\newcommand{\ul}{\underline}
\newcommand{\ddata}{\beta} % Discrete data of log fields
\newcommand{\cM}{\mathcal{M}} % log structure
\newcommand{\cK}{\mathcal{K}} % log structure
\newcommand{\oM}{\overline{\cM}} % characteristic sheaf
\newcommand{\oN}{\overline{\cN}} % characteristic sheaf
\newcommand{\NN}{\mathbb{N}} % natural number
\newcommand{\ZZ}{\mathbb{Z}} % integers
\newcommand{\CC}{\mathbb C}
\newcommand{\fM}{\mathfrak{M}} % Artin stack
\newcommand{\fC}{\mathfrak{C}} % universal curves over Artin stack
\newcommand{\fU}{\mathfrak{U}} % Artin stack
\newcommand{\fV}{\mathfrak{V}} % Artin stack
\newcommand{\fH}{\mathfrak{H}} % twisted hodge bundle
\newcommand{\cL}{\mathcal{L}} % line bundle over C
\newcommand{\vgamma}{{\pmb{\gamma}}} %
\newcommand\scrM{\mathscr{M}} % DM stack
\newcommand{\SF}{\mathscr{S}} % the stack of $r$-spin fields
\newcommand{\scrU}{\mathscr{U}} %
\newcommand{\USF}{\mathscr{U}} % the stack of $r$-spin fields with uniform max degeneracy
\newcommand{\bL}{{\bf{L}}} % line bundle over universal stacks
\newcommand{\vb}{\mathrm{Vb}} % line bundle over universal stacks
\newcommand{\cP}{\mathcal{P}} % P^1 bundle over $\cC$
\newcommand{\cC}{\mathcal{C}} % twisted curve
\newcommand{\cZ}{\mathcal{Z}} % an irreducible component
\newcommand{\cE}{\mathcal{E}} % an exceptional divisor
\newcommand{\Log}{\operatorname{Log}} %Olsson's log stack
\newcommand{\cS}{\mathcal S} % Olsson's log stack over S
\newcommand{\cA}{\mathcal{A}} % Moduli of DF(1) log-structures
\newcommand{\cY}{\mathcal{Y}} % some Artin stack
\newcommand{\TT}{\mathbb{T}} % tangent complex
\newcommand{\LL}{\mathbb{L}} % cotangent complex
\newcommand{\EE}{\mathbb{E}} % perfect obstruction theory
\newcommand{\FF}{\mathbb{F}} % some complex
\newcommand{\obs}{\operatorname{Obs}} % perfect obstruction theory
\newcommand{\cok}{\operatorname{cok}} % cokernel
\newcommand{\bC}{{\mathbf C}} % Cone stack
\newcommand{\bs}{\mathbf{s}} % underling sections
\newcommand{\bt}{\mathbf{t}} % underling sections
\newcommand{\fm}{\mathfrak{m}} % morphism induced by maximal degeneracy
\newcommand{\fb}{\mathfrak{b}} % morphism induced by maximal degeneracy
\newcommand{\fc}{\mathfrak{c}} % contraction
\newcommand{\tf}{\tilde{f}} % log twisted fields/maps
\newcommand{\cO}{\mathcal O} %Structure sheaf
\newcommand{\cN}{\mathcal{N}} % normal bundle
\newcommand{\cT}{\mathcal T} % C^*-torsor
\newcommand{\cU}{\mathcal{U}}
\newcommand{\PP}{\mathbb{P}}
\newcommand{\Gm}{\mathbb{G}_{m}}
\newcommand{\GIT}{\mathbin{\mkern-3mu/\mkern-6mu/\mkern-3mu}}
\newcommand{\vir}{\mathrm{vir}}
\newcommand{\red}{\mathrm{red}}
\newcommand{\tw}{\mathrm{tw}}
\newcommand{\gd}{\mathbf{gd}} % test degeneracy
\newcommand{\spec}{\operatorname{Spec}} % stack of underlying sections
\DeclareMathOperator{\Aut}{Aut}
\DeclareMathOperator{\diff}{d}
\newcommand{\poleq}{\preccurlyeq} % partial ordering from log leq
\newtheorem{proposition}{Proposition}[section]
\newtheorem{corollary}[proposition]{Corollary}
\newtheorem{lemma}[proposition]{Lemma}
\newtheorem{theorem}[proposition]{Theorem}
\theoremstyle{definition}
\newtheorem{assumption}[proposition]{Assumption}
\newtheorem{definition}[proposition]{Definition}
\theoremstyle{remark}
\newtheorem{remark}[proposition]{Remark}
\newtheorem{notation}[proposition]{Notation}
\newtheorem{example}[proposition]{Example}
\newcommand{\step}[2]{\smallskip\noindent{\bf Step #1: #2.}}
\begin{document}

\title[Towards Logarithmic GLSM]{Towards Logarithmic GLSM: The $r$-spin case}
\author{Qile Chen \and Felix Janda \and Yongbin Ruan \and Adrien Sauvaget}
\renewcommand{\shortauthors}{Q. Chen, F. Janda, Y. Ruan and A. Sauvaget}

\address[Q. Chen]{Department of Mathematics\\
Boston College\\
Chestnut Hill, MA 02467\\
U.S.A.}
\email{qile.chen@bc.edu}

\address[F. Janda]{Department of Mathematics\\
University of Notre Dame\\
Notre Dame, IN 46556\\
U.S.A.}
\email{fjanda@nd.edu}

\address[Y. Ruan]{Institute for Advanced Study in Mathematics\\
Zhejiang University\\
Hangzhou\\
China}
\email{yongbin.ruan@yahoo.com}

\address[A. Sauvaget]{CNRS, Universit\'e de Cergy--Pontoise \\
Laboratoire de Math\'ematiques AGM, UMR 8088 \\
95302 Cergy--Pontoise Cedex\\
France}
\email{adrien.sauvaget@math.cnrs.fr}

\date{\today}

\begin{abstract}
  In this article, we establish the logarithmic foundation for compactifying the moduli stacks of the gauged linear sigma model using
  stable log maps of \cite{Ch14, AC14, GS13}.
  We then illustrate our method via the key example of Witten's
  $r$-spin class to construct a proper moduli stack with a reduced perfect obstruction theory whose virtual cycle recovers the $r$-spin virtual cycle of Chang--Li--Li \cite{CLL15}. Indeed, our construction of the reduced virtual cycle is built upon the work of \cite{CLL15} by appropriately extending and modifying the Kiem-Li cosection \cite{KiLi13} along certain logarithmic boundary. In the subsequent article \cite{CJR19P}, we push the technique to a general situation. 
  
  One motivation of our construction is to fit the gauged linear sigma model in the broader setting of Gromov-Witten theory so that powerful tools such as virtual localization \cite{GrPa99} can be applied. A project \cite{CJR20P, CJR21P} along this line is currently in progress leading to applications including computing loci of holomorphic differentials \cite{CJRSZ20P}, and calculating higher genus Gromov--Witten
  invariants of quintic threefolds \cite{GJR17P, GJR18P}.
\end{abstract}

\keywords{$r$-spin, stable logarithmic maps, virtual cycles}

\subjclass[2010]{14N35, 14D23}

\maketitle

\setcounter{tocdepth}{1}
\tableofcontents

%%%%%%%%%%%%%%Introduction
\section{Introduction}

\subsection{Gauged Linear Sigma models}

One of the major advances in the subject of Gromov--Witten theory is
the development of the so called FJRW-theory by the third author and
his collaborators.
The Gromov--Witten theory of a Calabi--Yau hypersurface of a weighted
projective space is conjectured to be equivalent to its FJRW-dual via
the LG/CY correspondence, a famous duality from physics.
In physics, the Gromov--Witten theory corresponds to a nonlinear sigma
model while FJRW-theory corresponds to a Landau--Ginzburg model.
Back in 1993, Witten gave a physical derivation of the LG/CY
correspondence by constructing a family of theories which was known as the 
\emph{gauged linear sigma model} or GLSM \cite{Wi93}.
By varying the parameters of GLSM, Witten argued that GLSM converges
to a nonlinear sigma model at a certain limit and a
Landau-Ginzburg orbifold at a different limit.
Hence, they are related by analytic continuation.

Several years ago, GLSM (with the restriction to compact-type
insertions) was put on a firm mathematical footing by Fan, Jarvis and
the third author \cite{FJR18}.
Let us briefly describe the construction.
The input data of a GLSM is an LG-space
\begin{equation*}
  W\colon V \GIT_{\theta} G\rightarrow \CC
\end{equation*}
for a GIT quotient $V \GIT_{\theta} G$ with a $\CC^*$-action
$\CC^*_R\curvearrowright V$ (called the R-charge) such that $W$ is
homogenous of degree one.
Moreover, we assume that the critical locus
$\mathrm{Crit}_W=\{\diff W=0\}\subset V \GIT_{\theta}G$ is compact.
The most famous  example is
\begin{equation*}
  W = p(x^5_1+x^5_2+x^5_3+x^5_4+x^5_5)\colon \CC^5\times \CC\rightarrow \CC
\end{equation*}
with $\CC^*$-action of weight $(1,1,1,1,1,-5)$.
Here $(x_1, x_2, x_3, x_4, x_5)$ are the coordinates of $\CC^5$ and $p$
is the coordinate of $\CC$.
Furthermore, the R-charge has weight $(0,0,0,0,0, 1)$.
The GIT-quotient $(\CC^5\times \CC)\GIT_{\theta} \CC^*$ has two chambers
or phases depending on the character
\begin{equation*}
  \theta(z)=z^n\colon \CC^*\rightarrow \CC^*.
\end{equation*}
If $\theta>0$ (i.e., $n>0$), then the unstable locus is
$(0,0,0,0,0)\times \CC$ and we have the GIT quotient
$((\CC^5-\{(0,0,0,0,0)\})\times \CC)\GIT_{\theta} \CC^*\cong
\cO_{\PP^4}(-5).$ When $\theta<0$, the unstable locus is
$\CC^5\times \{0\}$ and the GIT quotient is
$(\CC^5\times \CC^*)\GIT_{\theta}\CC^*\cong [\CC^5/\ZZ_5]$.
This GLSM is supposed to be equivalent to the Gromov--Witten theory of
the quintic 3-fold $X_5=\{ x^5_1+x^5_2+x^5_3+x^5_4+x^5_5=0\}$ in the
chamber $\theta>0$ and FJRW-theory of the LG orbifold
$$F = x^5_1+x^5_2+x^5_3+x^5_4+x^5_5\colon [\CC^5/\ZZ_5]\rightarrow \CC$$
in the chamber $\theta<0$.
Let us use this example to illustrate Fan--Jarvis--Ruan's algebraic
GLSM theory.
The geometric data for the above GLSM is
\begin{equation*}
  \scrM = \{(\cC, \cL, (s_1, s_2,s_3, s_4,s_5)\in H^0(\cL^{\oplus 5}), p\in H^0(\cL^{-5}\otimes \omega_{\log})): \dots\}
\end{equation*}
satisfying a certain stability condition where $\cC$ is a pre-stable
curve and $\cL$ is a line bundle over $\cC$.
For $\theta>0$, the stability condition implies that
$(s_1, s_2, s_3, s_4, s_5)$ define a stable quasimap into $\PP^4$ and
we obtain a variant of Chang--Li's $p$-field moduli space \cite{CL12}.
For $\theta<0$, the stability condition implies that the zeros of $p$
form an effective divisor $D$, and that $p$ defines a weighted 5-spin
structure $\cL^5\cong \omega_{\log,\cC}(-D)$.
In both cases, $\scrM$ is a DM-stack with two-term perfect obstruction
theory and has a virtual cycle in the Chow group.
However, it is not proper (compact).
To obtain a virtual cycle which we can integrate, (under the
assumption that all insertions are of compact-type%
\footnote{We refer the reader to \cite{KiLi18P, CKL18P} for
  interesting new developments related to the more general broad
  insertions along the cosection approach.}%
) we use $\diff W$ to define a cosection
$$\sigma\colon \obs_{\scrM} \rightarrow \cO_{\scrM}$$
and apply Kiem---Li's cosection localization technique \cite{KiLi13}
to define a localized virtual cycle $[\scrM]^\vir_\sigma$ with support
on the \emph{compact} sub-locus
$\scrM(\sigma)\subset \scrM$ satisfying the condition
$(s_1, s_2, s_3, s_4, s_5, p)\in Crit_W$.

The above construction is beautiful.
However, it is not directly useful for computational purposes.
In many ways, we would like to have an alternative construction which
is more friendly towards effective computation.
To that end, we would like to avoid using a cosection.

In the same paper, Kiem--Li showed that if $\scrM$ is a compact moduli
space with a two-term perfect obstruction theory and a cosection
$\sigma$, then
\begin{equation*}
  \deg([\scrM]^\vir) = \deg([\scrM]^\vir_\sigma)
\end{equation*}
This suggests that one should try to compactify the GLSM moduli space
$\scrM$ in a way that its cosection extends without
additional degeneracy loci.
The main purpose of this and its subsequent articles is to construct
such a compactification.

\subsection{The logarithmic approach}

\subsubsection{Stable maps relative to boundary divisors}

The theory of stable maps relative to a smooth boundary divisor was
introduced in symplectic geometry by Li--Ruan \cite{LR01} and
Ionel--Parker \cite{IP03, IP04} in the 90s.
The algebraic version using expansions was first developed
by Jun Li in his work \cite{Li01, Li02}, including a proof of a
degeneration formula in Gromov--Witten theory.
Since then, the degeneration formula has become one of the main tools in Gromov--Witten theory.
A combination of expansions with logarithmic geometry was introduced
by Kim \cite{Kim10}, and with orbifold structures was introduced by
Abramovich--Fantechi \cite{AF16}.

The idea of using logarithmic structures {\em without expansion} was first proposed by Bernd Siebert in 2001 \cite{Siebert}. This has led to the theory of stable log maps of Abramovich--Chen--Gross--Siebert \cite{AC14, Ch14, GS13} in the general logarithmic setting with toroidal boundary
divisors. A different approach using exploded manifolds was introduced by Brett
Parker \cite{Par11, Par12, Par15}.

In this and the subsequent articles \cite{CJR19P, CJR20P, CJR21P}, we will
apply the techniques of stable log maps to compactify the gauged
linear sigma model (GLSM) of Fan-Jarvis-Ruan \cite{FJR18}, and study
their virtual cycles.

\subsubsection{Log maps}

A \emph{stable log map} to a separated log Deligne--Mumford stack $Y$
is a morphism of log stacks $f \colon \cC \to Y$ over a log scheme $S$
where $\cC \to S$ is a \emph{twisted log curve} and the underlying
twisted map $\underline{f}$ obtained by removing log structures is
stable in the usual sense. For our purpose, we will only consider the
case that $\cM_Y$ is of \emph{Deligne--Faltings type of rank one}.
This amounts to saying that the logarithmic boundary of $Y$ is a
Cartier divisor, see Section \ref{sss:rank-one}.

The central object of log maps is the stack $\scrM(Y,\beta)$ parameterizing stable log maps to $Y$ with a given collection of discrete data $\beta$ (Section \ref{ss:log-moduli}).
The case where $Y$ is a log scheme has been developed in \cite{AC14, Ch14, GS13}.
The same method applies to the case of log Deligne--Mumford targets.
Due to a lack of references, in Section \ref{sec:logmap} we collect results of stable log maps with Deligne--Mumford targets needed in our construction.

\subsubsection{Modular principalization of the boundary}\label{sss:principalization}

A stable log map is \emph{degenerate} if it maps a component of the
source curve to the boundary of $Y$.
Denote by $\Delta \subset \scrM(Y,\beta)$ the locus consisting of
degenerate fibers.
In general, it is a virtual toroidal divisor, in the sense that it is the pullback of a Weil divisor from a log smooth stack $\fM(\cA,\beta')$ via a canonical morphism, see \eqref{equ:forgetgeometry}.
This turns out to be a major difficulty for the construction of a reduced perfect obstruction
theory of the compactified GLSM.
The key to overcoming this difficulty is the following \emph{modular
  principalization} of $\Delta$.

Let $f\colon \cC \to Y$ be a stable log map over a geometric log point
$S$.
For each irreducible component $Z \subset \cC$ we may associate an
unique element $e_{Z} \in \oM_{S} := \cM_S/\cO^*_S$ called the
\emph{degeneracy} of $Z$ (Section~\ref{sss:map-at-generic-pt}).
As elements of $\oM_{S}$, they carry a natural partial ordering such
that $e_{Z_1} \poleq e_{Z_2}$ iff $(e_{Z_2} - e_{Z_1}) \in \oM_S$.
Intuitively $e_Z$ measures the ``speed'' of $Z$ falling into the
boundary of $Y$, and $e_{Z_1} \poleq e_{Z_2}$ means that $Z_2$
degenerates ``faster'' than $e_{Z_1}$.
The stable log map $f$ is said to have \emph{uniform maximal
  degeneracy} if the set of degeneracies has a unique maximal element.
It turns out that having uniform maximal degeneracy is an open
condition and is stable under base change.
Let $\scrU(Y,\beta) \subset \scrM(Y,\beta)$ be the sub-category
fibered over log schemes consisting of objects with the uniform
maximal degeneracy.
In Section \ref{sec:UMD}, we establish the following:

\begin{theorem}[Theorem \ref{thm:max-moduli}] \label{thm:max-moduli-intro}
The canonical morphism $\scrU(Y,\beta) \to \scrM(Y,\beta)$ is a proper, representable and log \'etale morphism of log Deligne--Mumford stacks.
\end{theorem}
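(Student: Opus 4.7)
The plan is to realise $\scrU(Y,\beta) \to \scrM(Y,\beta)$ as the log modification associated to a canonical subdivision of the characteristic fan of $\scrM(Y,\beta)$ built from the degeneracies. The point is that, once each $e_Z$ is viewed as a linear functional on a rational polyhedral cone, the condition of having a maximum among a finite set of linear forms cuts that cone into closed subcones which cover it, and such a covering is precisely the combinatorial data producing a proper, representable, log \'etale morphism of log Deligne--Mumford stacks.

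\medskip

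Near a log geometric point $\bar{s}\to\scrM(Y,\beta)$ parameterising a stable log map $f\colon\cC\to Y$, the finitely many degeneracies $e_1,\dots,e_n$ of the irreducible components of $\cC$ live in the stalk $\oM_{\scrM,\bar{s}}$ and, by functoriality of the construction of $e_Z$ recalled in Section~\ref{sss:map-at-generic-pt}, extend to sections of $\oM_\scrM$ on an \'etale neighbourhood $U$. On the dual cone $\sigma_U = \Hom(\oM_{\scrM,\bar{s}}, \NN)$ the $e_i$ become linear functionals, and I would subdivide
\begin{equation*}
\sigma_U \;=\; \bigcup_{i=1}^{n}\, \sigma_{U,i}, \qquad \sigma_{U,i} \;=\; \{\,v\in\sigma_U : v(e_i)\geq v(e_j)\ \text{for all } j\,\}.
\end{equation*}
Each $\sigma_{U,i}$ is rational polyhedral and corresponds to the fine, saturated, sharp submonoid of the group completion of $\oM_{\scrM,\bar{s}}$ generated by $\oM_{\scrM,\bar{s}}$ together with the differences $e_i - e_j$; by the standard dictionary for subdivisions of Kato fans, this refinement yields a morphism $\widetilde{U}\to U$ which is proper (the subcones cover $\sigma_U$), log \'etale (subdivisions are), and representable (no fractional generators are introduced).

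\medskip

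Next I would identify $\widetilde{U}$ with $\scrU(Y,\beta)\times_{\scrM(Y,\beta)} U$. A log morphism $T\to U$ lifts to the chart $\sigma_{U,i}\subset\widetilde{U}$ iff $e_i - e_j\in\oM_T$ for every $j$, which by the definition of $\poleq$ says $e_j\poleq e_i$ for all $j$, i.e.\ $e_i$ is a maximal degeneracy on $\cC_T\to Y$. Summed over $i$ this is exactly the uniform maximal degeneracy condition defining $\scrU(Y,\beta)$; openness and base-change stability of UMD already recorded in Section~\ref{sec:UMD} then yield the claimed identification. Canonicity of the construction (it depends only on the unordered set $\{e_i\}$, which is functorial in $\scrM$) ensures that the local modifications glue to a global morphism inheriting the three properties.

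\medskip

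The main obstacle I anticipate is the compatibility of the $e_i$ under specialisation and base change, since the irreducible decomposition of $\cC$ can change as one moves within $\scrM(Y,\beta)$: components split across nodes that smooth, and get identified where the log structure sharpens. One must check that on overlaps of \'etale neighbourhoods the resulting Kato-fan subdivisions agree, which in turn requires a careful accounting of how $e_Z$ transforms when a component $Z$ degenerates into a union of components (its degeneracy ought to be the minimum over the pieces) or when several components merge. Once this combinatorial compatibility is in place, properness, representability and log-\'etaleness of $\scrU(Y,\beta)\to\scrM(Y,\beta)$ reduce to the standard properties of representable subdivisions of fine saturated Kato fans, transferred from log schemes to log DM stacks by an \'etale-local argument.
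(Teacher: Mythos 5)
Your strategy --- realising $\scrU(Y,\beta)\to\scrM(Y,\beta)$ as the subdivision of the characteristic cones of $\scrM(Y,\beta)$ determined by which degeneracy attains the maximum --- is viable and genuinely different in presentation from the paper's argument. The paper instead constructs the minimal monoid with uniform maximal degeneracy $\oM(G,V_{\max})$ by hand (Section \ref{sss:um-monoid}), proves openness and universality of this minimality, gets algebraicity, representability and log \'etaleness through Olsson's $\Log$ stacks and Proposition \ref{prop:finite-auto-umd}, and proves properness by an explicit weak valuative criterion in which the degeneracies are principalized and then maximized by a sequence of log blow-ups of the ideals $\cK_0,\cK_1,\dots,\cK_k$. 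Your cone $\sigma_{U,i}$ is dual to the paper's $\oM(G,V_{\max})$, and the paper's Steps 3--4 are in effect the local charts of your subdivision; indeed your decomposition is the one induced by the convex piecewise-linear function $\max_i e_i$, equivalently the log blow-up of the coherent log ideal generated by $\{\sum_{j\neq i}e_j\}_i$, which is the cleanest way to obtain properness and representability from the general theory instead of re-running the valuative criterion. So your route buys brevity and a conceptual description, provided one is willing to invoke the dictionary for subdivisions of characteristic fans over a base that is only log DM.

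However, as written the proof has a genuine gap exactly at the point you flag and do not resolve: you must show that the stalkwise sets of degeneracies come, \'etale-locally, from a canonical finite collection of sections of $\oM_{\scrM}$ compatible with generization, since this is what makes the local subdivisions glue, makes the construction independent of the chosen neighbourhood, and legitimises the identification of $T$-points of the subdivision with UMD families. This is not automatic ``by functoriality'': the paper proves it in Lemma \ref{lem:generize-degeneracy} and its corollary by choosing auxiliary sections of the universal curve through each component and reading off $\sigma^*(\bar f^{\flat})(1)$, and the same input underlies Proposition \ref{prop:um-open}. Your heuristic for specialisation is moreover incorrect: when a component of the nearby fibre specialises to a union of components, the degeneracies of all the pieces have the \emph{same} image under the generization map $\oM_{S,s}\to\oM_{S,t}$, equal to the degeneracy of the nearby component --- there is no ``minimum over the pieces''. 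Two smaller corrections: the submonoid generated by $\oM_{\scrM,\bar s}$ and the differences $e_i-e_j$ is in general not sharp (several $e_i$ may tie for the maximum), so you must pass to its sharpening --- this is precisely the paper's quotient identifying $e_{v_1}$ with $e_{v_2}$ for $v_1,v_2\in V_{\max}$; and uniqueness of the lift of a $T$-point (so that the subdivision really is the subcategory $\scrU(Y,\beta)$ and not just surjects onto it) should be stated, using that subdivisions are monomorphisms in the fs category. With these points supplied, your argument closes.
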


The maximal degeneracy defines naturally a \emph{virtual} Cartier
divisor $\Delta_{\max} \subset \scrU(Y,\beta)$ in the sense of \cite[\S 3]{KR18} whose support is
precisely the locus of degenerate log maps, see
Section~\ref{sss:boundary-torsor}.
To be more precise, $\Delta_{\max}$ is the vanishing locus of a global section of a line bundle $\bL_{\max}^{\vee}$.

\begin{remark}
  The category $\scrU(Y,\beta)$ is indeed the largest sub-category of
  $\scrM(Y,\beta)$ to which our construction of reduced perfect
  obstruction theory of compactified GLSM applies.
  Consequently, our construction applies to subcategories of
  $\scrU(Y,\beta)$ including the aligned logarithmic structures of
  \cite[Section~8.1]{ACFW13}.
  The general construction of this paper allows us to work with
  various subcategories of $\scrU(Y,\beta)$ to carry out the
  computation of the GLSM virtual cycle.
  This will be a task of \cite{CJR20P, CJR21P}.
\end{remark}

\subsection{The $r$-spin case}

Since the technique is relatively involved, for the reader's benefit it
makes sense to work out in full detail a first nontrivial simple
example.
This is another main purpose of the current article.
Our example of choice is the r-spin theory which corresponds to the
GLSM of
$$W=x^rp\colon [(\CC\times \CC)/\CC^*] \rightarrow \CC,$$
where the coordinates on $\CC \times \CC$ are $(x, p)$, the weight of
action is $(1, -r)$ and the $R$-charge is $(0,1)$.
Similarly to the case of quintic 3-folds, this model has two chambers
as well.
The relevant chamber for $r$-spin curve theory is the Landau--Ginzburg
chamber $\theta<0$, where the stable locus is $\CC\times \CC^*$.
Furthermore, we choose a stability condition such that $p$ has no
zero.
By the previous discussion, $p$ can be interpreted as defining an
isomorphism $\cL^r\cong \omega_{\log, \cC}$ and the GLSM moduli space
is
$$\USF^{\circ}_{g,k}=\{(\cC, \cL, s\in H^0(\cL), \cL^r\cong \omega_{\log, \cC})\}.$$

Let $(\cC/S, \cL)$ be an $r$-spin curve consisting of a log curve $\cC \to S$ and an $r$-spin bundle $\cL$ over $\ul{\cC}/\ul{S}$.
Denote by $0_{\cP}$ and $\infty_{\cP}$ the zero and infinity sections of $\PP := \PP(\cL\oplus\cO_{\ul{\cC}})$ respectively, and by $\cM_{\cP_{\infty}}$ the log structure on $\PP$ associated to $\infty_{\cP}$.
Consider the log stack $\cP = (\PP, \cM_{\cC}|_{\PP}\oplus_{\cO^*}\cM_{\cP_{\infty}})$ with the projection $\cP \to \cC$.
A {\em log field} is a section $f \colon \cC \to \cP$.
It is {\em stable} if $\omega^{\log}_{\cC/S}\otimes f^*\cO(0_{\cP})^{k}$ is positive for $k \gg 0$.

Denote by $\SF_{\beta}^{1/r}$ the stack of stable $r$-spin curves with a log field with discrete data $\beta = (g, \vgamma, \bf{c})$ consisting of the genus $g$, the monodromy $\vgamma$ of the spin bundle along markings, and the contact order $\bf{c}$ along each marking with $\infty_{\cP}$. We first achieve the compactification:

\begin{theorem}[Theorem \ref{thm:spin-fields-moduli}]
 $\SF_{\beta}^{1/r}$ is represented by a proper log Deligne--Mumford stack.
\end{theorem}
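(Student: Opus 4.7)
The plan is to realize $\SF_\beta^{1/r}$ as an appropriate substack of a moduli of stable log maps, and then invoke the representability results of Section \ref{sec:logmap} together with the properness of the moduli of $r$-spin curves. Let $\oM^{1/r}_{g,\vgamma}$ denote the (proper, Deligne--Mumford) moduli stack of $r$-spin curves with the prescribed monodromy $\vgamma$. Over it sit the universal log curve $\cC$, the universal spin bundle $\cL$, and the projective compactification $\cP = \PP(\cL \oplus \cO_{\ul{\cC}}) \to \cC$ equipped with the rank one Deligne--Faltings log structure coming from $\infty_\cP$. A stable log field over a log scheme $S$ with discrete data $\beta = (g, \vgamma, \mathbf{c})$ is then precisely a stable log map $f\colon \cC_S \to \cP$ over $\oM^{1/r}_{g,\vgamma}$ satisfying the section condition $\pi \circ f = \mathrm{id}_{\cC_S}$ together with the prescribed contact orders $\mathbf{c}$ along $\infty_\cP$.

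For algebraicity, the relative moduli of stable log maps to $\cP$ over $\oM^{1/r}_{g,\vgamma}$ with the discrete data inherited from $\beta$ is a log Deligne--Mumford stack by the general result of Section~\ref{sec:logmap}, since $\cM_{\cP}$ is of Deligne--Faltings type of rank one. The section condition is closed (via the equality $\pi \circ f = \mathrm{id}_{\cC_S}$ of morphisms into the separated target $\cC$), while the stability condition, requiring $\omega^{\log}_{\cC/S}\otimes\cO(f^*0_\cP)^k$ to be positive for $k \gg 0$, is open. Hence $\SF_\beta^{1/r}$ is a locally closed substack, and therefore itself a log Deligne--Mumford stack.

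Properness is then established by the valuative criterion. Finite type is immediate: the contact orders $\mathbf{c}$ bound the degree of $f^*0_\cP$, and the stability condition then bounds the degrees of the remaining components. For separatedness and universal closedness, let $R$ be a DVR with fraction field $K$, and consider a stable log field over $\spec K$. First extend the underlying $r$-spin curve to $\spec R$, after possibly a finite base change, using properness of $\oM^{1/r}_{g,\vgamma}$; then extend the rational section $f_K$ to the central fiber, resolving indeterminacies by inserting rational tails in the source curve mapping into $\infty_\cP$, with matching log enhancements prescribed by the orders of vanishing. The resulting extension is stable by construction, and unique once the rank one log type is fixed by the discrete data.

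The main obstacle is this last step: the construction and uniqueness of the log-enhanced limit. The delicate issue is to show that the modifications of the source curve and the log enhancement needed to resolve the central fiber are canonically determined by the discrete data $\beta$, so that the extension genuinely represents a stable log field rather than an artifact of choices. This reduces essentially to the standard properness proof for stable log maps \cite{AC14, Ch14, GS13}, adapted to the rank one target $\cP \to \cC$ and globalized over the proper base $\oM^{1/r}_{g,\vgamma}$.
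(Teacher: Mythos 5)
Your algebraicity argument follows the paper's route (reduce to the stack of underlying sections via Section~\ref{sec:logmap}, then use finiteness of automorphisms), and your boundedness sketch is in the right spirit, though calling it ``immediate'' undersells the inductive contact-order/degree analysis of Section~\ref{ss:r-spin-field-boundedness}. The genuine gap is in the valuative criterion, which is the actual content of the theorem. The general machinery you invoke only gives the \emph{weak} valuative criterion for the forgetful morphism from log maps to underlying maps (Proposition~\ref{prop:log-map-valuative}): once the underlying twisted map extends over $\spec R$, the log enhancement extends uniquely. It does not produce the underlying limit. Properness of stable log maps to a proper target with stable-map stability does not apply here either: the target $\cP$ is a $\PP^1$-bundle over a varying prestable spin curve over a non-proper base, the objects are required to be \emph{sections}, and the stability is the GLSM positivity of $\omega^{\log}_{\cC/S}\otimes\cO(f^*0_{\cP})^k$, not stable-map stability. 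What must be proved by hand---and what occupies Section~\ref{ss:valuative} (Proposition~\ref{prop:valuative})---is that the underlying section itself extends and is unique: one has to choose a modification of the source curve, re-extend the spin structure over it (via auxiliary markings at the zeros of the section and the twist \eqref{equ:spin-extra-twist}), show the limiting twisted stable map is again a section (ruling out components mapping into fibers), contract unstable components while keeping the spin structure, and prove separatedness by the degree comparison of Lemma~\ref{lem:degree-comparison}. That existence can genuinely fail is shown by the example in Section~\ref{sss:properness-failure}: a family of sections meeting $\infty_{\cP}$ at a non-marked point has no stable limit; it is exactly the log structure along $\infty_{\cP}$, forcing contact with infinity to occur at markings and nodes, that rescues the argument. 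Your phrase ``resolving indeterminacies by inserting rational tails mapping into $\infty_{\cP}$'' does not engage with this mechanism and, as stated, would run into the same pathology.

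A related structural error: you propose to work over the proper moduli $\overline{\scrM}^{1/r}_{g,\vgamma}$ of \emph{stable} spin curves and to extend the spin curve first, then the section. But the source curves of stable log fields are only prestable spin curves (the stability constrains the field, not the curve; components with one or two special points are allowed), so the correct base is the Artin stack $\fM^{1/r}_{g,\vgamma}$, which is not proper. Moreover the limit source curve is in general \emph{not} the stable spin-curve limit: it must be modified to make the limit map a section, and the spin bundle must then be extended over the modified curve---this interplay is precisely why the paper's proof alternates between choosing prestable extensions of the coarse curve, extending the spin structure via Lemma~\ref{lem:map-extending}, and contracting unstable components. Finally, a small point: being a section is an \emph{open}, not closed, condition inside the moduli of twisted stable maps to the family $\underline{\cP}\to\underline{\cC}$ (the source of a nearby stable map could a priori differ from $\cC_S$), which is how the paper cuts out $\cS$ in Lemma~\ref{lem:algebraicity-usual-section}.
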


\begin{remark}
  The compactification of the moduli of abelian and meromorphic
  differentials using log stable maps has been studied previously in
  \cite{CC16, Gu16}.
  The compactification considered in this paper (in the case $r = 1$)
  is different from loc.\ cit.\ in that we do not put the log
  structure on $\cP$ induced by the zero section.
\end{remark}

\begin{remark}
It is worth emphasizing that the properness of  $\SF_{\beta}^{1/r}$ is interestingly a non-trivial fact. As shown in Section \ref{sss:properness-failure}, limit(s) of a one-parameter family of meromorphic sections of  spin bundles may not exist regardless of the stability conditions. Log structures play an important role in the existence of the underlying limiting section!
\end{remark}

Note that a log field $f \colon \cC \to \cP$ is equivalent to a log
map $f' \colon \cC \to (\PP, \cM_{\cP_{\infty}})$ whose underlying
morphism of schemes is a section of $\PP \to \ul{\cC}$.
Since $\cM_{\cP_{\infty}}$ is Deligne--Faltings type of rank one, we
may consider the stack $\USF_{\beta}^{1/r}$ of stable $r$-spin curve
with a log field with uniform maximal degeneracy with respect to
$\cM_{\cP_{\infty}}$.
Theorem \ref{thm:max-moduli-intro} implies that $\USF_{\beta}^{1/r}$
is a proper log Deligne--Mumford stack as well.

Next, we consider the virtual cycles.
The stack $\SF_{\beta}^{1/r}$ admits a canonical two term perfect
obstruction theory and hence a virtual cycle
$[\SF_{\beta}^{1/r}]^{\vir}$ which pulls back to the canonical perfect
obstruction theory and the virtual cycle $[\USF_{\beta}^{1/r}]^{\vir}$
of $\USF_{\beta}^{1/r}$.
But this canonical virtual cycle is different than the cosection
localized virtual cycle in general.
The main result of the paper is the following:

\begin{theorem}[Proposition \ref{prop:reduced-obs} and \ref{prop:comparison}]
  \label{thm:main}
  Under the condition that all markings are narrow and of trivial
  contact order, the space $\USF_{\beta}^{1/r}$ carries an alternative
  ``reduced'' two term perfect obstruction theory together with a
  cosection $\sigma^{\red}_{\USF/\fU}$ on $\USF_{\beta}^{1/r}$ that
  has no additional degeneracy loci.
  Furthermore, denote by $[\USF_{\beta}^{1/r}]^{\red}$ the virtual
  cycle of the reduced perfect obstruction theory, then
  \begin{equation*}
    i_*[\USF^{\circ}]^{\vir}_\sigma = [\USF_{\beta}^{1/r}]^{\red}
  \end{equation*}
  where
  $i\colon \overline{\scrM}^{1/r}_{g,\vgamma}\rightarrow
  \USF_{\beta}^{1/r}$ is the inclusion of the zero section, and
  $\USF^{\circ} = \USF_{\beta}^{1/r} \setminus \Delta_{\max}$.
\end{theorem}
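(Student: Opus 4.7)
The plan is to construct the reduced perfect obstruction theory as a modification of the standard one along the virtual Cartier divisor $\Delta_{\max}$, to derive the cosection directly from the superpotential $W=x^r p$, and to compare the two virtual cycles by combining Kiem--Li cosection localization with the observation that everything simplifies on the open substack $\USF^{\circ}$.

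\textbf{Step 1: reduced perfect obstruction theory.} In the narrow and trivial contact order case, the relative perfect obstruction theory of $\USF_{\beta}^{1/r}$ over the stack $\fU$ of underlying twisted log curves equipped with a spin bundle is governed, up to shift and dual, by $R\pi_*\cL$ for the universal spin bundle. The first task is to isolate the rank-one direction in $R^1\pi_*\cL$ coming from the log boundary $\cM_{\cP_\infty}$, which is supported along $\Delta_{\max}$, and to fit it into an exact triangle with the standard complex. The reduced complex is then obtained by twisting this boundary piece by $\cO(-\Delta_{\max})$, effectively cancelling the boundary obstruction; the uniform maximal degeneracy from Theorem~\ref{thm:max-moduli-intro} is exactly what upgrades $\Delta_{\max}$ to an honest virtual Cartier divisor and makes this cancellation global.

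\textbf{Step 2: the cosection.} The differential $dW = r x^{r-1} p\, dx + x^r\, dp$ induces, via the universal log field, a cosection $\sigma$ on the standard obstruction sheaf whose degeneracy locus is $\{s=0\}\cup\Delta_{\max}$: the first component comes from $x^{r-1}$ vanishing at the zero section, and the second from the log vanishing of $x^r$ along the boundary. I would verify by a local computation in log coordinates that, on the boundary summand identified in Step~1, $\sigma$ coincides up to units with multiplication by the defining section of $\Delta_{\max}$. This shows that $\sigma$ descends to the reduced complex as a cosection $\sigma^{\red}_{\USF/\fU}$ whose degeneracy locus is precisely the image $i(\overline{\scrM}^{1/r}_{g,\vgamma})$, with the boundary contribution cancelled.

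\textbf{Step 3: comparison of cycles.} On $\USF^{\circ}$ one has $\Delta_{\max}=\emptyset$, so the standard and reduced complexes coincide, as do $\sigma$ and $\sigma^{\red}_{\USF/\fU}$; the cosection-localized virtual cycle $[\USF^{\circ}]^{\vir}_{loc}$ therefore agrees with the reduced localized class on the open part. Since the degeneracy locus of $\sigma^{\red}_{\USF/\fU}$ on the proper stack $\USF_{\beta}^{1/r}$ is entirely contained in $\USF^{\circ}$, functoriality of Kiem--Li localization under the open inclusion $\USF^{\circ}\hookrightarrow \USF_{\beta}^{1/r}$ identifies the two localized classes in $A_*(\overline{\scrM}^{1/r}_{g,\vgamma})$. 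By the main Kiem--Li identity, the pushforward by $i$ of this reduced localized cycle equals $[\USF_{\beta}^{1/r}]^{\red}$, yielding $i_*[\USF^{\circ}]^{\vir}_{loc}=[\USF_{\beta}^{1/r}]^{\red}$.

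\textbf{Main obstacle.} Step~1 is the heart of the argument: one must upgrade the virtual Cartier divisor $\Delta_{\max}$ to an actual sub-object of the standard obstruction complex, show that the quotient remains perfect of amplitude $[-1,0]$, and verify that $\sigma$ pairs with the boundary generator as the defining section of $\Delta_{\max}$. The narrow and trivial contact order hypothesis is presumably what eliminates additional residue-type boundary sheaves that would obstruct this decomposition, and controlling it globally requires a careful analysis of the universal log field on the strata cut out by $\Delta_{\max}$, using the representability and properness of $\scrU(Y,\beta)\to \scrM(Y,\beta)$ from Theorem~\ref{thm:max-moduli-intro} to reduce to a tractable local model.
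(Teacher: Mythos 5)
Your overall architecture (modify the obstruction theory along $\Delta_{\max}$, get the cosection from the superpotential, compare via cosection localization) is the paper's strategy, but at both technical hearts your plan has genuine gaps. In Step 1 you propose to isolate a boundary sub-object of $R^1\pi_*\cL$ and twist it by $\cO(-\Delta_{\max})$; this is not how the reduction can be made to work, and you yourself flag it as the main obstacle without supplying the mechanism. What is actually needed is a map of \emph{complexes} $\EE_{\USF/\fU} \to \FF := [\cO \to \bL_{\max}^{\otimes -r}]$ (note the twist is by $\bL_{\max}^{\otimes -r}\cong\cO(r\Delta_{\max})$, not by $\cO(\Delta_{\max})$) compatible with the perfect obstruction theory $\TT_{\USF/\fU}\to\EE_{\USF/\fU}$, so that one can take the cone and still have an obstruction theory. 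The paper obtains this from the twisted superpotential: the $r$-th power of the twisted spin section defines a morphism $\USF\to\fH$ to the twisted Hodge bundle $\fH=\bC(\pi_{\fU,*}\tilde\omega_{\fU})$, which is \emph{smooth} over $\fU$, and the compatibility of the two obstruction theories (Lemma \ref{lem:obs-commute}) together with $H^1(\TT_{\fH/\fU})=0$ is exactly what makes $\TT_{\USF/\fU}$ factor through $\EE^{\red}_{\USF/\fU}$ (Lemma \ref{lem:red-obs-factor}). Moreover, perfectness of $\EE^{\red}$ in amplitude $[0,1]$ is not automatic: it requires the surjectivity of the (already log-twisted, $\bL_{\max}^{\otimes -r}$-valued) relative cosection along $\Delta_{\max}$ (Proposition \ref{prop:cosection-boundary-surjective}), which in turn uses that narrow markings of trivial contact order map into the zero section (Lemma \ref{lem:narrow-marking-vanishing}) and the non-vanishing of the universal log twist on maximally degenerate components. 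Your "local computation in log coordinates" gestures at this, but a pointwise identification of $\sigma$ with the defining section of $\Delta_{\max}$ does not produce the global factorization of the obstruction theory through a perfect two-term reduced complex.

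In Step 3 there is a second gap: cosection localization in the form you invoke (Kiem--Li and its wall-crossing/virtual pull-back refinement of Chang--Kiem--Li) requires an \emph{absolute} perfect obstruction theory and an $\cO$-valued absolute cosection, whereas everything constructed so far is relative over $\fU$, whose underlying stack is not smooth (it is only log smooth), so the absolute theory and the lifting of the cosection do not exist directly. The paper handles this by restricting to a finite-type open substack, resolving it by a projective log \'etale modification $\tilde\fV\to\fV$ with locally free log structure and hence smooth underlying stack (Lemma \ref{lem:base-resolution}), constructing the absolute reduced theory and cosection over $\tilde\fV$ (using $H^1(\TT_{\tilde\fV})=0$), applying \cite[Theorem 2.6]{CKL17} and \cite{KiLi13} there, and then returning to $[\USF]^{\red}$ by the virtual push-forward theorem of \cite{Co06, Ma12}. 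Without this relative-to-absolute passage and the push-forward along the resolution, the appeal to "functoriality of Kiem--Li localization under the open inclusion" is not justified, even though the final two citations you name are indeed the ones the paper uses at the end.
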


\begin{remark}
  We remark that the reduced perfect obstruction theory has the same
  virtual dimension as the canonical one.
  Therefore, it is \emph{not} a traditional reduced virtual cycle,
  which changes the virtual dimension.
  Instead, the perfect obstruction theory is only ``reduced'' along
  the boundary $\Delta_{\max}$.
  In fact, the two perfect obstruction theories are related by a
  triangle \eqref{tri:reduce-X} where the third complex is determined
  by a virtual Cartier divisor supported along $\Delta_{\max}$.
\end{remark}

\begin{remark}
  In general, the canonical cosection of Chang--Li--Li \cite[(3.5)]{CLL15} over the open substack
  $\USF^{\circ} \subset \SF_{\beta}^{1/r}$ defining
  $[\USF^{\circ}]^{\vir}_\sigma$, extends to a meromorphic cosection
  over $\SF_{\beta}^{1/r}$.
  However, the behavior of this meromorphic cosection along the
  boundary $\SF_{\beta}^{1/r} \setminus \USF^{\circ}$, which is
  crucial for studying the virtual cycle, is hard to understand.
  The key to solve this issue is the modular principalization
  $\USF_{\beta}^{1/r} \to \SF_{\beta}^{1/r}$ of the boundary
  $\SF_{\beta}^{1/r} \setminus \USF^{\circ}$ in \S
  \ref{sss:principalization}, which endows $\Delta_{\max}$ with a
  virtual Cartier divisor structure in $\USF_{\beta}^{1/r}$.
  Consequently, we prove in \S \ref{ss:extending-cosection} that
  the canonical cosection over $\USF^{\circ}$  extends to
  $\USF_{\beta}^{1/r}$ with precisely $r$-th order poles along
  $\Delta_{\max}$ (\eqref{equ:relative-cosection} and Lemma
  \ref{lem:old-compatible}) but no additional degeneracy loci
  (Proposition \ref{prop:cosection-boundary-surjective}).
  These properties lead to the reduced theory in \S
  \ref{ss:cosection-factorization} and \S
  \ref{ss:relative-reduced-POT}.

  Comparing to the canonical theory, the reduced one carries a {\em reduced cosection} which extends the canonical one over $\USF^{\circ}$ across the boundary $\Delta_{\max}$ with neither additional poles nor additional degeneracy loci.
  This is what allows for comparing the virtual cycles
  $[\USF^{\circ}]^{\vir}_\sigma$ and $[\USF_{\beta}^{1/r}]^{\red}$ in
  \S \ref{ss:absolute-reduced-POT}. 
\end{remark}

\begin{remark}
As further shown in the subsequent paper \cite[Theorem~1.10]{CJR19P}, the two virtual cycles $[\USF_{\beta}^{1/r}]^{\vir}$ and $[\USF_{\beta}^{1/r}]^{\red}$ are related by $[\USF_{\beta}^{1/r}]^{\vir} = [\USF_{\beta}^{1/r}]^{\red} + r\cdot [\Delta_{\max}]^{\red}$, where $[\Delta_{\max}]^{\red}$ is a {\em reduced virtual cycle} of the boundary $\Delta_{\max}$. The cycle $[\Delta_{\max}]^{\red}$ is non-zero in general, and will be studied in detail in \cite{CJR20P, CJR21P}. The fact that the canonical virtual cycle $[\USF_{\beta}^{1/r}]^{\vir}$ does not equal the cosection localized virtual cycle $[\USF^{\circ}]^{\vir}_\sigma$ led us to search for the reduced theory.
\end{remark}

\begin{remark}
  The GLSM moduli space $\USF^{\circ}$, as well as its
  compactification $\USF_{\beta}^{1/r}$ admit $\CC^*$-actions induced
  by scaling the (log) field.
  Unfortunately, Chang--Kiem--Li's \cite{CKL17} localization theorem for
  cosection-localized virtual cycles does not apply to the cycle
  $[\USF^{\circ}]^{\vir}_\sigma$.
  This is why Theorem~\ref{thm:main} is significant, since it allows
  the application of Graber--Pandharipande's virtual localization
  formula \cite{GrPa99}, thus leading to a new way for understanding
  Witten's $r$-spin class (even before push-forward to the moduli
  space of curves).
  We leave the discussion of the $\CC^*$-action, equivariance of
  obstruction theories, etc. (in the more general setting of
  \cite{CJR19P}) to the future work \cite{CJR20P}.
  The localization formula has been a main motivation of this work
  (see also Section~\ref{ss:effective-rspin}).
\end{remark}

\subsection{History of the $r$-spin virtual cycle}

There was a long line of works constructing both the moduli space of
$r$-spin structures and its virtual cycle.
Spin curves were proposed by Witten \cite{Wi93} in an effort to
generalize his famous conjecture that the intersection theory of the
moduli space of stable curves is governed by the KdV-hierarchy.
The compactification was first constructed by Jarvis \cite{Ja98} using
torsion-free sheaves and later by Abramovich--Jarvis \cite{AbJa03}
using line bundles on twisted curves.

The first construction of the virtual cycle is due to
Polishchuk--Vaintrob \cite{PoVa01}.
From the modern point of view, their construction is better viewed as
a quantum K-theoretic construction from which one can obtain a virtual
cycle by taking some kind of Chern character (see~\cite{Ch08}).

The picture was clarified significantly by Fan--Jarvis--Ruan with a
vast generalization (FJRW-theory) of $r$-spin theory.
The input data of FJRW theory is a non-degenerate quasi-homogeneous
polynomial $W$ together with a so called \emph{admissible} finite
automorphism group $G$ of $W$.
The $r$-spin theories are simply the case of $W=z^r$ and
$G=\ZZ/r \ZZ$.
The state space of the $r$-spin theory corresponds to the monodromy at
the marked point, and is indexed by an integer $0\leq m<r$.
The insertion $m>0$ corresponds to the so called \emph{narrow} sector
in FJRW-theory and the corresponding virtual cycle was constructed as
a localized topological Euler class.
The role of $m=0$ was clarified in general FJRW-theory as a new type
of insertions called \emph{broad}. They showed that broad
insertions are irrelevant in $r$-spin theory but a source of difficulty
in general case.
Fan--Jarvis--Ruan's construction is analytic in nature although there
is an algebraic construction of Polishchuk and Vaintrob using matrix
factorizations \cite{PoVa16}.
However, it is not clear that these two are equivalent in the most
general case.

The last piece of the puzzle before the present work was provided by
Chang--Li--Li in \cite{CLL15}, where they gave yet another algebraic
geometric construction of FJRW virtual cycle for narrow sectors.
This is the construction that we use in this article.
Furthermore, they proved that all constructions of
Polishchuk--Vaintrob, Chiodo, Fan--Jarvis--Ruan and Chang--Li--Li are
equivalent.

Finally, the $A_r$-generalization of Witten's integrable hierarchies
conjecture was proved by Faber--Shadrin--Zvonkine \cite{FSZ10} while
the $D_n, E_{6,7,8}$-generalization was proved by Fan--Jarvis--Ruan
\cite{FJR13}.

\subsection{Effective $r$-spin structures}
\label{ss:effective-rspin}

A key input that led us to propose this new construction of the
$r$-spin virtual cycle is a conjectural formula of
$[\overline{\scrM}^{1/r}_{g,n}]^\vir$ by the second author.
This formula was motivated by the recent study of the cycle of the
locus of holomorphic differentials and of double ramification cycles.
We outline here this train of thought.

We consider the open sub-stack
$\scrM_{g,\vgamma}^{1/r}\subset\overline{\scrM}_{g,\vgamma}^{1/r}$ of
$r$-spin structures on smooth orbifold curves.
An $r$-spin structure $(\cC/S,\cL)\in \scrM_{g,\vgamma}^{1/r}$ is
called \emph{effective} if $h^0(\cL)>0$.
We denote by $S_0\subset \scrM_{g,\vgamma}^{1/r}$ the locus of
effective $r$-spin structures and by
$\overline{S}_0 \subset \overline{\scrM}_{g,\vgamma}^{1/r}$ its
closure.
A.~Polishchuk studied the geometry of effective $r$-spin structures
(see~\cite{Po06}) and asked the following question: Can we express the
$r$-spin virtual cycle to $\scrM_{g,\vgamma}^{1/r}$ in terms of the
cycle $[\overline{S}_0]$ and other natural cycles?

This problem was left aside until a precise  conjecture was recently stated (see~\cite[Conjecture~A.1]{PPZ16P}). This conjecture can be re-stated as follows:
for large values of  $r$, we have
\begin{equation*}\label{conj:holodiff}
\epsilon_*\left( \frac{1}{r}[\overline{\scrM}_{g,\vgamma}^{1/r}]^\vir + [\overline{S}_0] \right)  = \alpha(r)\in A^*(\overline{\scrM}_{g,n})
\end{equation*}
where $\alpha(r)$ is a polynomial in $r$ (here
 $\epsilon\colon \overline{\scrM}_{g,\vgamma}^{1/r}\to \overline{\scrM}_{g,n}$ stands for the forgetful map of the spin structure).

\begin{remark}
  The conventions for the value of
  $[\overline{\scrM}_{g,\vgamma}^{1/r}]^{\vir}$ are different
  in~\cite{CLL15},~\cite{Po06}, and~\cite{PPZ16P}.
\end{remark}

This conjecture is very similar to a conjectural expression by Pixton
for the double ramification (DR) cycles that was proved by
Pandharipande, Pixton, Zvonkine, and the second author
(see~\cite{PPZ16P}).
The main tool of their proof is the virtual localization formula of
Graber and Pandharipande (see~\cite{GrPa99}).

In order to prove the new conjecture of~\cite{PPZ16P}, the second
author built a (conjectural) localization formula by analogy with the
proof of the expression of DR cycles.
In this conjectural localization formula, the role of DR cycles is
replaced by cycles of effective $r$-spin structures.
The second author checked the consistency of this formula by various
computations in low genera.

From this point, our main problem was to construct the space where the
conjectural localization formula should hold.
The effort to pin down the geometry underlining this formula led to
use the machinery of log geometry in this article.
In work in progress \cite{CJR20P, CJR21P}, the first three authors, will prove
a general localization formula for log GLSM, and in \cite{CJRSZ20P},
with Pandharipande, Pixton, Schmitt and Zvonkine, we will show that it
implies \cite[Conjecture~A.1]{PPZ16P}.

\subsection{Plan}

The paper is organized as follows.
In Section~\ref{sec:logmap}, we discuss the general set-up of log
stable maps in the orbifold setting.
In Section~\ref{sec:UMD}, we introduce the new notion of log
structures of ``uniform maximal degeneracy'', which is crucial for the
construction of the reduced virtual cycle.
This is applied in Section~\ref{sec:logfields}, to construct the
compactification of the moduli space of $r$-spin curves with a
field. Finally, in Section~\ref{sec:virtual}, we construct the reduced
perfect obstruction theories and cosections, and we prove
Theorem~\ref{thm:main}.

\subsection*{Acknowledgement}

The first author was partially supported by NSF grant DMS 1560830 and
DMS 1700682.
The second author was partially supported by an AMS Simons Travel
Grant and NSF grants DMS-1901748 and DMS-1638352..
The third author was partially supported by NSF grant DMS 1405245 and
NSF FRG grant DMS 1159265.

The second author is indebted to Dimitri Zvonkine for many discussions
at the Institut de Mathématiques de Jussieu that have directly led to
a conjectural formula for Witten's class, whose proof was the main
motivation for starting the current project.
Discussions at the ``RTG conference on Witten's class and related
topics'' at the University of Michigan formed the start of this
collaboration.
The second and third authors would like to thank Shuai Guo for the
collaboration which provided motivation for the current work. 
The authors would like to thank Rahul Pandharipande for the continuous
support and the wonderful ``Workshop on higher genus'' at ETH Zürich
where our entire program was presented at the first time.
We would also like to thank Dawei Chen, Alessandro Chiodo, J\'er\'emy
Gu\'er\'e, Davesh Maulik, Jonathan Wise and Dimitri Zvonkine for
useful discussions.
Finally, the second and third authors would like to thank MSRI for the
hospitality where the paper was finished during a visit supported by
NSF grant DMS-1440140.

%%%%%%%%%%%%%%%%%%%%%Moduli
\section{Moduli of twisted stable log maps}
\label{sec:logmap}

In this section, we introduce the setup of stable log maps needed for
compactifying GLSM.
It was defined with prestable source curves in \cite{AC14, Ch14,
  GS13}.
We take the opportunity to extend it to the orbifold setting.

%Construction of moduli spaces

\subsection{Twisted log maps}

%-------------------------------------------------------------------
\subsubsection{Twisted curves}\label{sss:twisted-curve}

Recall from \cite[Definition~4.1.2]{AV02} that a {\em twisted
  $n$-pointed curve of genus $g$} over $\ul{S}$ consists of
the data
\[
(\ul{\cC} \to \ul{C} \to \ul{S}, \{\sigma_i\}_{i=1}^n)
\]
where
\begin{enumerate}
 \item $\ul{\cC}$ is a proper Deligne--Mumford stack, and is \'etale locally a nodal curve over $\ul{S}$;
 \item $\sigma_i \subset \ul{\cC}$ are disjoint closed substacks in the smooth locus of $\ul{\cC} \to \ul{S}$;
 \item $\sigma_i \to \ul{S}$ are \'etale gerbes banded by the multiplicative group $\mu_{r_i}$ for some non-negative integer $r_i$;
 \item the morphism $\ul{\cC} \to \ul{C}$ is the coarse moduli morphism;
 \item along each stacky singular locus of $\ul{\cC} \to \ul{S}$, the group action of $\mu_{r_i}$ is balanced;
 \item $\ul{\cC} \to \ul{C}$ is an isomorphism over $\ul{\cC}_{gen}$, where $\ul{\cC}_{gen}$ is the complement of the markings $\sigma_i$ and the stacky singular locus of $\ul{\cC} \to \ul{S}$.
\end{enumerate}

Given a twisted curve as above, by \cite[Proposition~4.1.1]{AV02} the
coarse space $\ul{C} \to \ul{S}$ is an $n$-pointed, genus $g$ ordinary
pre-stable curve over $\ul{S}$ with the markings determined by the
images of $\{\sigma_i\}$.
When there is no danger of confusion, we simply write
$\ul{\cC} \to \ul{S}$ for a family of twisted curves.

Twisted curves can only have stacky structure along markings and
nodes.
Though the stacky structures can be described equivalently in terms of
log structures as in \cite{Ol07}, to be compatible with the existing
literature on r-spin curves, we will recall their local structures
below following \cite{AV02}.

\subsubsection{Stacky structure along nodes}
Let $\ul{\cC} \to \ul{C} \to \ul{S}$ be a family of twisted curves,
and ${\bar q} \to \ul{C}$ be a geometric point of a node.
Shrinking $\ul{S}$ if necessary, there exists an \'etale neighborhood
$\ul{U} \to \ul{C}$ of $q$ with an \'etale morphism
\[
\ul{U} \to \spec \big( \cO_{\ul{S}}[x,y]/(xy = t)\big)
\]
for some $t \in \cO_{\ul{S}}$. The pullback $\ul{\cC}\times_{\ul{C}}\ul{U}$ is the stack quotient
\begin{equation}\label{equ:node-local}
[\spec\big( \cO_{\ul{U}}[\tilde{x},\tilde{w}]/(\tilde{x}\tilde{y} = t', \tilde{x}^r = x, \tilde{y}^{r} = y) \big)/\mu_{r}]
\end{equation}
for some $t' \in \cO_{\ul{S}}$. Here for a generator $\gamma \in \mu_{r}$, the $\mu_{r}$-action is given by $\gamma(\tilde{x}) = \zeta \tilde{x}$ and $\gamma(\tilde{y}) = \zeta' \tilde{y}$ for some primitive $r$-th roots of unity $\zeta$ and $\zeta'$. The {\em balanced} condition implies that $\zeta' = \zeta^{-1}$.

\subsubsection{Stacky structure along markings}
Let $p \to \ul{C}$ be a geometric point of a marking corresponding to
$\sigma_i$.
Shrinking $\ul{S}$ if necessary, there exists an \'etale neighborhood
$\ul{U} \to \ul{C}$ of $p$ with an \'etale morphism
\[
\ul{U} \to \spec \cO_{\ul{S}}[z].
\]
The pullback $\ul{\cC}\times_{\ul{C}}\ul{U}$ is the stack quotient
\begin{equation}\label{equ:marking-local}
\big[\spec\big( \cO_{\ul{U}}[\tilde{z}]/(\tilde{z}^{r_i} = z) \big) \big/ \mu_{r_i} \big],
\end{equation}
where for each $\zeta \in \mu_{r_i}$ the action is given by $\tilde{z} \mapsto \zeta\tilde{z}$.

%-------------------------------------------------------------------
\subsubsection{Logarithmic twisted curves}\label{sss:log-twisted-curve}
A {\em log twisted $n$-pointed curve} over a fine and saturated log scheme $S$ in the sense of \cite[Definition~1.7]{Ol07} consists of
\[
(\pi\colon \cC \to C \to S, \{\sigma_i\}_{i=1}^n)
\]
such that
\begin{enumerate}
\item The underlying data $(\ul{\cC} \to \ul{C} \to \ul{S}, \{\sigma_i\}_{i=1}^n)$ is a twisted $n$-pointed curve over $\ul{S}$.
\item $\pi$ is a log smooth and integral morphism of fine and saturated log stacks.
\item If $\ul{U} \subset \ul{\cC}$ is the non-critical locus of $\ul{\pi}$, then $\oM_{\cC}|_{\ul{U}} \cong \pi^*\oM_{S}\oplus\bigoplus_{i=1}^{n}\NN_{\sigma_i}$ where $\NN_{\sigma_i}$ is the constant sheaf over $\sigma_i$ with fiber $\NN$.
\end{enumerate}

We remark that the log structure $\cM_{\cC}$ along each marking has a component given by the divisorial log structure associated to the marking. This corresponds to the component $\NN_{\sigma_i}$ above. However, the stacky structure is allowed to be trivial along markings.

For simplicity, we may refer to $\pi\colon \cC \to S$ as a log twisted curve.
The {\em pullback} of a log twisted curve $\pi\colon \cC \to S$ along an arbitrary morphism of fine and saturated log schemes $T \to S$ is the log twisted curve $\pi_T\colon \cC_T:= \cC\times_S T \to T$.

%-------------------------------------------------------------------
\subsubsection{The combinatorial structure of log twisted curves}\label{curvecombinatorial}

Consider a log twisted curve
$(\pi\colon \cC \to S, \{\sigma_i\}_{i=1}^n)$, a geometric point
$p\to \ul{\cC}$ and its image
$s=\ul{\pi}(p) \in \ul{S}$.
The morphism
$\bar{\pi}^{\flat}\colon \ul{\pi}^{*}\oM_{S} \to \oM_{\cC}$ of sheaves
of monoids can be described on the level of stalks as for classical
log curves by
\[
\bar{\pi}_{p}^{\flat} \colon \ul{\pi}^{*} \oM_{S, s} \to \oM_{\cC, p} \simeq \left\{
\begin{array}{c l}
\ul{\pi}^*\oM_{S,s}\oplus\NN, & \text{if $\ul{p}$ is a marked point}\\
\ul{\pi}^*\oM_{S,s}\oplus_{\NN} \NN^2, & \text{if $\ul{p}$ is a node}\\
\ul{\pi}^*\oM_{S,s}, & \text{otherwise},
\end{array} \right.
\]
where $\bar{\pi}_{p}^{\flat}$ is the inclusion of the first factor.
Recall that at the $i$-th marking the factor $\NN$ is generated by
generator of the divisorial log-structure associated to $\sigma_i$,
while at a node, the direct sum is determined by
\begin{equation}\label{node-characteristic}
\NN \to \oM_{S,s}, \ \ \ 1 \mapsto \rho_q,
\end{equation}
and the diagonal map $\NN \to \NN^{2}$.
Indeed, the diagonal map is induced by the relation
$t' = \tilde{x}\tilde{y}$ in the local chart (\ref{equ:node-local}).
The two generators $(1,0), (0,1) \in \NN^2$ correspond to the local
coordinates $\tilde{x}, \tilde{y}$ of the two branches of the node,
and $\rho_{q}$ corresponds to the local section $t'$.

%-------------------------------------------------------------------
\subsubsection{The stack of log twisted curves}\label{sss:curve-stack}

Denote by $\fM^{\tw}_{g,n}$ the category of genus $g$ log twisted
curves with $n$ marked points over the category of log schemes.
By \cite[Theorem~1.9]{Ol07}, the fibered category $\fM^{\tw}_{g,n}$ is
represented by a log algebraic stack.
Indeed, the underlying stack $\ul{\fM}^{\tw}_{g,n}$ is the stack
parameterizing twisted curves with the same discrete data.
The boundary of $\ul{\fM}^{\tw}_{g,n}$ parameterizing singular fibers
is a normal crossings divisor whose associated divisorial log
structure defines the log structure of $\fM^{\tw}_{g,n}$.

%-------------------------------------------------------------------
\subsubsection{Log stable maps with twisted source curves}
We fix a log algebraic stack $Y$ as the target.

\begin{definition}
A {\em log map} to $Y$ over a fine and saturated log scheme $S$ consists of the data
\[
(\pi\colon \cC \to S, f\colon \cC \to Y)
\]
where $\cC \to S$ is a log twisted curve over $S$, and $f$ is a
morphism of log stacks.
The \emph{pullback} of a log map along an arbitrary morphism of log
schemes is defined via the pullback of log twisted curves as usual.

When $Y$ is a separated log Deligne--Mumford stack, a log map is
\emph{stable} if the underlying twisted map is stable in the usual
sense.
In particular, a stable log map is representable.
\end{definition}
For simplicity, we may write $f\colon \cC \to Y$ for a log map.

\subsubsection{Deligne--Faltings targets of rank one}\label{sss:rank-one}
Throughout this paper, we will mainly focus on the following type of targets.

\begin{definition}
  A log algebraic stack $Y$ is {\em Deligne--Faltings type of rank one}
  if there is a morphism of sheaves of monoids $\NN_{Y} \to \oM_Y$
  which locally lifts to a chart (in the sense of \cite[Definition~2.9 (1)]{KKato})
  of $\cM_Y$. Here $\NN_{Y}$ denotes the constant sheaf over $Y$ with
  fiber $\NN$.
\end{definition}

Consider the log algebraic stack  $\cA$ with the underlying stack $$\big[\spec(k[\NN])/\spec(k[\ZZ]) \big]$$ and the log structure induced by the affine toric variety $\spec(k[\NN])$. Let $\infty_{\cA}\subset \cA$ be the boundary divisor associated to the log structure $\cM_\cA$. The log stack $\cA$ has the universal property that if $Y$ is  Deligne--Faltings type of rank one, then there is a canonical strict morphism $Y \to \cA$.

%-------------------------------------------------------------------
\subsection{The combinatorial structure of twisted log maps}\label{ss:log-combinatorial}
The combinatorial structure of log maps with twisted source curves is
similar to the case without twists as in \cite{GS13,AC14, Ch14}.
We introduce it following \cite[Section~2.3]{ACGS17}.
For our purposes, we assume $Y$ is Deligne--Faltings type of rank one.

\subsubsection{The induced morphism of sheaves of monoids}
Let $(\pi\colon \cC \to S, f\colon \cC \to Y)$ be a log map over $S$. First consider the case where $\ul{S}$ is a geometric point with $\oM_{S} = Q$. Denote by $\cM := \ul{f}^*\cM_{Y}$. Thus, $\cM$ is a Deligne--Faltings log structure on $\ul{\cC}$ of rank one. This leads to a pair of morphisms of sheaves of monoids
\[
(\bar{\pi}^{\flat}\colon Q \to \oM_{\cC}, \bar{f}^{\flat}\colon \oM \to \oM_{\cC}).
\]
where we view $Q$ as the constant sheaf of monoids on $\ul{\cC}$. The morphism $\bar{\pi}^{\flat}$ is described in Section \ref{curvecombinatorial}. We describe the behavior of $\bar{f}^{\flat}$ at generic points, marked points, and nodes of $\ul{\cC}$ as follows.

\subsubsection{The stalks of $\oM$}
Since $\cM$ is Deligne--Faltings type of rank one, for any point $s \to \ul{\cC}$ the sheaf $\oM_{s}$ is a constant sheaf of monoids with fiber either $\NN$ or the trivial one $\{0\}$.

\subsubsection{The structure of $\bar{f}^{\flat}$ at generic points}\label{sss:map-at-generic-pt}

If $s = \eta$ is a generic point of an irreducible component
$Z \subset \ul{\cC}$, then we have a local morphism of monoids
$\bar{f}^{\flat}_{\eta}\colon \oM_{\eta} \to Q$.\footnote{A morphism
  of monoids $h\colon P \to Q$ is {\em local} if
  $h^{-1}(Q^{\times}) = P^{\times}$.}

If $\oM_{\eta} = \NN$, then we call $Z$ a {\em degenerate component},
and $e_{Z} := \bar{f}^{\flat}_{\eta}(1) \in Q$ the {\em degeneracy} of
$f$ along $Z$.

If $\oM_{\eta} = \{0\}$, then we call $Z$ a {\em non-degenerate
  component}, and set the {\em degeneracy} of $Z$ to be
$e_{Z} = 0 \in Q$.

\subsubsection{The structure of $\bar{f}^{\flat}$ at marked points}\label{sss:map-at-marking}
If $s = p$ is a point lying on the marking $\sigma_i$, 
then we have a local morphism of monoids $\bar{f}^{\flat}_{p}\colon \oM_{p} \to Q\oplus\NN$. Consider the composition
 \[
 c_p\colon \oM_{p} \stackrel{\bar{f}^{\flat}_{p}}{\longrightarrow} Q\oplus\NN \stackrel{pr_2}{\longrightarrow} \NN
 \]
If $\oM_{p} = \NN$, the morphism $c_{p}$ is determined by $c_{p}(1) \in \NN$. We call $c_p$ or equivalently $c_{p}(1)$ the {\em contact order} at $p$. The marked point $p$ has the {\em trivial contact order} if $c_p(1) = 0$.

Let $\eta$ be the generic point of the component $Z$ containing $p$,
and assume that $Z$ is degenerate.
Since the generization morphism $\chi_{\eta,p}\colon Q\oplus\NN \to Q$
(see \cite[Lemma~3.5~iii]{Ol03}) is just the projection to the first
factor, we obtain
\[
\bar{f}^{\flat}_p \colon \NN \to Q\oplus\NN, \ \ \ 1 \mapsto e_Z + c_{p}(1)\cdot(0,1).
\]

\subsubsection{The structure of $\bar{f}^{\flat}$ at nodal points}\label{sss:nodecombinatorial}
Suppose $s = q \to \ul{\cC}$ is a nodal point contained in the closures of two generic points $\eta_1, \eta_2$ of the two branches meeting at $q$. Using the description of nodes in Section \ref{curvecombinatorial}, we have a local morphism
\[\bar{f}^{\flat}_{q}\colon \oM_{q} \to Q\oplus_{\NN}\NN^2.\]
Let $(1,0), (0,1) \in \NN^2$ correspond to the two local coordinates
around $q$ of the two branches of $\eta_1$ and $\eta_2$ respectively.

If $\oM_{q} = \NN$, after possibly renaming the branches at $q$, we
may assume that
\begin{equation}\label{nodecombinatorial}
\bar{f}^{\flat}_{q}(1) = e + c_{q}\cdot (1,0)
\end{equation}
for some $c_q \in \NN$ and $e \in Q$. We call $c_{q}$ the {\em contact order} of the node $q$. Observe the commutative diagram
\begin{equation}\label{nodegeneralization}
\xymatrix{
\oM_{q} \ar[r]^{\bar{f}^{\flat}_q} \ar[d] & \oM_{\cC, q} \ar[d] \\
\oM_{\eta_i} \ar[r]^{\bar{f}^{\flat}_{\eta_i}} & \oM_{\cC, \eta_i}.
}
\end{equation}
where the vertical arrows are the generization morphisms.
Applying the commutativity of the above diagram with $i=1$ to (\ref{nodecombinatorial}), we obtain that
\begin{equation}\label{nodedegeneracy}
\bar{f}^{\flat}_{q}(1) = e_{Z_1} + c_{q}\cdot (1,0)
\end{equation}
where $e_{Z_i}$ is the degeneracy of the component $Z_i$ containing $\eta_i$. Using $i=2$, we have
\begin{equation}\label{nodalequation}
e_{Z_1} + c_{q}\rho_q = e_{Z_2}.
\end{equation}
This is the nodal equation as in \cite[(3.3.2)]{Ch14}.

If $\oM_{q} = \{0\}$, then $\bar{f}^{\flat}_{q}$ is necessarily trivial and $c_q = 0$.
Since the commutativity of \eqref{nodegeneralization} holds in this
case as well, taking generization, we obtain
$e_{Z_1} = e_{Z_2} = 0$. In particular, Equation \eqref{nodalequation}
holds for all nodes.

\subsubsection{The natural partial ordering}\label{sss:partial-order}
For a twisted curve $\ul{\cC}$ over a geometric point, recall that its {\em dual intersection graph} $\ul{G}$ consisting of the set of vertices $V(\ul{G})$ corresponding to irreducible components, the set of edges $E(\ul{G})$ corresponding to nodes, and the set of half-edges $L(\ul{G})$ corresponding to marked points.

Let $q \to \ul{\cC}$ be a node joining two irreducible components $Z_1, Z_2$. Using \eqref{nodalequation}, we introduce the partial ordering $\poleq$ as follows:
\begin{enumerate}
 \item If $c_q > 0$, we write $v_1 \poleq v_2$.
 \item If $c_q = 0$, we write  $v_1 \poleq v_2$ and $v_2 \poleq v_1$, or equivalently $v_1 \sim v_2$.
\end{enumerate}
Then $\poleq$ extends to a partial order on the set $V(\ul{G})$, called the {\em minimal partial order}.

The minimal partial order yields an orientation of $\ul{G}$ as
follows.
Let $l \in E(\ul{G})$ be the corresponding edge joining vertices
$v_1, v_2$ associated to $Z_1, Z_2$ respectively.
The edge $l$ is oriented from $v_1$ to $v_2$ if $v_1 \poleq v_2$, and
the edge is oriented both ways if $v_1 \sim v_2$.
We remark that such $\ul{G}$ contain no one direction oriented loops,
see \cite[Corollary 3.3.7]{Ch14}.

If $c_q > 0$, we say that $q$ is an \emph{incoming node of $Z_1$} or
an \emph{outgoing node of $Z_2$}.
When $c_q = 0$, the node is neither an incoming nor outcoming component
of any component.
The \emph{incoming special points} of a component $Z \subset \cC$ are
all incoming special points, the \emph{outgoing special points} are
the markings on $Z$ and all outgoing nodes.

\subsubsection{The logarithmic combinatorial type}\label{logcombinatorialtype}
We introduce the {\em log combinatorial type} of the log map $(\cC \to S, f\colon \cC \to Y)$ over a geometric point $\ul{S}$ following \cite[Section~3.4]{Ch14} and \cite[Section~4.1.1]{AC14}:
\begin{equation}\label{equ:combinatorial-type}
G = \big(\ul{G},  V(G) = V^{n}(G) \cup V^{d}(G), \poleq, (c_i)_{i\in L(G)}, (c_l)_{l\in E(G)} \big)
\end{equation}
where
\begin{enumerate}[(a)]
 \item $\ul{G}$ is the dual intersection graph of the underlying curve $\ul{\cC}$.

 \item $V^{n}(G) \cup V^{d}(G)$ is a partition of $V(G)$ where $V^{d}(G)$ consists of vertices of degenerate components.

 \item $\poleq$ is the minimal partial order defined in Section \ref{sss:partial-order}.

 \item Associate to a leg $i\in L(G)$ the contact order $c_i \in \NN$ of the corresponding marking $\sigma_i$.

 \item Associate to an edge $l\in E(G)$ the contact order $c_l \in \NN$ of the corresponding node.
\end{enumerate}

\begin{remark}
Our definition of log combinatorial types is similar to the definition of types in \cite[Definition~1.10]{GS13} and \cite[Section~2.3.7]{ACGS17}. Since we work with Deligne--Faltings type targets, we are able to include more combinatorial information such as the partition and partial order on $G$.
\end{remark}

These combinatorial data behave well under generization:

\begin{proposition}\label{prop:combinatorial-generalization}
Let $f\colon \cC \to Y$ be a log map over an arbitrary log scheme $S$. Then
\begin{enumerate}
 \item The contact order $c_i$ along the $i$th marking $\sigma_i$ is a constant over each connected component of $S$.
 \item Let $W \subset \cC$ be a connected locus of nodes in $\cC$. Then the contact order of the nodes is constant along $W$.
\end{enumerate}
\end{proposition}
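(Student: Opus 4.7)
The strategy is to express both the marked-point contact orders $c_i$ and the nodal contact orders as the image of a single global section $\delta := \bar{f}^\flat(1) \in \Gamma(\cC, \oM_\cC)$ under appropriate quotient maps of monoid sheaves, and then to observe that those quotients are constant sheaves on the relevant loci. The section $\delta$ is well-defined because $Y$ is Deligne--Faltings of rank one: the global morphism $\NN_Y \to \oM_Y$ pulls back along $\ul{f}$ to $\NN_\cC \to \oM = \ul{f}^*\oM_Y$ and then composes with $\bar{f}^\flat \colon \oM \to \oM_\cC$ to give $\delta$. Once this is in place, both parts reduce to the elementary fact that a section of a constant sheaf on a connected space is constant.

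For (1), I would restrict $\delta$ to the marking $\sigma_i$. The log twisted curve structure (Section \ref{sss:log-twisted-curve}, condition (3)) provides a canonical direct sum decomposition $\oM_\cC|_{\sigma_i} \cong \pi^*\oM_S|_{\sigma_i} \oplus \NN_{\sigma_i}$, and the projection $\oM_\cC|_{\sigma_i} \to \NN_{\sigma_i}$ applied to $\delta|_{\sigma_i}$ yields a section of the constant sheaf $\NN_{\sigma_i}$ whose value at a geometric point $p$ is exactly $c_p(1)$, by the description in Section \ref{sss:map-at-marking}. Such a section is locally constant; and since $\sigma_i \to \ul{S}$ is an \'etale gerbe, $\sigma_i$ has a unique connected component above each connected component of $S$, so $c_i$ is constant there.

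For (2), fix a connected locus of nodes $W \subset \cC$. After passing to an \'etale cover $\tilde{W} \to W$ on which the two branches at each node can be consistently labelled, the stalks of $\oM_\cC$ along $\tilde W$ take the uniform form $\oM_S \oplus_\NN \NN^2$ of Section \ref{sss:nodecombinatorial}, and (\ref{nodecombinatorial}) exhibits $c_q \in \NN$ as the image of $\delta_q$ under a fixed projection landing in a constant $\NN$-valued sheaf on $\tilde{W}$. Hence $c_q$ is locally constant on $\tilde W$, and because the integer $c_q$ is intrinsically symmetric under the swap of the two branches, it descends to a locally constant (hence constant) function on the connected $W$.

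The only genuine technical subtlety lies in part (2), namely the monodromy at nodes that may permute the two branches as one moves along $W$. The plan handles this by trivialising the branches \'etale-locally on $W$ and then invoking the symmetry of $c_q$ in the branch labelling to descend; no deeper obstacle arises because the projection extracting $c_q$ from $\delta_q$ is invariant under the $\mathbb{Z}/2$-action exchanging the two coordinates of $\NN^2$.
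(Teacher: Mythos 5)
Your argument is correct and is essentially the approach the paper takes: the paper's proof is just the citation to \cite[Lemmas 3.2.4 and 3.2.9]{Ch14}, whose arguments likewise use the rank-one Deligne--Faltings section to realize the contact order as a section of a locally constant quotient of $\oM_{\cC}$ by $\pi^*\oM_{S}$ along the marking, respectively along the nodal locus. The only cosmetic point is that at a node the natural target of your projection is the locally constant $\ZZ$-valued quotient (the sign depending on the branch labelling) rather than a constant $\NN$-sheaf; taking absolute values is exactly your $\ZZ/2$-descent step, so nothing breaks.
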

\begin{proof}
The proof is identical to the case of \cite[Lemma 3.2.4, 3.2.9]{Ch14}.
\end{proof}

%-------------------------------------------------------------------
\subsection{Minimality}\label{sss:minimality}

\subsubsection{The monoid}\label{sss:min-monoid}
We recall the construction of minimal monoids in \cite{Ch14, AC14, GS13}. Consider a log map $(\cC \to S, f\colon \cC \to Y)$ over a geometric point $\ul{S}$ with the log combinatorial type $G$. We introduce a variable $\rho_l$ for each edge $l \in E(G)$, and a variable $e_v$ for each vertex $v \in V(G)$. Denote by $h_l$ the relation
$ e_{v'} = e_v + c_l\cdot\rho_l
$
for each edge $l$ with the two ends $v \poleq v'$ and contact order $c_l$. Denote by $h_v$ the relation
$
e_v = 0
$
for each $v \in V^{n}(G)$. Consider the abelian group
\[
\mathcal{G} = \big(\bigoplus_{v \in V(G)} \ZZ e_v \bigoplus_{l \in E(G)} \ZZ \rho_l \big) \big/ \langle h_v, h_l \ | \ v\in V^{n}(G), \ l \in E(G) \rangle
\]
Let $\mathcal{G}^{t} \subset \mathcal{G}$ be the torsion subgroup. Consider the composition
\[
\big(\bigoplus_{v \in V(G)} \NN e_v \bigoplus_{l \in E(G)} \NN \rho_l\big) \to \mathcal{G} \to \mathcal{G}/\mathcal{G}^{t}
\]
Let $\oM(G)$ be the smallest submonoid that is saturated in $\mathcal{G}/\mathcal{G}^{t}$, and contains the image of the above composition. We call $\oM(G)$ the {\em minimal monoid} associated to $G$, or associated to the log map.\footnote{The monoid $\oM(G)$ is called the {\em basic monoid} in \cite{GS13}.}

\begin{proposition}\label{prop:minimality}
There is a canonical map of monoids $\phi\colon \oM(G) \to \oM_S$ induced by sending $e_v$ to the degeneracy of the component associated to $v$, and sending $\rho_l$ to the element $\rho_{q}$ as in Equation (\ref{node-characteristic}) associated to $l$. In particular, the monoid $\oM(G)$ is fine, saturated, and sharp.
\end{proposition}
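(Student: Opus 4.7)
The plan is to build $\phi$ freely first and then push it through the successive quotients defining $\oM(G)$. On the free monoid $P := \bigoplus_{v\in V(G)} \NN e_v \oplus \bigoplus_{l\in E(G)} \NN \rho_l$, define a map $\tilde\phi\colon P \to \oM_S = Q$ by sending $e_v$ to the degeneracy $e_{Z_v}$ of the associated component and $\rho_l$ to $\rho_{q_l}$ from (\ref{node-characteristic}). I must check $\tilde\phi$ kills the relations $h_v$ and $h_l$. The relation $h_v$ (for $v \in V^n(G)$) holds because by definition $e_{Z_v}=0\in Q$ for a non-degenerate component (Section~\ref{sss:map-at-generic-pt}). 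The relation $h_l$ is exactly the nodal equation \eqref{nodalequation} established in Section~\ref{sss:nodecombinatorial}. Hence $\tilde\phi$ factors through the abelian group $\mathcal{G}$ at the level of groups, giving $\mathcal{G} \to Q^{gp}$.

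Next, since $S$ is a geometric log point of a fine and saturated log scheme, $Q = \oM_S$ is fine, saturated and sharp, so $Q^{gp}$ is torsion-free. Consequently $\mathcal{G} \to Q^{gp}$ factors through the torsion-free quotient $\mathcal{G}/\mathcal{G}^t$. The image of $P$ in $\mathcal{G}/\mathcal{G}^t$ lands in $Q \subset Q^{gp}$; and because $Q$ is saturated in $Q^{gp}$, the saturation of that image in $\mathcal{G}/\mathcal{G}^t$—which is exactly $\oM(G)$ by construction—also lands in $Q$. This produces the canonical monoid homomorphism $\phi\colon \oM(G) \to \oM_S$.

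For the structural properties: fineness follows because $\mathcal{G}/\mathcal{G}^t$ is a finitely generated abelian group and the saturation of a finitely generated submonoid of such a group remains finitely generated (Gordan's lemma). Saturatedness holds tautologically by construction. Sharpness is the only delicate point, and I expect it to be the main obstacle: \emph{a priori} a saturation in a group that has been taken twice (modding out $h_l$ and then $\mathcal{G}^t$) could acquire spurious units.

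The strategy for sharpness is to exhibit a $\QQ$-linear functional $\ell\colon (\mathcal{G}/\mathcal{G}^t)\otimes \QQ \to \QQ$ that is strictly positive on the image of every non-zero generator. First, the nodal equation forces that an edge $l$ with $c_l=0$ connects two vertices with identical degeneracy, so an equivalence class for $\sim$ is either entirely in $V^d(G)$ or entirely in $V^n(G)$ (using that $\bar f^\flat_\eta$ is local, and hence $e_Z\neq 0$ for degenerate $Z$, as recorded in Section~\ref{sss:map-at-generic-pt}). The quotient by $\sim$ is therefore a genuine finite poset whose edges carry strictly positive weights $c_l>0$. On this poset define $\ell(\rho_l)=1$, $\ell(e_v)=0$ for $v\in V^n(G)$, and for $v\in V^d(G)$ set $\ell(e_v)$ equal to the maximum $c$-weighted chain length from a $\sim$-minimal vertex below $v$, shifted by a positive constant so that minimal degenerate vertices receive a positive value. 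The relations $h_l$ and $h_v$ are compatible with this assignment, so $\ell$ is well-defined on $\mathcal{G}/\mathcal{G}^t$. Since $\ell$ is strictly positive on every non-zero generator, it is strictly positive on every non-zero element of the submonoid generated by the images of $P$, and hence on its saturation $\oM(G)$; this forces $\oM(G)^{\times}=\{0\}$, proving sharpness.
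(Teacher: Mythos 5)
Your construction of $\phi$ is correct and essentially the standard one: the relations $h_v$ and $h_l$ are killed in $\oM_S$ by the definition of degeneracies and by the nodal equation \eqref{nodalequation}, the group $\oM_S^{gp}$ is torsion-free because $\oM_S$ is fine, saturated and sharp, and since every element of $\oM(G)$ has a positive multiple in the image of the free monoid, saturatedness of $\oM_S$ in $\oM_S^{gp}$ shows that $\phi$ indeed takes values in $\oM_S$; fineness (Gordan) and saturatedness (by construction) are also fine. (For comparison, the paper simply invokes the proof of \cite[Proposition 3.4.2]{Ch14}.)

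The gap is in the sharpness argument, at the sentence ``The relations $h_l$ and $h_v$ are compatible with this assignment.'' Once you impose $\ell(\rho_l)=1$ for \emph{every} edge, the relation $h_l$ forces $\ell(e_{v'})-\ell(e_v)=c_l$ for every edge with ends $v\poleq v'$, and this system is over-determined as soon as the dual graph has two edges, or two paths, between the same vertices (or a vertex with several incoming edges) whose $c$-weighted lengths differ --- a completely common configuration for stable log maps. For instance, if $v_1,v_2\in V^{n}(G)$ are each joined to $w\in V^{d}(G)$ by edges of contact orders $1$ and $2$, the relations read $e_w=\rho_{l_1}=2\rho_{l_2}$ in $\mathcal{G}$, and no functional with $\ell(\rho_{l_1})=\ell(\rho_{l_2})=1$ kills both; your ``maximal $c$-weighted chain length'' rule only yields the inequality $\ell(e_{v'})\geq \ell(e_v)+c_l$, not the equality needed for $\ell$ to descend to $\mathcal{G}$, and the extra additive shift on $V^{d}(G)$ breaks the relations for edges going from $V^{n}(G)$ into $V^{d}(G)$ as well. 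The functional strategy can be repaired, but you must not prescribe the values on the $\rho_l$ in advance: first choose a potential $\epsilon\colon V(G)\to \mathbb{Q}_{\geq 0}$ that vanishes on $V^{n}(G)$, is constant on $\sim$-classes, is strictly increasing along every edge with $c_l>0$, and is strictly positive on $V^{d}(G)$ --- such $\epsilon$ exists because, using the realized degeneracies in $\oM_S$ (where each $\rho_q\neq 0$, as it maps to the vanishing smoothing parameter of an actual node, and $\oM_S$ is sharp), no directed cycle contains a strict edge and every strict edge either goes from $V^{n}(G)$ into $V^{d}(G)$ or stays inside $V^{d}(G)$; then set $\ell(e_v)=\epsilon(v)$, $\ell(\rho_l)=(\epsilon(v')-\epsilon(v))/c_l>0$ when $c_l>0$, and $\ell(\rho_l)=1$ when $c_l=0$. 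This kills every relation by construction, is strictly positive on all $\rho_l$ and on all $e_v$ with $v\in V^{d}(G)$, and then your final positivity-implies-sharpness step goes through unchanged.
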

\begin{proof}
This follows from the proof of \cite[Proposition 3.4.2]{Ch14}.
\end{proof}

For later use, we observe the following.

\begin{corollary}\label{cor:separate-non-distinct-log}
  There is a unique monoid $\oM(G)'$ such that
  $\oM(G) = \oM(G)' \oplus \NN^{d}$ where $d$ is the number of edges
  in $E(G)$ with trivial contact orders.
  In particular, the image of $e_{v}$ is contained in $\oM(G)'$ for
  all $v \in V(G)$.
\end{corollary}
\begin{proof}
  When $c_{l} = 0$, the element $\rho_{l}$ is not involved in the relation $h_{l}$. The collection of such $\rho_l$ generates the factor $\NN^{d}$.
\end{proof}

\subsubsection{Minimal objects}

As in \cite{GS13, Ch14, AC14}, we define the minimal objects using the canonical morphism $\phi$.

\begin{definition}
  A log map $(\cC \to S, f\colon \cC \to Y)$ over $S$ is called
  \emph{minimal} or \emph{basic}\footnote{The terminology used in \cite{GS13} is {\em
      basic}.} if for each of its geometric fibers, the induced
  canonical morphism in Proposition \ref{prop:minimality} is an
  isomorphism.
\end{definition}

The definition is justified by the openness of minimality.

\begin{proposition}\label{prop:minimal-open}
For any family of log maps $(\cC \to S, f\colon \cC \to Y)$ over a log scheme $S$, if the fiber $f_s\colon \cC_s \to Y$ over a geometric point $s \to S$ is minimal, then there is an \'etale neighborhood $U \to S$ of $s$ such that the fiber $f_{U}\colon \cC_{U} \to Y$ is minimal.
\end{proposition}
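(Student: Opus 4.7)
The plan is to follow the strategy used for Proposition 3.5.1 in \cite{Ch14} and Proposition 4.2.5 in \cite{AC14} in the untwisted setting. The twisting of the source curve adds no new combinatorial data beyond contact orders at markings and nodes (all of which are supported on the coarse moduli), so once the combinatorial/monoidal bookkeeping is set up, the argument transfers directly.

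First I would replace $S$ by a sufficiently small étale neighborhood of $\bar{s}$ so that, by Proposition \ref{prop:combinatorial-generalization}, all contact orders along markings and along each connected stratum of nodes are constant. In this neighborhood, for any generization $\bar{t}$ of $\bar{s}$, the log combinatorial type $G_{\bar{t}}$ is obtained from $G_{\bar{s}}$ by a prescribed contraction: each edge $l \in E(G_{\bar{s}})$ whose associated element $\rho_l$ becomes a unit in $\oM_{S,\bar{t}}$ is removed (the corresponding node is smoothed and its two endpoints are identified), and a degenerate vertex $v$ may become non-degenerate if $e_v$ becomes a unit. I would make this generization map of types precise and compare it with the generalization morphism $\oM_{S,\bar{s}} \to \oM_{S,\bar{t}}$, which is a face localization followed by sharpening.

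Next I would check that the construction of Section \ref{sss:min-monoid} commutes with this combinatorial contraction: the monoid $\oM(G_{\bar{t}})$ is canonically the face quotient of $\oM(G_{\bar{s}})$ obtained by setting $\rho_l = 0$ for each contracted edge and $e_v = 0$ for each vertex that becomes non-degenerate (no new torsion is introduced, since the nodal relations $e_{v'} = e_v + c_l \rho_l$ and $e_v = 0$ transform consistently). Together with the generalization of $\oM_S$, this produces a commutative square
\[
\begin{array}{ccc}
\oM(G_{\bar{s}}) & \xrightarrow{\ \phi_{\bar{s}}\ } & \oM_{S,\bar{s}} \\
\big\downarrow & & \big\downarrow \\
\oM(G_{\bar{t}}) & \xrightarrow{\ \phi_{\bar{t}}\ } & \oM_{S,\bar{t}}
\end{array}
\]
in which the vertical arrows realize both the left and the right sides as quotients by the same face. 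If $\phi_{\bar{s}}$ is an isomorphism, then $\phi_{\bar{t}}$ is surjective by commutativity, and injective because it is the quotient of $\phi_{\bar{s}}$ by the same face ideal on both sides. Hence minimality propagates to every generization of $\bar{s}$; combining this with constancy of the combinatorial data in our étale neighborhood gives minimality of $f_U$ on some $U \to S$, which is the desired openness.

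The main obstacle is the middle step, namely verifying that the face quotient of $\oM(G_{\bar{s}})$ dictated by the combinatorial contraction matches exactly the characteristic monoid of $\oM_{S,\bar{t}}$, with no stray torsion appearing from the quotient $\mathcal{G}/\mathcal{G}^t$ in the definition of $\oM(G)$. This matching was the technical heart of the proof in \cite[3.5]{Ch14}; the twisted setting adds nothing at the level of characteristic monoids, so the same verification applies, and I would simply record that the argument carries over unchanged.
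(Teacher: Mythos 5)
Your proposal is correct and follows essentially the same route as the paper, which simply observes that the untwisted argument of \cite[Proposition 3.5.2]{Ch14} and \cite[Proposition 1.22]{GS13} applies verbatim since the orbifold structure plays no role at the level of characteristic monoids. Your sketch (constancy of contact orders via Proposition \ref{prop:combinatorial-generalization}, generization of combinatorial types, and compatibility of the minimal monoids with the induced face quotients) is precisely the content of that cited proof.
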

\begin{proof}
This follows from the proof of \cite[Proposition 3.5.2]{Ch14} and \cite[Proposition 1.22]{GS13}.
\end{proof}

Minimal objects have the following universal property which is the key to the construction of the moduli stack.
\begin{proposition}\label{prop:minimal-universal}
For any log map $f\colon \cC \to Y$ over a log scheme $S$, there exists a minimal log map $f_m\colon \cC_m \to Y$ over $S_m$ and a morphism of log schemes $\Phi\colon S \to S_m$ such that
\begin{enumerate}
 \item The underlying morphism $\ul{\Phi}$ is an isomorphism.
 \item $f\colon \cC \to Y$ is the pullback of $f_m\colon \cC_m \to Y$ along $\Phi$.
\end{enumerate}
Furthermore, the pair $(f_m, \Phi)$ is unique up to a unique isomorphism.
\end{proposition}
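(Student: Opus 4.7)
The plan is to construct $S_m$ by keeping the underlying scheme $\ul{S}$ fixed and installing a new log structure whose stalks are the minimal monoids of Proposition \ref{prop:minimality}. Working \'etale-locally on $\ul{S}$, I first fix a geometric point $\bar{s} \to S$ and use Proposition \ref{prop:combinatorial-generalization} to deduce that the contact orders of markings and nodes are locally constant. Hence, on an \'etale neighborhood of $\bar{s}$, the log combinatorial type $G_{\bar{s}}$ of Section \ref{logcombinatorialtype} is determined up to smoothing of nodes and up to the decoration $V(G) = V^n(G) \cup V^d(G)$ encoding degenerate vs.\ non-degenerate components, the latter being controlled by $\oM_\cC$ which comes from $\oM_S$. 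Proposition \ref{prop:minimality} supplies a canonical morphism $\phi_{\bar{s}} \colon \oM(G_{\bar{s}}) \to \oM_{S,\bar{s}}$.

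Next I would assemble these stalks into a sheaf. For a specialization $\bar{s}_1 \rightsquigarrow \bar{s}_2$, the combinatorial type $G_{\bar{s}_1}$ is obtained from $G_{\bar{s}_2}$ by smoothing a subset of edges $E' \subset E(G_{\bar{s}_2})$ (forcing the corresponding $\rho_l$ to map to $0$) and possibly moving certain vertices from $V^d$ to $V^n$ (forcing the corresponding $e_v$ to map to $0$). These prescriptions define a morphism of monoids $\oM(G_{\bar{s}_2}) \to \oM(G_{\bar{s}_1})$ which is compatible with the $\phi$'s, since generalization sends $\rho_q \in \oM_S$ to an invertible (hence zero in $\oM_S$) element exactly along the smoothed nodes, and sends $e_Z$ to zero whenever $Z$ becomes non-degenerate. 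This compatibility upgrades the collection $\{\oM(G_{\bar{s}})\}$ to an \'etale sheaf $\oM_{S_m}$ on $\ul{S}$ equipped with a morphism of sheaves $\bar\phi \colon \oM_{S_m} \to \oM_S$.

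I would then define $\cM_{S_m}$ as the fibered product $\cM_S \times_{\oM_S} \oM_{S_m}$ along the chart-local lifts of $\bar\phi$; this yields a fine, saturated log structure on $\ul{S}$ with characteristic sheaf $\oM_{S_m}$, and hence a log scheme $S_m$ with underlying $\ul{S}$, together with a morphism $\Phi \colon S \to S_m$ whose underlying map is the identity. To construct the log twisted curve $\cC_m \to S_m$ and the log map $f_m \colon \cC_m \to Y$, I would pull back the underlying twisted curve $\ul{\cC}$ and use the combinatorial recipe of Sections \ref{curvecombinatorial} and \ref{ss:log-combinatorial}: the sheaf $\oM_{\cC_m}$ is determined on stalks by the formulas $\oM_{S_m} \oplus \NN$ at markings and $\oM_{S_m} \oplus_\NN \NN^2$ at nodes, with the nodal gluing specified by sending $1 \in \NN$ to the image of the corresponding $\rho_l \in \oM_{S_m}$. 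Similarly $f_m^\flat$ on stalks is the morphism $\oM \to \oM_{\cC_m}$ sending $1$ to $e_v + c\cdot(\text{local coordinate})$ according to the formulas of Sections \ref{sss:map-at-generic-pt}--\ref{sss:nodecombinatorial}. That $f$ is the pull-back of $f_m$ along $\Phi$ is automatic from construction, and minimality of $f_m$ is exactly the statement that $\phi_{\bar s}$ is the identity for $S_m$.

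For uniqueness, given another pair $(f_m', \Phi')$ with $\ul{\Phi}'$ an isomorphism, the minimality of $f_m'$ together with the universal property inherent in the construction of $\oM(G)$ from generators and relations $h_v, h_l$ produces a unique morphism $\oM_{S_m} \to \oM_{S_m'}$ compatible with both maps to $\oM_S$, hence a unique isomorphism $S_m \xrightarrow{\sim} S_m'$ over $S$ intertwining $f_m$ and $f_m'$. The main obstacle I anticipate is verifying that the assignment $\bar{s} \mapsto \oM(G_{\bar{s}})$ genuinely defines an \'etale sheaf --- that is, checking the functoriality of the minimal monoid construction under all specializations and confirming compatibility with saturation and the quotient by torsion in the definition of $\oM(G)$; the twisted structure itself adds no real difficulty, since the stacky markings and nodes contribute only the same $\NN$-factors and diagonal maps as in the untwisted case treated in \cite{Ch14, GS13, AC14}.
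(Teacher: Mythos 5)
Your proposal is correct and follows essentially the same route as the paper: the paper's proof of Proposition \ref{prop:minimal-universal} is simply a citation of \cite[Proposition 1.24]{GS13} and \cite[Proposition 4.1.1]{Ch14} (observing that the orbifold twists on the source curve play no role), and your sketch --- assembling the minimal monoids of Proposition \ref{prop:minimality} into a characteristic sheaf via the generization maps, defining $\cM_{S_m} = \cM_S \times_{\oM_S} \oM_{S_m}$, and recovering the log curve and the map by the same combinatorial/fiber-product recipe, with uniqueness from minimality --- is exactly the construction carried out in those references. The points you flag as remaining to check (functoriality of $\oM(G)$ under generization, compatibility with saturation and the torsion quotient, and the torsor-level lifting on the curve) are precisely the details handled in loc.\ cit.
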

\begin{proof}
The proof is identical to the situation of log maps with no orbifold twists on the source curves. We refer to \cite[Proposition 1.24]{GS13} and \cite[Proposition 4.1.1]{Ch14} for details.
\end{proof}

\subsubsection{Finiteness of automorphisms}\label{sss:finite-auto}
Let $f\colon \cC \to Y$ be a log map over $S$ with $\ul{S}$ a geometric point. An {\em automorphism} of a stable log map is a pair $(\psi\colon \cC \to \cC, \theta\colon S \to S)$ of compatible automorphisms of log schemes such that $\psi\circ f = f$. Denote by $\Aut(f)$ the automorphism group of the log map $f$, and by $\Aut(\ul{f})$ the automorphism group of the corresponding underlying map. We have the following property:

\begin{proposition}\label{prop:finite-auto}
Suppose the log map $f\colon \cC \to Y$ over $S$ is stable and minimal. Then the natural group morphism $\Aut(f) \to \Aut(\ul{f})$ is injective. In particular, the group $\Aut(f)$ is finite.
\end{proposition}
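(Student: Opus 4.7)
My plan is to prove injectivity first; the finiteness of $\Aut(f)$ will then follow since $\Aut(\ul{f})$ is finite for stable twisted maps to a separated Deligne--Mumford stack. The argument is parallel to the corresponding statement for stable log maps without orbifold twists (see \cite{Ch14, GS13}); I would merely need to adapt the analysis to include the extra $\NN$-factors at orbifold markings.

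Suppose $(\psi, \theta) \in \Aut(f)$ projects to the identity, so that $\ul{\psi} = \mathrm{id}_{\ul{\cC}}$ and $\ul{\theta} = \mathrm{id}_{\ul{S}}$. Then $\theta^{\flat}$ is determined by its induced action $\bar{\theta}^{\flat}$ on $\oM_S$ together with a group homomorphism $\alpha \colon \oM_S^{\mathrm{gp}} \to \cO_S^{*} = k^{*}$; similarly $\psi^{\flat}$ is determined by $\bar{\psi}^{\flat}$ and a sheaf morphism $\beta \colon \oM_\cC^{\mathrm{gp}} \to \cO_\cC^{*}$. Compatibility $\pi \circ \psi = \theta \circ \pi$ forces $\beta|_{\pi^{*}\oM_S^{\mathrm{gp}}} = \pi^{*}\alpha$, and forces $\bar{\psi}^{\flat}$ to be $\pi^{*}\bar{\theta}^{\flat}$ on the $\pi^{*}\oM_S$-summand and the identity on the extra $\NN$- and $\NN^{2}$-factors at markings and nodes (the identity rather than a permutation, since $\ul{\psi} = \mathrm{id}$ preserves the ordering of branches). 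Compatibility $f \circ \psi = f$ gives $\beta \circ \bar{f}^{\flat, \mathrm{gp}} = 1$.

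The first key step is to show $\bar{\theta}^{\flat} = \mathrm{id}$ via minimality. By Proposition~\ref{prop:minimality}, $\oM_S \cong \oM(G)$ is generated by the degeneracies $e_v$ for $v \in V^{d}(G)$ and the nodal elements $\rho_l$ for $l \in E(G)$. Evaluating $\bar{\psi}^{\flat}_{\eta} \circ \bar{f}^{\flat}_{\eta} = \bar{f}^{\flat}_{\eta}$ at the generic point $\eta$ of a degenerate component $Z$, together with Section~\ref{sss:map-at-generic-pt}, yields $\bar{\theta}^{\flat}(e_Z) = e_Z$. Compatibility of $\bar{\psi}^{\flat}$ with the pushout structure $\oM_{\cC, \bar q} = \pi^{*}\oM_S \oplus_{\NN} \NN^{2}$ at a node $\bar q$ (Section~\ref{sss:nodecombinatorial}), combined with $\bar{\psi}^{\flat}$ being the identity on the $\NN^{2}$-factor, forces $\bar{\theta}^{\flat}(\rho_q) = \rho_q$. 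Hence $\bar{\theta}^{\flat}$ is the identity on a generating set of $\oM_S$, hence on all of $\oM_S$.

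The second key step is to show $\alpha = 1$, after which $\beta = 1$ follows. At a node $\bar q$, the generalization maps $\chi_{\bar q, \eta_i}$ send one $\NN^{2}$-generator to $0$ along one branch and to $\rho_q$ along the other. Sheaf compatibility of $\beta$ then requires a single element of $\cO^{*}_{\cC, \bar q}$ to restrict to $1$ on one branch and to $\alpha(\rho_q)$ on the other; over a geometric base, both restrictions of a local unit share a common constant term, which forces $\alpha(\rho_q) = 1$. For each degenerate component $Z$ with generic point $\eta$, the condition $\beta \circ \bar{f}^{\flat}_{\eta} = 1$ gives $\alpha(e_Z) = 1$. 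Since $\{e_v, \rho_l\}$ generate $\oM_S$, we conclude $\alpha = 1$; triviality of $\beta$ on the marking $\NN$-factors follows from the same generalization argument, since the marking generator restricts to $0$ at the adjacent generic point. The main technical obstacle will be carefully managing these sheaf-theoretic restrictions; in particular, the constraint at nodes that units in the local ring restrict with matching constant term to both branch generic points is what eliminates $\alpha(\rho_q)$.
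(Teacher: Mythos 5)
Your proof is correct and follows essentially the same route as the paper, which simply cites \cite[Proposition 1.25]{GS13} and \cite[Lemma 3.8.3]{Ch14}: minimality forces the automorphism to act as the identity on $\oM_S$ via the degeneracies $e_v$ and the nodal elements $\rho_l$, and the residual twist by units is killed using the nodal/marking structure of the log curve together with compatibility with $f$ and $\pi$. The only point to record explicitly is that, to conclude $\psi^{\flat}=\mathrm{id}$, your node computation also gives triviality of $\beta$ on the two nodal generators once $\alpha(\rho_q)=1$; this is immediate from the restriction-to-both-branches argument you already gave.
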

\begin{proof}
The proof is identical to the case of \cite[Proposition 1.25]{GS13} and \cite[Lemma 3.8.3]{Ch14}.
\end{proof}

%-------------------------------------------------------------------
\subsection{The stacks of twisted log maps}\label{ss:log-moduli}

Fix a separated log Deligne--Mumford stack $Y$ as the target with
$\cM_Y$ of Deligne--Faltings type of rank one.
Consider the {\em discrete data}
\begin{equation}\label{discretedata}
  \beta = (g,n, {\bf c} = \{c_i\}_{i=1}^{n}, A)
\end{equation}
for twisted log maps in $Y$ where $g$ is the genus, $n$ is the number of markings,
$c_i$ is the contact order of the $i$-th marking, and $A \in H_2(\ul{Y})$ is a curve class.

Let $\beta' = (g,n, \bf{c})$ be the {\em reduced discrete data} obtained by removing the curve class, and $\ul{\beta} = (g,n, A)$ the {\em underlying discrete data} by removing the contact orders.

Denote by $\scrM(Y, \beta)$ the category of stable log maps to $Y$ with the discrete data $\beta$ fibered over the category of log schemes, and $\scrM(\ul{Y}, \ul{\beta})$ the stack of usual twisted stable maps to $\ul{Y}$. For our purposes, we view $\scrM(\ul{Y}, \ul{\beta})$ as a log stack equipped with the canonical log structure given by its universal curves. Composing with the forgetful morphism $Y \to \ul{Y}$, we obtain a canonical morphism
\begin{equation}\label{equ:forgetlog}
\scrM(Y,\beta) \to \scrM(\ul{Y}, \ul{\beta}).
\end{equation}

\begin{theorem}\label{thm:algebraicity}
The morphism \eqref{equ:forgetlog} is representable by log Deligne--Mumford stacks locally of finite type.
\end{theorem}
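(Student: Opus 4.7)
The plan is to follow the strategy developed by Wise \cite{Wi16} and Abramovich--Wise \cite{AW} for log maps from untwisted prestable curves, and adapt it to the twisted/orbifold setting. Since the stack $\scrM(\ul{Y},\ul{\beta})$ of twisted stable maps is algebraic and locally of finite type by \cite{AV02}, everything that remains is the description of the logarithmic enhancement data over a test log scheme $T \to \scrM(\ul{Y},\ul{\beta})$.

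First, I will reduce to the subcategory of minimal log maps. By Proposition \ref{prop:minimal-universal}, every log map is the pullback of a unique minimal log map along a log morphism that is an isomorphism on underlying schemes, and by Proposition \ref{prop:minimal-open} minimality is an open condition. Hence the classification of stable log maps reduces to representing the moduli of minimal ones. Concretely, given $T \to \scrM(\ul{Y},\ul{\beta})$ corresponding to a twisted stable map $\ul{f}\colon \ul{\cC}_T \to \ul{Y}$, I must parameterize triples consisting of a minimal log structure on $T$, a log twisted curve structure on $\cC_T/T$ lifting $\ul{\cC}_T/\ul{T}$, and a morphism $f^{\flat}\colon \ul{f}^{*}\cM_Y \to \cM_{\cC_T}$ compatible with the discrete data $\beta$.

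Second, I will stratify the base by the log combinatorial type $G$ of \eqref{equ:combinatorial-type}. By Proposition \ref{prop:combinatorial-generalization}, this type is locally constant, so one obtains a locally closed stratification of $T$. On the stratum where the type is equal to a fixed $G$, the minimal monoid $\oM(G)$ of Section \ref{sss:min-monoid} is determined, and the canonical map of Proposition \ref{prop:minimality} corresponds to a strict morphism
\begin{equation*}
 T \to \cA_{\oM(G)}.
\end{equation*}
Wise's algebraicity results \cite{Wi16} for $\Hom$-stacks between log schemes represent the data of such morphisms together with the required $f^{\flat}$, subject to the nodal equation \eqref{nodalequation} and the contact order constraints at markings in Sections \ref{sss:map-at-marking}--\ref{sss:nodecombinatorial}. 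Minimality (sharpness and injectivity of $\oM(G) \to \oM_T$ at geometric points) then cuts out an open substack.

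Third, I will assemble the strata along generalization into a single log algebraic stack over $T$, and verify that it is Deligne--Mumford and locally of finite type. The Deligne--Mumford property follows from finiteness of automorphisms (Proposition \ref{prop:finite-auto}), and local finiteness of type follows because, for fixed $\beta$, only finitely many combinatorial types $G$ arise. The chief obstacle is the twisted structure of the source: special points of $\ul{\cC}_T$ are gerbes banded by $\mu_r$, and one must verify that the log-enhancement at such points has the same local structure as in the untwisted case treated in \cite{Wi16, AW}. This will be handled by observing that the log structure on a twisted curve is pulled back from its coarse curve (Section \ref{sss:log-twisted-curve}), so the stalk-level analysis at nodes and markings driving the representability argument goes through unchanged, the gerbe band contributing only to the underlying coarse data already packaged in $\scrM(\ul{Y},\ul{\beta})$.
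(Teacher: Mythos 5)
Your overall toolkit (reduction to minimal objects, Wise's Hom-stack representability, finiteness of automorphisms for the DM property, the observation that the twisted structure is invisible to the log data) matches the paper's, but there is a genuine gap at the center of your argument: the stratification by combinatorial type and the subsequent ``assembly''. First, Proposition \ref{prop:combinatorial-generalization} only asserts local constancy of the \emph{contact orders} at markings and nodes; the full type $G$ of \eqref{equ:combinatorial-type} (dual graph, partition into degenerate/non-degenerate vertices, partial order) is emphatically not locally constant, since nodes smooth out and degeneracies change under generalization. More seriously, even granting a locally closed stratification with each stratum represented via $\cA_{\oM(G)}$ and a Hom-stack, your third step --- ``assemble the strata along generalization into a single log algebraic stack'' --- is exactly the statement to be proved and is not supplied by any general principle: algebraicity of a fibered category does not follow from algebraicity of locally closed pieces, because one must represent families over which the combinatorial type jumps. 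Finally, the claim that ``for fixed $\beta$, only finitely many combinatorial types $G$ arise'' is false without a boundedness argument --- this is precisely the content of Definition \ref{def:combinatorially-finite} and Proposition \ref{prop:boundedness}, which in the paper come \emph{after} algebraicity and require hypotheses --- and it is in any case not what ``locally of finite type'' asks for.

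The paper's proof avoids stratifying altogether. After reducing, via the strict morphism $Y \to \cA$ and the fs fiber product $\scrM(Y,\beta) \cong \scrM(\ul{Y},\ul{\beta})\times_{\fM_{g,n}(\ul{\cA})}\fM(\cA,\beta')$, to Proposition \ref{prop:uni-min-stack}, one works over a strict chart $U \to \fM_{g,n}(\ul{\cA})$ and replaces your stratification by Olsson's stack $\Log_U$, which parameterizes \emph{all} fs log structures under $\cM_U$ at once, so that varying combinatorial types are handled functorially in families. Over its universal curve the lifting datum $f^{\flat}\colon \ul{f}^*\cM_{\cA} \to \cM_{\cC}$ is represented by the Hom-stack $\Hom_{\Sch/\ul{\cC}}(\cM,\cM_{\cC})$ (\cite[Proposition 2.9]{GS13}, \cite[Theorem 2.2]{Wi16}), whose push-forward along $\pi$ is algebraic and locally of finite type by \cite[Theorem 1.2]{HR14}; minimal objects then form an open substack by Propositions \ref{prop:minimal-open} and \ref{prop:minimal-universal}, and the Deligne--Mumford property comes from Proposition \ref{prop:finite-auto}. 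If you reorganize your argument along these lines --- Hom-stack in families over $\Log$, then minimality as an open condition, with no stratification and no finiteness-of-types claim --- the proof closes; your remark that the log structure of a twisted curve is insensitive to the gerbe structure is correct and is exactly how the paper disposes of the orbifold issue.
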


The above theorem has been established when both domain curves and the target are schemes \cite{Ch14, GS13}, and the same method applies in the orbifold case as well.
However for later use, we will follow the universal target strategy of \cite{AW, Wi16} below.

For any log map $f\colon \cC \to Y$ over $W$, the composition $\cC \to Y \to \cA$ is a log map  to $\cA$ over $W$, where $Y \to \cA$ is the canonical strict morphism.
Denote by $\fM(\cA,\beta')$ the category of log maps to $\cA$ with the reduced discrete data $\beta'$. The above composition defines a canonical morphism
\begin{equation}\label{equ:forgetgeometry}
\scrM(Y,\beta) \to \fM(\cA,\beta').
\end{equation}

On the other hand, consider the stack $\fM_{g,n}(\ul{\cA})$
parameterizing (not necessarily representable) usual maps to
$\ul{\cA}$ from genus $g$, $n$-marked log twisted curves.
It is an algebraic stack locally of finite type by \cite[Theorem
1.2]{HR14}.
We further view $\fM_{g,n}(\ul{\cA})$ as a log stack equipped with the
canonical log structure induced by its universal twisted curve.

\begin{proposition}\label{prop:uni-min-stack}
The canonical morphism
\[
\fM(\cA,\beta') \to \fM_{g,n}(\ul{\cA})
\]
induced by the forgetful morphism $\cA \to \ul{\cA}$ is representable by log Deligne--Mumford stacks locally of finite type.
In particular, the fibered category $\fM(\cA,\beta')$ is representable by log algebraic stacks locally of finite type.
\end{proposition}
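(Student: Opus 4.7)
The plan is to adapt the algebraicity proofs of \cite{Ch14, AC14, GS13} from prestable to twisted source curves. Since $\fM_{g,n}(\ul{\cA})$ is already algebraic and locally of finite type by \cite[Theorem~1.2]{HR14}, the second assertion of the proposition follows from the first, and the content is to show that the morphism \eqref{equ:forgetgeometry} is representable by log Deligne--Mumford stacks locally of finite type. Fix a morphism from a log scheme $W$ to $\fM_{g,n}(\ul{\cA})$, corresponding to a log twisted curve $\pi\colon \cC \to W$ equipped with an underlying map $\ul{f}\colon \ul{\cC} \to \ul{\cA}$. The fiber of \eqref{equ:forgetgeometry} over $W$ parameterizes log enhancements of $\ul{f}$ with reduced discrete data $\beta'$, and it is this fiber that must be shown to be a log DM stack locally of finite type.

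First I would reduce to the substack of \emph{minimal} enhancements. The universal property (Proposition \ref{prop:minimal-universal}) together with openness of minimality (Proposition \ref{prop:minimal-open}) imply that the stack of all enhancements over $W$ is obtained from the stack of minimal enhancements by base change along the stack of morphisms of fine saturated log structures on $\ul{W}$, which is algebraic by standard results. Hence it suffices to construct the substack of minimal enhancements.

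Next, I would stratify étale-locally on $\ul{W}$ by log combinatorial type. By Proposition \ref{prop:combinatorial-generalization} the contact orders at markings and along connected nodal loci are locally constant, and only finitely many log combinatorial types $G$ are compatible with $\beta'$; the partition of components into degenerate and non-degenerate pieces is constructible. Once $G$ is fixed, the minimal monoid $\oM(G)$ of Section \ref{sss:min-monoid} is determined, and the canonical map of Proposition \ref{prop:minimality} prescribes the images of its generators in $\cO_{\ul{W}}$ (the node parameters $\rho_l$ being equated with the local equations of the nodes of $\ul{\cC}$, and the degeneracies $e_v$ being determined up to the structural relations $h_l$). This cuts out a locally closed substack of $\ul{W}$, and the source log structure together with $\bar{f}^\flat$ is then forced by the local combinatorics recorded in Sections \ref{sss:map-at-generic-pt}--\ref{sss:nodecombinatorial}. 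Finiteness of automorphisms is furnished by Proposition \ref{prop:finite-auto}.

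The principal new obstacle compared with \cite{Ch14, AC14, GS13} is the presence of stacky markings and balanced stacky nodes on the source curve. I would handle this by working in the balanced étale charts \eqref{equ:node-local} and \eqref{equ:marking-local}: since the $\mu_{r}$-actions are balanced, they act trivially on the relevant characteristic monoid sheaves, so the combinatorial computations of Section \ref{ss:log-combinatorial}, the construction of the minimal monoid, and the openness/universality properties all descend from the atlas to the quotient stack unchanged. Assembling over the finitely many combinatorial types then yields the required algebraicity.
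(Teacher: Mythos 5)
Your reduction to minimal objects is the right first move, but the core of your construction does not produce the moduli space, and one of your finiteness claims is false. First, it is not true that only finitely many log combinatorial types are compatible with $\beta'$: the reduced data $\beta'$ fixes the genus, the number of markings and their contact orders, but the contact orders at nodes (and hence the possible minimal monoids) are unconstrained for the universal target $\cA$, since there is no curve class or degree relation such as \eqref{equ:degree-contact-orders} to bound them. That finiteness is exactly the nontrivial hypothesis of Definition \ref{def:combinatorially-finite}, which is what Proposition \ref{prop:boundedness} needs for the finite-type statement; it is neither available nor needed here, which is why the conclusion is only ``locally of finite type''.

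Second, and more seriously, fixing a log combinatorial type $G$ does not ``force'' the log map. The type determines the morphism of characteristic sheaves $\bar{f}^{\flat}\colon \oM \to \oM_{\cC}$, but a log enhancement is a morphism of log structures $f^{\flat}\colon \cM \to \cM_{\cC}$ lifting $\bar{f}^{\flat}$, and such lifts form a genuine moduli problem: they correspond to isomorphisms of $\cO^*$-torsors (equivalently, nowhere-vanishing sections of a line bundle on the curve) subject to a closed compatibility with the structure morphisms. Parameterizing these lifts relatively and algebraically is the heart of the proof; your stratification by combinatorial type only decomposes the base $\ul{W}$ and never constructs the fiber of $\fM(\cA,\beta') \to \fM_{g,n}(\ul{\cA})$. (Relatedly, the canonical map of Proposition \ref{prop:minimality} takes values in $\oM_S$, not in $\cO_{\ul{W}}$, so it does not cut a locus out of $W$; and Proposition \ref{prop:finite-auto} requires stability, which is not available for maps to the universal target $\cA$.) The paper's proof instead follows \cite{AW, Wi16}: over a strict chart $U$ it takes the open substack $V \subset \Log_U$ of fs log structures under $\cM_U$, pulls back the universal log curve, forms the Hom-stack of morphisms of log structures $\ul{f}^*\cM_{\cA} \to \cM_{\cC}$ (representable by algebraic spaces by \cite[Proposition 2.9]{GS13} and \cite[Theorem 2.2]{Wi16}), pushes it forward along the curve using \cite[Theorem 1.2]{HR14}, and only then identifies the desired stack with the open locus of minimal objects via Propositions \ref{prop:minimal-open} and \ref{prop:minimal-universal}. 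Some version of this torsor/Hom-stack step is indispensable and is missing from your argument.
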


\begin{proof}
The proof is identical to the case of \cite[Corollary 1.1.1]{Wi16}. 
\end{proof}

\begin{proof}[Proof of Theorem \ref{thm:algebraicity}]
The underlying map $\ul{Y} \to \ul{\cA}$ of $Y \to \cA$ induces a strict morphism of log stacks
\[
\scrM(\ul{Y}, \ul{\beta}) \to \fM_{g,n}(\ul{\cA}),
\]
where both stacks are equipped with the canonical log structures from their universal curves. The two morphisms (\ref{equ:forgetlog}) and (\ref{equ:forgetgeometry}) induce
\[
\scrM(Y,\beta) \to \scrM(\ul{Y},\ul{\beta})\times_{\fM_{g,n}(\ul{\cA})}\fM(\cA,\beta'),
\]
where the fiber product is in the fine and saturated category. The above morphism is an isomorphism. Indeed, the datum of a log map to $Y$ is equivalent to the datum of an underlying map to $\ul{Y}$ and a  log map to $\cA$ with compatible compositions to $\ul{\cA}$. Thus, the algebraicity of Theorem \ref{thm:algebraicity} follows from Proposition \ref{prop:uni-min-stack}. The Deligne--Mumford property is a consequence of Proposition \ref{prop:finite-auto}.
\end{proof}

The following log smoothness result will be used later.

\begin{proposition}\label{prop:uni-stack-smooth}
The tautological morphism
\[
\fM(\cA,\beta') \to \fM^{\tw}_{g,n}
\]
by taking the source log curves, is log \'etale.
In particular, the stack $\fM(\cA,\beta')$ is log smooth and
equi-dimensional.
\end{proposition}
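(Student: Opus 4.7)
The plan is to verify log \'etaleness via the infinitesimal lifting criterion for fine and saturated log algebraic stacks. Given any strict square-zero thickening $T_0 \hookrightarrow T$ of fine and saturated log schemes together with a commutative solid square whose top row is $T_0 \to \fM(\cA,\beta')$ and whose bottom row is $T \to \fM^{\tw}_{g,n}$, I need to produce a unique lift $T \to \fM(\cA,\beta')$. Concretely, the bottom arrow supplies a log twisted curve $\cC \to T$, the top arrow supplies a log map $f_0\colon \cC_0 \to \cA$ over $T_0$ where $\cC_0 := \cC \times_T T_0$, and the task is to lift $f_0$ to a log map $f\colon \cC \to \cA$ over $T$ carrying the same reduced discrete data $\beta'$.

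The central simplification is that $\cM_\cA$ is of Deligne--Faltings type of rank one, so by the universal property discussed in Section \ref{sss:rank-one}, a log map from any log stack $X$ to $\cA$ is equivalent to a single global section of $\cM_X$ (the image of the generator of the tautological chart $\NN \to \cM_\cA$), together with a compatible underlying morphism $\ul{X} \to \ul{\cA}$. Strictness of $T_0 \hookrightarrow T$ forces the induced thickening $\cC_0 \hookrightarrow \cC$ to be strict as well, so $\cM_\cC|_{\ul{\cC}_0} = \cM_{\cC_0}$ and the characteristic sheaves agree on $\ul{\cC}_0$. The contact orders at markings and nodes making up $\beta'$ are read off from the characteristic sheaves (see Section \ref{ss:log-combinatorial}), so any lift of $f_0$ automatically preserves $\beta'$, and the lifting problem reduces to extending the defining global section of $f_0$ from $\cM_{\cC_0}$ to $\cM_\cC$.

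To carry this out, I would work \'etale locally on $\cC$: fine and saturated charts of $\cM_\cC$ restrict to charts of $\cM_{\cC_0}$ by strictness, so the chart image of the section extends uniquely, and the remaining ambiguity by a unit in $\cO_\cC^*$ is pinned down by compatibility with the lifted underlying morphism $\ul{\cC} \to \ul{\cA}$ coming from the canonical deformation of the line-bundle-with-section datum. The local lifts glue uniquely because strictness identifies the characteristic data across the thickening. This establishes log \'etaleness of $\fM(\cA,\beta') \to \fM^{\tw}_{g,n}$; equi-dimensionality of $\fM(\cA,\beta')$ then follows formally, since $\fM^{\tw}_{g,n}$ is equi-dimensional by \cite{Ol07} and log \'etale morphisms preserve equi-dimensionality. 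The main subtlety, handled exactly as in \cite[Proposition 3.2]{AW}, is the clean interaction between the underlying line-bundle-with-section deformation and the logarithmic enhancement, ensuring that the locally-constructed lift really extends $f_0$ rather than merely producing a new log map with the same characteristic type.
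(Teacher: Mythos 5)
Your overall strategy --- checking the infinitesimal lifting criterion along strict square-zero thickenings and exploiting that $\cA$ is a purely combinatorial, log \'etale target --- is the same route as the paper, which simply observes that $\cA$ is log \'etale over $\spec \CC$ and invokes the proof of \cite[Proposition 3.2]{AW}. However, the central step of your write-up has a genuine flaw. A log map $X \to \cA$ is \emph{not} given by a global section of $\cM_X$ together with an underlying morphism: the correct statement is that the groupoid of log maps $X \to \cA$ is canonically equivalent to the set $\Gamma(X, \oM_X)$ of sections of the \emph{characteristic} sheaf, the underlying map to $\ul{\cA} = [\A^1/\Gm]$ being recovered from the $\cO^*$-torsor sitting over that section. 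In general this torsor (equivalently, the line-bundle-with-section defining $\ul{f}_0$) is non-trivial, so there is no ``defining global section of $f_0$'' in $\cM_{\cC_0}$ to extend, and your global reduction does not make sense as stated. More seriously, in the \'etale-local step you resolve the unit ambiguity ``by compatibility with the lifted underlying morphism $\ul{\cC} \to \ul{\cA}$ coming from the canonical deformation of the line-bundle-with-section datum.'' No such canonical deformation exists independently of the log data: over a square-zero extension a line bundle on $\ul{\cC}_0$ deforms with choices forming a torsor under $H^1(\ul{\cC}_0, I)$, and sections introduce further ambiguity, so this appeal is circular --- the canonicity and uniqueness of the lift of the underlying datum is exactly the content of the proposition and cannot be assumed.

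The repair is short and is where the actual content lies: since $T_0 \hookrightarrow T$ is strict and square-zero, $\cC_0 \hookrightarrow \cC$ is a strict thickening with the same \'etale site, hence $\oM_{\cC_0} = \oM_{\cC}$ and therefore $\Gamma(\cC, \oM_{\cC}) = \Gamma(\cC_0, \oM_{\cC_0})$; by the modular description of $\cA$ the log map $f_0$ then extends uniquely, and the $\cO^*$-torsor over the (unchanged) section of $\oM_{\cC}$ produces the lift of the underlying line-bundle-with-section for free, with the contact orders in $\beta'$ preserved because they are read off from characteristic data, as you correctly note. You should also record that log \'etaleness requires local finite presentation, which is supplied by Proposition \ref{prop:uni-min-stack}. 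Your final equi-dimensionality deduction is fine: log \'etale morphisms preserve dimension by Kato's local structure theorem, and $\fM^{\tw}_{g,n}$ is pure-dimensional by \cite{Ol07}.
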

\begin{proof}
This is identical to the proof of \cite[Proposition 3.2]{AW}.
\end{proof}

%-------------------------------------------------------------------
\subsection{Relative boundedness of twisted log maps}
The boundedness of stable log maps without orbifold structures has been proved in \cite{Ch14, AC14, GS13} under certain assumptions, and in \cite[Theorem~1.1.1]{ACMW17} in full generality by reducing to the case of \cite{AC14}.
For our purposes, we will only consider the Deligne--Faltings case of rank one in the orbifold situation.

Consider the forgetful morphism of log algebraic stacks
\[
{\bf F}\colon \fM(Y,\beta) \to \fM(\ul{Y},\ul{\beta})
\]
where $\fM(\ul{Y},\ul{\beta})$ has the canonical log structure from its universal curve. For each strict morphism $W \to \fM(\ul{Y},\beta)$, consider the projection
$${\bf F}_W\colon \fM(Y,\beta)_W := \fM(Y,\beta) \times_{\fM(\ul{Y},\ul{\beta})} W \to W$$

\begin{definition}\label{def:combinatorially-finite}
For a strict morphism $W \to \fM(\ul{Y},\beta)$, the discrete data $\beta$ is called {\em combinatorially finite over $W$} if the collection of log combinatorial types of log maps over $\fM(Y,\beta)_W$ is finite.
\end{definition}

\begin{remark}
  If $W = \scrM(\ul{Y}, \ul{\beta})$, then
  $\fM(Y,\beta)_W = \scrM(Y,\beta)$.
  Thus the above definition is compatible with the combinatorial
  finiteness of \cite[Definition 3.3]{GS13}.
\end{remark}

\begin{proposition}\label{prop:boundedness}
Suppose $\beta$ is combinatorially finite over $W$ for a strict morphism $W \to \fM(\ul{Y},\ul\beta)$. Then ${\bf F}_W$ is of finite type.
\end{proposition}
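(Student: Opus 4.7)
The plan is to combine Theorem~\ref{thm:algebraicity} (applied after base change to $W$) with a stratification by log combinatorial type, exploiting the combinatorial finiteness assumption. Since Theorem~\ref{thm:algebraicity} together with base change gives that ${\bf F}_W$ is representable by log algebraic stacks locally of finite type, it suffices to verify that ${\bf F}_W$ is quasi-compact.

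First I would stratify $\fM(Y,\beta)_W$ by the log combinatorial type $G$ introduced in Section~\ref{logcombinatorialtype}. By Proposition~\ref{prop:combinatorial-generalization}, contact orders along markings and along connected loci of nodes are locally constant, and the remaining combinatorial data of $G$ (the dual graph, the partition into degenerate/non-degenerate vertices, and the partial order $\poleq$) is constructible from the family. Each type $G$ therefore cuts out a locally closed substack $\fM(Y,\beta)_W^G \subset \fM(Y,\beta)_W$, and by the combinatorial finiteness hypothesis only finitely many such strata are non-empty. Quasi-compactness of ${\bf F}_W$ thus reduces to quasi-compactness of each $\fM(Y,\beta)_W^G \to W$.

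For a fixed type $G$, I would factor through the universal target as in the proof of Theorem~\ref{thm:algebraicity}: a type-$G$ log map over $W$ is the same as the given underlying map to $\ul{Y}$ together with a compatible type-$G$ log map to $\cA$. By Propositions~\ref{prop:minimality} and~\ref{prop:minimal-universal}, such a minimal log map is characterized by the underlying data on $W$ and the canonical morphism $\oM(G) \to \oM_{S_m}$. Since $\oM(G)$ is a fixed fine, saturated, sharp monoid with a finite presentation determined by the vertices, edges, and contact orders of $G$, the space of such lifts is representable by a stack of finite type over $W$, in the spirit of the argument recorded in the proof of Proposition~\ref{prop:uni-min-stack} via Olsson's stack $\Log$. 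Summing over the finitely many types gives the required quasi-compactness, and hence ${\bf F}_W$ is of finite type.

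The main obstacle is precisely this last step: making rigorous that for a fixed combinatorial type $G$ the stratum is quasi-compact over $W$. One has to carefully bound the morphisms from $\oM(G)$ into the characteristic sheaf on $W$ that are compatible with the prescribed contact orders and with the nodal equation~\eqref{nodalequation}. This is the rank-one orbifold analogue of the boundedness input underlying \cite{AC14, Ch14, GS13}, and it reduces ultimately to the observation that once the combinatorial type $G$ and the underlying twisted curve are prescribed, the minimal log structure carries only finitely many moduli over $W$.
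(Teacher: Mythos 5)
Your reduction to quasi-compactness and the stratification by log combinatorial type (using Proposition~\ref{prop:combinatorial-generalization} and the combinatorial finiteness hypothesis) is exactly how the paper begins, but the step you yourself flag as ``the main obstacle'' is the actual content of the proposition, and the way you propose to close it does not work. The issue is that fixing the type $G$ does \emph{not} leave only ``finitely many moduli'' for the log map: minimality and Proposition~\ref{prop:minimality} determine the characteristic-level morphism $\bar{f}^{\flat}\colon \oM' \to \oM_{\cC}$ uniquely from $G$, but a log map is a lift $f^{\flat}\colon \cM' \to \cM_{\cC}$ of $\bar f^{\flat}$ at the level of log structures, and such lifts form a positive-dimensional, a priori unbounded family (they are torsor-level data, not discrete data). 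Moreover, appealing to the argument of Proposition~\ref{prop:uni-min-stack} via Olsson's $\Log$ and the Hom-stack results only yields ``locally of finite type,'' which is precisely what you already have from Theorem~\ref{thm:algebraicity}; it cannot produce quasi-compactness.

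What the paper does at this point is the missing input. After stratifying $W$ so that the dual graph and the partition into degenerate/non-degenerate vertices are constant, it builds a finite-type base $S = \cA_{\oM(G)}\times_{\cA_{\NN^n}}W$ carrying the log curve whose characteristic structure realizes $G$, observes that $G$ determines $\bar f^{\flat}$, and then parameterizes the lifts $f^{\flat}$ as follows: using that $Y$ is Deligne--Faltings of rank one, the lift is equivalent to a morphism of $\cO^*$-torsors $\cT_Y \to \cT_{\cC}$, i.e.\ a nowhere-vanishing section of the line bundle $L$ associated to $\Isom_{\cC}(\cT_Y,\cT_{\cC})$; the push-forward $\ul\pi_* L$ is a separated scheme of \emph{finite type} over $S$ by the Grothendieck-duality argument of \cite[Proposition 2.2]{CL12}, the nowhere-vanishing condition is open, and compatibility with the structure morphisms of the log structures is closed. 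The resulting finite-type scheme surjects onto the stratum of type $G$, giving quasi-compactness. Your proposal contains no substitute for this boundedness mechanism, so as written it restates the problem rather than solving it.
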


\begin{proof}
  This follows from the same proof as in \cite[Section 5.4]{Ch14} or \cite[Section 3.2]{GS13}.
\end{proof}

%-------------------------------------------------------------------
\subsection{The relative weak valuative criterion}
We fix a discrete valuation ring $R$ with the maximal ideal $\fm$ and the residue field $R/\fm$. Let $K$ be the quotient field of $R$. We have the following version of valuative criterion necessary for properness.

\begin{proposition}\label{prop:log-map-valuative}
Consider a commutative diagram of solid arrows of underlying stacks
\begin{equation}\label{diag:valuative}
\xymatrix{
\spec K \ar[rr] \ar[d] && \ul{\fM(Y,\beta)} \ar[d]^{\ul{\bf F}} \\
\spec R \ar[rr] \ar@{-->}[rru] && \ul{\fM(\ul{Y},\ul{\beta})}
}
\end{equation}
Possibly after replacing $R$ by a finite extension of DVRs, and $K$ by
the induced extension of the quotient field, there exists a dashed
arrow making the above diagram commutative.
Furthermore, such a dashed arrow is unique up to a unique isomorphism.
\end{proposition}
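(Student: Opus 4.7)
The strategy is to apply the universal-target reduction of Section~\ref{sss:universal-stack} and then to produce the required extension combinatorially, at the level of characteristic monoids, before lifting to honest log structures. Pulling back along the strict morphism $Y \to \cA$, the problem reduces to the following: given a log map $f_K \colon \cC_K \to \cA$ over a log scheme $S_K$ with $\ul{S_K} = \spec K$, and a compatible extension $\ul f_R \colon \ul\cC_R \to \ul\cA$ of the underlying map over $\spec R$, construct a log map $f_R \colon \cC_R \to \cA$ over some log scheme $S_R$ with $\ul{S_R} = \spec R$ which restricts to $f_K$ over $S_K$ and whose underlying map is $\ul f_R$, together with uniqueness up to unique isomorphism, after possibly a finite extension of $R$.

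The first step is to extend the log twisted curve $\cC_K \to S_K$ to a log twisted curve $\cC_R \to S_R$. Using the structure of $\fM^{\tw}_{g,n}$ recalled in Section~\ref{sss:curve-stack}, together with the canonical log structure from the boundary stratification of $\ul{\fM}^{\tw}_{g,n}$, after a finite base change such an extension exists and is unique; the base change is needed to guarantee balanced stacky structures of the correct orders at the new nodes of the central fiber. I would next read off the log combinatorial type $G$ of the central fiber as in \eqref{equ:combinatorial-type}: its vertex/edge partition and generic degeneracies are determined by $\ul f_R$; contact orders at markings and at persistent nodes are constant along $\spec R$ by Proposition~\ref{prop:combinatorial-generalization}; and contact orders at new central nodes are computed from the local form of $\ul f_R$ via the Deligne--Faltings structure of $\cA$ combined with \eqref{nodalequation}. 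With $G$ in hand, Section~\ref{sss:min-monoid} produces the minimal monoid $\oM(G)$, and enlarging $\oM_{S_R}$ if needed so that it fits into a canonical map $\oM(G) \to \oM_{S_R}$ extending the one at $S_K$ yields a characteristic morphism $\bar f_R^\flat \colon \ul f_R^*\oM_\cA \to \oM_{\cC_R}$ extending $\bar f_K^\flat$.

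The main obstacle is lifting $\bar f_R^\flat$ to an honest morphism of log structures $f_R^\flat \colon \ul f_R^*\cM_\cA \to \cM_{\cC_R}$. Because $\cM_\cA$ is Deligne--Faltings of rank one, this amounts to extending a trivialization of a distinguished $\cO^*$-torsor on $\cC_K$ (as in Step~5 of the proof of Proposition~\ref{prop:boundedness}) to $\cC_R$ compatibly with $\ul f_R^*\cM_\cA$ and with the characteristic data constructed above. Away from the central nodes and markings the extension is automatic since $R$ is a DVR. At a central node, compatibility with $\bar f_R^\flat$ is governed by the nodal equation $e_{Z_2} = e_{Z_1} + c_q \rho_q$ from \eqref{nodalequation}; at the level of actual log structures this becomes an equality in $\cO_R$ which is only solvable after extracting a $c_q$-th root of a uniformizer of $R$, which is precisely the source of the allowed finite base change. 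After this base change, $f_R^\flat$ exists, and stability together with minimality of $f_R$ follow from stability of $\ul f_R$ and from the construction of $\oM_{S_R}$ via $\oM(G)$. Uniqueness of the dashed arrow up to unique isomorphism then follows from the universal property of minimal log maps (Proposition~\ref{prop:minimal-universal}), since any two extensions have canonically isomorphic minimal log structures on the base and agree over $S_K$ by construction.
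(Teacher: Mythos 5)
Your overall skeleton is the same as the paper's (which itself only sketches the argument and defers the details to \cite{GS13} and \cite{Ch14}): determine the log combinatorial type of the central fiber, build the minimal monoid $\oM(G)$ and from it a log structure on $\spec R$, lift the characteristic-level morphism to an honest log map, and get uniqueness via minimality. However, two of your justifications are off in ways worth flagging. First, the bottom arrow of \eqref{diag:valuative} already supplies the \emph{entire} underlying twisted stable map $\ul{f}_R\colon \ul{\cC}_R \to \ul{Y}$ over $\spec R$, including the twisted curve with its stacky (balanced) structure at all nodes of the central fiber. There is no ``extension of the log twisted curve'' to construct, no ``new nodes,'' and the finite base change has nothing to do with balancedness; it is needed where you place it later, namely in pinning down contact orders at the central nodes and in building the base log structure so that the elements of $\oM(G)$ (in particular the $\rho_q$, which must be compatible with the given smoothing parameters in $R$ and with the nodal equation \eqref{nodalequation}) admit the required divisibility -- i.e.\ one must adjoin roots of a uniformizer.

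Second, the two genuinely substantive steps are glossed. The construction of $\cM_S$ on $\spec R$ -- restricting to $\cM_{S_K}$ over the generic point, having closed-fiber characteristic $\oM(G)$, and equipped with a structure morphism to $\cO_R$ compatible with both -- is the paper's Step 2 and the content of \cite[Section 4.2]{GS13}; ``enlarging $\oM_{S_R}$ if needed'' does not capture this interpolation. Likewise, the lift of $\bar{f}^{\flat}$ to a morphism of log structures is \emph{not} ``automatic away from the central nodes and markings because $R$ is a DVR'': along the interior of a degenerate component $Z$ of the central fiber, the existence of the torsor isomorphism is exactly the statement that the degeneracy $e_Z$ (via $\cM_S \to \cO_R$) matches the vanishing order of $\ul{f}_R^*$ of the local equation of the boundary along $Z$; verifying this, \'etale-locally, and then gluing by canonicity is the heart of \cite[Section 4.3]{GS13} and \cite[Section 6.3]{Ch14}, and it is where the correctness of the combinatorial type chosen in your earlier step is actually used. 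Finally, uniqueness needs slightly more than Proposition \ref{prop:minimal-universal}: one should argue separately that the combinatorial type $(G)$ and hence $\cM_S$ are forced, and that two lifts of $\bar{f}^{\flat}$ agreeing over $\cC_K$ coincide (they differ by a unit on $\cC_R$ restricting to $1$ on the generic fiber). With these points repaired, your argument becomes the paper's.
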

\begin{proof}
This follows from the same proof as in \cite[Section 6]{Ch14} and \cite[Section 6]{GS13}.
Indeed, the bottom arrow of \eqref{diag:valuative} provides a family of underlying pre-stable maps over $\spec R$.
It remains to construct the extension on the level of log structures. 

The first step is to extend the log combinatorial type to the closed fiber.
This can be done identically as in \cite[Section 6.2]{Ch14} or \cite[Section 4.1]{GS13} by studying \'etale locally on the source curve.
The second step is to construct a log curve over $S$ with $\ul{S} = \spec R$.
This step can be carried out identically as in \cite[Section 4.2]{GS13}, since it only uses the complement of markings and nodes, and orbifold structures play no role.
Finally, the morphism between log structures of the curve and target can be constructed identically as in \cite[Section 4.3]{GS13} and \cite[Section 6.3]{Ch14} by first constructing the log map \'etale locally on the curve, then gluing them using the canonicity of the local construction.
\end{proof}

%%%%%%%%%%%%%%%%%%%%Uniform maximal degeneracy
\section{Stable log maps with uniform maximal degeneracy}
\label{sec:UMD}

%%%%%%%%Twisted log maps with uniform maximal degeneracy

In this section, we introduce a configuration of log structures which
is the key to the construction of the reduced perfect obstruction
theory, and subsequently Witten's r-spin class.

We again fix  the target $Y$ with the log structure $\cM_Y$ of rank one Deligne--Faltings type.

%%%---------------------------------------------------------------------------------
\subsection{Uniform maximal degeneracy}

%---------------------------------------------------------------------------------
\subsubsection{Maximal degeneracies}\label{sss:max-deg}
Consider a log map $f\colon \cC \to Y$ over a geometric log point $S$.
Denote by $G$ the log combinatorial type of $f$, and by $\oM(G)$ the minimal monoid.
Let $\phi\colon\oM(G) \to \oM_S$ be the canonical morphism as in  Proposition \ref{prop:minimality}.

Consider the natural partial order $\poleq_{\oM_S}$ on $\oM_S$ such
that $e_1 \poleq e_2$ iff $(e_2 - e_1) \in \oM_S$. The partial order
$\poleq_{\oM_S}$ induces a refinement of $\poleq$ of $G$ in the sense
that $v_1 \poleq v_2$ in $V(G)$ implies $e_{v_1} \poleq e_{v_2}$ in
$\oM_S$.
When $\phi$ is an isomorphism, this refinement is the trivial
refinement.

\begin{definition}
  A degeneracy $\phi(e_v) \in \oM_S$ is called \emph{maximal} if
  $\phi(e_v)$ is maximal in the set of all degeneracies
  under $\poleq_{\oM_S}$.
  The corresponding vertex $v \in V(G)$ is called a \emph{maximally
    degenerate vertex} of $f$.
\end{definition}

As $\poleq_{\oM_S}$ is a partial order, there could be more than one maximal degeneracy in $\oM_S$. On the other hand, different vertices are allowed to have the same degeneracy in $\oM_S$.

\begin{definition}\label{def:uniform-deg}
  The log map $f\colon \cC \to Y$ over $S$ is said to have \emph{uniform maximal degeneracy}
  if the set of degeneracies has a maximum under $\poleq_{\oM_S}$.
  A family of log maps is said to have \emph{uniform maximal
    degeneracy} if each geometric fiber has uniform maximal
  degeneracy.
\end{definition}

Since the set $V(G)$ is finite, the maximal degeneracy, if it exists, has to be the degeneracy of some vertex. The above definition for families is justified by the following.

\begin{proposition}\label{prop:um-open}
For any family of log maps $f\colon \cC \to Y$ over a log scheme $S$, if the fiber $f_s\colon\cC_s \to Y$ over a geometric point $s \to S$ has uniform maximal degeneracy, then there is an open neighborhood $U \subset S$ of $s$ such that the pullback family $f_{U}\colon \cC_{U} \to Y$ over $U$ has uniform maximal degeneracy.
\end{proposition}

Proposition \ref{prop:um-open} can be checked \'etale locally, and
follows immediately from Lemma \ref{lem:generize-degeneracy} and
\ref{lem:generize-po} below.

%---------------------------------------------------------------------------------
\subsubsection{Generization of degeneracies and partial orders}
Consider a pre-stable log map $f\colon \cC \to Y$ over a log scheme
$S$ together with a chart $h\colon \oM_{S,s} \to \cM_S$ where
$s \to S$ is a geometric point.
Such a chart always exists after possibly passing to an \'etale cover.
Here $f$ does not necessarily have uniform maximal degeneracy.
The chart $h$ allows us to view any $e \in \oM_{S,s}$ as a section of $\oM_S$ via the composition $\oM_{S,s} \to \cM_S \to \oM_S$. This section $e$ can be then specialized to any geometric point $t \in S$ with the fiber denoted by $e_{t} \in \oM_{S,t}$. Let $G$ the log combinatorial type of $f_s$.

\begin{lemma}\label{lem:generize-degeneracy}
With notation as above, suppose $e \in \oM_{S,s}$ is the degeneracy of $v \in V(G)$. Then there is an \'etale neighborhood $U \to S$ of $s$ such that for any geometric point $t \in U$, the fiber $e_t \in \oM_{S,t}$ is a degeneracy.
\end{lemma}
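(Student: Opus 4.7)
The plan is a spreading-out argument: starting from the germ $e \in \oM_{S,s}$ at the generic point $\eta$ of the component $Z$ corresponding to $v$, I would propagate $e$ to an honest section of $\oM_S$ on an \'etale neighborhood $U$ of $s$ and then read off the degeneracies of nearby fibers directly.

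First the setup. Let $\eta$ be the generic point of the component $Z \subset \cC_s$ associated to $v$. Since $v \in V^{d}(G)$ we have $\oM_{\eta} = \NN$, and because $\eta$ avoids nodes and markings, Section~\ref{curvecombinatorial} gives $\oM_{\cC, \eta} \cong \oM_{S, s}$, under which $\bar{f}^{\flat}_{\eta}(1) = e$. I would then pick an \'etale neighborhood $V \to \cC$ of $\eta$ disjoint from nodes, markings and the critical locus of $\pi$, small enough that $\ul{f}(V)$ is contained in the closed support of $\cM_Y$. Because $Y$ is Deligne--Faltings of rank one, on $V$ we get $\oM|_V \cong \NN_V$, and because $V$ avoids nodes/markings, $\oM_{\cC}|_V \cong \pi^{*}\oM_S|_V$. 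Hence $\bar{f}^{\flat}(1)|_V$ is a section $\sigma$ of $\pi^{*}\oM_S$ on $V$ whose stalk at $\eta$ is $e$.

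Next I would compare $\sigma$ to the section coming from the given chart. Through $h \colon \oM_{S,s} \to \cM_S$, the element $e$ extends to a section $\tilde e$ of $\oM_S$ on an \'etale neighborhood $U_0$ of $s$, and $\pi^{*}\tilde e$ and $\sigma$ are two sections of the constructible sheaf $\pi^{*}\oM_S$ on $V$ whose stalks at $\eta$ coincide. After shrinking $V$ and $U_0$ they therefore agree on all of $V$. Moreover $V \to S$ is smooth at $\eta$, since $V$ avoids the critical locus of $\pi$, so after further shrinking to some \'etale $U \subset U_0$ we may assume that $V \to U$ is surjective on geometric points.

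Finally, for any geometric point $t \to U$ the fiber $V_t$ is a nonempty smooth open subset of $\cC_t$, so it contains the generic point $\eta_t$ of some irreducible component of $\cC_t$. That component is degenerate because $\oM_{\eta_t} = \NN$, and its degeneracy is $\bar{f}^{\flat}_{\eta_t}(1) = \sigma_{\eta_t} = (\pi^{*}\tilde e)_{\eta_t} = e_t$, proving the lemma. The only step that requires some care is the comparison between $\sigma$ and $\pi^{*}\tilde e$: one has to use the constructibility of the characteristic sheaf of a fine saturated log scheme so that agreement at a single stalk forces agreement on a whole \'etale neighborhood, while simultaneously keeping track of the \'etale local structure of $\oM_{\cC}$ around $\eta$. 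Once this is set up, the rest of the argument is formal.
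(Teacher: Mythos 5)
Your overall strategy (evaluate $\bar{f}^{\flat}$ along a locus that avoids nodes and markings and dominates the base, then compare the resulting section of $\oM_S$ with the one induced by $e$ via the chart) is sound, and it is essentially the paper's argument; but there is a genuine gap at the step where you shrink $V$ so that $\ul{f}(V)$ lies in the support of $\cM_Y$ and conclude $\oM|_V \cong \NN_V$. The locus $\ul{f}^{-1}(\mathrm{supp}\,\cM_Y)$ where $\oM$ is non-trivial is closed in $\ul{\cC}$: it contains the whole component $Z$, but in general it contains no \'etale neighborhood of $\eta$ in the total space $\cC$. In the typical situation the lemma is meant for --- a one-parameter family in which only the central fiber has a component mapping into the boundary of $Y$ --- that locus is a divisor in $\cC$, so no such $V$ exists, and your later assertion that the component of $\cC_t$ through $\eta_t$ is degenerate ``because $\oM_{\eta_t} = \NN$'' is false: nearby components are non-degenerate, $e_t$ generizes to $0$, and one must recognize $0$ as the degeneracy of a non-degenerate component in the sense of Section~\ref{sss:map-at-generic-pt}. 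As written, your argument would even prove the (false) statement that nearby fibers stay degenerate.

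The repair is exactly what the paper does: drop the requirement on $\ul{f}(V)$ and instead use the global morphism $\NN_Y \to \oM_Y$ provided by the Deligne--Faltings rank-one hypothesis (Section~\ref{sss:rank-one}). Its pullback gives a canonical element $1$ of $\Gamma(\cC,\oM)$, and $\bar{f}^{\flat}(1)$ is then a section of $\oM_{\cC}|_V = \pi^*\oM_S|_V$ on your $V$ with no hypothesis on where $\ul{f}(V)$ lands. At the generic point of a degenerate component this section equals the degeneracy; at the generic point of a non-degenerate component, $1$ maps to $0$ in the trivial stalk of $\oM$, so $\bar{f}^{\flat}(1)$ is $0$ there, which is by definition that component's degeneracy --- so the evaluation computes the degeneracy in both cases. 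With this modification the rest of your argument (matching germs with $\pi^*\tilde e$ at $\eta$, shrinking so $V$ dominates $U$, evaluating at a generic point of $V_t$) goes through and coincides with the paper's proof, which uses a section $\sigma\colon \ul{S} \to \ul{\cC}$ through the smooth unmarked locus meeting $Z$ in place of your neighborhood $V$; that difference is immaterial. Also, constructibility is not needed for the comparison step: two sections of any \'etale sheaf with the same germ at a point agree on some neighborhood.
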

\begin{proof}
Shrinking $S$ if necessary, we may choose a section $\sigma\colon \ul{S} \to \ul{\cC}$ such that $\sigma(\ul{S})$ is contained in the smooth non-marked locus of $\cC \to S$, and intersects the component of $\cC_s$ corresponding to $v$. Consider the pullback morphism
\[
\sigma^*(\bar{f}^{\flat})\colon (\sigma \circ f)^*\oM_Y \to \sigma^*\oM_{\cC} = \oM_S.
\]
The equality on the right hand side follows from the assumption that $\sigma(\ul{S})$ avoids all nodes and markings.

Since $\oM_{Y}$ is of Deligne--Faltings type of rank one, we may choose a morphism $\NN \to \oM_Y$ which locally lifts to a chart. Denote again by $1 \in \oM_Y$ the image of $1 \in \NN$ via this morphism. By the discussion in Section \ref{sss:map-at-generic-pt}, the fiber of the image $\sigma^*(\bar{f}^{\flat})(1)_t \in \oM_{S,t}$ over each geometric point $t \in S$ is the degeneracy of the component of $\cC_t$ intersecting $\sigma(\ul{S})$. In particular, we have $\sigma^*(\bar{f}^{\flat})(1)_s = e$.
\end{proof}

Conversely, every degeneracy of a nearby fiber is the generization of some degeneracy from the central fiber:

\begin{corollary}
With notation as above, there is an \'etale neighborhood $U \to S$ of $s$ such that for any geometric point $t \in U$ and any degeneracy $e' \in \oM_{S,t}$, there is a degeneracy $e \in \oM_{S,s}$ such that $e_t = e'$.
\end{corollary}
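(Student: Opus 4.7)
The plan is to reduce to Lemma~\ref{lem:generize-degeneracy} by producing a section through each component of $\cC_s$ simultaneously, and then to argue that every irreducible component of a nearby fiber is swept out by at least one of these sections.

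Concretely, I would first enumerate the (finitely many) irreducible components $Z_1,\dots,Z_n \subset \cC_s$ with their degeneracies $e_i \in \oM_{S,s}$. For each $i$, the smooth non-marked, non-nodal locus of $\ul{\cC} \to \ul{S}$ is smooth and non-stacky, so after passing to an \'etale neighborhood $U_i \to \ul{S}$ of $s$ I can choose a section $\sigma_i\colon \ul{U}_i \to \ul{\cC}_{U_i}$ landing in that locus with $\sigma_i(s) \in Z_i$. Shrinking $U_i$ if needed and repeating the argument of Lemma~\ref{lem:generize-degeneracy}, the pullback $\tau_i := \sigma_i^*(\bar f^{\flat})(1) \in \Gamma(U_i, \oM_S)$ is a section of $\oM_S$ whose fiber at any geometric point $u \in U_i$ equals the degeneracy of the irreducible component of $\cC_u$ containing $\sigma_i(u)$; in particular $(\tau_i)_s = e_i$.

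Setting $U := \bigcap_i U_i$ (and possibly further shrinking so that $U$ is local and connected, e.g.\ by passing to the strict henselization at $s$), the claim to establish is that every degeneracy $e' \in \oM_{S,t}$ equals $(\tau_i)_t$ for some $i$. By definition $e'$ is the degeneracy $e_{Z'}$ of some irreducible component $Z' \subset \cC_t$, so it suffices to show $\sigma_i(t) \in Z'$ for at least one $i$. The key input here is that $\ul{\cC}_U \to \ul{U}$ is a proper flat family of reduced nodal twisted curves, so that the closures in $\ul{\cC}_U$ of $Z_1,\dots,Z_n$ cover $\ul{\cC}_U$ and, after intersecting with the fiber over $t$, cover $\cC_t$. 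Since $\sigma_i$ factors through the closure of $Z_i$, the component $Z'$ must contain $\sigma_i(t)$ for some $i$, yielding $(\tau_i)_t = e_{Z'} = e'$.

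The main obstacle I expect is precisely this last geometric step: the fact that no new irreducible components appear under generization, i.e.\ every component of $\cC_t$ is the generization of (a union of) components of $\cC_s$. This rests on properness, flatness, and the nodal structure of the family, and I would justify it by a standard constructibility argument on the irreducible components of $\ul{\cC}_U$ combined with the description of nodal specializations. Once this is in place, everything else is a direct bookkeeping application of Lemma~\ref{lem:generize-degeneracy}.
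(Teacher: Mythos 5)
Your overall strategy is the same as the paper's: the paper's proof also consists of choosing, after shrinking $S$, finitely many extra sections avoiding nodes and the original markings whose union meets every irreducible component of every geometric fiber, and then applying the argument of Lemma~\ref{lem:generize-degeneracy} to each of them. The gap is in your justification of the covering step, where you have the direction of specialization reversed. After passing to the strict henselization (so that the image of $s$ is the closed point of $U$), the fiber $\cC_s$ is closed in $\ul{\cC}_U$, so the closure of each $Z_i$ in $\ul{\cC}_U$ is $Z_i$ itself; the union of these closures is $\cC_s$, which is nowhere dense in $\ul{\cC}_U$ as soon as $U$ has positive dimension, and certainly does not cover $\ul{\cC}_U$ or the fiber $\cC_t$. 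Likewise $\sigma_i$ does not factor through the closure of $Z_i$: its image is a horizontal curve meeting $\cC_s$ only at $\sigma_i(s)$. The true statement is the opposite one, which you allude to at the end but do not prove: for $t$ a generization of $s$, choosing a trait through $t$ and $s$, the closure of each component $Z'$ of $\cC_t$ is irreducible, dominates the trait, hence is flat and proper over it, so its special fiber is a nonempty curve and therefore contains some $Z_i$.

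Even with the direction corrected, a further step is missing: knowing $\overline{Z'}\supseteq Z_i\ni\sigma_i(s)$ does not yet give $\sigma_i(t)\in Z'$, since a priori another component of $\cC_t$ could also close up onto $\sigma_i(s)$ and carry the section. Here one uses that $\sigma_i(s)$ is a smooth, non-special point of the fiber, so $\cC\to S$ is smooth at $\sigma_i(s)$; the local ring of the total space there is regular, hence a domain, so a unique irreducible component of the total space passes through $\sigma_i(s)$, and it must contain both $\overline{Z'}$ and the image of $\sigma_i$, forcing $\sigma_i(t)\in Z'$. Finally, the statement concerns an actual \'etale neighborhood $U\to S$, whose geometric points need not all specialize to $s$; the strict henselization only sees generizations of $s$, so to descend to an \'etale neighborhood you still need a constructibility/shrinking argument (or one simply asserts, as the paper does, that after shrinking $S$ the sections can be chosen so that their union meets every component of every geometric fiber). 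As written, the proposal's key sentence would not survive scrutiny, so the proof is incomplete.
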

\begin{proof}
  With notation as in the proof of Lemma \ref{lem:generize-degeneracy}, we
  may further shrink $S$ and choose a finite set of extra markings
  $\{\sigma_i \to \ul{\cC}\}$ avoiding nodes and the original
  markings, whose union $\cup \sigma_i(\ul{S})$ intersects each
  irreducible component of each geometric fiber of $\cC \to S$.
\end{proof}

The partial order $\poleq_{\oM_{S,t}}$ is well-behaved under generization:

\begin{lemma}\label{lem:generize-po}
With notation as above, consider a pair of elements $e_1, e_2 \in \oM_{S,s}$ with $e_1 \poleq_{\oM_{S,s}} e_2$. Then we have $e_{1,t} \poleq_{\oM_{S,t}} e_{2,t}$ in $\oM_{S,t}$ for any geometric point $t \to S$.
\end{lemma}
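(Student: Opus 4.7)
The plan is to unwind the definition of $\poleq_{\oM_S}$ and then spread the witnessing equation from the stalk at $s$ to a neighborhood using the étale sheaf structure of $\oM_S$. By definition, the hypothesis $e_1 \poleq_{\oM_{S,s}} e_2$ provides an element $e_3 \in \oM_{S,s}$ with $e_1 + e_3 = e_2$ holding in the stalk $\oM_{S,s}$. The goal is to show that this relation persists on every nearby stalk, so the content of the lemma is purely sheaf-theoretic.

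First I would use that $\oM_S$ is an étale sheaf of monoids, so its stalk at $s$ is the filtered colimit of its sections over étale neighborhoods of $s$. Thus the element $e_3$ (already $e_1$ and $e_2$ are extended to a neighborhood by the setup of the chart $h$) lifts to a section on some étale neighborhood $U \to S$ of $s$; shrinking $U$ if necessary, I would also arrange that the equation $e_1 + e_3 = e_2$, which holds in the colimit $\oM_{S,s}$, already holds as an equation of sections over $U$. This uses nothing beyond the standard property that identities of sections detected in a stalk propagate to a neighborhood in a sheaf.

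Then for any geometric point $t \to S$ factoring through $U$ (which is what "any geometric point $t \to S$" means after the implicit étale localization of the running notation), passing to the stalk at $t$ in the equation over $U$ gives $e_{1,t} + e_{3,t} = e_{2,t}$ in $\oM_{S,t}$, with $e_{3,t} \in \oM_{S,t}$. This is precisely the relation $e_{1,t} \poleq_{\oM_{S,t}} e_{2,t}$.

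No real obstacle arises here; the lemma is essentially tautological once one observes that the partial order on $\oM_S$ is encoded by an algebraic equation that makes sense sheaf-theoretically. The only subtlety worth flagging is the interpretation of "any geometric point $t \to S$": since the $e_i$ are stalks at $s$, the statement tacitly assumes $t$ lies in the étale neighborhood on which $e_1, e_2$ (and hence $e_3$) are realized as sections, consistently with how Lemma \ref{lem:generize-degeneracy} is phrased and with the use of both lemmas in deducing Proposition \ref{prop:um-open}.
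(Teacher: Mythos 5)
Your core mechanism is exactly the paper's one-line proof: the hypothesis gives a witness $e_3 = e_2 - e_1 \in \oM_{S,s}$, and its generalization witnesses the relation at $t$. However, the detour through the filtered-colimit/spreading-out argument, and the resulting restriction of $t$ to an \'etale neighborhood $U$ of $s$, is both unnecessary and the source of an incorrect reading of the statement. The running setup preceding the lemma fixes a chart $h\colon \oM_{S,s} \to \cM_S$ defined on $S$ itself, and declares that every element of $\oM_{S,s}$ is viewed as a section of $\oM_S$ over $S$ via $\oM_{S,s} \to \cM_S \to \oM_S$. Since $h$ and the projection $\cM_S \to \oM_S$ are monoid homomorphisms, the identity $e_1 + (e_2 - e_1) = e_2$, which holds in $\oM_{S,s}$, holds on the nose at the level of sections of $\oM_S$ over all of $S$ --- no shrinking is needed to ``propagate'' it. Taking fibers gives $(e_2 - e_1)_t = e_{2,t} - e_{1,t} \in \oM_{S,t}$ for \emph{every} geometric point $t \to S$, which is the full statement. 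Your final remark that the lemma ``tacitly assumes'' $t$ lies in an \'etale neighborhood where the sections are realized is therefore a misreading rather than a hidden hypothesis: the chart already realizes them globally. This is not merely cosmetic, since the lemma is later invoked over $S = \spec R$ in Step~2 of the properness argument for Theorem \ref{thm:max-uni-moduli}, where one generalizes from the closed point to the generic point; with your formulation you would need an extra remark (e.g.\ that an \'etale neighborhood of the closed point of $\spec R$ covers it), whereas the chart-based argument applies verbatim. So: right idea, same as the paper, but drop the spreading-out step and the scope caveat and instead quote the chart.
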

\begin{proof}
By assumption, we have $(e_2 - e_1) \in \oM_{S,s}$, hence
$$(e_2 - e_1)_t = (e_{2,t} - e_{1,t}) \in \oM_{S,t}.$$
\end{proof}

\begin{corollary}\label{cor:max-deg-exists}
  Suppose $f\colon \cC \to Y$ is a family of log maps over $S$ with
  uniform maximal degeneracy.
  Then there is a global section $e_{\max} \in \Gamma(S, \oM_{S})$,
  which restricts to the maximal degeneracy over each geometric fiber
  over $S$. 
\end{corollary}
\begin{proof}
  Lemma \ref{lem:generize-degeneracy} and Lemma \ref{lem:generize-po}
  imply that the maximal degeneracy over each geometric fiber glues to
  the global section $e_{\max}$.
\end{proof}

\begin{example}
  \label{ex:UMD}
  Let $\ul\cC$ be a twisted curve over a geometric point, which we
  make into a log curve $\cC$ by adding divisorial log structures at
  the markings.
  Then, the projection $f\colon \cC \times \cA \to \cA$ is an example
  of a family of log maps over $S = \cA$ with uniform maximal
  degeneracy.
  Indeed, over each of the two geometric points of $S$, all components
  of $\cC$ have the same degeneracy -- over the unique closed point
  $0_\cA$, all are degenerate, and over $\cA \setminus \{0_\cA\}$, all
  are non-degenerate.
  In this example, $e_{\max}$ is the unique generator of
  $\Gamma(S, \oM_S) = \Gamma(\cA, \oM_\cA) = \NN$.
\end{example}

%--------------------------------------------------------------
\subsubsection{The category of log maps with uniform maximal degeneracy}\label{sss:um-category}

Let $Y$ be a log stack of rank one Deligne--Faltings type.
We introduce the fibered category $\fU(Y,\beta')$ of pre-stable log
maps to $Y$ with uniform maximal degeneracy and reduced discrete data
$\beta'$ over the category of fine and saturated log schemes.
If furthermore $Y$ is a separated log Deligne--Mumford stack, denote
by $\scrU(Y,\beta) \subset \fU(Y,\beta')$ the sub-category of stable
log maps with discrete data $\beta$ as in \eqref{discretedata}.

By the universality as in Proposition \ref{prop:minimal-universal}, there are tautological morphisms of fibered categories as inclusions of subcategories:
\begin{equation}\label{equ:forget-max}
\scrU(Y,\beta) \to \scrM(Y,\beta) \ \ \mbox{and} \ \ \fU(Y,\beta') \to \fM(Y,\beta').
\end{equation}
We next introduce the minimality of the subcategory $\fU(Y,\beta')$.

%%%%---------------------------------------------------------------------------------
\subsection{Minimality with uniform maximal degeneracy}

%--------------------------------------------------------------
\subsubsection{Log combinatorial type with uniform maximal degeneracy}\label{sss:log-type-um}
Let $f\colon \cC \to Y$ be a pre-stable log map over $S$ with uniform maximal degeneracy. First assume that $\ul{S}$ is a geometric point.

Let $G$ be the log combinatorial type of $f$, and
$\phi\colon \oM(G) \to \oM_S$ the canonical morphism.
Denote by $V_{\max} \subset V(G)$ the subset of vertices having the
maximal degeneracy in $\oM_S$.
We call $(G, V_{\max})$ the {\em log combinatorial type with uniform
  maximal degeneracy}.

%--------------------------------------------------------------
\subsubsection{Minimal monoids with uniform maximal degeneracy}\label{sss:um-monoid}
Consider the torsion-free abelian group
\[
\big( \oM(G)^{gp}\big/ \sim \big)^{tf}
\]
where $\sim$ is given by the relations $(e_{v_1} - e_{v_2}) = 0$ for any $v_1, v_2 \in V_{\max}$. By abuse of notation, we may use $e_v$ for the image of the degeneracy of the vertex $v$ in $\big( \oM(G)^{gp}\big/ \sim \big)^{tf}$. Thus, for any $v \in V_{\max}$ their degeneracies in $\big( \oM(G)^{gp}\big/ \sim \big)^{tf}$ are identical, denoted by $e_{\max}$.

Let $\oM(G,V_{\max})$ be the saturated submonoid in $\big( \oM(G)^{gp}\big/ \sim \big)^{tf}$ generated by
\begin{enumerate}
\item the image of $\oM(G) \to \big( \oM(G)^{gp}\big/ \sim \big)^{tf}$, and
\item the elements $(e_{\max} - e_v)$ for any $v \in V(G)$.
\end{enumerate}
By the above construction, we obtain a natural morphism of monoids $\oM(G) \to \oM(G,V_{\max})$. On the other hand, we have a canonical morphism of monoids $\phi\colon \oM(G) \to \oM_S$ by Proposition \ref{prop:minimality}. Putting these together, we observe the following canonical factorization:

\begin{proposition}\label{prop:minimize-umd}
There is a canonical morphism of monoids
\begin{equation}\label{equ:minimize-umd}
\phi_{\max}\colon \oM(G,V_{\max}) \to \oM_S.
\end{equation}
such that the morphism $\phi\colon \oM(G) \to \oM_S$ factors through $\phi_{\max}$.
\end{proposition}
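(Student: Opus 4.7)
The plan is to construct $\phi_{\max}$ as the unique factorization induced by universal properties, checking at each stage of the construction of $\oM(G,V_{\max})$ that the given canonical morphism $\phi\colon\oM(G)\to\oM_S$ extends compatibly. The morphism $\oM(G)\to\oM(G,V_{\max})$ is built from $\oM(G)^{gp}$ by three successive operations: quotient by the relations $e_{v_1}=e_{v_2}$ for $v_1,v_2\in V_{\max}$, passage to the torsion-free quotient, and passage to the saturated submonoid generated by the image of $\oM(G)$ together with the elements $e_{\max}-e_v$. I would verify that $\phi^{gp}\colon\oM(G)^{gp}\to\oM_S^{gp}$ descends through each of these steps.

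First, the hypothesis that $f$ has uniform maximal degeneracy means that all vertices in $V_{\max}$ have the same image under $\phi$, namely the supremum $e_{\max}\in\oM_S$. Hence $\phi^{gp}(e_{v_1}-e_{v_2})=0$ for all $v_1,v_2\in V_{\max}$, so $\phi^{gp}$ factors through $\oM(G)^{gp}/\sim$. Next, since $\oM(G)$ is fine, saturated and sharp by Proposition~\ref{prop:minimality}, the group $\oM_S^{gp}$ is torsion-free, and therefore the map descends further to $(\oM(G)^{gp}/\sim)^{tf}$.

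For the final step I would check that the image of each generator of the saturated submonoid $\oM(G,V_{\max})$ actually lies in $\oM_S\subset\oM_S^{gp}$. The generators coming from $\oM(G)$ map into $\oM_S$ tautologically via $\phi$. For the generator $e_{\max}-e_v$, the fact that $e_{\max}$ is the supremum of the degeneracies under $\poleq_{\oM_S}$ gives precisely $(e_{\max}-\phi(e_v))\in\oM_S$ by the definition of $\poleq_{\oM_S}$ recalled in Section~\ref{sss:max-deg}. Since $\oM_S$ is itself saturated, any element of $\oM(G,V_{\max})$, being a saturated combination of these generators inside the torsion-free group, maps into $\oM_S$. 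This yields the desired $\phi_{\max}$, and the factorization $\phi=\phi_{\max}\circ(\oM(G)\to\oM(G,V_{\max}))$ is built into the construction.

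The only non-formal point is the verification that $(e_{\max}-e_v)$ actually lifts to an element of $\oM_S$ rather than only to $\oM_S^{gp}$; this is where the uniform maximal degeneracy hypothesis is essential, and it is the step I expect to record most carefully. Everything else is a routine application of the universal properties of groupification, torsion-free quotient, and saturation, together with the fact that $\oM_S$ is a sharp fs monoid.
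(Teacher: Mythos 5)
Your argument is correct and coincides with what the paper treats as immediate from the construction in Section~\ref{sss:um-monoid}: the descent of $\phi^{gp}$ through the quotient by $\sim$ (all vertices of $V_{\max}$ have the same degeneracy $e_{\max}$), through the torsion-free quotient, and the check that the generators $\phi(e_v)$ and $e_{\max}-\phi(e_v)$ lie in $\oM_S$ (the latter by the supremum property, the saturation step by saturatedness of $\oM_S$) is exactly the observation the paper records without proof. One small slip: the torsion-freeness you need is that of $\oM_S^{gp}$, which holds because $\oM_S$ is a sharp fs monoid (the stalk of the characteristic sheaf at a geometric point), not as a consequence of Proposition~\ref{prop:minimality} about $\oM(G)$; the fact itself is true, so this does not affect the argument.
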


\begin{corollary}\label{cor:separate-non-distinct-log-U}
  There is a canonical splitting
  \begin{equation*}
    \oM(G, V_{\max}) = \oM(G,V_{\max})' \oplus \NN^{d}
  \end{equation*}
  where $d$ is the number of edges in $E(G)$ whose contact order is
  zero.
  Furthermore, the image of $e_{v}$ is contained in
  $\oM(G)'$ for all $v \in V(G)$.
\end{corollary}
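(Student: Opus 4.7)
The plan is to deduce the splitting from the one already established in Corollary \ref{cor:separate-non-distinct-log} by tracking it through each stage of the construction of $\oM(G, V_{\max})$ given in Section~\ref{sss:um-monoid}. Recall that Corollary \ref{cor:separate-non-distinct-log} provides a splitting $\oM(G) = \oM(G)' \oplus \NN^d$ in which the $\NN^d$-factor is generated precisely by those variables $\rho_l$ with $c_l = 0$, because the defining relation $h_l$ for such an edge does not involve $\rho_l$ at all, and in which every $e_v$ lies in $\oM(G)'$.

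First I would pass to the group envelope, obtaining $\oM(G)^{gp} = \oM(G)'^{gp} \oplus \ZZ^d$. The key observation is that the additional data used to form $\oM(G, V_{\max})$, namely the relations $e_{v_1} - e_{v_2} = 0$ for $v_1, v_2 \in V_{\max}$ and the extra generators $e_{\max} - e_v$ for $v \in V(G)$, only involve symbols $e_v$ all of which already lie in $\oM(G)'^{gp}$. Consequently, the quotient by $\sim$, the further quotient by torsion, and the saturation operation all take place entirely within the first factor of the direct sum, while the factor $\ZZ^d$ generated by the $\rho_l$ with $c_l = 0$ is carried along unchanged.

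Putting these remarks together gives a canonical direct sum decomposition
\[
\oM(G, V_{\max}) = \oM(G, V_{\max})' \oplus \NN^d
\]
where the second factor is generated by the images of the $\rho_l$ with $c_l = 0$, and the image of $\oM(G)'$ under the natural map $\oM(G) \to \oM(G, V_{\max})$ lies inside $\oM(G, V_{\max})'$; in particular the image of each $e_v$ sits in the first factor. There is no real obstacle here: the content of the argument is merely the observation that the additional identifications and generators introduced in Section~\ref{sss:um-monoid} are orthogonal to the variables $\rho_l$ indexed by the zero-contact-order edges, so the splitting of Corollary \ref{cor:separate-non-distinct-log} is preserved by each operation verbatim.
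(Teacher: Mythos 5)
Your argument is correct and matches the paper's approach: the paper's proof simply states that the splitting follows directly from Corollary \ref{cor:separate-non-distinct-log} together with the construction of $\oM(G,V_{\max})$ in Section~\ref{sss:um-monoid}, and what you have written is exactly the verification that the relations $e_{v_1}-e_{v_2}$ and the extra generators $e_{\max}-e_v$ live in the factor $\oM(G)'^{gp}$, so the quotient, torsion-free quotient, and saturation all preserve the $\NN^{d}$ summand generated by the $\rho_l$ with $c_l=0$. Your spelled-out version is a faithful expansion of the paper's one-line proof, with no gap.
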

\begin{proof}
This follows directly from Corollary \ref{cor:separate-non-distinct-log} and the construction of $\oM(G, V_{\max})$.
\end{proof}

\begin{definition}
We call $\oM(G,V_{\max})$ the {\em minimal monoid with uniform maximal degeneracy} associated to $(G, V_{\max})$, or simply the {\em minimal monoid} associated to $(G,V_{\max})$.
\end{definition}

\begin{definition}\label{def:umd-minimal}
  A stable log map $f\colon \cC \to Y$ over $S$ with $\ul{S}$ a
  geometric point is called \emph{minimal with uniform maximal
    degeneracy} if \eqref{equ:minimize-umd} is an isomorphism.
  A family of log maps is called \emph{minimal with uniform maximal
    degeneracy} if each of its geometric fibers is so.
\end{definition}

%---------------------------------------------------------------------------------
\subsubsection{Openness of minimality with uniform maximal degeneracy}

The definition of minimal objects in families with uniform maximal degeneracy  is justified by the following analogue of Proposition \ref{prop:minimal-open}:

\begin{proposition}\label{prop:min-umd-open}
For any family of log maps $f\colon \cC \to Y$ over a log scheme $S$, if the fiber $f_{s}\colon\cC_{s} \to Y$ over a geometric point $s \to S$ is minimal with uniform maximal degeneracy, then there is an open neighborhood $U \subset S$ of $s$ such that the family $f_{U}\colon \cC_{U} \to Y$ is minimal with uniform maximal degeneracy.
\end{proposition}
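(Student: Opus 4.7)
The argument follows the pattern of Proposition \ref{prop:minimal-open}, with additional combinatorial bookkeeping for the uniform maximal degeneracy structure. First I would apply Proposition \ref{prop:um-open} to shrink $S$ to an \'etale neighborhood of $s$ on which every geometric fiber of $f$ has uniform maximal degeneracy. It then remains to verify that the canonical morphism $\phi_{\max,t}\colon \oM(G_t, V_{\max,t}) \to \oM_{S,t}$ from Proposition \ref{prop:minimize-umd} is an isomorphism for every geometric point $t \to S$ in this neighborhood. By the \'etale-local nature of the stalks, it suffices to treat the case where $t$ is a generalization of $s$.

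For such a $t$, the generalization morphism $\chi\colon \oM_{S,s} \to \oM_{S,t}$ is the quotient by a face $F \subset \oM_{S,s}$. Combinatorially, the dual graph $\ul{G}_t$ is obtained from $\ul{G}_s$ by contracting those edges $l$ with $\phi(\rho_l) \in F$, which by the nodal equation \eqref{nodalequation} are precisely the edges of trivial contact order. Then by Lemmas \ref{lem:generize-degeneracy} and \ref{lem:generize-po}, the maximal set $V_{\max,t}$ corresponds, after contraction, to the set of vertices $v \in V(G_s)$ with $\phi(e_{\max,s} - e_v) \in F$. This identifies the pair $(G_t, V_{\max,t})$ intrinsically from $(G_s, V_{\max,s})$ and the face $F$.

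The crucial step is to construct a canonical morphism of monoids $\psi\colon \oM(G_s, V_{\max,s}) \to \oM(G_t, V_{\max,t})$ by sending each $e_v$ to $e_{v'}$ where $v'$ is the image of $v$ under the contraction, sending each surviving $\rho_l$ to the corresponding $\rho_{l'}$ and each contracted $\rho_l$ to $0$, and sending $e_{\max,s} - e_v$ to $e_{\max,t} - e_{v'}$. Here Corollary \ref{cor:separate-non-distinct-log-U} is convenient, since the split $\NN^d$-factor associated to zero-contact-order edges is precisely the part being killed. The verification amounts to checking that $\psi$ respects the relations $h_v$ and $h_l$, the identifications among the degeneracies of $V_{\max}$-vertices, and the passage to the torsion-free saturated submonoid in the definition of $\oM(G, V_{\max})$.

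Finally I would check that $\psi$ fits into a commutative square with $\phi_{\max,s}$, $\phi_{\max,t}$, and $\chi$, and that $\psi$ is identified with the quotient by the face $\phi_{\max,s}^{-1}(F) \subset \oM(G_s, V_{\max,s})$. Since $\phi_{\max,s}$ is an isomorphism by hypothesis, this forces $\phi_{\max,t}$ to be an isomorphism as well, completing the argument. The principal obstacle will be this last combinatorial identification---tracking how the saturation and the torsion-free quotient in the construction of $\oM(G, V_{\max})$ interact with the contraction of zero-contact-order edges---which must be handled carefully in order to identify $\psi$ with the quotient by a face.
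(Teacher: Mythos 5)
Your overall scaffolding (shrink $S$ via Proposition \ref{prop:um-open}, then prove that $\phi_{\max,t}$ is an isomorphism by comparing it with the generalization map and using that $\phi_{\max,s}$ is one) is the same as the paper's, but two things go wrong in the middle. First, the combinatorial identification you lean on is false: the edges contracted in passing from $G_s$ to $G_t$ are those whose smoothing parameter $\rho_l$ lands in the face $F$, and these are \emph{not} ``precisely the edges of trivial contact order''. The nodal equation \eqref{nodalequation} only tells you that $e_{Z_1}=e_{Z_2}$ in $\oM_{S,t}$ once $\rho_l\in F$; it puts no constraint on $c_l$, and nodes with $c_l>0$ are smoothed in perfectly ordinary families, while trivial-contact-order nodes need not be smoothed at $t$ at all (take $t=s$). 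For the same reason the split $\NN^d$-factor of Corollary \ref{cor:separate-non-distinct-log-U} is not ``the part being killed'': the face $F$ may contain degeneracies $e_v$ and smoothing parameters of positive-contact-order edges, and may miss most of the $\NN^d$-factor. So the intrinsic description of $(G_t,V_{\max,t})$, and hence of the kernel of your $\psi$, is incorrect as stated (your description of $V_{\max,t}$ via $\phi(e_{\max,s}-e_v)\in F$ is fine; the edge/graph part is not).

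Second, the step you defer as ``the principal obstacle'' is in fact the entire mathematical content of the proposition. Commutativity of your square only gives the inclusion $\ker(\psi^{gp})\subseteq\big(\phi_{\max,s}^{-1}(F)\big)^{gp}$ for free; what has to be proved is the reverse inclusion --- that the face $F$, transported by $\phi_{\max,s}$, is killed by the canonical map to $\oM(G_t,V_{\max,t})$ --- together with surjectivity of $\phi_{\max,t}$. This is precisely what the paper's proof supplies: surjectivity because $\oM_{S,t}$ is the saturation of the image of $\oM_{S,s}$ under generalization, which is the image of $\phi_{\max,t}$; and injectivity by checking on the generators $e_v$, $\rho_l$, $e_{\max}-e_v$ of the submonoid $\oN$ that those lying in $F$ (defined as in \eqref{equ:log-kernel}, with $F^{gp}$ the kernel by \cite[Lemma 3.5]{Ol03}) map to zero, and then passing to saturations to conclude $\chi(F)=0$. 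Your proposal neither carries out this verification nor can be completed as written once the killed face is misidentified with the trivial-contact-order factor; the fix is exactly the generator-by-generator argument above. (A smaller point: the reduction to $t$ a generalization of $s$ should be justified by spreading out a chart of $\oM_{S,s}$ over an \'etale neighborhood, as in the paper, rather than by appealing to ``the \'etale-local nature of the stalks''.)
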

\begin{proof}
Since the statement can be checked \'etale locally on $S$, by Proposition \ref{prop:um-open}, replacing $S$ by an \'etale neighborhood of $s$, we may assume that $f\colon \cC \to Y$ over $S$ has uniform maximal degeneracy. For each geometric point $t\in S$, denote by $(G_t, V_{\max,t})$ the log combinatorial type of the fiber $f_t\colon \cC_t \to Y$ over $t$, see Section \ref{sss:log-type-um}. 

Let $f_m\colon \cC_m \to Y$ over $S_m$ be the associated minimal objects as in Proposition \ref{prop:minimal-universal} such that $f$ is the pullback of $f_m$ along a morphism $S \to S_m$. Shrinking $\ul{S}$ if necessary, we choose two charts $\oM_{S,s} \to \cM_{S}$ and $\oM_{S_m, s} \to \cM_{S_m}$. We view elements of $\oM_{S,s}$ and $\oM_{S_m,s}$ as global sections of $\oM_{S}$ and $\oM_{S_m}$ via the respective compositions:
\[
\oM_{S,s} \to \cM_{S} \to \oM_S \ \ \ \mbox{and} \ \ \ \oM_{S_m, s} \to \cM_{S_m} \to \oM_{S_m}.
\]

For each geometric point $t \in S$ we have a commutative diagram of solid arrows
\[
\xymatrix{
\oM_{S_m, s} \ar[r] \ar[d] & \oM_{S_m,t} \ar[d] \\
\oM(G_s, V_{\max,s}) \ar@{=}[d] \ar@{-->}[r]^{\chi} & \oM(G_t, V_{\max,t}) \ar[d]^{\phi_{\max,t}} \\
\oM_{S,s} \ar[r]^{\chi_{s,t}} & \oM_{S,t}.
}
\]
where the top and bottom horizontal arrows are the generization morphisms given by the two charts above, the compositions of the vertical arrows are given by the morphism $S \to S_m$, and the factorization through $\oM(G_t, V_{\max,t})$ follows from Proposition \ref{prop:minimize-umd}. By the construction in Section \ref{sss:um-monoid}, the arrow on the top induces the dashed arrow $\chi$ making the above diagram commutative.
Indeed, to see the commutativity of the lower square, observe that the maximal degeneracy $e_{\max,s}$ at $s$ specializes to the maximal degeneracy $e_{\max, t}$ at $t$. Thus, the relations $\sim$ and elements of the form $(e_{\max,s} - e_{v,s})$ as in Section \ref{sss:um-monoid}  over $s$ generize to the corresponding relations and elements over $t$, which leads to the factorization through $\chi$.

First observe that the lower commutative square in the above diagram implies that $\phi_{\max,t}$ is surjective.
Indeed, the groupification of the generization morphism $\oM_{S,s}^{gp} \to \oM_{S,t}^{gp}$ is surjective. Since it factors through $\oM(G_t,V_{\max,t})^{gp}$, the morphism $\phi_{\max,t}^{gp}$ is also surjective. Furthermore, $\oM_{S,t}$ is the saturation of the submonoid in $\oM_{S,t}^{gp}$ generated by the image of $\oM_{S,s}$ which is precisely the image $\phi_{\max,t}(\oM(G_t,V_{\max,t}))$.

To see that $\phi_{\max,t}$ is injective, it remains to prove the injectivity of $\phi_{\max,t}^{gp}$. Consider the set
\begin{equation}\label{equ:log-kernel}
F = \{e \in \oM_{S,s} \ | \ \chi_{s,t}(e) = 0\}.
\end{equation}
By \cite[Lemma 3.5]{Ol03}, the group $F^{gp}$ is the kernel of the morphism $\oM_{S,s}^{gp} \to \oM_{S,t}^{gp}$. Let $K$ be the kernel of $\oM(G_s, V_{\max,s})^{gp} \to \oM(G_t,V_{\max,t})^{gp}$, hence $K \subset F^{gp}$. We will prove $F^{gp} = K$ by showing that the composition $F \hookrightarrow \oM(G_s, V_{\max,s}) \stackrel{\chi}{\to} \oM(G_t, V_{\max,t})$ is trivial.

Indeed, consider the fine submonoid $\oN \subset \oM(G_s, V_{\max,s})^{gp}$ generated by the degeneracy $e_v$ for each $v \in V(G_s)$, the element $\rho_l$ for each $l\in E(G)$, and the element $e_{\max} - e_v$ for each $v \in V(G)$. Let $e \in \oM(G_s, V_{\max,s})$ be one of the above three types. Observe that $\chi(e) = 0$ if $\chi_{s,t}(e) = 0$ by the construction in Section \ref{sss:um-monoid}, and hence $\chi(\oN\cap F) = 0$.
Since $\oM(G_s, V_{\max,s})$ is the saturation of $\oN$ in $\oM(G_s, V_{\max,s})^{gp}$, $F$ is the saturation of $\oN\cap F$. We conclude that $\chi(F) = 0$.
\end{proof}

\begin{remark}
The proof in Proposition \ref{prop:min-umd-open} indeed proves that the log structure minimal in the sense of Definition \ref{def:umd-minimal} is coherent \cite[(2.1)]{KKato}. As shown in \cite[Theorem B.2]{Wi16}, the coherence is a sufficient condition for the openness of minimality in general.
\end{remark}

%-------------------------------------------------------------------------
\subsubsection{The universality}

The minimal objects in $\fU(Y,\beta')$ have a universal property
similar to the case of Proposition \ref{prop:minimal-universal}:

\begin{proposition}\label{prop:min-umd-universal}
For any log map $f\colon \cC \to Y$ over a log scheme $S$ with uniform maximal degeneracy, there exists a log map $f_{mu}\colon \cC_{mu} \to Y$ over $S_{mu}$ which is minimal with uniform maximal degeneracy, and a morphism of log schemes $\Phi_{u}\colon S \to S_{mu}$ such that
\begin{enumerate}
 \item The underlying morphism $\ul{\Phi}_u$ is an isomorphism.
 \item $f\colon \cC \to Y$ is the pullback of $f_{mu}\colon \cC_{mu} \to Y$ along $\Phi_u$.
\end{enumerate}
Furthermore, the pair $(f_{mu}, \Phi_u)$ is unique up to a unique isomorphism.
\end{proposition}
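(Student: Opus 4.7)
The plan is to mimic the proof of Proposition \ref{prop:minimal-universal}, replacing the minimal monoid $\oM(G)$ by the minimal monoid with uniform maximal degeneracy $\oM(G,V_{\max})$, and the canonical morphism $\phi$ by its factorization $\phi_{\max}$ provided by Proposition \ref{prop:minimize-umd}. A convenient first step is to apply Proposition \ref{prop:minimal-universal} itself to obtain the minimal log map $f_m\colon \cC_m \to Y$ over $S_m$ together with an underlying isomorphism $\ul{S}\cong \ul{S_m}$, so that $f$ is the pull-back of $f_m$ along some $\Phi\colon S \to S_m$. Note that $f_m$ need not itself have uniform maximal degeneracy: the UMD condition on $f$ corresponds to the identifications $e_v = e_{v'}$ for $v,v' \in V_{\max}$ being forced in $\cM_S$ but not in $\cM_{S_m}$.

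Next, I construct $S_{mu}$ as the intermediate log scheme with $\ul{S_{mu}} = \ul{S}$ and characteristic sheaf $\oM_{S_{mu},s} = \oM(G_s, V_{\max,s})$ at each geometric point $s$. Proposition \ref{prop:minimize-umd} provides the canonical fiberwise factorization
\[ \phi_s\colon \oM(G_s) \longrightarrow \oM(G_s, V_{\max,s}) \xrightarrow{\;\phi_{\max,s}\;} \oM_{S,s}, \]
and the key task is to check that these assemble to a factorization of the morphism of log structures $\cM_{S_m} \to \cM_S$ through some log structure $\cM_{S_{mu}}$ on $\ul{S}$. This is essentially a sheafification argument: the construction of $\oM(G, V_{\max})$ from $\oM(G)$ (quotienting by the relations $e_v = e_{v'}$ for $v,v' \in V_{\max}$, saturating in the torsion-free groupification, and adjoining the elements $e_{\max} - e_v$) is functorial under the generalization maps. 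Once $\cM_{S_{mu}}$ is so constructed, the log map $f_{mu}\colon \cC_{mu} \to Y$ is obtained by pulling back the log twisted curve and the log map to $Y$ along $S_{mu} \to S_m$.

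By construction, $\oM_{S_{mu}, s} = \oM(G_s, V_{\max,s})$ and $\phi_{\max}$ for $f_{mu}$ is the identity at every geometric point, so $f_{mu}$ is minimal with UMD in the sense of Definition \ref{def:umd-minimal}. The induced morphism $\Phi_u\colon S \to S_{mu}$ is strict on underlying schemes by construction, and tracking through the construction shows $f = \Phi_u^* f_{mu}$. Uniqueness up to unique isomorphism of the pair $(f_{mu}, \Phi_u)$ follows from the universal factorization property of $\phi_{\max}$ in Proposition \ref{prop:minimize-umd}: any candidate $(f'_{mu}, \Phi'_u)$ must realize the same characteristic sheaf $\oM(G, V_{\max})$ and the same map $\phi_{\max}$ fiberwise, forcing the comparison $S_{mu} \to S'_{mu}$ to be an isomorphism.

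The main obstacle will be the globalization step, namely showing that the locally defined minimal UMD characteristic sheaves $\oM(G_s, V_{\max,s})$ patch coherently to a sheaf on $\ul{S}$. The combinatorial type $(G, V_{\max})$ is only constant on locally closed strata, so one must check that the quotient-and-saturation operation behaves well under specialization. The calculation performed in the proof of Proposition \ref{prop:min-umd-open}---in particular the construction of the dashed arrow $\chi\colon \oM(G_s, V_{\max,s}) \to \oM(G_t, V_{\max,t})$ and the matching of kernels with the generalization maps of $\oM_S$---is exactly the ingredient needed to show that the local pieces glue. Once globalization is secured, the remaining verifications are formal, parallel to \cite[Prop.~1.24]{GS13} and \cite[Prop.~4.1.1]{Ch14}.
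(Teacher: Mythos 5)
Your proposal is correct and follows essentially the same route as the paper: pass to the minimal object $f_m$ over $S_m$ via Proposition \ref{prop:minimal-universal}, interpose $S_{mu}$ using the factorization $\phi_{\max}\colon \oM(G,V_{\max}) \to \oM_S$ of Proposition \ref{prop:minimize-umd}, obtain $f_{mu}$ by pull-back along $S_{mu}\to S_m$, and get uniqueness from the canonicity of the construction. The only difference is organizational: rather than assembling the stalks $\oM(G_s,V_{\max,s})$ into a sheaf over all of $\ul{S}$ (your flagged globalization step), the paper defines $\cM_{S_{mu}}$ locally as the log structure associated to the pre-log structure $Q=\oM(G,V_{\max}) \to \cM_S$ coming from a single chart at one geometric point $s$, verifies minimality with uniform maximal degeneracy only at $s$, and then invokes the openness result (Proposition \ref{prop:min-umd-open}) together with chart-independence up to unique isomorphism to spread the property to a neighborhood and glue.
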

\begin{proof}
Let $f_m\colon \cC_m \to Y$ over $S_m$ be the associated minimal object as in Proposition \ref{prop:minimal-universal}, so that $f$ is the pullback of $f_m$ along $\Phi\colon S \to S_m$ with $\ul{\Phi}$ the identity of $\ul{S}$.

Since the statement is local on $S$, we are free to shrink $S$ if needed. Thus, we may assume there are charts
\[
h_{S_m}\colon \oM_{S_m,s} \to \cM_{S_m} \ \ \mbox{and} \ \ h_S\colon \oM_{S,s} \to \cM_{S}
\]
for some geometric point $s \to S$.
Denote by $(G, V_{\max})$ the log combinatorial type of the fiber
$f_s$ over $s$.
By Proposition~\ref{prop:minimize-umd}, the morphism
$\phi\colon \oM(G) = \oM_{S_m,s} \to \oM_{S,s}$ factors through
$\phi_{\max}\colon Q:=\oM(G,V_{\max}) \to \oM_{S,s}$.
Write $\tilde{\phi}\colon \oM(G) \to Q$ for the canonical morphism.

Denote by $\cM_{S_{mu}}$ the log structure on $\ul{S}$ associated to
the pre-log structure defined by
$h\colon Q \to \oM_{S,s} \stackrel{h_S}{\to} \cM_{S}$.
Thus, there is a morphism of log structures
$\cM_{S_{mu}} = Q\oplus_{h^{-1}\cO^*_{\ul{S}}}\cO^*_{\ul{S}} \to
\cM_S$.
Then the following assignments on the right define a unique dashed
arrow on the left which makes the diagram of log structures
commutative:
\[
\xymatrix{
 & \cM_{S_m} \ar@{-->}[ld] \ar[rd] &   &  &h_{S_m}(e) \ar@{|-->}[ld] \ar@{|->}[rd] & \\
\cM_{S_{mu}} \ar[rr] && \cM_{S}      &  h \circ \tilde{\phi}(e) + v \ar@{|->}[rr] && h_S\circ \phi(e) + u
}
\]
Here $u \in \cO^*$ and $v \in \cO^*$ are the unique, invertible
sections making the diagram commutative.
This defines a morphism of log schemes
$S_{mu} := (\ul{S}, \cM_{S_{mu}}) \to S_m$ through which $S \to S_m$
factors.
Further observe that such a morphism depends on the choice of charts
$h_S$ and $h_{S_m}$.
However, different choices of charts induce a unique isomorphism of
$S_{mu}$ compatible with the arrows to and from $S_m$ and $S$
respectively.

Pulling back the log map over $S_m$, we obtain a log map
$f_{mu}\colon \cC \to Y$ over $S_{mu}$ which further pulls back to $f$
over $S$.
Note that the geometric fiber $f_{mu,s}$ is minimal with uniform
maximal degeneracy over $s$.
Further shrinking $\ul{S}$ and using
Proposition~\ref{prop:min-umd-open}, we obtain a family of log maps
over $S_{mu}$ minimal with uniform maximal degeneracy as needed.
\end{proof}

%--------------------------------------------------------------
\subsubsection{Finiteness of automorphisms}

Consider a log map $f\colon \cC \to Y$ over $S$ with $\ul{S}$ a geometric point. Suppose $f$ is minimal with uniform maximal degeneracy. Let $f_m\colon \cC \to Y$ over $S_m$ be the minimal log map given by Proposition \ref{prop:finite-auto} such that $f$ is the pullback of $f_m$ along a morphism $\Phi\colon S \to S_m$. Let $\Aut(f)$ and $\Aut(f_m)$ be the automorphism groups introduced in Section \ref{sss:finite-auto}. They are related as follows:

\begin{proposition}\label{prop:finite-auto-umd}
With notation as above, there is an injective homomorphism of groups $\Aut(f) \to \Aut(f_m)$. In particular, $\Aut(f)$ is finite if $f$ is stable.
\end{proposition}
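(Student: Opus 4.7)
My plan is to construct the homomorphism $\Aut(f)\to\Aut(f_{m})$ from the universal property of Proposition \ref{prop:minimal-universal}, check injectivity by factoring through $\Aut(\ul{f_{m}})$ and invoking the argument behind Proposition \ref{prop:finite-auto}, and then deduce finiteness of $\Aut(f)$ from finiteness of $\Aut(f_{m})$.

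For the construction, I would take $(\psi,\theta)\in\Aut(f)$. The relation $f\circ\psi=f$ with $\psi$ lying over $\theta$ identifies $f$ with $\theta^{\ast}f$ as log maps over $S$, and since $f=\Phi^{\ast}f_{m}$ we also have $\theta^{\ast}f=(\Phi\circ\theta)^{\ast}f_{m}$. Thus $(f_{m},\Phi)$ is the minimal lift of $f$ while $(f_{m},\Phi\circ\theta)$ is a minimal lift of $\theta^{\ast}f$ (the underlying $\ul{\Phi\circ\theta}$ is still an isomorphism). Applying the uniqueness-up-to-unique-iso clause of Proposition \ref{prop:minimal-universal} to the iso $\psi\colon f\cong\theta^{\ast}f$ produces a unique automorphism $(\psi_{m},\theta_{m})\in\Aut(f_{m})$ satisfying $\theta_{m}\circ\Phi=\Phi\circ\theta$ and recovering $\psi$ as the pullback of $\psi_{m}$. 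Functoriality of the minimal lift makes the assignment a group homomorphism.

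For injectivity, I would observe that the composition $\Aut(f)\to\Aut(f_{m})\to\Aut(\ul{f_{m}})$ is precisely the forgetful map $\Aut(f)\to\Aut(\ul{f})$, using the identification $\ul{f}=\ul{f_{m}}$ coming from $\ul{\Phi}$ being an isomorphism. The second arrow is injective by Proposition \ref{prop:finite-auto}, so it suffices to show that $\Aut(f)\to\Aut(\ul{f})$ itself is injective. This follows by the same argument as in Proposition \ref{prop:finite-auto}: once $\ul{\psi}=\mathrm{id}$ and $\ul{\theta}=\mathrm{id}$, the combinatorial generators $e_{v}$, $\rho_{l}$, and the UMD correction terms $e_{\max}-e_{v}$ that generate $\oM_{S}=\oM(G,V_{\max})$ (Section \ref{sss:um-monoid}) are each preserved, so $\theta^{\flat}$ is trivial on characteristic sheaves, and the remaining $\cO^{\ast}$-twist is eliminated by compatibility with $f$ and the fixed target $Y$, exactly as in loc.\ cit.

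Since $\ul{f}=\ul{f_{m}}$, stability of $f$ is equivalent to stability of $f_{m}$, and Proposition \ref{prop:finite-auto} applied to $f_{m}$ gives $\Aut(f_{m})$ finite; the injection $\Aut(f)\hookrightarrow\Aut(f_{m})$ then transfers finiteness. I expect the main subtle point to be verifying that the Proposition \ref{prop:finite-auto} argument for the injectivity of $\Aut(f)\to\Aut(\ul{f})$ really does go through in the UMD-minimal setting — in particular, that the identifications $e_{v_{1}}=e_{v_{2}}$ for $v_{1},v_{2}\in V_{\max}$ imposed in $\oM(G,V_{\max})$ do not introduce extra automorphism ambiguity beyond what was handled in the basic minimal case. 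The surjectivity of $\tilde{\phi}^{gp}\colon\oM(G)^{gp}\to\oM(G,V_{\max})^{gp}$, which is immediate from the construction in Section \ref{sss:um-monoid}, is the key algebraic input ensuring this passage is safe.
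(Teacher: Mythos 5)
Your construction of the homomorphism is exactly the paper's: pull back along $\Phi$ and $\Phi\circ\theta$ and invoke the uniqueness clause of Proposition \ref{prop:minimal-universal}. Where you diverge is the injectivity step. The paper never passes through $\Aut(\ul{f})$: it observes that $\cM_{S_m}^{gp}\to\cM_{S}^{gp}$ is surjective (precisely because $\oM(G)^{gp}\to\oM(G,V_{\max})^{gp}$ is surjective by the construction of Section \ref{sss:um-monoid}), so an element of the kernel, having $\theta_m=\mathrm{id}$ and hence $\theta^{\flat}\circ\Phi^{\flat}=\Phi^{\flat}$, must fix a generating image and therefore be the identity; no re-examination of nodes, markings, or the map to $Y$ is needed. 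You instead prove the stronger intermediate statement that $\Aut(f)\to\Aut(\ul{f})$ is injective for UMD-minimal $f$, by re-running the argument behind Proposition \ref{prop:finite-auto} (i.e.\ the cited proofs in \cite{GS13,Ch14}): that argument kills the residual $\cO^{*}$-character on the degeneracies $e_v$ and the node parameters $\rho_l$, and you correctly identify that the surjectivity of $\oM(G)^{gp}\to\oM(G,V_{\max})^{gp}$ is what lets you conclude triviality on all of $\oM(G,V_{\max})$, including the elements $e_{\max}-e_v$ and the saturation. So both proofs ultimately rest on the same algebraic fact, but the paper's route is more economical: it treats the \cite{GS13,Ch14} injectivity as a black box applied only to $f_m$ (where it is stated), whereas yours requires reopening that proof and checking it in the UMD-minimal setting — which does work, but is exactly the "subtle point" you flag, and is more than the statement requires. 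Your handling of the reduction (the composite $\Aut(f)\to\Aut(f_m)\to\Aut(\ul{f_m})$ being the forgetful map, so injectivity of $\Aut(f)\to\Aut(\ul{f})$ suffices) and of finiteness via stability of $\ul{f}=\ul{f_m}$ agrees with the paper.
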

\begin{proof}
We first construct this group homomorphism. Consider an element $(\psi\colon \cC \to \cC, \theta\colon S \to S)$ in $\Aut(f)$. Note that $f$ can be obtained as the pullback of $f_m$ via either $S \stackrel{\Phi}{\longrightarrow} S_m$ or the composition $S \stackrel{\theta}{\longrightarrow} S \stackrel{\Phi}{\longrightarrow} S_m$. By the canonicity in Proposition \ref{prop:minimal-universal}, there is a unique isomorphism $(\psi_m\colon\cC_m \to \cC_m, \theta_m\colon S_m \to S_m)$ in $\Aut(f_m)$ that fits in the commutative diagram:
\[
\xymatrix{
S \ar[r]^{\theta} \ar[d]_{\Phi} & S \ar[d]^{\Phi} \\
S_m \ar[r]^{\theta_m} & S_m
}
\]
The arrow $\Aut(f) \to \Aut(f_m)$ is then defined by $(\psi,\theta) \mapsto (\psi_m,\theta_m)$.

To see the injectivity, observe that the morphism
$\cM^{gp}_{S_m} \to \cM^{gp}_{S}$ is surjective by the construction of
Section \ref{sss:um-monoid}.
Thus $\theta_m$ being the identity implies that $\theta$ is also the
identity.
\end{proof}

%%%%---------------------------------------------------------------------------------
\subsection{The stack}

\subsubsection{The statements}
Consider the fibered categories of log maps with uniform maximal
degeneracies as in Section~\ref{sss:um-category}.
We now establish their algebraicity and properness.
By Proposition~\ref{prop:uni-min-stack}, \ref{prop:boundedness} and
\ref{prop:log-map-valuative}, it suffices to build these properties
upon the stack of log maps.
We first consider the case of the universal target.

\begin{theorem}\label{thm:max-uni-moduli}
The tautological morphism as in (\ref{equ:forget-max})
\[\fU(\cA, \beta') \to \fM(\cA, \beta')\]
is proper, birational, log \'etale and representable by log algebraic spaces. 
In particular, the fibered category $\fU(\cA, \beta')$ is represented by a log smooth log algebraic stack locally of finite type.
\end{theorem}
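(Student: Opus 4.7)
The plan is to reduce the statement to a local toric model. By Proposition \ref{prop:minimality} combined with Proposition \ref{prop:uni-stack-smooth}, the stack $\fM(\cA,\beta')$ is stratified by log combinatorial type $G$, and étale-locally at a geometric point of type $G$ there is a strict smooth chart to the Artin cone $\cA_{\oM(G)}$. By Proposition \ref{prop:min-umd-universal}, the preimage in $\fU(\cA,\beta')$ is stratified by pairs $(G, V_{\max})$, with étale-local strict smooth charts to $\cA_{\oM(G, V_{\max})}$ compatible with the monoid morphism $\oM(G) \to \oM(G,V_{\max})$. Thus the theorem reduces to studying the maps $\cA_{\oM(G,V_{\max})} \to \cA_{\oM(G)}$ as $V_{\max}$ ranges over the admissible subsets of $V(G)$.

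Next, I would organize these maps as a toric subdivision of $\sigma_G := \Hom(\oM(G),\mathbb{R}_{\geq 0})$. The functionals $h\mapsto h(e_v)$ for $v\in V(G)$ define a complete polyhedral subdivision $\tilde\sigma_G$ with cones
\[
C_{V_{\max}} = \{ h \in \sigma_G : h(e_v) \leq h(e_w) \text{ for all } v \in V(G),\ w \in V_{\max}\},
\]
indexed by non-empty $V_{\max} \subset V(G)$; completeness holds because any $h$ lies in $C_{V_{\max}(h)}$ where $V_{\max}(h)$ denotes the set of maximizers of $h(e_\bullet)$. By construction of Section~\ref{sss:um-monoid}, the dual monoid of $C_{V_{\max}}$ is exactly $\oM(G,V_{\max})$, so the associated toric stack $\cA_{\tilde\sigma_G} \to \cA_{\oM(G)}$ is a proper, birational, log étale modification.

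The crucial and most delicate step, which I expect to be the main obstacle, is to identify $\fU(\cA,\beta')$ étale-locally as the fiber product of $\fM(\cA,\beta')$ with $\cA_{\tilde\sigma_G}$ over $\cA_{\oM(G)}$. The essential content is that for a strict map $T \to \fM(\cA,\beta')$ landing in the $G$-stratum, equipping the pulled-back log map with a uniform maximal degeneracy structure is equivalent to a log étale enlargement of $\cM_T$ adjoining the elements $(e_{\max} - e_v)$ and, when $|V_{\max}|\geq 2$, imposing the identifications of degeneracies, i.e., to a lift to $\cA_{\tilde\sigma_G}$. One direction is supplied by the universal property of $\oM(G,V_{\max})$ (Proposition \ref{prop:min-umd-universal}); the converse uses openness of minimality with uniform maximal degeneracy (Proposition \ref{prop:min-umd-open}) to ensure such a lift indeed comes from a minimal-with-UMD log map.

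Granted the local description, the remaining claims follow readily. Properness and log étaleness of $\fU \to \fM$ are inherited from the complete, birational toric subdivision $\cA_{\tilde\sigma_G} \to \cA_{\oM(G)}$; alternatively, properness may be deduced via a valuative-criterion argument as in Proposition \ref{prop:log-map-valuative}, using that the supremum selecting $V_{\max}$ on the special fiber is uniquely determined after a suitable finite base change of the DVR. Representability by log algebraic spaces of finite type is supplied by representability of the local toric modification together with Proposition \ref{prop:finite-auto-umd}, since $|V(G)| < \infty$ bounds the strata per combinatorial type. Finally, log smoothness of $\fU(\cA,\beta')$ follows by composing the log étale map $\fU \to \fM$ with the log étale map $\fM(\cA,\beta') \to \fM^{\tw}_{g,n}$ of Proposition \ref{prop:uni-stack-smooth}, together with the log smoothness of $\fM^{\tw}_{g,n}$.
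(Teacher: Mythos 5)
Your route is viable, and it is genuinely different from the paper's. You package the entire theorem into one statement: smooth-locally on the target, $\fU(\cA,\beta')\to\fM(\cA,\beta')$ is pulled back from the complete subdivision of $\sigma_G=\Hom(\oM(G),\mathbb{R}_{\geq 0})$ determined by the piecewise-linear function $h\mapsto\max_v h(e_v)$, so representability, finite type, properness and log \'etaleness all descend from a toric model. The paper never forms this subdivision: it obtains algebraicity and log \'etaleness by exhibiting $\fU$ as an open substack of Olsson's $\Log_{\fM}$ (via Propositions \ref{prop:min-umd-open} and \ref{prop:min-umd-universal} and \cite[Theorem 4.6]{Ol03}), finite type by covering a chart by the finitely many $\cA_{\oM(G,V_{\max})}$, and properness by a bare-hands weak valuative criterion in which the lift over a DVR is produced by two rounds of log blow-ups (first principalizing the log ideal of the candidate maximal degeneracies, then the pairwise ideals generated by $\{e_{v_0},e_{v_i}\}$), followed by the universal property of minimal objects for uniqueness. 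Those blow-ups are precisely the DVR-local shadow of your subdivision, so your approach buys a cleaner conceptual statement ($\fU$ is a log modification of $\fM$ pulled back from its Artin fan), at the price of having to prove the chart identification in full; the paper's approach avoids that identification at the price of a longer explicit valuative-criterion argument.

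Two points in your sketch need strengthening. First, the crucial identification must be verified for an arbitrary fs log scheme $T$ over the chart, whose fibers have varying combinatorial types; as literally written (``for a strict map $T\to\fM(\cA,\beta')$ landing in the $G$-stratum'') the statement is nearly vacuous and cannot see properness, whose whole content concerns families crossing strata. The substantive inputs are that the degeneracies of every nearby fiber are exactly the generizations of the $e_v$ and that the partial order is preserved under generization (Lemma \ref{lem:generize-degeneracy}, its corollary, and Lemma \ref{lem:generize-po}); these are also what make your cones $C_{V_{\max}}$ for different strata fit into a single subdivision compatible with face maps $\sigma_{G'}\hookrightarrow\sigma_G$. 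With them, saturatedness of $\oM_{T,t}$ gives the pointwise equivalence ``uniform maximal degeneracy at $t$ iff $\Hom(\oM_{T,t},\mathbb{R}_{\geq0})\to\sigma_G$ lands in some $C_{V_{\max}}$'', and Propositions \ref{prop:min-umd-open} and \ref{prop:min-umd-universal} upgrade this to the asserted equivalence of fibered categories. Second, the claim that the dual monoid of $C_{V_{\max}}$ ``is exactly'' $\oM(G,V_{\max})$ requires a small check: when $e_{\max}\poleq e_v$ in $\oM(G)$ for some $v\notin V_{\max}$ the cone acquires extra lineality and the monoid of Section \ref{sss:um-monoid} fails to be sharp, so one should either restrict to the admissible $V_{\max}$ (as you implicitly do) or compare only after sharpening; this is harmless but should be said, since it is exactly the bookkeeping that makes the cones $C_{V_{\max}}$ with redundant labels coincide.
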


Then consider the cartesian diagram
\[
\xymatrix{
\scrU(Y,\beta) \ar[r] \ar[d] & \fU(Y,\beta') \ar[r] \ar[d] & \fU(\cA, \beta') \ar[d] \\
\scrM(Y,\beta) \ar[r] & \fM(Y,\beta') \ar[r] & \fM(\cA, \beta')
}
\]
where the vertical arrows are given by (\ref{equ:forget-max}), and the
two horizontal arrows of the right square are induced by the canonical
strict morphism $Y \to \cA$.
Note that imposing a curve class and requiring the underlying maps be
stable are both representable by open embeddings.
The following is an immediate consequence of the above theorem.

\begin{theorem}\label{thm:max-moduli}
The canonical morphism $\scrU(Y,\beta) \to \scrM(Y,\beta)$ is a proper, representable and log \'etale morphism of log Deligne--Mumford stacks. In particular, $\scrU(Y,\beta)$ is of finite type if $\scrM(Y, \beta)$ is so.
\end{theorem}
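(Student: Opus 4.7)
The plan is to deduce the theorem from Theorem \ref{thm:max-uni-moduli} by a base change argument, following the cartesian diagram displayed just before the statement. First I would verify that both squares in that diagram are cartesian in the fine and saturated log category. The right square expresses the fact that uniform maximal degeneracy is determined purely by the composite log map $\cC \to Y \to \cA$: since $Y \to \cA$ is strict, the characteristic sheaves on the source are the same whether pulled back from $Y$ or from $\cA$, so the log combinatorial type $(G, V_{\max})$ and the minimal monoid $\oM(G, V_{\max})$ depend only on the map to $\cA$. Consequently, a family in $\fM(Y,\beta')$ has uniform maximal degeneracy if and only if its composition to $\cA$ does. The left square encodes that imposing a curve class $A$ and requiring the underlying twisted map to be stable cut out representable open/closed conditions that are preserved under the forgetful morphism $\fU \to \fM$. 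Composing the two cartesian squares exhibits $\scrU(Y,\beta) \to \scrM(Y,\beta)$ as a base change of $\fU(\cA,\beta') \to \fM(\cA,\beta')$.

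Since properness, representability, and log étaleness are all stable under base change in the fine and saturated log category, each of these properties transfers directly from $\fU(\cA,\beta') \to \fM(\cA,\beta')$, established in Theorem \ref{thm:max-uni-moduli}, to $\scrU(Y,\beta) \to \scrM(Y,\beta)$. The log Deligne--Mumford property then follows because $\scrM(Y,\beta)$ is itself log Deligne--Mumford by Theorem \ref{thm:algebraicity}, whose proof invoked Proposition \ref{prop:finite-auto} for the finiteness of automorphisms; a stack representable over a log DM stack is again log DM.

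The finite type assertion is immediate since proper morphisms of log algebraic stacks are of finite type, so if $\scrM(Y,\beta)$ is of finite type then so is $\scrU(Y,\beta)$. I do not anticipate any genuine obstacle in this argument, as the statement is essentially a formal corollary of Theorem \ref{thm:max-uni-moduli} once the base change is set up correctly. The only subtle point worth double-checking is the cartesian nature of the right square, namely that no new minimal structure arises when passing between uniform maximal degeneracy for $Y$-targets and for $\cA$-targets. This reduces to the observation, already implicit in Section \ref{sss:um-monoid}, that the minimal monoid $\oM(G, V_{\max})$ is constructed canonically from the combinatorial type $(G, V_{\max})$, which is manifestly preserved under the strict morphism $Y \to \cA$.
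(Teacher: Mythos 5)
Your proposal matches the paper's own argument: the theorem is deduced from Theorem \ref{thm:max-uni-moduli} via exactly the displayed cartesian diagram, with the right square cartesian because uniform maximal degeneracy (and its minimal structure) only depends on the composition with the strict morphism $Y \to \cA$, and the left square an open embedding imposing curve class and stability. Your base-change reasoning and the finite-type conclusion are the same as in the paper, so the proposal is correct and essentially identical in approach.
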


We now give the proof of Theorem \ref{thm:max-uni-moduli}, which splits to two parts.

%--------------------------------------------------------------
\subsubsection{Representability, boundedness and log \'etaleness}
For simplicity, write $\fM := \fM(\cA,\beta')$ and $\fU := \fU(\cA,\beta')$.

Consider Olsson's log stack $\Log_{\fM}$, which associates to each
strict morphism $T \to \fM$ the category of morphisms of fine log
structures $\cM_T \to \cM$ over $\ul{T}$.
By Proposition~\ref{prop:min-umd-universal}, we may view $\fU$ as the
category fibered over the category of schemes parameterizing log maps
minimal with uniform maximal degeneracy.
By Proposition~\ref{prop:min-umd-open}, the tautological morphism
$\fU \to \Log_{\fM}$ is an open embedding.
Since $\Log_{\fM}$ is algebraic, $\fU$ is a log algebraic stack
equipped with the universal minimal log structure.
By Proposition~\ref{prop:finite-auto-umd}, the morphism $\fU \to \fM$
is representable.
The log \'etaleness of $\fU \to \fM$ follows from \cite[Theorem 4.6
(ii), (iii)]{Ol03}. By Proposition \ref{prop:uni-stack-smooth}, the
stack $\fU$ is log \'etale.

To prove that $\fU \to \fM$ is of finite type, consider a strict
morphism $T \to \fM$ from a log scheme $T$ of finite type, and write
$U := T\times_{\fM}\fU$.
Since being of finite type is a property local on the target, it
suffices to show that $U$ is of finite type.

Denote by $\Lambda$ the collection of log combinatorial types of log maps over $T$. Since $T$ is of finite type, the set $\Lambda$ is finite. Let $\Lambda_{um} = \{(G,V_{\max}) \ | \ G \in \Lambda \}$ be the collection of log combinatorial types of log maps over $U$ as  in Section \ref{sss:log-type-um}. The set $\Lambda_{um}$ is again finite as the number of choices of $V_{\max} \subset V(G)$ for a fixed $G \in \Lambda$ is finite.

For a fine and saturated monoid $P$, we introduce the log stack $\cA_{P}$ with the underlying stack $\big[\spec(k[\NN])/\spec(k[P^{gp}]) \big]$ and the log structure induced by the affine toric variety $\spec(k[P])$.

For each $(G,V_{\max}) \in \Lambda_{um}$, the canonical morphism (\ref{equ:minimize-umd}) induces a morphism of log stacks $\cA_{\oM(G,V_{\max})} \to \cA_{\oM(G)}$. Consider
\[
\cA_{\oM(G,V_{\max}), T} = T\times_{\Log}\cA_{\oM(G,V_{\max})}
\]
where $T \to \Log$ is the canonical strict morphism, and the morphism on the right is the composition $\cA_{\oM(G,V_{\max})} \to \cA_{\oM(G)} \to \Log$. By \cite[Corollary 5.25]{Ol03}, there is an \'etale morphism
\[
\cA_{\oM(G,V_{\max}), T} \to \Log_T.
\]
By the construction of $\fU$, $U$ is an open sub-stack of $\Log_T$. By Definition \ref{def:umd-minimal} and Proposition \ref{prop:min-umd-open}, $U$ is covered by the image of the finite union:
\[
\bigcup_{(G,V_{\max}) \in \Lambda_{um}} \cA_{\oM(G,V_{\max}), T} \to \Log_T.
\]
Thus $U$ is of finite type.

%--------------------------------------------------------------
\subsubsection{Properness}

Since $\fU \to \fM$ is representable and of finite type, for properness it suffices to prove the weak valuative criterion.

\step{1}{The set-up of the weak valuative criterion}

Let $R$ be a discrete valuation ring, $\fm \subset R$ be its maximal ideal, and $K$ be its quotient field. Consider a commutative diagram of solid arrows of the underlying stacks
\[
\xymatrix{
\spec K \ar[rr] \ar[d]  && \ul{\fU} \ar[d] \\
\spec R \ar[rr] \ar@{-->}[rru]&& \ul{\fM}
}
\]
It suffices to show that possibly after replacing $R$ by a finite
extension of discrete valuation rings, and $K$ by the corresponding
finite extension of quotient fields, there exists a unique dashed
arrow making the above diagram commutative.

Let $f$ be a minimal log map over $S = (\spec R, \cM_S)$ given by the bottom arrow of the above diagram. Denote by $s, \eta \in S$ the closed and generic points with the log structure pulled back from $S$ respectively.  Let $f_{\eta_u}$ be the log map over $\eta_u = (\ul{\eta}, \cM_{\eta_{u}})$ minimal with uniform maximal degeneracy given by the top arrow. There is a canonical morphism $\eta_u \to \eta$ such that $f_{\eta_u}$ is the pullback of $f_{\eta}$. We will construct the dashed arrow by extending $f_{\eta_u}$ to a log map over $\spec R$ which is the pullback of $f$, and is minimal with uniform maximal degeneracy.

\step{2}{Determine the combinatorial type of the closed fiber}

Passing to a finite extension of $R$ and $K$, denote by $G$ the log
combinatorial type of the closed fiber $f_{s}$ of $f$, and by
$(G_{\eta}, V_{\max,\eta_u})$ the log combinatorial type of
$f_{\eta_u}$.
We next determine the log combinatorial type $(G, V_{\max})$ of
possible extensions of $f_{\eta_u}$.

We may assume that there exists a chart $h\colon \oM(G) \to \cM_{S}$ after taking a further base change.
For each $v\in V(G)$, denote by $e_v \in \oM(G)$ the corresponding degeneracy. Denote by $\gd$ the composition
\begin{equation}\label{equ:close-general}
\oM(G)  \stackrel{h}{\longrightarrow} \cM_S \longrightarrow \cM_{\eta} \longrightarrow \cM_{\eta_u}.
\end{equation}
By Lemma \ref{lem:generize-degeneracy}, the general fiber of $\gd(e_v)$ corresponds to a degeneracy of some vertex $v_{\eta} \in V(G_{\eta})$. Consider the subset $V' \subset V(G)$ consisting of vertices $v$ such that $\gd(e_v)_\eta$ corresponds to the degeneracy of vertices in $V_{\max,\eta_u}$. We define a partial order on $V'$ as follows.

For any $v_1, v_2 \in V'$, observe $\gd(e_{v_2}) - \gd(e_{v_1}) \in K^{\times}$ as it is a difference of maximal degeneracies over $\eta$.  We define
\[v_1 \poleq_u v_2 \ \ \mbox{if} \ \ \big(\gd(e_{v_2}) - \gd(e_{v_1})
  \big) \in R.\]
Denote by $V_{\max} \subset V'$ the collection of maximal elements
under this partial order $\poleq_u$.

We show that $(G,V_{\max})$ is necessarily the log combinatorial type of any possible extension $f_{S_u}$ of $f_{\eta_u}$ over $S_u = (\spec R, \cM_{S_u})$ with uniform maximal degeneracy. Given such an extension, let $V'_{\max}$ be the collection of maximally degenerated vertices of the closed fiber of $f_{S_u}$. By Lemma \ref{lem:generize-degeneracy} and \ref{lem:generize-po}, we have the inclusion $V'_{\max} \subset V'$.

Consider the canonical morphism $\psi\colon S_u \to S$ along which $f$ pulls back to $f_{S_u}$. Thus $\gd$ can be also given by the composition
\[
\oM(G)  \stackrel{h}{\longrightarrow} \cM_S \stackrel{\psi^{\flat}}{\longrightarrow} \cM_{S_u} \longrightarrow \cM_{\eta_u}.
\]
Suppose $v_2 \in V'_{\max}$. Then since $\psi^{\flat}\circ h(e_{v_2}) - \psi^{\flat}\circ h(e_{v_1}) \in \cM_{S_u}$ for any $v_1 \in V'$, we have $\gd(e_{v_2}) - \gd(e_{v_1}) \in R$. This implies $V'_{\max} \subset V_{\max}$. The other direction $V_{\max} \subset V'_{\max}$ is similar.

\step{3}{Principalize degeneracies of elements in $V_{\max}$}

Let $\cK_0 \subset \cM_{S}$ be the log ideal generated by $\{h(e_v) \ | \ v \in V_{\max}\}$. Let $\hat{S}_0 \to S$ be the log blow-up along $\cK_0$, and $f_{\hat{S}_0}$ be the pullback of $f$. We show that $\eta_u \to S$ factors through $\hat{S}_0 \to S$ uniquely.

Indeed, let $(G_{\eta}, V_{\max,\eta_u})$ be the log combinatorial type of $f_{\eta_u}$. By Lemma \ref{lem:generize-degeneracy} and \ref{lem:generize-po}, $\gd(e_v)$ corresponds to the maximal degeneracy of $f_{\eta_u}$ for any $v \in V_{\max}$. Thus $\cK_0$ pulls back to a locally principal log ideal over $\eta_u$ via $\eta_u \to S$. It follows from the universal property of log blow-ups that there is a unique morphism $\eta_u \to \hat{S}_0$ lifting $\eta_u \to S$ .

Since the underlying of $\hat{S}_0 \to S$ is projective, the underlying morphism of $\eta_u \to \hat{S}_0$ extends to a strict morphism $S_0 \to \hat{S}_0$ with the underlying $\ul{S}_0 = \spec R$. In particular, we obtain a morphism $\psi_0\colon \eta_u \to S_0$. Denote by $f_{S_0}$ the pullback of $f_{\hat{S}_0}$over $S_0$. Consider the composition
\[
\gd_0\colon \oM(G) \stackrel{h}{\longrightarrow} \cM_S \longrightarrow \cM_{S_0}
\]
We show that the elements in $V_{\max}$ have the same degeneracy associated to the closed fiber of $f_{S_0}$ by showing that
\begin{equation}\label{equ:prin-max-deg-0}
  \gd_0(e_{v_2}) - \gd_0(e_{v_1}) \in R^{\times}, \ \  \mbox{for any} \ \ v_1, v_2 \in V_{\max}.
\end{equation}

Indeed, observe
\begin{equation}\label{equ:prin-max-deg}
  \gd(e_{v_2}) - \gd(e_{v_1}) \in R^{\times} , \ \ \mbox{for any} \ \ v_1, v_2 \in V_{\max}.
\end{equation}
Since $S_0 \to S$ factors through $\hat{S}_0 \to S$, we have $\gd_0(e_{v_2}) - \gd_0(e_{v_1}) \in \cM_{S_0}$. Since $\gd = \psi^{\flat}_{0}\circ\gd_0$, the claim follows from the fact that
\[
\psi^{\flat}_{0}\big( \gd_0(e_{v_2}) - \gd_0(e_{v_1}) \big) = \gd(e_{v_2}) - \gd(e_{v_1}) \in R^{\times}.
\]

\step{4}{Maximize the degeneracy of elements in $V_{\max}$}

Fix $v_0 \in V_{\max}$.
Consider the finite set
$
V(G) \setminus V_{\max} = \{v_1, \cdots, v_k\}.
$
Define $\cK_{i} \subset \cM_{S_0}$ to be the log ideal generated by $\{ \gd_0(e_{v_i}), \gd_{0}(e_{v_0})\}$ for $i = 1, 2, \cdots, k$. By (\ref{equ:prin-max-deg}) the log ideal $\cK_i$ is independent of the choice of $v_0 \in V_{\max}$. Consider the diagram
\[
\xymatrix{
\hat{S}_k \ar[r] & \hat{S}_{k-1} \ar[r] & \cdots  \ar[r] & \hat{S}_1 \ar[r] & S_0 \\
&&&& \eta_u \ar[u]_{\psi_0} \ar@{-->}[lu] \ar@{-->}[lllu] \ar@/^/@{-->}[llllu]
}
\]
where $\hat{S}_{i+1} \to \hat{S}_i$ is the log blow-up of the pullback of $\cK_i$ via $\hat{S}_i \to S_0$.

Since $\gd(e_0) - \gd(e_{v_i}) \in \cM_{\eta_u}$, the log ideal $\cK_i$ pulls back to a locally principal log ideal over $\eta_u$ via $\psi_0$. Thus we obtain a sequence of dashed arrows $\hat{\psi}_i\colon \eta_u \to \hat{S}_i$ lifting $\psi_0$ as in the above diagram.

Since log blow-ups are projective, we obtain a strict morphism $S_k \to \hat{S}_k$ with underlying $\ul{S}_k = \spec R$ extending the underlying morphism of $\psi_{0}$. Thus for each $i$ we have morphisms $\psi_i\colon \eta_u \to S_i$ and $S_i \to S_0$. Let $f_{S_k}\colon C_{S_k} \to \cA$ over $S_{k}$ be the pullback of $f_{S_0}$.

Consider the composition
$\gd_k\colon \oM(G) \stackrel{h}{\longrightarrow} \cM_S \longrightarrow \cM_{S_k}.$
Since the pullback of $\cK_i$ is locally principal over $S_k$, either
$\gd_k(e_{v_0}) - \gd_k(e_{v_i})$
or
$\gd_k(e_{v_i}) - \gd_k(e_{v_0})$
belongs to $\cM_{S_u}$. We next show that the latter is not possible.

Indeed the construction in Step 2 implies that
$\gd(e_{v_0}) - \gd(e_{v_i}) \in \cM_{\eta_{u}}\setminus R^{\times}.$
Since $\gd = \psi^{\flat}_k\circ\gd_k$, we necessarily have that $\gd_k(e_{v_0}) - \gd_k(e_{v_i}) \in \cM_{S_k}\setminus R^{\times}$ for any $i=1,\cdots, k$. Thus $f_{S_k}$ over $S_k$ has uniform maximal degeneracy by Proposition \ref{prop:um-open}.

\step{5}{Verify the extension and uniqueness}

We show that $f_{S_k}$ is the unique extension of $f_{\eta_u}$ as needed. First observe that the pullback of $f_{S_k}$ along $\psi_{k}$ is the log map $f_{\eta_u}$ minimal with uniform degeneracy. Thus the universality of Proposition \ref{prop:min-umd-universal} implies that $\psi_k$ induces an isomorphism between the generic fiber $f_{S_k,\ul{\eta}}$ of $f_{S_k}$ and $f_{\eta_u}$. Using Proposition \ref{prop:min-umd-universal} again, we obtain a log map $f_{S_u}$ over $S_{u}$ which is minimal with uniform maximal degeneracy, and a morphism $S_{k} \to S_u$ with the identity underlying morphism, along which $f_{S_u}$ pulls back to $f_{S_k}$. This provides the desired extension of $f_{\eta_u}$.

To see the uniqueness, let $f_{S_u}$ over $S_u$ be any extension of $f_{\eta_u}$. Note that there is a canonical morphism $S_{u} \to S$ along which $f$ pulls back to $f_{S_u}$. Since the log combinatorial type $(G,V_{\max})$ is unique as shown in Step 2, the log ideal $\cK_0$ as in Step 3 pulls back to a locally principal log ideal over $S_u$, hence there is a unique morphism $S_u \to S_0$ such that $f_{S_u}$ is the pullback of $f_{S_0}$.

By Condition (2) of Section \ref{sss:um-monoid}, the log ideal $\cK_i$ as in Step 4 pulls back to a locally principal log ideal over $S_u$, hence a unique morphism $S_u \to S_k$ such that $f_{S_u}$ is the pullback of $f_{S_k}$. Applying the universality of Proposition \ref{prop:min-umd-universal} one more time, we obtain an isomorphism $S_u \to S_k$ compatible with pullback of log maps.

\smallskip

Finally, note that $\fU(\cA, \beta') \to \fM(\cA, \beta')$ is an
isomorphism over the open dense substacks with the trivial log
structure on both the source and the target.
Hence it is birational.
This completes the proof of Theorem \ref{thm:max-uni-moduli}. \qed

%%%%---------------------------------------------------------------------------------
\subsection{The logarithmic twist}\label{ss:log-twists}

We introduce notions which will be used to extend the cosection across the boundary.

Consider the stack $\fU := \fU(\cA, \beta')$ with its universal pre-stable log map
$
f_{\fU}\colon \cC_{\fU} \to \cA
$
and the projection $\pi_{\fU}\colon \cC_{\fU} \to \fU.$

%---------------------------------------------------
\subsubsection{The boundary torsor of $\fU$}\label{sss:boundary-torsor}

Consider the global section $e_{\max} \in \Gamma(\fU,\oM_{\fU})$ of
Corollary \ref{cor:max-deg-exists}, which is the maximal degeneracy
over each geometric point.  
Consider the $\cO_{\fU}^{*}$-torsor over $\fU$
\begin{equation}\label{equ:boundary-torsor}
\cT_{\max} := e_{\max}\times_{\oM_{\fU}}\cM_{\fU}
\end{equation}
and the corresponding line bundle
$
\bL_{\max} \supset \cT_{\max}.
$
The composition
\[
\cT_{\max} \to \cM_{\fU} \to \cO_{\fU}
\]
induces a morphism of line bundles
\begin{equation}\label{equ:boundary-complex}
\bL_{\max} \stackrel{}{\longrightarrow} \cO_{\fU}.
\end{equation}
Since $\fU$ is log smooth by Theorem \ref{thm:max-uni-moduli}, the dual of the above defines a section of $\bL_{\max}^{\vee}$ whose vanishing locus is a Cartier divisor $\Delta_{\max} \subset \fU$ such that $\bL_{\max}^{\vee} \cong \cO_{\fU}(\Delta_{\max})$.

\begin{example}
  Example~\ref{ex:UMD} defines a morphism $\cA \to \fU$.
  The pullback of $\Delta_{\max}$ is $0_\cA$ --- the closed point of $\cA$, and the pullback of
  $\bL_{\max}$ is $\cO_\cA(-0_\cA)$.
\end{example}

%---------------------------------------------------
\subsubsection{The torsor from the target}

By Section \ref{sss:rank-one}, the characteristic sheaf $\oM_{\cA}$ admits a global section
$
\delta_{\infty} \in \Gamma(\cA, \oM_{\cA})
$
whose image in $\oM_{\cA}$ is a local generator. Consider the $\cO^*$-torsor over $\cA$:
\begin{equation}\label{equ:target-torsor}
\cT_{\infty} := \delta_{\infty}\times_{\oM_{\cA}}\cM_{\cA}
\end{equation}
and the corresponding line bundle $\cO_{\cA}(-\infty_{\cA}) \supset \cT_{\infty}$. The composition
\[
\cT_{\infty} \to \cM_{\cA} \to \cO_{\cA}
\]
corresponds to the canonical embedding
$
\cO_{\cA}(-\infty_{\cA}) \to \cO_{\cA}.
$

%---------------------------------------------------
\subsubsection{The universal twist}

We construct the {\em log twist} as follows.

\begin{lemma}\label{lem:torsor-twist}
Suppose all contact orders of markings in $\beta'$ are trivial. Then $f_{\fU}^{\flat}$ induces a morphism compatible with the $\cO^*_{\cC_{\fU}}$-action
\begin{equation*}
\tf_{\fU}^{\flat}\colon (\pi^{*}_\fU\cT_{\max})\otimes (f^{*}_{\fU}\cT^{\vee}_{\infty}) \to \cM_{\cC_{\fU}}, \ \ a\otimes (-b) \mapsto a  - f_{\fU}^{\flat}(b)
\end{equation*}
where $\cT^{\vee}_{\infty}$ is the dual torsor of $\cT_{\infty}$.
\end{lemma}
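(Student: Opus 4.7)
My plan is to reduce the claim to a stalkwise check on characteristic monoids. The formula $a \otimes (-b) \mapsto a - f_{\fU}^{\flat}(b)$ clearly defines an $\cO^{*}$-equivariant morphism into the groupification $\cM_{\cC_{\fU}}^{gp}$, so the substantive content of the lemma is that the image lands in the submonoid $\cM_{\cC_{\fU}}$. This is a local condition on $\cC_{\fU}$, and after dividing by $\cO^{*}$ it amounts to proving that at every stalk of $\cC_{\fU}$,
\begin{equation*}
e_{\max} - \bar{f}_{\fU}^{\flat}(\delta_{\infty}) \in \oM_{\cC_{\fU}}
\end{equation*}
(as opposed to merely $\oM_{\cC_{\fU}}^{gp}$), where $e_{\max}$ is pulled back from $\oM_{\fU}$ along $\pi_{\fU}^{\flat}$.

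From here I would perform a case analysis over the three stalk types of a log twisted curve, using the explicit description of $\bar{f}^{\flat}$ recorded in Section \ref{ss:log-combinatorial}. At a stalk in the smooth locus of a component $Z$, Section \ref{sss:map-at-generic-pt} gives $\bar{f}_{\fU}^{\flat}(\delta_{\infty}) = e_{Z}$, so the required inclusion $e_{\max} - e_{Z} \in \oM_{\fU}$ is exactly the defining property of uniform maximal degeneracy. At a marked point $p$ on a component $Z$, Section \ref{sss:map-at-marking} gives $\bar{f}_{\fU}^{\flat}(\delta_{\infty}) = e_{Z} + c_{p}\cdot (0,1) \in \oM_{\fU}\oplus\NN$, and the assumption that all contact orders in $\beta'$ are trivial forces $c_{p} = 0$, reducing this to the generic-point case. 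This is precisely where the hypothesis is used: at a marking the $\NN$-summand is free, so a negative $\NN$-coordinate cannot be compensated by any amalgamation relation.

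The main obstacle is the nodal case of Section \ref{sss:nodecombinatorial}, where $\oM_{\cC_{\fU}, q} = \oM_{\fU}\oplus_{\NN}\NN^{2}$ with amalgamation $(\rho_{q}, 0) \sim (0, (1,1))$, and $\bar{f}_{\fU}^{\flat}(\delta_{\infty}) = (e_{Z_{1}}, (c_{q}, 0))$ for a chosen incident branch $Z_{1}$. Here the naive expression
\begin{equation*}
(e_{\max}, 0) - (e_{Z_{1}}, (c_{q}, 0)) = (e_{\max} - e_{Z_{1}}, (-c_{q}, 0))
\end{equation*}
a priori lives only in the groupification. My plan is to exploit the amalgamation relation to trade $c_{q}\rho_{q}$ in the first factor for $(c_{q}, c_{q})$ in the second, rewriting the expression as $(e_{\max} - e_{Z_{1}} - c_{q}\rho_{q}, (0, c_{q}))$, and then to invoke the nodal equation \eqref{nodalequation} to identify the first coordinate as $e_{\max} - e_{Z_{2}}$. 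Uniform maximal degeneracy applied to the opposite branch $Z_{2}$ puts this in $\oM_{\fU}$, while $(0, c_{q})\in\NN^{2}$, so the representative genuinely lies in the submonoid $\oM_{\fU}\oplus\NN^{2}$ whose image surjects onto $\oM_{\cC_{\fU}, q}$. With the stalkwise check complete, $\cO^{*}$-equivariance of $\tilde{f}_{\fU}^{\flat}$ follows immediately from the additive shape of the formula together with the standard identification $(a+u)\otimes (-b) = a \otimes (-(b-u))$ on the tensor of torsors.
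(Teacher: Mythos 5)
Your proposal is correct and follows essentially the same route as the paper: define the map into $\cM^{gp}_{\cC_{\fU}}$, reduce to checking that $e_{\max} - \bar{f}_{\fU}^{\flat}(\delta_{\infty})$ lies in $\oM_{\cC_{\fU}}$ stalkwise, and conclude from uniform maximal degeneracy together with the triviality of the contact orders. The only difference is that where the paper simply cites the combinatorial description of Section \ref{ss:log-combinatorial}, you spell out the case analysis explicitly --- in particular the nodal case via the amalgamation relation and the nodal equation \eqref{nodalequation} --- which is precisely the verification the paper leaves implicit.
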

\begin{proof}
Consider the sequence of inclusions
\[
\pi^{*}_\fU\cT_{\max} \subset \pi^{*}_\fU\cM_{\fU} \subset \cM_{\cC_{\fU}},
\]
and the composition
\[
f^{*}_{\fU}\cT^{\vee}_{\infty} \subset f^{*}_{\fU}\cM^{gp}_{\cA} \to \cM^{gp}_{\cC_{\fU}},
\]
where the last arrow induced by $f_{\fU}^{\flat}$.
Putting these together, we obtain
\[
(\pi^{*}_\fU\cT_{\max})\otimes (f^{*}_{\fU}\cT^{\vee}_{\infty}) \to \cM^{gp}_{\cC_{\fU}}, \ \ \ \ a\otimes (-b) \mapsto a  - f_{\fU}^{\flat}(b).
\]

To see this morphism factors through $\cM_{\cC_{\fU}}$, it suffices to show the image of the composition
\[
(\pi^{*}_\fU\cT_{\max})\otimes (f^{*}_{\fU}\cT^{\vee}_{\infty}) \to \cM^{gp}_{\cC_{\fU}} \to \oM^{gp}_{\cC_{\fU}}
\]
is contained in $\oM_{\cC_{\fU}}$. Note the image is of the form $e_{\max} - \bar{f}_{\fU}^{\flat}(\delta_{\infty})$. Since $e_{\max}$ is the maximal degeneracy and the contact orders are all trivial, we have $e_{\max} - \bar{f}_{\fU}^{\flat}(\delta_{\infty}) \in \oM_{\cC_{\fU}}$ by the description in Section \ref{ss:log-combinatorial}.
\end{proof}

\begin{proposition}\label{prop:log-twist}
Suppose the contact orders in $\beta'$ are all trivial. Then there is a natural morphism of line bundles over $\cC_{\fU}$
\begin{equation}\label{equ:log-twist}
\tf_{\fU}\colon \pi^{*}_{\fU}\bL_{\max}\otimes f^{*}_{\fU}\cO(\infty_{\cA}) \to \cO_{\cC_{\fU}}
\end{equation}
such that $\tf_{\fU}$ vanishes along non-maximally degenerate components, and is surjective everywhere else.
\end{proposition}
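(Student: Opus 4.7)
The plan is to construct $\tf_\fU$ directly from Lemma~\ref{lem:torsor-twist} by composing with the structure map $\alpha\colon\cM_{\cC_\fU}\to\cO_{\cC_\fU}$ of the log structure, and then to verify the vanishing and non-vanishing claims stalk-by-stalk using the local combinatorics of $\bar f^\flat_\fU$ recorded in Section~\ref{ss:log-combinatorial}. Lemma~\ref{lem:torsor-twist} produces a morphism of $\cO^*_{\cC_\fU}$-torsors $\tf^\flat_\fU\colon(\pi^*_\fU\cT_{\max})\otimes(f^*_\fU\cT^\vee_\infty)\to\cM_{\cC_\fU}$; since this tensor of torsors is the $\cO^*$-torsor inside the line bundle $\pi^*_\fU\bL_{\max}\otimes f^*_\fU\cO(\infty_\cA)$, postcomposition with $\alpha$ extends uniquely to the desired line bundle morphism $\tf_\fU$.

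For the geometric properties, it suffices to work over a geometric fiber with base characteristic monoid $Q$ and maximal degeneracy $e_\max$. The image of the distinguished torsor section $e_\max\otimes(-\delta_\infty)$ under $\tf^\flat_\fU$ has characteristic $e_\max-\bar f^\flat_\fU(\delta_\infty)\in\oM_{\cC_\fU}$. By Sections~\ref{sss:map-at-generic-pt} and~\ref{sss:map-at-marking}, this characteristic equals $e_\max-e_Z\in Q$ at a generic point or trivial-contact marking on an irreducible component $Z$ (here we use the assumption that all contact orders in $\beta'$ are trivial). By Section~\ref{sss:nodecombinatorial}, at a node $q$ joining $Z_1,Z_2$ with contact order $c_q$, using the nodal equation $e_{Z_2}=e_{Z_1}+c_q\rho_q$ and the identification $\rho_q=(1,1)\in Q\oplus_\NN\NN^2$, the characteristic can be rewritten in monoid form as $(e_\max-e_{Z_2})+c_q(0,1)$.

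Sharpness of $Q$ (Proposition~\ref{prop:minimality}) makes $\alpha$ return a unit precisely when the characteristic is $0$. Thus on a max-deg component $Z$ the characteristic vanishes generically and $\tf_\fU$ is a local generator everywhere on $Z$ except at its intersections with non-max-deg components; on a non-max-deg $Z$ the characteristic $e_\max-e_Z$ is nonzero generically, so $\tf_\fU$ vanishes at the generic point of $Z$ and hence identically on $Z$ (as $\cO_Z$ is a domain). At a node between a non-max $Z_1$ and a max $Z_2$, the expression $(e_\max-e_{Z_2})+c_q(0,1)$ reduces to $c_q(0,1)$, whose image under $\alpha$ is (up to a unit) the $c_q$-th power of the local coordinate on the $Z_2$-branch at $q$: this vanishes on the $Z_1$-branch and is nonzero on the $Z_2$-branch off the node. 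Nodes between two max-deg components with $c_q>0$ are excluded by sharpness of $Q$, since the nodal equation would force $c_q\rho_q=0$ with $\rho_q$ a nontrivial smoothing parameter; the remaining node cases are handled by the same characteristic calculation.

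The main difficulty lies in the nodal case: one has to check that $e_\max-\bar f^\flat_\fU(\delta_\infty)$ genuinely lies in the monoid $Q\oplus_\NN\NN^2$ rather than only in its groupification, and then interpret $\alpha$ applied to this monoid element correctly branch-by-branch. The key manipulation $-c_q(1,0)=c_q(0,1)-c_q\rho_q$, combined with the nodal equation and the maximality of $e_\max$, is exactly what turns the putatively signed expression into a non-negative one in the correct monoid; with this in hand, the vanishing/surjectivity dichotomy follows from standard sharpness arguments.
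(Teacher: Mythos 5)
Your construction and verification follow the paper's proof essentially verbatim: the paper also defines $\tf_{\fU}$ by composing $\tf_{\fU}^{\flat}$ of Lemma \ref{lem:torsor-twist} with the structure morphism $\cM_{\cC_{\fU}} \to \cO_{\cC_{\fU}}$ and then reads off vanishing/surjectivity from the characteristic $e_{\max} - \bar{f}_{\fU}^{\flat}(\delta_{\infty})$ on generic points, markings, and nodes. Your extra nodal bookkeeping (the rewriting via the nodal equation and the exclusion of positive-contact-order nodes between two maximally degenerate components) is a correct elaboration of the step the paper states tersely, not a different argument.
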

\begin{proof}
The morphism $\tf_{\fU}$ is obtained by composing $\tf_{\fU}^{\flat}$ as in Lemma \ref{lem:torsor-twist} with the structural morphism $\cM_{C_{\fU}} \to \cO_{C_{\fU}}$, and using the corresponding line bundles $\cT_{\infty} \subset \cO_{\cA}(-\infty)$ and $\cT_{\max} \subset \bL_{\max}$.

Consider a non-maximally degenerate component $Z$ with degeneracy $e_Z$. Then over the generic point of $Z$ we have $$e_{\max} - \bar{f}_{\fU}^{\flat}(\delta_{\infty})=e_{\max} - e_Z \in \cM_{\fU}\setminus \{ 0 \}$$ as $e_{\max}$ is the maximal degeneracy. Since the target of $\tf_{\fU}$ is the trivial line bundle, we conclude that $\tf_{\fU}$ vanishes over the non-maximally degenerate components.

Then observe that $e_{\max} - \bar{f}_{\fU}^{\flat}(\delta_{\infty}) = 0$ in $\oM_{C_{\fU}}$ over the maximally degenerate components except those nodes joining maximally degenerate components with non-maximally degenerate components.
\end{proof}

%-------------------------------------------------
\subsection{A partial expansion}\label{ss:partial-expansion}

Denote by $\cN_{\max} \subset \cM_{\fU}$ the sub-log structure
generated by $\cT_{\max}\subset \cM_{\fU}$.
Then $e_{\max} \subset \Gamma(\fU, \oM_{\fU})$ is a global section
whose image in $\oN_{\max}$ is a local generator.

Denote by $\cA_{\max} := \cA$ the log stack with the boundary divisor $\Delta$ given by the origin. The inclusion $\cN_{\max} \hookrightarrow \cM_{\fU}$ defines a morphism of log stacks $\fm\colon \fU \to \cA_{\max}$ with $\fm^{-1}(\Delta) = \Delta_{\max}$.

Let
$
\fb\colon \cA^{e} \to \cA\times\cA_{\max}
$
be the blow-up of $\infty_{\cA}\times\Delta$ with the naturally induced log structure. Indeed, there a unique open dense point in $\cA^{e}$ with the trivial log structure whose complement is a simple normal crossings divisor in $\cA^e$.  The divisorial log structure associated to this simple normal crossings divisor is $\cM_{\cA^e}$. Furthermore $\fb$ is log \'etale.

Let $\cE_{\fb} \subset \cA^e$ be the exceptional divisor of $\fb$ and
$\infty_{\cA^{e}} \subset \cA^e$ be the proper transform of
$\infty_{\cA}\times\cA_{\max} \subset \cA\times \cA_{\max}$.

\begin{lemma}\label{lem:lift-max-deg}
Suppose the contact orders in $\beta'$ are all trivial. Then there is a commutative diagram of log stacks
\[
\xymatrix{
\cC_{\fU} \ar[rr]^{f^{e}_{\fU}} \ar[rd]_{f_{\fU}\times\fm} && \cA^{e} \ar[ld]^{\fb} \\
 &\cA\times\cA_{\max}&
}
\]
such that
\begin{enumerate}
 \item The inverse image $(f^e_{\fU})^{-1}(\infty_{\cA^e})$ is empty.
 \item For any geometric point $w \to \fU$, an irreducible component $Z \subset \cC_{w}$ over $w$ dominates $\cE_{\fb}$ via $f^e_{\fU}$ if and only if $Z$ is maximally degenerate with non-trivial degeneracy.
\end{enumerate}
\end{lemma}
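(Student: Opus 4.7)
The plan is to construct the lift $f^e_\fU$ by applying the universal property of the log blow-up $\fb$ to $f_\fU \times \fm$, and then to read off properties (1) and (2) from a toric analysis of which chart of $\cA^e$ the lift factors through. The key input is already established in the proof of Lemma \ref{lem:torsor-twist}: under the assumption that contact orders in $\beta'$ are trivial, one has $e_{\max} - \bar{f}^{\flat}_\fU(\delta_\infty) \in \oM_{\cC_\fU}$ everywhere on $\cC_\fU$, i.e., $\bar{f}^{\flat}_\fU(\delta_\infty)$ divides $e_{\max}$ in the characteristic sheaf of $\cC_\fU$.

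The first step is to record the two toric charts of $\cA^e$ explicitly. Writing the characteristic monoid $\oM_{\cA \times \cA_{\max}} = \NN \cdot \delta_\infty \oplus \NN \cdot \delta_{\max}$ with its two global generators from the two factors, $\fb$ is the toric star-subdivision along the ray dual to $\delta_\infty + \delta_{\max}$. The two resulting maximal cones give open charts $\cU_\pm \subset \cA^e$: in $\cU_+$ the characteristic monoid is generated by $\delta_\infty$ and $\delta_{\max} - \delta_\infty$ (expressing $\delta_\infty \mid \delta_{\max}$), while in $\cU_-$ it is generated by $\delta_{\max}$ and $\delta_\infty - \delta_{\max}$. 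The proper transform $\infty_{\cA^e}$ corresponds to the original ray dual to $\delta_\infty$, which survives only as a face of $\cU_-$; the exceptional divisor $\cE_\fb$ corresponds to the new ray, lies in both charts, and is cut out by $\delta_\infty = 0$ in $\cU_+$ and $\delta_{\max} = 0$ in $\cU_-$.

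Next comes the lift itself. The morphism $f_\fU \times \fm$ pulls back $\delta_\infty$ and $\delta_{\max}$ to $\bar{f}^{\flat}_\fU(\delta_\infty)$ and $\pi_\fU^{*} e_{\max}$ respectively; since the former divides the latter globally, the universal property of the log blow-up (equivalently, of the toric subdivision) produces a unique lift $f^e_\fU \colon \cC_\fU \to \cA^e$ factoring through $\cU_+$, with $(f^e_\fU)^{\flat}(\delta_{\max} - \delta_\infty) = \pi_\fU^{*} e_{\max} - \bar{f}^{\flat}_\fU(\delta_\infty)$. Property (1) is then immediate: $\infty_{\cA^e}$ is disjoint from $\cU_+$, hence $(f^e_\fU)^{-1}(\infty_{\cA^e}) = \emptyset$.

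For property (2), I would argue generic point by generic point. Let $Z \subset \cC_w$ be an irreducible component of a geometric fiber with generic point $\eta_Z$. By Section \ref{sss:map-at-generic-pt}, $\bar{f}^{\flat}_\fU(\delta_\infty)$ restricts to the degeneracy $e_Z$ at $\eta_Z$, while $\pi_\fU^{*} e_{\max}|_{\eta_Z}$ is the unique maximal degeneracy of the fiber. Consequently $f^e_\fU(Z)$ is contained in $\cE_\fb$ iff $e_Z > 0$ (i.e., $Z$ is degenerate), and it dominates $\cE_\fb$ iff in addition $\delta_{\max} - \delta_\infty$ pulls back to a unit at $\eta_Z$, i.e., $e_{\max} - e_Z = 0$ in $\oM_{\cC_\fU,\eta_Z}$; the two conditions together say exactly that $Z$ is maximally degenerate with non-trivial degeneracy. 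The main obstacle I anticipate is making the universal property of the log blow-up rigorous in the stacky and logarithmic setting, matching characteristic-sheaf divisibility to the monoidal universal property precisely; once the correct chart is pinned down, the rest reduces to a direct reading off of degeneracies at generic points.
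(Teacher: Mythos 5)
Your proposal is correct and follows essentially the same route as the paper: the lift is obtained from the universal property of the log blow-up, with the needed (local) principality/divisibility supplied by Lemma \ref{lem:torsor-twist}, and properties (1) and (2) are read off from the behaviour of $e_{\max}-\bar{f}^{\flat}_{\fU}(\delta_{\infty})$ at generic points of the fibers. Your explicit identification of the chart $\cU_+$ (so that (1) follows because $\infty_{\cA^e}$ misses that chart) is just a cleaner packaging of the paper's observation that $\alpha(e'_{\max}-\delta')$ has no poles, since both amount to $e_{\max}-\bar{f}^{\flat}_{\fU}(\delta_{\infty})$ lying in $\oM_{\cC_{\fU}}$ rather than merely in $\oM^{gp}_{\cC_{\fU}}$.
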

\begin{proof}
We first construct the morphism $f^e_{\fU}$. Denote by
$$\cK \subset \cM_{\cA\times\cA_{\max}} := \cM_{\cA}\oplus_{\cO^*}\cM_{\cA_{\max}}$$
the log ideal generated by $\cT_{\max}$ and $\cT_{\infty}$.
Consider the log ideal
$(f_{\fU}\times \fm)^{\bullet}\cK \subset \cM_{\cC_{\fU}}$ generated by $(f_{\fU}\times \fm)^{-1}\cK$. Thus $(f_{\fU}\times \fm)^{\bullet}\cK$ is the log
ideal generated by $\cT_{\max}$ and
$f_{\fU}^{\flat}(\cT_{\infty})$.
Since $\fb$ is the log blow-up of $\cK$, to show that
$f_{\fU}\times \fm$ lifts to $f^e$, it suffices to show that
$(f_{\fU}\times \fm)^{\bullet}\cK$ is locally principal, which follows
from Lemma \ref{lem:torsor-twist}.

Now consider geometric points $w \to \Delta_{\max}$ and $x \to \cC_w$. Denote by $e'_{\max}$ and $\delta'$ the corresponding local generators of $\cT_{\max}$ and $f_{\fU}^{\flat}(\cT_{\infty})$ in a neighborhood $W \subset \cC_{w}$ of $x$ respectively. Then by Lemma \ref{lem:torsor-twist} we have $e'_{\max} - \delta' \in \cM_{C_{\fU}}$. Let $\alpha(e'_{\max} - \delta') \in \cO_{W}$ be the corresponding image.

By construction of $\fb$, locally in the smooth topology we can choose a coordinate of $\cE_{\fb}\setminus \infty_{\cA^{e}}$ mapping to $\alpha(e'_{\max} - \delta')$ via $(f^e_{\fU})^*$. Thus $f^e_{\fU}(W)$ dominates $\cE_{\fb} \setminus \infty_{\cA^{e}}$ if and only if $\alpha(e'_{\max} - \delta') \neq 0$ on $W$. Statement (2) follows from the fact that $\alpha(e'_{\max} - \delta')$ vanishes only along non-maximally degenerate components of $\cC_w$.

To see (1), observe that $(f^e_{\fU,w})^{-1}(\infty_{\cA^e})$ is supported on the poles of the section $\alpha(e'_{\max} - \delta')$ over the maximally degenerate components of $\cC_w$. But $\alpha(e'_{\max} - \delta')$ has no poles by Lemma \ref{lem:torsor-twist}.
\end{proof}

We give another description of \eqref{equ:log-twist}. Since $\cE_{\fc} := \pi^*_{\fU}\Delta_{\max} - \cE_{\fb}$ is effective, there is a natural inclusion $(f^e_{\fU})^*\cO(\cE_{\fb}) \to \pi^*_{\fU}\cO(\Delta_{\max})$, hence
\begin{equation}\label{equ:log-twisted-expansion}
\pi^*_{\fU}\bL_{\max}\otimes (f^e_{\fU})^*\cO(\cE_{\fb})  \cong \pi^*_{\fU}\cO(-\Delta_{\max})\otimes (f^e_{\fU})^*\cO(\cE_{\fb})  \to \cO_{\cC_{\fU}}.
\end{equation}

\begin{lemma}\label{lem:compare-universal-log-twist}
The two morphisms \eqref{equ:log-twisted-expansion} and \eqref{equ:log-twist} are identical.
\end{lemma}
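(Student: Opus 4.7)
The plan is to check the equality locally after identifying the two source line bundles. From $\fb^{*}p_{1}^{*}\cO(\infty_{\cA}) \cong \cO_{\cA^{e}}(\infty_{\cA^{e}}+\cE_{\fb})$ combined with Lemma \ref{lem:lift-max-deg}(1), pulling back along $f^{e}_{\fU}$ trivializes the $\infty_{\cA^{e}}$ contribution and yields a canonical isomorphism $f^{*}_{\fU}\cO(\infty_{\cA}) \cong (f^{e}_{\fU})^{*}\cO(\cE_{\fb})$, which identifies the sources of (\ref{equ:log-twist}) and (\ref{equ:log-twisted-expansion}).

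Next, I would work in a local \'etale chart on $\cA^{e}$ avoiding $\infty_{\cA^{e}}$ (allowed by Lemma \ref{lem:lift-max-deg}(1)), with coordinates $(\tilde{x},\tilde{y})$ where $\cE_{\fb} = \{\tilde{x}=0\}$ and the proper transform $\Delta'$ of $\cA\times\Delta$ is $\{\tilde{y}=0\}$, and characteristic generators $\delta_{\cE_{\fb}}, \delta_{\Delta'}$ satisfying $\bar{\fb}^{\flat}(p_{1}^{*}\delta_{\infty}) = \delta_{\cE_{\fb}}$ and $\bar{\fb}^{\flat}(p_{2}^{*}\delta_{\max}) = \delta_{\cE_{\fb}}+\delta_{\Delta'}$ by the explicit log blow-up structure. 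On this chart, $\bar{f}^{\flat}_{\fU}(\delta_{\infty})$ and $\pi^{*}_{\fU}e_{\max}$ pull back to $(f^{e}_{\fU})^{*}\delta_{\cE_{\fb}}$ and $(f^{e}_{\fU})^{*}(\delta_{\cE_{\fb}}+\delta_{\Delta'})$ respectively, so that the image of $e_{\max}-\bar{f}^{\flat}_{\fU}(\delta_{\infty})$ under the structure map $\alpha\colon \cM_{\cC_{\fU}}\to\cO_{\cC_{\fU}}$ equals $(f^{e}_{\fU})^{*}\tilde{y}$. By Lemma \ref{lem:torsor-twist}, (\ref{equ:log-twist}) sends the generator $e_{\max}\otimes(-\delta_{\infty})$ to precisely this value. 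On the other hand, using $\fm\circ\pi_{\fU} = p_{2}\circ\fb\circ f^{e}_{\fU}$, we obtain $\pi^{*}_{\fU}\Delta_{\max} = (f^{e}_{\fU})^{*}(\cE_{\fb}+\Delta')$ as Cartier divisors, so the inclusion $(f^{e}_{\fU})^{*}\cO(\cE_{\fb})\hookrightarrow\pi^{*}_{\fU}\cO(\Delta_{\max})$ is multiplication by $(f^{e}_{\fU})^{*}\tilde{y}$ on the chart. Composing with the canonical pairing $\pi^{*}_{\fU}\bL_{\max}\otimes\pi^{*}_{\fU}\cO(\Delta_{\max})\to\cO_{\cC_{\fU}}$ recovers (\ref{equ:log-twisted-expansion}), and sends the corresponding generator to $(f^{e}_{\fU})^{*}\tilde{y}$ as well.

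The main obstacle is the bookkeeping between the additive convention on characteristic monoids and the multiplicative convention on $\cO^{*}$-torsors and their associated line bundles, together with ensuring that the two generators of the source line bundles correspond under the identification of the first paragraph. Once the local chart on $\cA^{e}$ is fixed and generators matched, both computations reduce to the same explicit section $(f^{e}_{\fU})^{*}\tilde{y}$, making the equality transparent.
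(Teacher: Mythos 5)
Your proposal is correct and follows essentially the same route as the paper: the key step in both is the divisor relation $\fb^*[\infty_{\cA}\times\cA_{\max}] = [\cE_{\fb}] + [\infty_{\cA^e}]$ together with Lemma \ref{lem:lift-max-deg}(1), which identifies $f_{\fU}^*\cO(\infty_{\cA})$ with $(f^e_{\fU})^*\cO(\cE_{\fb})$ and hence the sources of \eqref{equ:log-twist} and \eqref{equ:log-twisted-expansion}. Your additional local-chart computation on $\cA^e\setminus\infty_{\cA^e}$, showing both morphisms are given by the pullback of the equation of the proper transform of $\cA\times\Delta$, just makes explicit the section-matching that the paper's proof leaves implicit.
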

\begin{proof}
Since $\fb^*[\infty_{\cA}\times\cA_{\max}] = [\cE_{\fb}] + [\infty_{\cA^e}]$, pulling back \eqref{equ:log-twist} via $\fb$, we have
\[
\pi^*_{\fU}\bL_{\max}\otimes (f^e_{\fU})^*\cO(\cE_{\fb} + \infty_{\cA^e}) \to \cO_{\cC_{\fU}}.
\]
By Lemma \ref{lem:lift-max-deg}, the above morphism becomes
\[
\pi^*_{\fU}\bL_{\max}\otimes (f^e_{\fU})^*\cO(\cE_{\fb}) \to \cO_{\cC_{\fU}},
\]
which is (\ref{equ:log-twisted-expansion}).
\end{proof}

%%%%%%%%%%%%%%%%%%%%Logarithmic fields
\section{Logarithmic fields}
\label{sec:logfields}

%%%%%%%%Twisted log maps

%-------------------------------------------------------------------
\subsection{$r$-spin curves and their moduli}
The case of stable $r$-spin curves has been studied in \cite{Ja98, Ja00, AbJa03, Ch08}. Following the strategy of \cite{AbJa03}, we extend $r$-spin structures to twisted pre-stable curves.

\subsubsection{$r$-spin structures}
\begin{definition}\label{def:rspin}
An $n$-marked, genus $g$, $r$-spin curve over a scheme $S$ consists of the data
\[
(\cC \to S, \cL, \cL^{r} \cong \omega^{\log}_{\cC/S})
\]
where
\begin{enumerate}
 \item $\cC \to S$ is a family of genus $g$, $n$-marked twisted pre-stable curves.

 \item $\cL$ is a representable line bundle over $\cC$ (in the sense that the associated morphism $\cL\colon \cC \to B\Gm$ to the classifying stack is representable) with a given isomorphism $\cL^r \cong \omega^{\log}_{\cC/S}$ where $\omega^{\log}_{\cC/S}$ is the log cotangent bundle of the log smooth morphism $\cC \to S$.
\end{enumerate}
 The pullback of $r$-spin curves is defined in the usual sense. For simplicity, we may write $(\cC \to S, \cL)$ for an $r$-spin curve over $S$.
\end{definition}

\begin{notation}
For the purposes of this paper, we would like to view the family of curves $\cC \to S$ as a family of log curves equipped with the canonical log structure pulled-back from the stack of log curves as in Section \ref{sss:curve-stack}. This avoids adding extra underlines to both $\cC$ and $S$.
\end{notation}

\begin{notation}
Unlike the usual notation in logarithmic geometry, the log cotangent bundle of $\cC\to S$ in this paper is denoted by $\omega^{\log}_{\cC/S}$ rather than $\omega_{\cC/S}$. We reserve the notation $\omega_{\cC/S}$ for the dualizing line bundle of the family $\cC \to S$. This choice of notations is compatible with the commonly used notation in FJRW theory.
\end{notation}

\subsubsection{Monodromy representation along markings and nodes}

Consider an $r$-spin curve $(\cC \to S, \cL)$ and its $i$-th marking
$\sigma_i \subset \cC$ with the cyclic group $\mu_{r_i}$.
As the line bundle $\cL$ is representable, the action of $\mu_{r_i}$
on $\cL|_{\sigma_i}$ factors through a group homomorphism
\[
\gamma_i\colon \mu_{r_i} \hookrightarrow \Gm
\]
which is called the {\em monodromy representation} along $\sigma_{i}$.

In this paper, we use $\vgamma = (\gamma_i)_{i=1}^{n}$ to denote the collection of monodromy representations along the $n$ marked points. This is a discrete invariant of $r$-spin curves.

\bigskip

Consider a geometric point $q \to \cC$ which is a node. \'Etale locally around $q$, we have the model (\ref{equ:node-local}). Denote by $\cC_{q+}$ and $\cC_{q-}$ the two components intersecting at $q$ with respect to the two coordinates $x$ and $y$ respectively. We obtain two monodromy representations
\[
\gamma_{q\pm}\colon \mu_{r} \to \Gm
\]
of $\cL|_{q}$ at $q \in \cC_{q\pm}$ respectively. The representability of $\cL$ implies that both $\gamma_{+}$ and $\gamma_{-}$ are injective. The balanced condition of $\cC$ at the node $q$ implies that the composition
\[
\mu_r \stackrel{\gamma_{+}\times\gamma_{-}}{\longrightarrow} \Gm\times\Gm \longrightarrow \Gm
\]
is trivial, where the second arrow is the multiplication morphism.

\subsubsection{$r$-spin structure as twisted stable maps}
Given an $r$-spin curve $(\cC \to S, \cL)$ we obtain a unique commutative diagram:
\begin{equation}\label{diag:spin-map-correspondence}
\xymatrix{
 && (C, \omega^{\log}_{C/S})^{1/r} \ar[d] \\
\cC \ar[rru] \ar[rr] \ar[d] && C \ar[d] \\
S_1 \ar[rr] && S_2.
}
\end{equation}
where
\begin{enumerate}
\item $\cC \to C$ is the coarsification. Here we equip both $\cC \to S_1$ and $C \to S_2$ with their canonical log structures as a family of log curves. This is a log \'etale morphism. Furthermore, the bottom morphism $S_1 \to S_2$ induces the identity morphism of the underlying schemes $\underline{S_1} = \underline{S_2} = \underline{S}$, see \cite[Theorem~1.9]{Ol07}.

\item $(C, \omega^{\log}_{C/S})^{1/r} \to C$ is strict and \'etale with the underlying morphism given by taking the $r$-th root stack of $\omega^{\log}_{\cC/S}$ over $C$.

\item $\cC \to (C, \omega^{\log}_{C/S})^{1/r}$ is induced by the $r$-spin structure $\cL^r \cong \omega^{\log}_{\cC/S}$.
\end{enumerate}

Our description of the $r$-spin structure is similar to the case of \cite[Section~1.5]{AbJa03} except that we equip the two families of curves with their canonical log structure for later use.

Conversely, by pulling back the universal $r$-th root along $\cC \to (C, \omega^{\log}_{C/S})^{1/r}$ we obtain an $r$-spin bundle over $\cC$. To summarize, we have

\begin{lemma}\label{lem:spin-twist-stable-map}
The data of an $r$-spin curve $(\cC \to S, \cL)$ is equivalent to the diagram (\ref{diag:spin-map-correspondence}).
\end{lemma}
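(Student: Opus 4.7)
The plan is to establish the equivalence via the universal property of root stacks, constructing maps in both directions and checking they are mutually inverse. Throughout I would invoke \cite[Theorem~1.9]{Ol07} to identify $\underline{S}_1 = \underline{S}_2$, denoted simply $\underline{S}$, and record that the coarse moduli map $\pi\colon \cC \to C$ is log \'etale, so that log cotangent bundles are preserved under pull-back: $\omega^{\log}_{\cC/S_1} \cong \pi^*\omega^{\log}_{C/S_2}$.

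For the forward direction, starting from $(\cC \to S, \cL)$, I would form the coarsification $\cC \to C$, which is canonical. Composing the $r$-spin isomorphism $\cL^r \cong \omega^{\log}_{\cC/S_1}$ with the pull-back identification above yields an isomorphism $\cL^r \cong \pi^*\omega^{\log}_{C/S_2}$. By the universal property of the $r$-th root stack $(C,\omega^{\log}_{C/S_2})^{1/r} \to C$, this pair $(\cL, \phi)$ corresponds uniquely to a $C$-morphism $\cC \to (C,\omega^{\log}_{C/S_2})^{1/r}$, completing diagram \eqref{diag:spin-map-correspondence}.

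For the reverse direction, given the diagram, I would pull back the universal $r$-th root of $\omega^{\log}_{C/S_2}$ along the morphism $\cC \to (C,\omega^{\log}_{C/S_2})^{1/r}$; this produces a line bundle $\cL$ on $\cC$ together with a tautological isomorphism $\cL^r \cong \pi^*\omega^{\log}_{C/S_2} \cong \omega^{\log}_{\cC/S_1}$. That the two constructions are mutually inverse is immediate from the universal property of root stacks, since a $C$-morphism to the root stack is recovered bijectively from its associated line-bundle-plus-isomorphism data.

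The main subtle point I anticipate is verifying the \emph{representability} of $\cL$ required by Definition~\ref{def:rspin}, equivalently that $\cC \to (C,\omega^{\log}_{C/S_2})^{1/r}$ is a representable morphism of stacks. At each marking and each balanced node, the stacky structure of $\cC$ must match the character by which the local $\mu_{r_i}$-action on $\cL$ operates; at a node the two branch monodromies must be inverses, as noted in the discussion preceding the lemma. Both conditions are checked directly in the \'etale-local models \eqref{equ:node-local} and \eqref{equ:marking-local}, and this is the one place where the argument goes beyond a direct invocation of the root-stack universal property.
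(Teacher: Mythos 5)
Your proposal is correct and follows essentially the same route as the paper, which likewise constructs the forward direction by invoking the universal property of the $r$-th root stack (using $\omega^{\log}_{\cC/S}\cong\omega^{\log}_{C/S}|_{\cC}$) and the reverse direction by pulling back the universal $r$-th root, in the spirit of \cite[Section~1.5]{AbJa03}. Your explicit attention to the representability of $\cL$, i.e.\ of the morphism $\cC \to (C,\omega^{\log}_{C/S})^{1/r}$, checked in the local models \eqref{equ:node-local} and \eqref{equ:marking-local}, is a point the paper only treats implicitly (it surfaces later in the proof of Proposition~\ref{prop:curve-stack}), so it is a welcome clarification rather than a deviation.
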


\subsubsection{The stack of $r$-spin structures}
Denote by $\fM_{g,\vgamma}^{1/r}$ the stack of genus $g$, $n$-marked, $r$-spin curves with monodromy data $\vgamma$ along markings. It can be viewed a fibered category over the category of usual schemes as the log structures on the curves are the canonical ones.

\begin{proposition}\label{prop:curve-stack}
  The stack $\fM_{g,\vgamma}^{1/r}$ is a smooth, log smooth algebraic
  stack locally of finite presentation.
  Furthermore, the tautological morphism removing the $r$-spin
  structures
  \[
    \fM_{g,\vgamma}^{1/r} \to \fM^{\tw}_{g, n}
  \]
  is locally of finite type, quasi-separated, strict, and (log) \'etale.
\end{proposition}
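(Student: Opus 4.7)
\textbf{Proof plan for Proposition \ref{prop:curve-stack}.}
The plan is to realize $\fM_{g,\vgamma}^{1/r}$ as a $2$-fiber product involving the relative Picard stack and the $r$-th power map on it, following the strategy of~\cite{AbJa03}, and then transfer algebraicity, smoothness and log \'etaleness from $\fM^{\tw}_{g,n}$. First I would use Lemma~\ref{lem:spin-twist-stable-map} to recast an object of $\fM_{g,\vgamma}^{1/r}$ as a twisted curve $\cC\to S$ equipped with a line bundle $\cL$ on $\cC$ and an isomorphism $\cL^{r}\cong \omega^{\log}_{\cC/S}$, where $\cL$ is representable and has the prescribed monodromies $\vgamma$ at the markings. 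All the log structures involved are the canonical ones pulled back from the universal curves, so the morphism of stacks removing $\cL$ is automatically strict.

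Next, let $\pi\colon \cC^{\mathrm{univ}}\to\fM^{\tw}_{g,n}$ be the universal twisted curve and set $\mathscr{P} := \mathrm{Pic}(\cC^{\mathrm{univ}}/\fM^{\tw}_{g,n})$. By a standard extension of the algebraicity of the relative Picard stack of a nodal family to the twisted setting (see~\cite{AbJa03}), $\mathscr{P}$ is a smooth algebraic stack locally of finite presentation over $\fM^{\tw}_{g,n}$. The log cotangent bundle $\omega^{\log}_{\cC^{\mathrm{univ}}/\fM^{\tw}_{g,n}}$ determines a section $\omega\colon\fM^{\tw}_{g,n}\to\mathscr{P}$, and the $r$-th power morphism $[r]\colon\mathscr{P}\to\mathscr{P}$ is representable and \'etale in characteristic zero, since its kernel is the \'etale group scheme of relative $r$-torsion $\mathscr{P}[r]$. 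I would then form the fiber product
\[
\widetilde{\fM} \;:=\; \mathscr{P}\;\times_{\mathscr{P}}\;\fM^{\tw}_{g,n},
\]
where the first factor maps to $\mathscr{P}$ via $[r]$ and the second via $\omega$. Thus $\widetilde{\fM}\to\fM^{\tw}_{g,n}$ is representable, \'etale, quasi-separated, and locally of finite presentation. The representability of $\cL$ together with a choice of monodromy datum $\vgamma$ is an open and closed condition on $\widetilde{\fM}$ (determined by the restriction of $\cL$ to the marked gerbes $\sigma_i$), and cutting this out yields $\fM_{g,\vgamma}^{1/r}$ as a union of open and closed substacks of $\widetilde{\fM}$.

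Finally I would transfer the desired properties. Local finiteness of type, quasi-separatedness, and strictness are inherited from $\widetilde{\fM}\to\fM^{\tw}_{g,n}$. Smoothness of the stack $\fM_{g,\vgamma}^{1/r}$ follows from smoothness of $\fM^{\tw}_{g,n}$ (Section~\ref{sss:curve-stack}) together with the \'etaleness of the morphism on underlying stacks. Since the morphism is strict and \'etale between fine log stacks, it is log \'etale; composing with the log smooth structure morphism of $\fM^{\tw}_{g,n}$ gives the log smoothness of $\fM_{g,\vgamma}^{1/r}$.

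The main obstacle I anticipate is the bookkeeping needed to match connected components of $\widetilde{\fM}$ with valid monodromy data. Concretely, the existence of an $r$-th root of $\omega^{\log}_{\cC/S}$ on a twisted curve imposes compatibility between the integer $r$, the stacky orders $r_{i}$ at markings and nodes, and the degrees along irreducible components; together with the representability requirement, these conditions carve out exactly the open and closed substack corresponding to $\vgamma$. The remaining ingredients---algebraicity and smoothness of the relative Picard stack for families of twisted curves, and \'etaleness of $[r]$ on it---are now standard and can be invoked from~\cite{AbJa03}.
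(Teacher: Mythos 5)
Your route is correct in substance but genuinely different from the paper's. The paper does not go through the Picard stack: it uses Lemma \ref{lem:spin-twist-stable-map} to identify an $r$-spin structure with a (representable) section of the \'etale projection $\tilde{\fC}=(\fC,\omega^{\log}_{\fC/\fM^{\tw}_{g,n}})^{1/r}\to\fC$, realizes $\fM^{1/r}_{g,\vgamma}$ as an open substack of the section stack $\pi_*\tilde{\fC}$, invokes \cite[Theorem~1.3]{HR14} for algebraicity, local finiteness of type and quasi-separatedness, and then proves \'etaleness of $\fM^{1/r}_{g,\vgamma}\to\fM^{\tw}_{g,n}$ by the infinitesimal lifting criterion, which is immediate because $\tilde{\fC}\to\fC$ is \'etale. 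Your construction instead presents $\fM^{1/r}_{g,\vgamma}$ inside the $2$-fiber product of the $r$-th power map $[r]$ on the relative Picard stack $\mathscr{P}$ against the section defined by $\omega^{\log}$. This is a legitimate alternative: it makes the deformation theory transparent (the derivative of $[r]$ is multiplication by $r$ on $H^1(\cO)$, an isomorphism in characteristic zero) and exhibits the moduli as a pseudo-torsor under $r$-torsion of $\mathscr{P}$; the price is that you must import algebraicity, smoothness and quasi-separatedness of the Picard stack of a family of twisted curves (Brochard, or again \cite{HR14} via $\Hom(\fC,B\Gm)$ --- \cite{AbJa03} alone is a thin reference for this), whereas the paper reuses only the Hom-/section-stack machinery it already needs elsewhere and gets \'etaleness for free from the \'etaleness of the root-stack projection.

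Two intermediate claims need repair, though neither affects the final statement. First, with $\mathscr{P}$ the Picard \emph{stack}, the map $[r]$ is not representable: on automorphism groups it is $\lambda\mapsto\lambda^r$ on $\Gm$, so its relative inertia contains $\mu_r$; consequently $\widetilde{\fM}\to\fM^{\tw}_{g,n}$ is not representable either (indeed $\fM^{1/r}_{g,\vgamma}\to\fM^{\tw}_{g,n}$ has $\mu_r$ in its relative inertia, and Proposition \ref{prop:curve-stack} deliberately does not claim representability). Note also that replacing $\mathscr{P}$ by the rigidified Picard space to force representability would break the construction, since the fiber product would no longer remember the pair $(\cL,\cL^r\cong\omega^{\log})$ on the nose; and the inference ``kernel \'etale $\Rightarrow$ $[r]$ \'etale'' should be replaced by the deformation-theoretic argument (or flatness plus unramifiedness), which is where characteristic zero genuinely enters. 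Second, the locus you cut out of $\widetilde{\fM}$ is determined by representability of $\cL$ at \emph{all} stacky points, nodes included, not only by the restriction to the marked gerbes $\sigma_i$; this locus is open (the non-faithful locus in $\fC$ is closed and $\fC$ is proper over the base) but in general not closed, since extra stacky nodes can appear in special fibers. Openness, together with the locally constant monodromy data $\vgamma$ at the markings, is all you need, so the argument survives with these corrections.
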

\begin{proof}
  Denote by $\pi\colon \fC \to \fM^{\tw}_{g, n}$ the universal curve,
  and $\fC \to C$ the universal coarse moduli morphism.
  Also, denote by $(\fC, \omega_{\fC/\fM^{\tw}_{g, n}})^{1/r}$ the
  root stack over $\fC$ parameterizing $r$-th roots of
  $\omega_{\fM^{\tw}_{g, n}}^{\log}$.
  As
  $\omega_{C/\fM^{\tw}_{g, n}}^{\log}|_{\fC} \cong
  \omega_{\fC/\fM^{\tw}_{g, n}}^{\log}$, we observe that
  $\tilde{\fC} := (C, \omega_{C/\fM^{\tw}_{g,
      n}}^{\log})^{1/r}\times_{C}\fC \cong (\fC,
  \omega_{\fC/\fM^{\tw}_{g, n}}^{\log})^{1/r}$ with an \'etale
  projection $\tilde{\fC} \to \fC$.

  Consider $S \to \fM^{\tw}_{g, n}$ with the pullback family
  $\tilde{\fC}_S \to \fC_S \to S$.
  By the description of \eqref{diag:spin-map-correspondence}, giving
  an $r$-spin bundle $\cL_S$ over $\fC_S$ is equivalent to giving a
  section $s$ of the projection $\tilde{\fC}_S \to \fC_S$ such that
  the composition
  $\fC_S \to \tilde{\fC}_S \to (C, \omega_{C/\fM^{\tw}_{g,
      n}}^{\log})^{1/r}_S$ is representable.
  Thus the stack $\fM_{g,\vgamma}^{1/r}$ is an open substack of the
  stack $\pi_*\tilde{\fC}$ parameterizing sections of the morphism
  $\tilde{\fC} \to \fC$ over $\fM^{\tw}_{g, n}$ with discrete data
  $\vgamma$.
  By \cite[Theorem~1.3]{HR14}, the stack $\fM_{g,\vgamma}^{1/r}$ is
  algebraic, and the tautological morphism
  $\fM_{g,\vgamma}^{1/r} \to \fM^{\tw}_{g, n}$ is locally of finite
  type and quasi-separated.

  As $\fM^{\tw}_{g, n}$ carries the canonical locally free log
  structure, it remains to show that the morphism
  $\fM_{g,\vgamma}^{1/r} \to \fM^{\tw}_{g, n}$ is \'etale in the usual
  sense.
  We check it using the infinitesimal lifting property.

Let $A \to B$ be a small extension of Artin rings, and consider the commutative diagram of solid arrows
\[
\xymatrix{
\spec B \ar[r] \ar[d] & \fM_{g,\vgamma}^{1/r} \ar[d] \\
\spec A \ar[r] \ar@{-->}[ru]&  \fM^{\tw}_{g, n}
}
\]
It suffices to show that there is a unique dashed arrow making the above diagram commutative. Pulling back the universal families, it remains to construct the section given by the dashed arrows fitting in the commutative diagram of solid arrows
\[
\xymatrix{
\tilde{\fC}_{\spec B} \ar[r] \ar[d] & \tilde{\fC}_{\spec A} \ar[d] \\
\fC_{\spec B} \ar[r] \ar@/^1pc/[u] & \fC_{\spec A} \ar@/_1pc/@{-->}[u]
}
\]
But since the vertical arrows are \'etale, by the infinitesimal lifting of \'etale morphisms, such a dashed arrow exists and is unique.
\end{proof}

The following is an analogue of \cite[Corollary 2.2.2]{AbJa03}

\begin{corollary}\label{cor:r-spin-finite}
The tautological morphism $\fM^{1/r}_{g,\vgamma} \to \fM_{g,n}$ is proper and quasi-finite.
\end{corollary}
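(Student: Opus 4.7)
The plan is to factor the forgetful morphism as
\[
  \fM^{1/r}_{g,\vgamma} \xrightarrow{\ \alpha\ } \fM^{\tw,\star}_{g,n,\vr} \xrightarrow{\ \beta\ } \fM_{g,n},
\]
where $\vr = (r_i)_{i=1}^{n}$ is determined by $\vgamma$ and $\fM^{\tw,\star}_{g,n,\vr} \subset \fM^{\tw}_{g,n}$ is the open and closed substack of twisted curves with marking orders $\vr$ whose node orders $r_q$ all divide $r$. This restriction is legitimate because, for any $r$-spin curve $(\cC, \cL)$ with monodromy data $\vgamma$, the representability of $\cL$ combined with $\cL^{r}\cong \omega^{\log}_{\cC/S}$ forces the local monodromy $\gamma_{q\pm}\colon \mu_{r_q}\hookrightarrow \Gm$ at every node to factor through $\mu_r$, so that $r_q$ necessarily divides $r$; hence $\alpha$ lands inside $\fM^{\tw,\star}_{g,n,\vr}$.

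For the second arrow $\beta$, I would use that the coarse-moduli morphism $\cC \to C$ is projective and representable, and that over a fixed pre-stable curve $C$ there are only finitely many twisted enhancements once the marking orders are fixed and the node orders are required to divide $r$. This makes $\beta$ proper and quasi-finite.

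For the first arrow $\alpha$, Proposition~\ref{prop:curve-stack} already supplies strictness, log étaleness (hence étaleness), and local finiteness of type. The fibers of $\alpha$ over a geometric point $\cC$ form a torsor under the finite group of $r$-torsion in $\mathrm{Pic}(\cC)$, so $\alpha$ is quasi-finite. The remaining point is properness, which I would verify by the valuative criterion: given a DVR $R$ with fraction field $K$, an $r$-spin curve $(\cC_K, \cL_K)$ over $\spec K$, and an extension of the underlying twisted curve to $\cC_R$ inside $\fM^{\tw,\star}_{g,n,\vr}$, the line bundle $\cL_K$ extends to a line bundle on the proper flat family $\cC_R$; any two such extensions differ by a divisor supported on the special fiber, and one may adjust to force the $r$-th power to coincide with $\omega^{\log}_{\cC_R/R}$, possibly after a finite base change to absorb residual $r$-torsion ambiguity. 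Uniqueness up to a unique isomorphism follows from separatedness of the relative Picard functor for twisted curves.

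Combining $\alpha$ and $\beta$, I obtain the desired conclusion as a composition of proper quasi-finite morphisms. The main obstacle is the valuative criterion for $\alpha$: one must show that the choices involved in extending a line bundle across a twisted-curve degeneration can be arranged so that both the $r$-spin relation is preserved and the prescribed monodromies at markings are matched. This ultimately rests on the facts that vertical divisors supported on the components of the special fiber exhaust the ambiguity in Picard extensions, and that the remaining $r$-torsion obstruction becomes trivial after a finite base change of $R$.
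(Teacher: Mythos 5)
Your quasi-finiteness argument is essentially the paper's (étaleness of $\fM^{1/r}_{g,\vgamma}\to\fM^{\tw}_{g,n}$ from Proposition~\ref{prop:curve-stack} plus zero-dimensional fibers of $\fM^{\tw}_{g,n}\to\fM_{g,n}$), and the observation that representability forces the node orders to divide $r$ is correct. The properness argument, however, has a genuine gap: properness does \emph{not} factor through $\fM^{\tw,\star}_{g,n,\vr}$, because neither of your two arrows is proper. For $\beta$, the valuative criterion fails uniqueness: when a node forms in the coarse limit, the twisted extension is not unique --- the untwisted node and the $\mu_d$-twisted node for any $d\mid r$ give non-isomorphic lifts over $\spec R$ with the same generic fiber and the same coarse family, so $\beta$ is not even separated (finiteness of fibers gives quasi-finiteness, not properness). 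For $\alpha$, existence fails: with the twisted curve $\cC_R$ fixed, a spin structure on the generic fiber need not extend as a line bundle satisfying $\cL^r\cong\omega^{\log}$. Concretely, the finite base change needed to define the limit gives the total space of the (fixed) family an $A_{e-1}$ singularity at each newly formed node, and $\cL_K$ extends as a line bundle only when its class in the local class group $\ZZ/e$ vanishes; in general the limit exists only after re-twisting that node by a root construction, i.e.\ after changing the underlying point of $\fM^{\tw}_{g,n}$ --- exactly the move your factorization forbids. One can also see the failure numerically: $\alpha$ is étale, and a proper quasi-finite étale map would have locally constant degree, but the number of $r$-spin structures drops (e.g.\ from $r^{2g}$ over a smooth untwisted curve to $\#\operatorname{Pic}(\cC_0)[r]=r^{2g-1}$ over an untwisted irreducible one-nodal curve). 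Your step ``the line bundle $\cL_K$ extends to a line bundle on the proper flat family $\cC_R$'' is precisely where this breaks: it is only automatic when the total space is regular, which is destroyed by the base change, and the adjustment by vertical divisors is unavailable, e.g.\ when the special fiber is irreducible.

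The paper's route avoids this by never fixing the twisted curve in advance: via Lemma~\ref{lem:spin-twist-stable-map} an $r$-spin curve is re-encoded as a representable twisted stable map $\cC\to (C,\omega^{\log}_{C/S})^{1/r}$ to the root stack over the coarse curve, and properness over $\fM_{g,n}$ is then quoted from \cite[Theorem~1.4.1]{AV02}; there the limit is allowed to (and in general must) change the stacky structure at the nodes, which is exactly the mechanism your two non-proper factors fail to capture separately. To salvage your approach you would have to prove directly the combined valuative criterion for the composite, choosing the node twistings and the extension of $\cL$ simultaneously, which amounts to redoing the Abramovich--Jarvis/AV argument rather than deducing the corollary from properness of the factors.
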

\begin{proof}
By viewing $r$-spin curves as twisted stable maps, the properness follows from \cite[Theorem 1.4.1]{AV02}. Since the morphism $\fM_{g,\vgamma}^{1/r} \to \fM^{\tw}_{g, n}$ is \'etale and $\fM^{\tw}_{g, n}\to \fM_{g,n}$ has zero dimensional fibers, we conclude that the composition $\fM_{g,\vgamma}^{1/r} \to \fM^{\tw}_{g, n} \to  \fM_{g,n}$ is quasi-finite.
\end{proof}

\subsubsection{Log $r$-spin curves and their stacks}

\begin{definition}\label{def:log-rspin}
A {\em log $r$-spin curve} over a log scheme $S$ consists of
\[
(\cC \to S, \cL)
\]
where $\cC \to S$ is a log curve (not necessarily equipped with the canonical log structure), and $\cL$ is an $r$-spin structure over the underlying orbifold curve of $\cC \to S$. The {\em pullback} of the log $r$-spin curve is defined as usual using fiber products in the fine and saturated category.
\end{definition}

As every log curve is obtained by the unique pullback from the associated canonical log curve, we have:
\begin{corollary}
The log stack $\fM_{g,\vgamma}^{1/r}$ with its canonical log structure given by its universal curve represents the category of log $r$-spin curves fibered over the category of log schemes.
\end{corollary}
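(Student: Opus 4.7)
I will upgrade Proposition~\ref{prop:curve-stack} from usual schemes to log schemes by invoking Olsson's theorem on log twisted curves. Recall from \cite{Ol07} (cf.~Section~\ref{sss:curve-stack}) that $\fM^{\tw}_{g,n}$ equipped with its canonical log structure represents the category of log twisted curves: a morphism of log schemes $S \to \fM^{\tw}_{g,n}$ is functorially the same datum as a log twisted curve over $S$. By Proposition~\ref{prop:curve-stack}, the tautological morphism $\fM_{g,\vgamma}^{1/r} \to \fM^{\tw}_{g,n}$ is strict and \'etale; in particular the canonical log structure on $\fM_{g,\vgamma}^{1/r}$ coincides with the pullback of the canonical log structure on $\fM^{\tw}_{g,n}$, since the universal curve on $\fM_{g,\vgamma}^{1/r}$ is the base-change of the one on $\fM^{\tw}_{g,n}$.

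Given a morphism of log schemes $\phi \colon S \to \fM_{g,\vgamma}^{1/r}$, composition with the strict morphism to $\fM^{\tw}_{g,n}$ produces a log twisted curve $\cC \to S$ via Olsson's correspondence. At the same time, the underlying morphism $\ul{\phi}$ produces an $r$-spin bundle $\cL$ on the underlying twisted curve $\ul{\cC}$; since an $r$-spin structure in the sense of Definition~\ref{def:rspin} depends only on the underlying twisted curve, which by construction coincides with that of the canonical log curve associated to $\cC \to S$, the pair $(\cC \to S, \cL)$ is a log $r$-spin curve in the sense of Definition~\ref{def:log-rspin}. Conversely, a log $r$-spin curve $(\cC \to S, \cL)$ supplies a log curve $\cC \to S$, hence a morphism of log schemes $S \to \fM^{\tw}_{g,n}$, together with a factorization of the underlying map $\ul{S} \to \ul{\fM}^{\tw}_{g,n}$ through $\ul{\fM}_{g,\vgamma}^{1/r}$; by strictness of $\fM_{g,\vgamma}^{1/r} \to \fM^{\tw}_{g,n}$, these two pieces glue to a unique morphism of log schemes $S \to \fM_{g,\vgamma}^{1/r}$.

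The two constructions are inverse to each other essentially by unraveling the definitions, and both are manifestly compatible with pullback along morphisms of log schemes, so they assemble into an equivalence of categories fibered over log schemes. The only subtlety I expect is the check that the canonical log structure placed on $\fM_{g,\vgamma}^{1/r}$ via its own universal curve really is the pullback from $\fM^{\tw}_{g,n}$; this is forced by strictness and \'etaleness in Proposition~\ref{prop:curve-stack} together with the base-change identification of the two universal curves, and so requires no further work.
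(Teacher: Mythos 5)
Your argument is correct and is essentially the paper's own proof spelled out in detail: the paper simply observes that every log curve is the unique pull-back of the associated canonical log curve (Olsson's universal property), which together with the strictness of $\fM_{g,\vgamma}^{1/r} \to \fM^{\tw}_{g,n}$ from Proposition~\ref{prop:curve-stack} and the fact that the spin structure is a datum on the canonical curve gives the corollary. Your more explicit two-way construction is just an unpacking of the same reasoning, so there is nothing to add.
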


%-------------------------------------------------------------------
\subsection{Log fields and their moduli}

\subsubsection{Log fields}\label{sss:spin-field}
Given a log $r$-spin curve $(\cC \to S, \cL)$, consider the $\PP^1$-bundle
\[
\underline{\cP} := \PP(\cL\oplus\cO_{\cC}) \to \underline{\cC}.
\]
Denote by $0_{\cP}$ and $\infty_{\cP}$ the zero and infinity section of the above $\PP^1$-bundle with normal bundles $\cL$ and $\cL^{\vee}$ respectively. Let $\cM_{\infty_{\cP}}$ be the log structure over $\underline{\cP}$ associated to the Cartier divisor $\infty_{\cP}$. It is Deligne--Faltings type of rank one, see Section \ref{sss:rank-one}.

Denote by $\cP' = (\underline{\cP}, \cM_{\infty_{\cP}})$ and
$\cP = (\underline{\cP},
\cM_{\cC}|_{\underline{\cP}}\oplus_{\cO^*}\cM_{\infty_{\cP}})$ the
corresponding log stacks where $\cM_{\cC}|_{\underline{\cP}}$ is the
pullback of $\cM_{\cC}$.
There is a natural projection
\begin{equation}\label{equ:compact-r-spin}
\cP \to \cC.
\end{equation}

\begin{definition}\label{def:r-spin-field}
  A \emph{log field} over a log $r$-spin curve $(\cC \to S, \cL)$ over
  a scheme $S$ is a log map $f\colon \cC \to \cP$ which is a section
  of $\cP \to \cC$.
  The triple $(\cC \to S, \cL, f)$ is called an \emph{$r$-spin curve
    with a log field}.
  It is called \emph{stable} if
  $\omega^{\log}_{\cC/S}\otimes f^*\cO(0_{\cP})^{k}$ is positive for
  $k \gg 0$.
  The \emph{pullback} of an $r$-spin curve with a log field is
  defined as usual via the pullback of log curves.
\end{definition}

\subsubsection{Associated log map of a log $r$-spin field}
Note that giving a log field $f\colon \cC \to \cP$ is equivalent to giving an \emph{associated log map}
\begin{equation}\label{equ:reduce-target}
\cC \to \cP',
\end{equation}
which induces a section of $\ul{\cP} \to \ul{\cC}$.
In fact, the inclusion $\cM_{\infty_{\cP}} \to \cM_{\cC}|_{\underline{\cP}}\oplus_{\cO^*}\cM_{\infty_{\cP}}$ defines a natural morphism $\cP \to \cP'$. Thus (\ref{equ:reduce-target}) is given by the composition
$
\cC \to \cP \to \cP'.
$

On the other hand, given a morphism (\ref{equ:reduce-target}) we recover the log field $f$ via
$
\cC \to \cP'\times_{\underline{\cC}}\cC =: \cP.
$
For convenience, we may use $f$ for the corresponding log map (\ref{equ:reduce-target}) when there is no danger of confusion.

\begin{definition}\label{def:umd-spin-field}
A log field has {\em uniform maximal degeneracy} if its associated log map has {\em uniform maximal degeneracy}.

It is called {\em minimal (with uniform maximal degeneracy)} if the associated log map (\ref{equ:reduce-target}) is minimal (with uniform maximal degeneracy).
\end{definition}

\subsubsection{The discrete data of an $r$-spin curve with a log field}
The discrete data of an $r$-spin curve with a log field is given by
\begin{equation}\label{equ:spin-data}
\beta:=(g, \vgamma = (\gamma_i)_{i=1}^{n}, {\bf c} = (c_i)_{i=1}^{n})
\end{equation}
where
\begin{enumerate}
 \item $g$ is the genus.
 \item $\gamma_i$ is the monodromy representation at the $i$-th marking.
 \item $c_i$ is the contact order of the associated log map at the $i$-th marking.
\end{enumerate}

Compared to the discrete data \eqref{discretedata}, the above \eqref{equ:spin-data} does not specify the curve class.
However, since we only allow sections, the curve class is uniquely determined by the collection of contact orders $\bf{c}$:
\begin{equation}\label{equ:fields-class}
A = [0_{\cP}] + \sum_{i=1}^{n} c_i \cdot [\cP_{\sigma_i}]
\end{equation}
where $0_{\cP}$ is the zero section of the projection $\cP \to C$, and
$\cP_{\sigma_i}$ is the fiber over the $i$-th marking $\sigma_i$.
Indeed, \eqref{equ:fields-class} follows by decomposing $A$ according
to the irreducible components $\cZ$ of $\cC$,
\begin{equation*}
  A = \sum_{\cZ \subset \cC} A_\cZ,
\end{equation*}
and noting that if $\cZ$ is non-degenerate, we may scale the
section to zero to see that
\begin{equation*}
  A_\cZ = [0_{\cP}|_\cZ] + \sum c_j [\cP_{\tau_j}],
\end{equation*}
where $c_j$ denotes the contact order at the special point $\tau_j$ on
$\cZ$, and that if $\cZ$ is degenerate, the discussion in
\cite[Proposition 5.2.4]{Ch14} (see also
\eqref{equ:degree-contact-orders}) implies that
\begin{multline*}
  A_\cZ = [\infty_{\cP}|_\cZ]
  = [0_{\cP}|_\cZ] - c_1(\cL|_\cZ) \\
  = [0_{\cP}|_\cZ] + \sum_{j\text{ outgoing}} c_j [\cP_{\tau_j}] - \sum_{j\text{ incoming}} c_j [\cP_{\tau_j}] 
\end{multline*}
where the incoming and outgoing special points are as introduced in
Section~\ref{sss:partial-order}.

Finally, \eqref{equ:fields-class} is obtained by combining the two
cases, and observing the cancellation at the nodes.

\subsubsection{Automorphisms of minimal stable $r$-spin curves with a log field}
An automorphism of an $r$-spin curve with a log field can be defined similarly as in Section \ref{sss:finite-auto} by taking into account the automorphisms on the target $\cP$ induced by the automorphisms of the curve.

\begin{proposition}\label{prop:r-spin-field-finite-auto}
  Consider an $r$-spin curve $(\cC \to S, \cL)$ with a log field $f\colon \cC \to \cP$ over $S$ with $\ul{S}$ a geometric point. Suppose it is minimal (with uniform maximal degeneracy). Then its automorphism group is finite.
\end{proposition}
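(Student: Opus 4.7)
The plan is to reduce finiteness to the corresponding statement for the underlying stable $r$-spin field, by adapting the argument of Propositions~\ref{prop:finite-auto} and~\ref{prop:finite-auto-umd}, and then to invoke the stability condition to bound automorphisms of the underlying data. Recall that an automorphism of a log $r$-spin field over $S$ is a triple $(\psi, \theta, \phi)$ consisting of compatible log automorphisms $\psi\colon \cC \to \cC$ and $\theta\colon S \to S$, together with an isomorphism $\phi\colon \cL \cong \ul{\psi}^{*}\cL$ of $r$-spin structures, such that the automorphism $\tilde\psi$ of $\cP$ induced functorially by $(\psi,\phi)$ satisfies $\tilde\psi\circ f = f \circ \psi$. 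Although the target $\cP$ varies with $(\cC,\cL)$, all constructions in Section~\ref{sss:spin-field} are functorial in the $r$-spin curve, so this notion is well-posed.

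First I would establish injectivity of the forgetful map from $\Aut(f)$ to the automorphism group of the underlying $r$-spin field, following the argument of Proposition~\ref{prop:finite-auto} (respectively Proposition~\ref{prop:finite-auto-umd} in the uniform maximal degeneracy case). Minimality of $f$ (in either sense) means that $\oM_{S}$ is presented by the minimal monoid $\oM(G)$ or $\oM(G, V_{\max})$, whose generators $e_{v}$ and $\rho_{l}$ are read off the underlying data via Sections~\ref{sss:map-at-generic-pt}--\ref{sss:nodecombinatorial}. Hence if $(\psi, \theta, \phi)$ restricts to the identity on underlying data, then $\theta^{\flat}$ acts trivially on these generators; since $\cM^{gp}_{S} \to \oM^{gp}_{S}$ is surjective, $\theta$ is forced to be the identity, and this in turn forces $\psi$ and $\phi$ to be trivial.

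It then remains to bound the automorphism group of the underlying $r$-spin field $(\ul\cC, \{\sigma_i\}, \cL, \ul f)$. Given $\ul\psi \in \Aut(\ul\cC, \{\sigma_i\})$, the compatibility of $\phi$ with $\cL^{r} \cong \omega^{\log}_{\cC/S}$ forces $\phi^{r}$ to equal the canonical pullback isomorphism, so the set of lifts $\phi$ of any given $\ul\psi$ is a $\mu_{r}$-torsor, hence finite. Now the stability condition in Definition~\ref{def:r-spin-field} says that $\omega^{\log}_{\cC/S} \otimes \ul f^{*}\cO(0_{\cP})^{\otimes k}$ descends to an ample line bundle on the coarse moduli of $\ul\cC$ for $k \gg 0$. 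Any $\ul\psi$ compatible with some $\tilde\psi$ of $\cP$ fixing $\ul f$ preserves this polarization up to canonical isomorphism, and the classical finiteness of automorphism groups of polarized projective schemes yields that the relevant group of $\ul\psi$ is finite.

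The main obstacle I anticipate is the bookkeeping around the fact that the target $\cP$ is not fixed but varies functorially with $(\cC, \cL)$, so Propositions~\ref{prop:finite-auto} and~\ref{prop:finite-auto-umd} do not apply verbatim. The careful part is to verify that $\tilde\psi$ constructed from $(\psi, \phi)$ transforms the auxiliary line bundle $\ul f^{*}\cO(0_{\cP})$ canonically, so that the stability polarization is genuinely preserved by any such automorphism. Once this functoriality is in place, the injectivity argument transplants with no essential change, and the stability-based finiteness concludes the proof.
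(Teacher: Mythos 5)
Your first step---reducing to finiteness of automorphisms of the underlying data via the minimality arguments of Propositions~\ref{prop:finite-auto} and~\ref{prop:finite-auto-umd}---is exactly how the paper proceeds, and your care about the target $\cP$ varying with $(\cC,\cL)$ is reasonable (the paper cites those propositions directly). The gap is in your second step. The assertion that ``the classical finiteness of automorphism groups of polarized projective schemes'' bounds the group of $\ul\psi$ is false: for a polarized scheme the automorphisms preserving the polarization class form an algebraic group of finite type, but it can be positive-dimensional. For instance $(\PP^1,\cO(1))$ has $\mathrm{PGL}_2$ worth of such automorphisms, and precisely this situation occurs here, because the source curve is only \emph{pre}-stable: a genus-zero component with one or two special points carries a positive-dimensional group of automorphisms fixing the markings and nodes, all of which preserve any polarization up to isomorphism. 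So preservation of $\omega^{\log}_{\cC/S}\otimes \ul f^{*}\cO(0_{\cP})^{k}$ alone cannot give finiteness.

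What actually rigidifies these unstable components is the condition you state but never use beyond the polarization remark: the automorphism must commute with the section $\ul f$ itself. The paper's proof is a component-by-component analysis of exactly this point. On a stable component, finiteness follows from representability of $\cP\to\cC$ and stability of the curve. On a genus-zero component with at most two special points, $\cL$ has non-positive degree, so the stability condition of Definition~\ref{def:r-spin-field} forces $\deg f_i^{*}\cO(0_{\cP})>0$ there; hence $f_i$ is neither the zero nor the infinity section, meets $0_{\cP}$ (and, in the one-special-point case, $\infty_{\cP}$) properly, and after trivializing $\cL$ on a two-special-point component one views $f_i$ as a stable map to $\PP^1$ with non-trivial intersection with the zero section, which has finitely many automorphisms commuting with it. Your argument skips this analysis, so as written the finiteness of the underlying automorphism group is not established; supplying it would essentially reproduce the paper's proof. (Your observation that the lifts $\phi$ of a given $\ul\psi$ to $\cL$ form a $\mu_r$-torsor is fine and unproblematic.)
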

\begin{proof}
  By Proposition~\ref{prop:finite-auto} and
  \ref{prop:finite-auto-umd}, it suffices to show that the underlying
  structure
  $(\ul{\cC}, \cL, \underline{f}\colon \underline{\cC} \to
  \underline{\cP})$ has finite automorphisms.
  To simplify notation, we will abuse notation and write
  $(\cC, \cL, f\colon \cC \to \cP)$ instead of
  $(\ul{\cC}, \cL, \underline{f}\colon \underline{\cC} \to
  \underline{\cP})$ in this proof.

  The group of automorphisms of $(\cC, \cL, f\colon \cC \to \cP)$
  which fix the dual graph of $\cC$ is of finite index in the full
  automorphism group.
  Hence, it suffices to prove that $f_i\colon \cC_i \to \cP$ has
  finitely many automorphisms for any irreducible component
  $\cC_i \subset \cC$ with all special points of $\cC_i$ marked.
  Since $\cP \to \cC$ is representable, $f_i$ has finitely many
  automorphisms when $\cC_i$ is a stable curve.

  It remains to prove finiteness of automorphism when $\cC_i$ is
  unstable, and hence $\omega_{\cC_i}^{\log}$ and $\cL$ have
  non-positive degree.
  Stability hence implies that
  $\deg \cO(f_i^* 0_\cP) > -\deg(\cL) \ge 0$.
  In particular, $f_i$ cannot be the zero or infinity section.
  Since $f_i$ is a log-map, this implies that there must be a marking
  $\sigma$ where $f_i$ meets the infinity section.
  In particular, we are reduced to the case that $\cC_i$ is genus zero
  with one or two markings.
  In addition, there must be a point $q$ where $f_i$ meets the zero
  section.
  Automorphisms of $(\cC_i, f_i)$ must preserve $q$, hence the
  automorphism group must be a subgroup of the $\CC^*$ of
  automorphisms of $\cC_i$ fixing $\sigma$ and $q$.
  
  Let $\cC_i' = \cC_i \setminus \{q\}$.
  We note that $\cC_i'$ is of the form $[\CC/\mu_a]$ for some
  $a \in \ZZ$, and $\omega_{\cC_i}^{\log}|_{\cC_i'}$ is
  $\CC^*$-equivariantly trivial.
  Hence, we may view $f_i^r|_{\cC_i'}$ as a meromorphic
  function on $\cC_i'$, or equivalently, as a map
  $g_i\colon \cC_i' \to \PP^1$.
  Since we also know that $g_i$ has an isolated pole at $\sigma$, this
  implies that there are only finitely many $\CC^*$-automorphisms of
  $\cC_i'$ that fix $g_i$.
  In particular, $(\cC_i, f_i)$ has only finitely many automorphisms,
  as desired.
\end{proof}

\subsubsection{The stacks of $r$-spin curves with a log field}\label{sss:r-spin-field-stack}

Let $\SF_{\ddata}^{1/r}$ be the category of stable $r$-spin curves with a log field over the category of log schemes with the discrete data $\beta$. Let $\USF_{\ddata}^{1/r} \subset \SF_{\ddata}^{1/r}$ be the subcategory consisting of objects with uniform maximal degeneracy. Next we show that

\begin{theorem}\label{thm:spin-fields-moduli}
The two categories $\USF_{\ddata}^{1/r}$ and $\SF_{\ddata}^{1/r}$ are represented by proper log Deligne--Mumford stacks.
\end{theorem}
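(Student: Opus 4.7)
The plan is to realize $\SF_\beta^{1/r}$ as a moduli of stable log maps to the universal target $\cP'$ (of Deligne--Faltings type, rank one) lying over the algebraic stack $\fM^{1/r}_{g,\vgamma}$ of $r$-spin curves, then invoke the general algebraicity and valuative criterion results of Section~\ref{sec:logmap}, and finally deduce the case of $\USF_\beta^{1/r}$ from Theorem~\ref{thm:max-moduli-intro}.

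For algebraicity, let $\pi\colon \fC^{1/r} \to \fM^{1/r}_{g,\vgamma}$ be the universal $r$-spin curve, with universal spin bundle $\cL^{univ}$, and let $\cP'^{univ} \to \fC^{1/r}$ be the associated universal projective bundle equipped with its rank-one log structure along $\infty_\cP$ as in Section~\ref{sss:spin-field}. By the discussion in Section~\ref{sss:spin-field}, a log $r$-spin field over $S$ with discrete data $\beta$ is the same as a morphism $S \to \fM^{1/r}_{g,\vgamma}$ together with a log map $\cC_S \to \cP'_S$ whose underlying is a section with curve class $A$ given by \eqref{equ:fields-class}. Since $\fM^{1/r}_{g,\vgamma}$ is log smooth and algebraic by Proposition~\ref{prop:curve-stack}, a base-change version of Theorem~\ref{thm:algebraicity} (applied over the universal $\fM^{1/r}_{g,\vgamma}$ via the strict morphism $\cP' \to \cA$) identifies these log maps as a log algebraic stack. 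The section property (degree one on fibers of $\cP \to \cC$) is open and closed in the class-$A$ stratum, so it carves out an open-and-closed substack. Deligne--Mumford-ness follows from Proposition~\ref{prop:r-spin-field-finite-auto}, and finiteness of type from Proposition~\ref{prop:boundedness} after noting that fixing $(g, \vgamma, \mathbf{c})$ yields only finitely many log combinatorial types.

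Properness will be the main obstacle and is to be verified via the weak valuative criterion. Given a DVR $R$ with fraction field $K$ and a log $r$-spin field $(\cC_K, \cL_K, f_K)$ over $\spec K$, my approach is to combine two extension steps. First, I use the general theory to extend the composition $\cC_K \to \cP'_K \to \cA$: applying Proposition~\ref{prop:log-map-valuative} produces, after a finite base change, a canonical extension consisting of a twisted pre-stable curve $\cC_R$ with a spin bundle $\cL_R$ and a log map to the resulting $\cP'_R$. The delicate point is that the underlying limit may fail to be a section if one works without log structure: as the remark after the theorem statement warns, the naive limit of a meromorphic section of $\cL^{univ}$ may acquire a component of the central fiber mapping entirely into $\infty_\cP$. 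The rank-one log structure $\cM_{\infty_\cP}$ is precisely designed to absorb this pathology: such a degeneration is recorded by a non-trivial degeneracy element of $\oM_{\cC_R}$ in the sense of Section~\ref{sss:map-at-generic-pt}, and the compatible base-modification prescribed by the minimal monoid of the limiting log combinatorial type ensures that the resulting underlying extension is again a section of $\cP_R \to \cC_R$. Uniqueness of the extension is immediate from Proposition~\ref{prop:log-map-valuative}.

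Finally, for $\USF_\beta^{1/r}$, Theorem~\ref{thm:max-moduli-intro} gives that the tautological morphism $\USF_\beta^{1/r} \to \SF_\beta^{1/r}$ is representable, proper, and log \'etale. Hence $\USF_\beta^{1/r}$ inherits the properness and Deligne--Mumford-ness from $\SF_\beta^{1/r}$, completing the proof.
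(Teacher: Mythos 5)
Your reduction of $\USF_\beta^{1/r}$ to $\SF_\beta^{1/r}$ via Theorem~\ref{thm:max-moduli} and your algebraicity argument (log maps to $\cP'$ over $\fM^{1/r}_{g,\vgamma}$, openness of the section condition, finiteness of automorphisms via Proposition~\ref{prop:r-spin-field-finite-auto}) follow the paper's route. The genuine gap is in properness. Proposition~\ref{prop:log-map-valuative} is a \emph{relative} valuative criterion: it assumes that an extension of the \emph{underlying} twisted map over $\spec R$ is already given, and then produces (uniquely) the logarithmic enhancement. It does not produce the underlying limit, and the underlying limit is exactly the hard part here: one must show that, possibly after finite base change, the section $\ul f_K$ of $\PP(\cL_K\oplus\cO)$ extends to a section of $\PP(\cL\oplus\cO)$ over an extended spin curve satisfying the stability condition on $\omega^{\log}\otimes\cO(f^*0_{\cP})^k$, and that this extension is unique. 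Your phrase that the ``base-modification prescribed by the minimal monoid'' ensures the underlying extension is a section is an assertion, not an argument: a limit of sections, taken as a twisted stable map to the family of $\PP^1$-bundles, will in general acquire vertical components and components violating stability, and the spin curve itself must be modified (contraction of unstable rational tails and bridges changes $\omega^{\log}$ and hence $\cL$). This is the content of Proposition~\ref{prop:valuative} and the whole of Section~\ref{ss:valuative}: adding auxiliary markings at the intersections with $0_{\cP}$, extending via root stacks and properness of twisted stable maps, removing the auxiliary markings, contracting unstable components inductively, and a separate degree-theoretic separatedness argument comparing two candidate limits. In particular uniqueness is \emph{not} ``immediate from Proposition~\ref{prop:log-map-valuative}'': that proposition gives uniqueness of the log structure over a fixed underlying extension, whereas separatedness of the underlying moduli requires the comparison argument with the dominating curve $C_3$ and Lemma~\ref{lem:degree-comparison}. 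The role of the log structure along $\infty_{\cP}$ is also more specific than ``absorbing the pathology'': it forces condition (2) of Proposition~\ref{prop:valuative} (non-degenerate components meet $\infty_{\cP}$ only at markings/nodes), which is what makes the underlying limit exist at all, as Section~\ref{sss:properness-failure} shows by counterexample.

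A secondary gap is boundedness. Proposition~\ref{prop:boundedness} only gives finite type of $\SF\to\cS$ (respectively over a finite-type base) once combinatorial finiteness is established, and fixing $(g,\vgamma,\mathbf c)$ does \emph{not} by itself bound the log combinatorial types: the contact orders at nodes are not part of $\beta$ and must be bounded by the degree relation \eqref{equ:degree-contact-orders}, and the dual graphs are only finite in number after one bounds the number of unstable components of the source curves (via $\deg f^*\cO(0_{\cP})=(2g-2+n)/r$ and representability of $\cL$), which is the second lemma in Section~\ref{ss:r-spin-field-boundedness}. Both of these steps need to be supplied for your finiteness claim to hold.
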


For later use, we introduce $\cS$ the stack over
$\fM_{g,\vgamma}^{1/r}$, which associates, to each strict morphism
$T \to \fM_{g,\vgamma}^{1/r}$, the category of sections
$\underline{f}$ of the underlying projective bundle
$\underline{\cP}_T := \PP(\cL_{T}\oplus\cO_{\underline{\cC}_T}) \to
\underline{\cC}_T$ with the curve class given by
\eqref{equ:fields-class}.
Here $(\underline{\cC}_T \to \underline{T}, \cL_{T})$ is the spin
structure given by $T \to \fM_{g,\vgamma}^{1/r}$.
We view $\cS$ as a log stack with the strict morphism to
$\fM_{g,\vgamma}^{1/r}$.

Note that $\cS$ is an open substack of the stack parameterizing
twisted stable maps with the family of targets
$\underline{\cP}_{\fM_{g,\vgamma}^{1/r}} \to \fM_{g,\vgamma}^{1/r}$.
Indeed, requiring $\underline{f}$ to be a section of
$\underline{\cP}_T \to \underline{\cC}_T$ amounts to requiring the composition $\underline{\cC}_T \to \underline{\cP}_T \to \underline{\cC}_T$ to be an isomorphism which is an open condition.
The following is a consequence of \cite[Theorem 1.4.1]{AV02}:

\begin{lemma}\label{lem:algebraicity-usual-section}
The stack $\cS$ is algebraic, locally of finite type.
\end{lemma}

\begin{proof}[Proof of Theorem \ref{thm:spin-fields-moduli}]
By Theorem \ref{thm:max-uni-moduli}, the tautological morphism
\[
\USF_{\ddata}^{1/r} \to \SF_{\ddata}^{1/r}
\]
is proper, log \'etale, and representable by algebraic spaces of finite type. Thus to prove Theorem \ref{thm:spin-fields-moduli}, it remains to prove the statements for $\SF_{\ddata}^{1/r}$ only. We first verify the representability.

Consider the tautological morphism that removes log structures
\[
\SF_{\ddata}^{1/r} \to \cS.
\]
By Proposition \ref{prop:uni-min-stack}, this morphism is represented
by an algebraic stack locally of finite type.
Therefore, Lemma~\ref{lem:algebraicity-usual-section} implies that the
stack $\SF_{\ddata}^{1/r}$ is also algebraic and locally of finite
type.
Proposition \ref{prop:r-spin-field-finite-auto} further implies that
$\SF_{\ddata}^{1/r}$ is a Deligne--Mumford stack.

It remains to prove the properness. We will divide this into two parts: the boundedness part will be proved in Section \ref{ss:r-spin-field-boundedness}, and the valuative criterion will be checked in Section \ref{ss:valuative}.
\end{proof}

%-----------------------------------------------------------------------
\subsection{Boundedness}\label{ss:r-spin-field-boundedness}

We next prove the following result:

\begin{proposition}\label{prop:spin-field-boundedness}
The stack $\SF_{\ddata}^{1/r}$ is of finite type.
\end{proposition}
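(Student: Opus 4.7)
The plan is to apply Proposition~\ref{prop:boundedness} to the forgetful morphism $\SF_{\ddata}^{1/r}\to\cS$ that removes log structures. Let $\cS^{st}\subset\cS$ denote the open substack cut out by the stability condition in Definition~\ref{def:r-spin-field}; since this condition concerns only the underlying data, the morphism $\SF_{\ddata}^{1/r}\to\cS$ factors through $\cS^{st}$. It then suffices to show that (a) $\cS^{st}$ is of finite type, and that (b) $\ddata$ is combinatorially finite over $\cS^{st}$ in the sense of Definition~\ref{def:combinatorially-finite}; the conclusion will then follow immediately from Proposition~\ref{prop:boundedness}.

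For (a), stability requires $\omega^{\log}_{\cC/S}\otimes\cO(f^{*}0_{\cP})^{k}$ to be ample for $k\gg 0$, with total degree fixed by $\ddata$ since $\deg\omega^{\log}_{\cC/S}=2g-2+n$ and $\deg\cO(f^{*}0_{\cP})$ is determined by the fixed curve class $A$ in~\eqref{equ:fields-class}. Ampleness forces every component $Z\subset\cC$ with $\deg\omega^{\log}|_{Z}\leq 0$ to satisfy $\deg f^{*}\cO(0_{\cP})|_{Z}>0$, and, combined with the standard $3g-3+n$ bound on genuinely stable components, this bounds the total number of irreducible components of $\cC$. Hence the underlying twisted curves sit in a finite-type substack of $\fM^{\tw}_{g,n}$. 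Since $\fM^{1/r}_{g,\vgamma}\to\fM^{\tw}_{g,n}$ is locally of finite type (Proposition~\ref{prop:curve-stack}) and $\cS\to\fM^{1/r}_{g,\vgamma}$ is proper of Deligne--Mumford type (Lemma~\ref{lem:algebraicity-usual-section}), I would conclude that $\cS^{st}$ is of finite type.

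For (b), a log combinatorial type~\eqref{equ:combinatorial-type} is specified by the dual graph (fixed by the underlying curve), the partition $V(G)=V^{n}(G)\sqcup V^{d}(G)$ (finitely many choices), the marking contact orders (fixed by $\ddata$), and the nodal contact orders $c_{l}$ (from which the partial order $\poleq$ is recovered by Section~\ref{sss:partial-order}). For a node $q$ adjacent to a non-degenerate component $Z$, the contact order $c_{q}$ equals the local vanishing order of $f|_{Z}^{*}\cO(\infty_{\cP})$ at $q$, hence is bounded by $\deg f^{*}\cO(\infty_{\cP})|_{Z}$, a quantity controlled over the finite-type stratum $\cS^{st}$. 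For a node joining two degenerate components, the nodal equation~\eqref{nodalequation}, propagated through the (bounded) dual graph from non-degenerate boundary vertices, forces $c_{q}$ to lie in a finite set, by an argument analogous to~\cite[\S 3.3]{Ch14}. The main obstacle I anticipate is precisely this last case: no direct intersection-theoretic bound with $\infty_{\cP}$ is available, and one must trace $c_{q}$ carefully through iterated nodal equations along paths in the dual graph while exploiting the uniform bounds from (a).
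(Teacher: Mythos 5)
Your overall architecture (reduce to Proposition~\ref{prop:boundedness} over a finite-type base, then bound that base via stability) is the same as the paper's, but the step you yourself flag as the main obstacle --- contact orders at nodes joining two \emph{degenerate} components --- is a genuine gap, and the remedy you sketch would not close it. The nodal equation~\eqref{nodalequation} is an identity in the characteristic monoid $\oM_S$ involving the unknown smoothing parameters $\rho_q$; in the minimal monoid the $\rho_l$ are essentially free generators, so \emph{any} assignment of non-negative integers $c_q$ is consistent with these relations, and no propagation of them through the dual graph yields a numerical bound. The key point you are missing is that the intersection-theoretic bound with $\infty_{\cP}$ \emph{is} available on degenerate components: the identity~\eqref{equ:degree-contact-orders} holds for every irreducible component $Z$, including those mapped into $\infty_{\cP}$ (there $\ul{f}^*\cO(\infty_{\cP})|_Z \cong \cL^{\vee}|_Z$ still has a degree determined by the underlying data over the finite-type base). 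Granting this, the paper fixes one of the finitely many partial orders $\poleq$, partitions $V(\ul{G})=V_1\sqcup\cdots\sqcup V_k$ by repeatedly extracting maximal vertices, and inducts: components in $V_1$ have only incoming nodes, so~\eqref{equ:degree-contact-orders} together with non-negativity of contact orders leaves finitely many choices; these control the outgoing orders at the next layer, and so on. (The relevant precedent is \cite[Prop.~5.2.4, 5.3.1]{Ch14}, not \S3.3.) Without this degree identity on degenerate components your step (b) does not go through.

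Two smaller issues in your step (a). Bounding the number of components from stability requires a positive \emph{lower} bound on $\deg f^*\cO(0_{\cP})$ along unstable components; positivity alone is not enough on twisted curves, and the paper gets the bound $\geq 1/r$ from representability of the spin bundle $\cL$, giving at most $2g-2+n$ unstable components. Also, ``locally of finite type over a finite-type substack of $\fM^{\tw}_{g,n}$'' does not imply $\cS^{st}$ is of finite type --- you need quasi-compactness of $\fM^{1/r}_{g,\vgamma}$ over $\fM_{g,n}$, which is exactly Corollary~\ref{cor:r-spin-finite} (properness and quasi-finiteness via the twisted-stable-maps description), the ingredient the paper uses instead of Proposition~\ref{prop:curve-stack} at this point. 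These two points are repairable with results already in the paper; the missing degree identity in (b) is the substantive omission.
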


Consider the tautological morphism
\begin{equation}\label{equ:take-coarse-curve}
\SF^{1/r}_{\ddata} \to \fM_{g,n}
\end{equation}
by taking the corresponding coarse curves. Using the above morphism, the proof of Proposition \ref{prop:spin-field-boundedness} splits into the following two lemmas.

\begin{lemma}
The tautological morphism (\ref{equ:take-coarse-curve}) is of finite type.
\end{lemma}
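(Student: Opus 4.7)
The plan is to factor the tautological morphism \eqref{equ:take-coarse-curve} as
\[
\SF^{1/r}_{\ddata} \;\longrightarrow\; \cS \;\longrightarrow\; \fM^{1/r}_{g,\vgamma} \;\longrightarrow\; \fM_{g,n}
\]
and check that each arrow is of finite type. The last two arrows are harmless: by Lemma~\ref{lem:algebraicity-usual-section} the projection $\cS \to \fM^{1/r}_{g,\vgamma}$ is proper (hence of finite type), and by Corollary~\ref{cor:r-spin-finite} the forgetful map $\fM^{1/r}_{g,\vgamma} \to \fM_{g,n}$ is proper and quasi-finite, so of finite type. Consequently it suffices to show that the first arrow $\SF^{1/r}_{\ddata} \to \cS$ is of finite type.

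From the proof of Theorem~\ref{thm:spin-fields-moduli} this morphism is already known to be representable and locally of finite type via the argument of Proposition~\ref{prop:uni-min-stack}. To upgrade ``locally of finite type'' to ``of finite type'', I would invoke the relative boundedness of Proposition~\ref{prop:boundedness} applied to the universal target $\cP'$ over $\fM^{1/r}_{g,\vgamma}$, whose log structure $\cM_{\infty_\cP}$ is of Deligne--Faltings type of rank one. After base change to any quasi-compact strict open $W \to \cS$, the problem reduces to verifying that the discrete data $\ddata$ is combinatorially finite over $W$ in the sense of Definition~\ref{def:combinatorially-finite}.

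The main obstacle is exactly this combinatorial finiteness. After stratifying $W$ into finitely many locally closed pieces, I may assume that on each piece the dual graph $\ul{G}$ of the source is constant, and that the subset of irreducible components of $\ul{\cC}$ whose image under the section $\ul{f}$ lies in $\infty_{\cP}$ is also constant; this pins down both $\ul{G}$ and the partition $V(G) = V^n(G) \cup V^d(G)$. The contact orders at markings are fixed by $\ddata$, so it remains only to bound the contact orders $c_l$ at nodes $l \in E(G)$; once these are bounded, the partial order $\poleq$ on $V(G)$ is automatically determined by the rule in Section~\ref{sss:partial-order}, and only finitely many combinatorial types $G$ remain. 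The bound on $c_l$ will come from a conservation-of-intersection argument: the total intersection number of $\ul{f}$ with $\infty_\cP$ is fixed by the curve class $A$ prescribed in \eqref{equ:fields-class}, and each $c_l$ contributes non-negatively to this intersection when the node $l$ is smoothed in a one-parameter family, by the local analysis at nodes carried out in~\cite[Section~5.4]{Ch14} and~\cite[Section~3.2]{GS13}. Consequently each $c_l$ admits an a priori upper bound, combinatorial finiteness holds over $W$, and Proposition~\ref{prop:boundedness} delivers the desired finite-type conclusion.
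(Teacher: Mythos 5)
Your overall structure matches the paper's: factor through $\cS$, use Lemma~\ref{lem:algebraicity-usual-section} and Corollary~\ref{cor:r-spin-finite} for the last two arrows, reduce the first arrow to combinatorial finiteness via Proposition~\ref{prop:boundedness} and Definition~\ref{def:combinatorially-finite}, and stratify so that the dual graph and the partition $V^n\sqcup V^d$ are constant. The gap is in the one step that actually carries the content: bounding the contact orders $c_l$ at the nodes. Your proposed mechanism --- that the total intersection number of $\ul{f}$ with $\infty_{\cP}$ is fixed by the class $A$ of \eqref{equ:fields-class} and that ``each $c_l$ contributes non-negatively to this intersection'' --- does not work. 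Summing the per-component identity \eqref{equ:degree-contact-orders} over all vertices, every edge appears once as incoming and once as outgoing, so the node terms cancel and the total degree $\deg \ul{f}^*\cO(\infty_{\cP})$ contains no information about the $c_l$ at all. Worse, for a node joining two degenerate components (both mapped into $\infty_{\cP}$) the contact order is purely logarithmic data and is not a local intersection multiplicity of the underlying section in any sense, so the ``smoothing the node'' heuristic has nothing to latch onto; and on degenerate components $\deg \ul{f}^*\cO(\infty_{\cP})|_Z \cong \deg \cL^{\vee}|_Z$ is typically negative, so no positivity is available globally.

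The argument the paper runs (following \cite[Proposition 5.3.1]{Ch14}) is genuinely inductive: since $V(\ul{G})$ is finite there are only finitely many partial orders $\poleq$, so one may \emph{fix} one (note also that $\poleq$ is not determined by the values $c_l$ alone --- it records the orientation of each edge, cf.\ Section~\ref{sss:partial-order} --- so it must be chosen, though only finitely many choices exist). With $\poleq$ fixed, ``incoming'' and ``outgoing'' nodes make sense, and one partitions $V(\ul{G})$ into layers $V_1,\dots,V_k$ of successively maximal elements. Components in $V_1$ have only incoming nodes, so \eqref{equ:degree-contact-orders}, whose left-hand side is constant on the stratum, bounds those contact orders; one then propagates down the layers, at each stage using the already-established bounds on the incoming contact orders to bound the outgoing ones from the same per-component degree identity. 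Without this induction down the partial order (or some substitute using the component-wise degrees rather than the global class), your proof does not establish combinatorial finiteness, so the finite-type claim for $\SF^{1/r}_{\ddata}\to\cS$ is not yet proved.
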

\begin{proof}
Note that the morphism (\ref{equ:take-coarse-curve}) is given the composition
\[
\SF^{1/r}_{\ddata} \to \cS \to \fM_{g,n}^{1/r} \to  \fM_{g,n}
\]
where the middle arrow is of finite type by Lemma \ref{lem:algebraicity-usual-section}, and the right arrow  is of finite type by Corollary \ref{cor:r-spin-finite}. It remains to show that the morphism $\SF^{1/r}_{\ddata} \to \cS$ is of finite type.

Let $T \to \cS$ be any strict morphism from a log scheme $T$ of finite type, and write $\SF_T := \SF^{1/r}_{\ddata}\times_{\cS}T$. It suffices to show that $\SF_T$ is of finite type. By Proposition \ref{prop:boundedness}, it suffices to show that the discrete data $\beta$ is combinatorially finite over $T$, see Definition \ref{def:combinatorially-finite}. We prove this by applying the strategy similar to \cite[Proposition 5.3.1]{Ch14}.

Denote by $\underline{f}_T$ the universal section of
$\underline{\cP}_T \to \underline{\cC}_T$ over $\underline{T}$.
As $T$ is of finite type, there are finitely many dual graphs for
geometric fibers of the source curve
$\underline{\cC}_T \to \underline{T}$.
Let $\underline{G}$ be any such dual graph of $\underline{\cC}_t$ for
some geometric point $t \to T$.
It remains to show that the choices of log combinatorial types as in
(\ref{equ:combinatorial-type}) with the given dual graph
$\underline{G}$ is finite.

Note that the partition
$V(\underline{G}) = V^n(\underline{G}) \sqcup V^{d}(\underline{G})$ as
in (\ref{equ:combinatorial-type}) is uniquely determined by
$\underline{f}_t$.
Indeed, $V^{d}(\underline{G})$ consists of irreducible components
whose images via $\underline{f}_t$ are contained in the infinity
section of $\cP_{t}$.
The contact orders along the marked points are determined by $\beta$.
Since $\underline{G}$ is a finite graph, the number of partial
orderings $\poleq$ on $V(\underline{G})$ is also finite.
We fix one such choice, denoted again by $\poleq$.
It remains to show that the number of contact orders at the nodes are
finite.

Let $Z \subset \underline{\cC}_t$ be an irreducible component.
Recall the discussion of incoming and outgoing special points in
Section~\ref{sss:partial-order}.
The same discussion as in \cite[Proposition 5.2.4]{Ch14} implies that
\begin{equation}\label{equ:degree-contact-orders}
\deg(\underline{f}^*(\infty_{\cP})|_{Z}) = \sum_{q\text{ outgoing}} \frac{c_q}{r_q}  - \sum_{q\text{ incoming}} \frac{c_q}{r_q},
\end{equation}
where $c_q$ and $r_q$ are the contact order and order of isotropy
group at the special point $q$.

To bound the choices of contact orders at the nodes, we construct a partition:
\[
V(\underline{G}) = V_1 \sqcup V_2 \sqcup \cdots \sqcup V_k
\]
inductively as follows. First, we choose $V_1$ to be the collection of
largest elements in $V(\underline{G})$ with respect to $\poleq$.
Supposing that $V_1, \cdots, V_i$ are chosen, we choose
$V_{i+1} \subset V(\underline{G}) \setminus (\cup_{j=1}^{i}V_j)$ to be
the collection of largest elements with respect to $\poleq$.

By construction, a node $q$ joining component(s) in the same $V_i$
must have $c_q = 0$. Let $Z_1$ be any component corresponding to an
element in $V_1$. Then $Z_1$ has only incoming node(s).
By \eqref{equ:degree-contact-orders}, the choices of contact orders at
these nodes are finite, as contact orders are non-negative integers.
In particular, there are finitely many choices for the contact orders
of the outgoing nodes attached to components of $V_2$.

Now suppose the number of choices of contact orders at the outgoing
nodes attached to components of $V_i$ is finite.
Using \eqref{equ:degree-contact-orders} and the condition that contact
orders are non-negative integers, we conclude that the incoming
nodes of components of $V_i$, hence the outgoing nodes of components
of $V_{i+1}$ have finitely many choices of contact orders.
By induction, the number of choices of contact orders at each nodes is
finite.
This finishes the proof.
\end{proof}

\begin{lemma}
  The image of the morphism $\SF^{1/r}_{\ddata} \to \fM_{g,n}$ is
  contained in an open substack of finite type.
\end{lemma}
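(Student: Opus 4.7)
The plan is to mimic the classical boundedness argument for stable maps: bound the combinatorial complexity (number of irreducible components) of the coarse pre-stable curves $C$ underlying objects of $\SF^{1/r}_{\beta}$, and then invoke the fact that pre-stable curves of bounded combinatorial type form an open substack of finite type in $\fM_{g,n}$. Concretely, the map factors as $\SF^{1/r}_{\beta}\to\cS\to\fM^{1/r}_{g,\vgamma}\to\fM^{\tw}_{g,n}\to\fM_{g,n}$, and by the preceding lemma this composition is of finite type, so it suffices to identify a finite-type open substack of $\fM_{g,n}$ containing the image.

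First I would observe that the curve class of any underlying section $\underline{f}$ is fixed by the discrete data via \eqref{equ:fields-class}, so the rational number $\deg \underline{f}^*\cO(0_{\cP})=A\cdot[0_{\cP}]$ is a universal constant $N(\beta)$ depending only on $\beta$ (one computes $N(\beta)=\deg\cL+\sum_i c_i/r_i=(2g-2+n)/r+\sum_i c_i/r_i$ using $\cL^r\cong\omega^{\log}_{\cC/S}$ together with $[\cP_{\sigma_i}]\cdot[0_{\cP}]=1/r_i$).

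Next I would analyze the degrees component by component. For an irreducible component $Z\subset\cC$, either $f(Z)\not\subset 0_{\cP}$, in which case $\deg f^*\cO(0_{\cP})|_Z\ge 0$ from the effective intersection, or $f(Z)\subset 0_{\cP}$, in which case $f^*\cO(0_{\cP})|_Z\cong\cN_{0_{\cP}/\cP}|_Z\cong\cL|_Z$, and non-negativity is forced by the stability condition that $\omega^{\log}_{\cC/S}\otimes f^*\cO(0_{\cP})^k$ be positive for $k\gg 0$. Applying this stability condition again: any non-DM-stable component (rational with at most two special points) must satisfy $\deg f^*\cO(0_{\cP})|_Z>0$. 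Since all such degrees are rationals with denominators dividing $r\prod_i r_i$, positive values are bounded below by a fixed positive constant depending only on $\beta$; together with $\sum_Z \deg f^*\cO(0_{\cP})|_Z = N(\beta)$, this bounds the number of non-DM-stable components.

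The DM-stable components of a genus $g$, $n$-pointed pre-stable curve are classically bounded by $2g-2+n$. Combining, the total number of irreducible components of $\cC$, and hence of $C$, is bounded by a constant depending only on $\beta$. Such curves lie in a finite-type open substack of $\fM_{g,n}$, proving the lemma. The main obstacle will be the case $f(Z)\subset 0_{\cP}$ in the non-negativity step: one must carefully verify that the stability condition prevents $\deg\cL|_Z<0$ on such a component, and must track the fractional degrees coming from twisted curve structure at markings and nodes when extracting the uniform lower bound on positive contributions.
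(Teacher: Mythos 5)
Your proposal is essentially the paper's proof: both reduce to bounding the number of unstable components, use the stability condition to force $\deg f^*\cO(0_{\cP})\ge 0$ on every component and $>0$ on unstable ones, and compare with the total degree of $f^*\cO(0_{\cP})$, a constant determined by $\beta$. The one step where your write-up is incomplete is the uniform lower bound on the positive contributions: the stacky orders at \emph{nodes} are not part of the discrete data, so your claim that all component degrees have denominators dividing $r\prod_i r_i$ is unsupported as stated (you yourself flag this as the main obstacle). The paper closes exactly this point by invoking representability of the spin bundle: since $\cL$ is representable and $\cL^r\cong\omega^{\log}_{\cC/S}$ carries the trivial stabilizer action, the monodromy character at every stacky point is injective with trivial $r$-th power, so every stabilizer order divides $r$; hence all degrees lie in $\frac1r\ZZ$ and each positive contribution is at least $1/r$, giving the bound of $2g-2+n$ unstable components. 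With that ingredient added, your argument coincides with the paper's (your extra term $\sum_i c_i/r_i$ in the total degree is harmless, as any constant depending only on $\beta$ suffices).
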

\begin{proof}
To bound the image of $\SF^{1/r}_{\ddata} \to \fM_{g,n}$, it suffices to show that the number of rational components of the fibers over $\SF^{1/r}_{\ddata}$ is bounded. For this, it suffices to show that the numbers of unstable components of the source curves are bounded.

Consider any geometric point $t \to \SF^{1/r}_{\ddata}$ with the fiber $f_t\colon \cC_t \to \cP_t$. By the stability as in Definition \ref{def:r-spin-field}, the line bundle $f_t^*(\cO(0_{\cP_t}))$ has non-negative degree along each component of $\cC_t$, and positive degree along each unstable component of $\cC_t$. Furthermore, since the spin bundle $\cL_t$ over $t$ is representable, the degree of $f_t^*(\cO(0_{\cP_t}))$ along each unstable component is at least $\frac{1}{r}$. Since $\deg f_t^*(\cO(0_{\cP_t})) = (2g-2 + n)/r$, the number of unstable components of $\cC_t$ is at most $(2g-2 + n)$.
\end{proof}

%-----------------------------------------------------------------------
\subsection{Valuative criterion}\label{ss:valuative}

Let $R$ be a discrete valuation ring, $m_R \subset R$ be its maximal
ideal, and $K$ be its quotient field.
Let
$( \cC_{\eta} \to \eta, \cL_\eta, f_{\eta}\colon \cC_{\eta} \to
\cP_{\eta})$ be a minimal stable object over
$\eta = (\spec K, \cM_{\eta})$.
Possibly after a finite extension of $R$, we wish to uniquely extend
$f_{\eta}$ to a family $f\colon \cC \to \cP$ over
$S = (\spec R, \cM_{S})$.

\subsubsection{Outline}

The construction of the extension $f$ is rather involved.
Given Proposition \ref{prop:log-map-valuative}, it remains to extend the underlying structure, for which a main ingredient is the properness of the moduli space of twisted
stable maps \cite[Theorem~1.4.1]{AV02}.
However, the dependence of the target $\cP$ on $\omega^{\log}_{\cC}$
does not play well with the non-log-\'etale modifications (attaching
of new rational tails) of $\cC$ that arise when taking a twisted
stable maps limit.
To get around this, our strategy is to first introduce auxiliary
markings in such a way that there are no new rational tails under a
stable maps limit (Section~\ref{sss:add-aux-markings}).
With this, we obtain an extension with the auxiliary markings, which
we then need to remove (Section~\ref{sss:remove-aux-markings}).
In that process, we might introduce unstable components, which we then
need to contract (Section~\ref{sss:contract-unstable}).

One way to think of the auxiliary markings is a way to reduce to the
case of log stable maps to $\cP$ with logarithmic structure at both
$\infty$ and $0$ (similar to \cite{Gu16}).
In that situation, the construction of the extension is much simpler.

\subsubsection{Reduce to the case of nondegenerate irreducible generic fiber}

By Proposition \ref{prop:log-map-valuative}, it suffices to extend
$\underline{f}_{\eta}$ to a family of sections $\underline{f}$ over
$\spec R$.
Taking the normalization of $\underline{\cC}_{\eta}$ and labeling the
preimages of the nodes, it suffices to extend $\underline{f}_{\eta}$
over each component of the normalization.
Here we use that $r$-spin curves glue along their evaluation maps to
the \emph{proper} rigidified inertia stack which are defined by taking
the $r$-th root of $\cO_\Sigma \cong \omega_\cC^{\log}|_\Sigma$.
Thus, we may assume that $\underline{\cC}_{\eta}$ is smooth.

We may further assume that the image of $\underline{f}_{\eta}$ is not entirely contained in $0_{\cP_{\eta}}$ or $\infty_{\cP_{\eta}}$, as otherwise we may simply extend $\underline{f}_{\eta}$ as $0_{\cP_{\eta}}$ or $\infty_{\cP_{\eta}}$ respectively. Passing to a finite extension if necessary, we may assume that $\underline{f}_{\eta}$ intersects $0_{\cP_{\eta}}$ and $\infty_{\cP_{\eta}}$ properly along $\underline{\eta}$-points of $\underline{\cC}_{\eta}$.

As the log structures are irrelevant for extending the underlying structure, we will drop the underline in this section for simplicity, and all stacks are assumed to be underlying stacks unless otherwise specified. It remains to prove the following result.

\begin{proposition}\label{prop:valuative}
Let $(\cC_{\eta}, \cL_{\eta})$ be an irreducible $r$-spin curve, and $f_{\eta}$ be a section of $\cP_{\eta} := \PP(\cL_{\eta}\oplus\cO_{\cC_{\eta}}) \to \cC_{\eta}$ Denote by $0_{\cP_{\eta}}$ and $\infty_{\cP_{\eta}}$ the zero and infinity sections of $\cP_{\eta}$. Suppose that
\begin{enumerate}
 \item $f_{\eta}$ is neither the zero nor the infinity section.
 \item $f_{\eta}$ intersects the infinity section only along marked points.
 \item $\omega^{\log}_{\cC_{\eta}}\otimes f_{\eta}^{*}(\cO(0_{\cP_{\eta}}))^{k}$ is positive for $k\gg 0$.
\end{enumerate}
Possibly after a finite extension, there is a unique $r$-spin curve $(\cC, \cL)$ over $\spec R$ and a section $f$ of $\cP := \PP(\cL\oplus\cO_{\cC}) \to \cC$ extending the triple $(\cC_{\eta}, \cL_{\eta}, f_{\eta})$ such that $\omega^{\log}_{\cC}\otimes f^{*}(\cO(0_{\cP}))^{k}$ is positive for $k\gg 0$.
\end{proposition}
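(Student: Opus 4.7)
The plan is to build the extension in two steps (first the spin curve, then the section), with a final modification step to enforce stability; uniqueness then follows from the canonicity of each step. First, I would invoke the properness of $\fM_{g,\vgamma}^{1/r} \to \fM_{g,n}$ established in Corollary~\ref{cor:r-spin-finite}. After a finite extension of $R$, this lets me extend $(\cC_\eta, \cL_\eta)$ to a family $(\cC_0, \cL_0)$ of $r$-spin curves over $S = \spec R$, with associated $\PP^1$-bundle $\cP_0 := \PP(\cL_0 \oplus \cO_{\cC_0})$.

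Next, for the section, I would reinterpret $f_\eta$ as a meromorphic section $s_\eta$ of $\cL_\eta$ whose pole divisor, by hypothesis~(2) together with the prescribed contact orders, is exactly $\sum_i c_i \sigma_{i,\eta}$; equivalently, $s_\eta$ is a global section of $\cL_\eta(\sum c_i \sigma_i)$. Since the pushforward of $\cL_0(\sum c_i \sigma_i)$ along $\cC_0 \to S$ is a finitely generated module over the Noetherian affine base $S$, the section $s_\eta$, after rescaling by a suitable power of the uniformizer of $R$, extends uniquely up to units to a section $s_0$ whose restriction to the central fiber is not identically zero. The projectivization of $s_0$ then provides a morphism $f_0\colon \cC_0 \to \cP_0$ which is automatically a section over $S$ and extends $f_\eta$.

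The limiting section $s_0$ may nevertheless violate the hypotheses on the central fiber: it may vanish identically on some irreducible components, have pole order at some $\sigma_{i,0}$ strictly less than $c_i$, or produce a component $Z$ on which no power of $f_0^*\cO(0_{\cP_0})$ combined with $\omega_{\cC_0}^{\log}$ is positive. I would remedy this by a sequence of canonical modifications on the central fiber of $(\cC_0,\cL_0)$: contraction of each unstable rational component, and $\mu_r$-twisted blow-ups at points where $s_0$ has unexpected vanishing, so as to preserve the relation $\cL^r \cong \omega^{\log}_{\cC}$ on the modified curve. Because both the contraction centers and the twisted blow-up orders are pinned down by the stability condition and the prescribed contact orders, the modified triple $(\cC, \cL, f)$ is unique once it exists; the separatedness of $\fM_{g,\vgamma}^{1/r}$ together with the uniqueness (up to units) of the extension of $s_\eta$ then deliver the uniqueness statement of the proposition.

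The hard part is this last stabilization step: every birational modification of $\cC_0$ changes $\omega^{\log}$ by a twist supported on the exceptional locus, and the matching modification of $\cL_0$ must correct for this while remaining an $r$-th root of $\omega^{\log}$. This forces one to work within the category of twisted curves rather than schemes, with the level of stackiness at each inserted node determined by the vanishing order of $s_0$ modulo $r$ together with the contact orders of the neighboring markings. The combinatorial bookkeeping that guarantees simultaneously the existence of an $r$-th root on the modified curve and that the resulting $f$ is a section with the prescribed discrete data is the technical heart of the argument.
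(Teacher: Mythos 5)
There is a genuine gap, and it sits exactly where you flag ``the technical heart'': your argument stops at the point where the paper's proof actually begins. First, the claim that the projectivization of the rescaled limit $s_0$ ``is automatically a section over $S$'' is not correct. The rescaled limit is a nonzero section of $\cL_0(\sum_i c_i\sigma_i)$ on the two-dimensional total space $\cC_0$, and the induced map to $\PP(\cL_0\oplus\cO)$ is a priori only a rational map: near a marking it is locally $[u:t^{c_i}]$, which is indeterminate wherever the limit of $u$ vanishes on the central fiber at the marking. Resolving this indeterminacy forces you to insert new rational components into the central fiber, i.e.\ to change the underlying curve, and then the spin structure must be rebuilt on the modified (twisted) curve so that it remains an $r$-th root of $\omega^{\log}$. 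Your ``$\mu_r$-twisted blow-ups'' sketch gestures at this but proves nothing; this is precisely what the paper does via the auxiliary markings $\Lambda_0$ at the intersection with $0_{\cP}$, the twisted spin bundle $\tilde\cL_1=\cL_1|_{\tilde\cC_1}\otimes\cO(\sum_{x\in\Lambda_0}x)$, and properness of twisted stable maps (Lemmas~\ref{lem:map-extending} and \ref{lem:ext-with-extra-marking}), which guarantee the limit map contracts only rational bridges and hence stays a section, followed by untwisting (Lemma~\ref{lem:remove-extra-marking}) and the contraction of unstable components (Lemmas~\ref{lem:stabilize-bridge}, \ref{lem:unstable-tail-vanishing}, \ref{lem:stabilize-tail}). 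Without an argument of this kind you have not produced the pair $(\cC,\cL,f)$ at all, only a rational section on one arbitrarily chosen extension $\cC_0$.

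Second, the uniqueness statement does not follow from separatedness of $\fM^{1/r}_{g,\vgamma}$ together with uniqueness of the rescaled section. Separatedness of the spin moduli only compares spin structures over a \emph{fixed} extension of the underlying curve, whereas here the extension of the curve is itself part of the data being determined: two stable extensions of $(\cC_\eta,\cL_\eta,f_\eta)$ could a priori live on curves differing by rational components, and one must show the stability condition on $\omega^{\log}_{\cC}\otimes f^*\cO(0_{\cP})^k$ excludes this. The paper needs a dedicated argument for this (the construction of a common modification $C_3$, the gluing of the sections over the opens $U_1,U_2$, and the degree comparison of Lemma~\ref{lem:degree-comparison}); your proposal asserts that the contraction centers and blow-up orders are ``pinned down'' but gives no mechanism, so both existence and uniqueness remain unestablished.
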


\begin{remark}
In the above proposition, marked points are allowed to be broad, namely the inertia group along the marking can be trivial.
\end{remark}

\begin{notation}
  In the following, we will consider various $r$-spin curves with log
  fields $(\cC_i, \cL_i, f_i)$.
  Their generic fibers over $\eta$ will be decorated by subscripts
  $\eta$.
\end{notation}

We state two useful tools:

\begin{lemma}\label{lem:map-extending}
  Consider an $r$-spin curve $(\cC_{\eta}, \cL_{\eta})$ with its
  coarse moduli $\cC_{\eta} \to C_{\eta}$.
  Let $C \to \spec R$ be a pre-stable curve extending $C_{\eta}$.
  Possibly after a finite base change, there is a unique $r$-spin
  curve $(\cC, \cL)$ with the coarse moduli $\cC \to C$ over $\spec R$
  extending $(\cC_{\eta}, \cL_{\eta})$.
  
  If we are given the further data of a log field
  $f_\eta\colon \cC_\eta \to \cP_\eta$, then, possibly after a finite
  base change, in addition to $(\cC, \cL)$ as above, there is a unique
  twisted stable map $f'\colon \cC' \to \cP := \PP(\cL\oplus \cO_\cC)$
  extending $f_\eta$.
\end{lemma}
\begin{proof}
To prove the statement, we apply properness of twisted stable maps twice, see \cite[Theorem~1.4.1]{AV02}. First, we extend the $r$-spin structure using the twisted stable map point of view as in (\ref{diag:spin-map-correspondence}). We then extend $f_{\eta}$ to $f$ as twisted stable maps.
\end{proof}

\begin{lemma}
  \label{lem:unique-map}
  Let $\cC$ be a normal and integral Deligne--Mumford stack, and $X$
  be a separated Deligne--Mumford stack.
  Consider two morphisms $f, g\colon \cC \to X$ which agree over an
  open dense substack $U \subset \cC$.
  Then $f$ and $g$ agree on all of $\cC$.
\end{lemma}
\begin{proof}
  Define $\cC_\Delta$ by the cartesian square
  \begin{equation*}
    \xymatrix{
      \cC_\Delta \ar[r] \ar[d] & \cC \ar[d]^{(f, g)} \\
      X \ar[r]^-{\Delta} & X \times X.
    }
  \end{equation*}
  Since $X$ is separated and Deligne--Mumford, the diagonal morphism
  $X \to X \times X$ and thus $\cC_\Delta \to \cC$ are finite and
  representable.

  Furthermore, notice that the map
  $(f, g)|_U \colon U \to X \times X$, factors through $\Delta$.
  Hence, we get a section $s\colon U \to \cC_\Delta \times_{\cC}U$ of
  the projection $\cC_\Delta \times_{\cC}U \to U$.
  Let $Y$ be the closure of $s(U)$ in $\cC_\Delta$ equipped with the
  reduced substack structure.
  By construction, $Y$ is integral.
  We have constructed a finite, birational and representable morphism
  $Y \to \cC$.
  Since $Y$ is integral and $\cC$ is integral and normal, we see that
  $Y \to \cC$ is an isomorphism \cite[Lemma 29.53.8]{stacks-project}.
  Note that the morphism $(f,g)$ is the same as the composition
  $\cC \cong Y \to \cC_{\Delta} \to X \stackrel{\Delta}{\to} X\times
  X$.
  In particular, $f$ and $g$ agree on all of $\cC$.
\end{proof}

\subsubsection{Construct an extension with auxiliary markings}
\label{sss:add-aux-markings}
Denote by $\Lambda$ the set of markings of $\cC_{\eta}$.
Taking a finite base change if necessary, we may assume that
$f_{\eta}$ intersects $0_{\cP_{\eta}}$ properly along $\eta$-points of
$\cC_{\eta}$.
Denote by $\Lambda_0$ the set of these intersection points which are
non-marked in $\cC_{\eta}$.
Let $\cC'_{\eta}$ be the marked curve given by $\cC_{\eta}$ together
with the set of markings $\Lambda \cup \Lambda_0$.

Let $\cC'_{\eta} \to C'_{\eta}$ be the coarse moduli morphism. Possibly after a finite base change, let
\begin{equation}\label{equ:any-extension}
C'_1 \to \spec R
\end{equation}
be any family of pre-stable curves with the set of markings
$\Lambda \cup \Lambda_0$ extending $C'_{\eta}$.
Let $C_1 \to \spec R$ be the family of pre-stable curves obtained by
removing the set of markings $\Lambda_0$ from $C'_1$.
By Lemma~\ref{lem:map-extending}, we obtain an $r$-spin curve
$(\cC_1, \cL_1) \to \spec R$ extending $(\cC_{\eta}, \cL_{\eta})$ with
the coarse moduli $\cC_1 \to C_1$.

Let $\tilde{\cC}_1 \to \cC_1$ be the $r$-th root stack along the
markings in $\Lambda_{0}$.
Then $\tilde{\cC}_1$ has the set of markings $\Lambda\cup \Lambda_0$.
Given point $x$ on $\cC_1$ in $\Lambda_0$, we use $\tilde x$ to denote
the corresponding marking of $\tilde{\cC}_1$.
Note that
\begin{equation*}
  \cO_{\cC_1}(x)|_{\tilde{\cC}_1} \cong \cO_{\tilde{\cC}_1}(\sum_{x\in\Lambda_0} r\tilde x), \qquad
  \omega^{\log}_{\cC_1/\spec R}|_{\tilde{\cC}_1} \otimes \cO_{\tilde{\cC}_1}(\sum_{x\in\Lambda_0} r\tilde x) \cong \omega^{\log}_{\tilde{\cC}_1/\spec R}.
\end{equation*}
Thus, the line bundle over $\tilde{\cC}_1$
\begin{equation}\label{equ:spin-extra-twist}
\tilde{\cL}_1 = \cL_{1}|_{\tilde{\cC}_1}\otimes\cO_{\tilde{\cC}_1}(\sum_{x\in\Lambda_0} \tilde x)
\end{equation}
satisfies
$(\tilde{\cL}_1)^r \cong \omega^{\log}_{\tilde{\cC}_1/\spec R}$, and
hence $(\tilde{\cC}_1, \tilde{\cL}_1)$ is an $r$-spin curve over
$\spec R$.

Define $\tilde{\cP}_1 := \PP(\tilde{\cL}_1\oplus\cO)$ and its restriction $\tilde{\cP}_{1,\eta}:= \tilde{\cP}_1\times_{\spec R}\eta$.
The section $f_{\eta}$ induces a section $\tilde{f}_{1,\eta}$ of $\tilde{\cP}_{1,\eta} \to \tilde{\cC}_{1,\eta}$ as follows. 

Let $\cC_{\eta}^{\circ} = \cC_{\eta}\setminus  \Lambda_0$. Observe that $\tilde{\cP}_{1,\eta}|_{\cC_{\eta}^{\circ}} = \cP_{\eta}|_{\cC_{\eta}^{\circ}}$, giving a section $\tilde{f}_{1,\eta}|_{\cC_{\eta}^{\circ}} \colon \cC_{\eta}^{\circ} \to \tilde{\cP}_{1,\eta}$ over $\cC_{\eta}^{\circ} \subset \tilde{\cC}_{1,\eta}$ induced by $f_{\eta}$. To see this section extends to the entire curve $\tilde{\cC}_{1,\eta}$, let $U \subset \tilde{\cC}_{1,\eta}$ be a neighborhood of a marking $\tilde{x} \subset \tilde{\cC}_{1,\eta}$ for $x \in \Lambda_0$. By \eqref{equ:spin-extra-twist}, we have a natural morphism of line bundles $\cL_1|_{U} \to \tilde{\cL}_{1}|_{U}$. Shrinking $U$, we may assume that $f_{\eta}|_{U}$ defines a section of $\cL_1|_{U}$, hence a section of $\tilde{\cL}_{1}|_{U}$ by composing with $\cL_1|_{U} \to \tilde{\cL}_{1}|_{U}$. This gives the desired section $\tilde{f}_{1,\eta}$ which is neither the zero nor the infinity section by construction.

\begin{lemma}\label{lem:ext-with-extra-marking}
With notation as above, possibly after a finite extension, there is an $r$-spin curve with a log field $(\tilde{\cC}_2, \tilde{\cL}_2,\tilde{f}_2)$ extending $(\tilde{\cC}_{1,\eta}, \tilde{\cL}_{1,\eta}, \tilde{f}_{1,\eta})$.
\end{lemma}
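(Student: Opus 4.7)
The plan is to extend $\tilde{f}_{1,\eta}$ by first producing an extension as a twisted stable map to $\tilde{\cP}_1$, and then repairing the source curve and line bundle to recover a section of a $\PP^1$-bundle of the form $\PP(\tilde{\cL}_2\oplus\cO)$.

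First I would apply the valuative criterion of properness for the stack of twisted stable maps to the family $\tilde{\cP}_1\to\spec R$, which is proper over $\spec R$ by \cite[Theorem 1.4.1]{AV02}. Taking discrete data (genus, markings, monodromy, and the curve class fixed by $\tilde{f}_{1,\eta}$), this yields, after a finite extension of $R$ if necessary, a twisted stable map $\hat{f}_2\colon\hat\cC_2\to\tilde{\cP}_1$ over $\spec R$ whose restriction to $\eta$ is $\tilde{f}_{1,\eta}$. Because $\tilde{f}_{1,\eta}$ is a section, the composition $\hat\cC_2\to\tilde{\cP}_1\to\tilde{\cC}_1$ is birational and is therefore a contraction $c\colon\hat\cC_2\to\tilde{\cC}_1$ which is an isomorphism away from finitely many rational trees collapsed in the closed fiber. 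On each such tree $E\subset\hat\cC_{2,0}$ the map $\hat f_2|_E$ takes values in the fiber $\tilde\cP_{1,c(E)}\cong\PP^1$, so $\hat f_2|_E$ is a map of twisted rational curves to a $\PP^1$ of some (rational) degree determined by its incidences with $0_{\tilde\cP_1}$ and $\infty_{\tilde\cP_1}$.

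Set $\tilde\cC_2:=\hat\cC_2$. The next step is to construct $\tilde\cL_2$ on $\tilde\cC_2$ by twisting $c^*\tilde\cL_1$ by an integer combination of the irreducible components of the contracted trees. The twist is chosen tree by tree so that the resulting line bundle satisfies $\tilde\cL_2^{\otimes r}\cong\omega^{\log}_{\tilde\cC_2/\spec R}$ and so that the natural identification $c^*\tilde\cP_1\cong\PP(\tilde\cL_2\oplus\cO)=:\tilde\cP_2$ on the non-contracted locus extends globally over $\tilde\cC_2$. With such a choice $\hat f_2$ factors through a section $\tilde f_2\colon\tilde\cC_2\to\tilde\cP_2$, and uniqueness of the whole triple follows from the uniqueness in the valuative criterion for the moduli of twisted stable maps to $\tilde\cP_1$.

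The main obstacle is the combinatorial bookkeeping on the contracted trees. One must verify that the integer twists can be chosen simultaneously so that (i)~the multidegree of $\tilde\cL_2^{\otimes r}$ agrees on every component of every tree with that of $\omega^{\log}_{\tilde\cC_2/\spec R}$, and (ii)~the $\mu_r$-stacky structure at the newly introduced nodes, forced by representability of the limit $\hat f_2$, exactly absorbs any fractional discrepancy. Both are expected to follow from the fact that the intersection numbers of $\hat f_2$ with $0_{\tilde\cP_1}$, $\infty_{\tilde\cP_1}$ on each tree match those of $\tilde f_{1,\eta}$ on the nearby generic fiber, together with the $r$-spin relation on $\tilde\cC_1$ and the balancedness of nodes in twisted curves. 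Once these compatibilities are checked, the construction of $\tilde\cL_2$ and the verification that $\tilde f_2$ is a section are straightforward.
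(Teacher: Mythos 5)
Your first step coincides with the paper's: extend $\tilde{f}_{1,\eta}$, after a finite base change, to a twisted stable map $\hat{f}_2\colon\hat{\cC}_2\to\tilde{\cP}_1$ using properness of twisted stable maps, and observe that the composition $\hat{\cC}_2\to\tilde{\cP}_1\to\tilde{\cC}_1$ is a contraction. The gap is in what comes after. You never use the one feature of the set-up that this lemma was engineered to have: after adding the auxiliary markings $\Lambda_0$, the generic section $\tilde{f}_{1,\eta}$ meets $0_{\tilde{\cP}_1}$ and $\infty_{\tilde{\cP}_1}$ \emph{only at marked points}. The paper's proof uses exactly this to show that the contraction $\hat{\cC}_2\to\tilde{\cC}_1$ collapses only rational bridges: a contracted rational tail would surject onto a fiber, hence meet both $0$ and $\infty$, and since every intersection point in the closed fiber is either a specialization of a (marked) generic intersection or a node with a component mapping into $0$ or $\infty$, such a tail would carry at least two special points; and a contracted component with three or more special points is excluded because markings are preserved by $\hat{\cC}_2\to\tilde{\cC}_1$. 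Once only bridges are contracted, $\omega^{\log}_{\hat{\cC}_2/\spec R}$ is the pullback of $\omega^{\log}_{\tilde{\cC}_1/\spec R}$, so $\tilde{\cL}_2:=\tilde{\cL}_1|_{\hat{\cC}_2}$ is already an $r$-spin bundle with \emph{no twist at all}, $\tilde{\cP}_1\times_{\tilde{\cC}_1}\hat{\cC}_2=\PP(\tilde{\cL}_2\oplus\cO)$, and $\hat{f}_2$ tautologically becomes a section. Controlling the contracted locus is the actual content of the lemma, and your proposal omits it.

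The substitute you offer -- twisting $c^*\tilde{\cL}_1$ by integer combinations of contracted components so that the $r$-spin relation holds and the map becomes a section of $\PP(\tilde{\cL}_2\oplus\cO)$ -- is not just unverified bookkeeping; as stated it does not work. If the twist is nontrivial, $\PP(\tilde{\cL}_2\oplus\cO)$ and $c^*\tilde{\cP}_1=\PP(c^*\tilde{\cL}_1\oplus\cO)$ are genuinely different $\PP^1$-bundles (an isomorphism preserving the two distinguished sections forces $\tilde{\cL}_2\cong c^*\tilde{\cL}_1$), so the ``natural identification'' you invoke exists only away from the contracted locus, and the induced birational map has indeterminacy precisely there; $\hat{f}_2$ then gives only a rational section, and you would have to modify the source again, which your argument does not provide. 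Moreover, if an unmarked contracted rational tail did occur, the spin condition would require $\deg\tilde{\cL}_2=-1/r$ on it, while $c^*\tilde{\cL}_1$ has degree $0$ there and integer twists by components cannot produce this without additional stacky structure at the new nodes. The correct resolution is not that these discrepancies can always be absorbed, but that the marked-intersection property rules out such components entirely, so no twist is needed.
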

\begin{proof}
  Observe that $\tilde{f}_{1,\eta}$ intersects the zero and infinity
  section of $\tilde{\cP}_{1,\eta}$ only along markings in
  $\Lambda\cup\Lambda_0$.
  By \cite[Theorem~1.4.1]{AV02}, possibly after a further finite base
  change, we obtain a stable map
  $\tilde{f}_2\colon \tilde{\cC}_2 \to \tilde{\cP}_1$ extending
  $\tilde{f}_{1, \eta}$.

We claim that the composition $\tilde{\cC}_2 \to \tilde{\cP}_1 \to \tilde{\cC}_1$ contracts only rational components with precisely two special points. Let $Z \subset \tilde{\cC}_2$ be a contracted component. Then $Z$ cannot be a rational tail:
Otherwise, $\tilde{f}_2|_{Z}$ surjects onto a fiber of
$\tilde{\cP}_1 \to \tilde{\cC}_1$.
As over the generic point all intersections with zero and infinity
section are marked, $Z$ contains at least two special points.

Suppose $Z$ has at least three special points. Then two of the special points are either a marked point or a node joining $Z$ with a tree of rational components contracting to a point of $\tilde{\cC}_1$. The above discussion implies that such a tree contains at least one marked point. This is impossible since  $\tilde{\cC}_2 \to \tilde{\cC}_1$  preserves marked points.

Since $\tilde{\cC}_2 \to \tilde{\cC}_1$ contracts only rational
bridges and is compatible with markings, we have
$\omega^{\log}_{\tilde{\cC}_2/\spec R} =
\omega^{\log}_{\tilde{\cC}_1/\spec R}|_{\tilde{\cC}_2}$.
We check that
$(\tilde{\cC}_2, \tilde{\cL}_2 := \tilde{\cL}_1|_{\tilde{\cC}_{2}})$
is an $r$-spin curve over $\spec R$, hence
$\tilde{\cP}_1|_{\tilde{\cC}_2} = \tilde{\cP}_2 :=
\PP(\tilde{\cL}_2\oplus\cO)$.
Thus $\tilde{f}_1$ pulls back to a section $\tilde{f}_2$ of
$\tilde{\cP}_2 \to \tilde{\cC}_2$ as needed.
\end{proof}

\subsubsection{Remove auxiliary markings}
\label{sss:remove-aux-markings}

\begin{lemma}\label{lem:remove-extra-marking}
  Let $(\tilde{\cC}_2, \tilde{\cL}_2,\tilde{f}_2)$ be as in
  Lemma~\ref{lem:ext-with-extra-marking}.
  Let $\tilde{\cC}_2 \to \cC_2$ be obtained by first rigidifying along
  markings in $\Lambda_0$, then removing $\Lambda_0$ from the set of
  markings.
  Then there is an $r$-spin curve with a log field $(\cC_2, \cL_2, f_2)$
  extending $(\cC_{\eta}, \cL_{\eta}, f_{\eta})$ such that
\begin{enumerate}
 \item $\tilde{\cL}_2 = \cL_2|_{\tilde{\cC}_2}\otimes\cO_{\tilde{\cC}_2}(\sum_{x\in\Lambda_0}\tilde x)$,
 \item $f_{2}$ and $\tilde{f}_2$ are isomorphic away from the sections in $\Lambda_0$,
 \item $f_{2}$ sends sections in $\Lambda_0$ to the zero section of $\cP_2 := \PP(\cL_2\oplus \cO)$.
\end{enumerate}
\end{lemma}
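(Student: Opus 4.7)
The plan is to construct $(\cC_2, \cL_2, f_2)$ in three stages: descend the spin bundle $\tilde{\cL}_2$ to obtain $\cL_2$ via a monodromy computation, verify the $r$-spin relation by comparing log canonical bundles under the root stack map, and finally extend $f_\eta$ using the properness of the $\PP^1$-bundle.

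First, the map $\tilde{\cC}_2 \to \cC_2$ is an isomorphism away from $\Lambda_0$, and at each $x \in \Lambda_0$ it coarsens the $\mu_r$-gerbe. A line bundle on $\tilde{\cC}_2$ descends along this map iff it has trivial monodromy at each $x$. In a local stacky chart $\tilde z^r = z$ at such an $x$, the divisor class $x$ has local generator $\tilde z^{-1}$, giving $\cO_{\tilde{\cC}_2}(D)$ the monodromy character $\zeta \mapsto \zeta^{-1}$. By the construction in Lemma \ref{lem:ext-with-extra-marking}, $\tilde{\cL}_2 = \cL_1|_{\tilde{\cC}_2}(D)$ where $\cL_1|_{\tilde{\cC}_2}$ is pulled back from $\cC_1$ and hence has trivial monodromy at $\Lambda_0$. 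Therefore $\tilde{\cL}_2$ has monodromy character $\zeta \mapsto \zeta^{-1}$ at $\Lambda_0$, so $\tilde{\cL}_2 \otimes \cO(-D)$ has trivial monodromy and descends to a line bundle $\cL_2$ on $\cC_2$ satisfying property (1).

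Second, I would verify $\cL_2^r \cong \omega^{\log}_{\cC_2/\spec R}$ by means of the canonical bundle formula under root stacks. The local identity $dz = r \tilde z^{r-1} d\tilde z$, together with the fact that $\Lambda_0$ is marked on $\tilde{\cC}_2$ but unmarked on $\cC_2$, yields $\omega^{\log}_{\tilde{\cC}_2/\spec R} \cong \omega^{\log}_{\cC_2/\spec R}|_{\tilde{\cC}_2}(rD)$. Combining this with the original spin relation $\tilde{\cL}_2^r \cong \omega^{\log}_{\tilde{\cC}_2/\spec R}$ and property (1) gives $(\cL_2|_{\tilde{\cC}_2})^r \cong \omega^{\log}_{\cC_2/\spec R}|_{\tilde{\cC}_2}$, which descends to the desired relation on $\cC_2$.

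Third, with $\cP_2 = \PP(\cL_2 \oplus \cO) \to \cC_2$ now defined, the section $f_\eta$ extends uniquely to a regular section $f_2$: the total space $\cC_2$ is regular along each $\Lambda_0$ section (which lies in the smooth locus of $\cC_2 \to \spec R$), and $\cP_2 \to \cC_2$ is a proper $\PP^1$-bundle, so the valuative criterion applies. For property (2), the bundles $\cP_2$ and $\tilde{\cP}_2$ agree on the complement of $\Lambda_0$, and both $f_2$ and $\tilde f_2$ extend $f_\eta$ there, so they agree by uniqueness of extensions. For property (3), each $x \in \Lambda_0$ is a section $\spec R \to \cC_2$ whose generic fiber lies in $f_\eta^{-1}(0_{\cP_\eta})$, and the properness of $0_{\cP_2} \to \spec R$ forces the composition with $f_2$ to factor through $0_{\cP_2}$. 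The main obstacle is the monodromy computation of step one: one must track that the twist by $D$ in the definition of $\tilde\cL_1$ survives the contraction of rational bridges from $\tilde{\cC}_1$ to $\tilde{\cC}_2$, so that the cancellation of monodromy characters still holds on $\tilde{\cC}_2$; the canonical bundle comparison in step two is standard but requires care to distinguish the two sets of log markings.
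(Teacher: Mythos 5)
Your steps 1 and 2 are essentially correct and take a genuinely different route from the paper for constructing $(\cC_2,\cL_2)$ and proving (1): the paper re-applies Lemma~\ref{lem:map-extending} to the coarse curve $C_2$ to produce an $r$-spin curve $(\cC_3,\cL_3)$, root-twists it along $\Lambda_0$, and identifies the result with $(\tilde{\cC}_2,\tilde{\cL}_2)$ by uniqueness, whereas you descend $\tilde{\cL}_2(-D)$ directly along the rigidification. Your monodromy computation does go through: $\tilde{\cL}_2(-D)=\cL_1|_{\tilde{\cC}_2}$ is pulled back from a neighbourhood in which the $\Lambda_0$-points are non-stacky, and the bridges contracted by $\tilde{\cC}_2\to\tilde{\cC}_1$ cannot map into the marking sections (markings avoid nodes), so the twist by $D$ survives the contraction as you anticipated; the comparison of log canonical bundles under the root construction is also as you state.

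The gap is in step 3. The section $f_2$ is already defined, via $\tilde f_2$, away from the finitely many closed points $\Lambda_0\cap \cC_{2,\spec R/m_R}$, so what remains is a \emph{codimension-two} extension problem on the two-dimensional total space $\cC_2$, and the valuative criterion of properness for $\cP_2\to\cC_2$ says nothing about it: it only extends across codimension-one points. A rational section of a $\PP^1$-bundle over a regular two-dimensional base defined away from a closed point need not extend across it (the model case $[u:v]$ at the origin of $\A^2$); in general the limit requires bubbling a rational component over that point, and this is exactly the delicate phenomenon behind the properness discussion (cf.\ Section~\ref{sss:properness-failure}). Note also that over $\Lambda_0$ the bundles $\tilde{\cP}_2$ and $\cP_2|_{\tilde{\cC}_2}$ differ by an elementary transformation along those fibers, so one cannot define $f_2$ at these points by transporting $\tilde f_2$ either. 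The paper closes this step by extending $f_\eta$ as a twisted stable map $\hat f_2\colon \hat{\cC}_2\to\cP_2$, allowing the source to acquire components over $\Lambda_0$, and then ruling such components out: any component of $\hat{\cC}_2$ contracted over a point $x\in\Lambda_0$ would have to dominate the fiber of $\cP_2$ and hence meet $\infty_{\cP_2}$, while the intersection with the infinity section is confined to the $\Lambda$-markings (condition (2) of Proposition~\ref{prop:valuative}), which are disjoint from $\Lambda_0$ --- a contradiction, so $\hat{\cC}_2\to\cC_2$ is an isomorphism and $f_2:=\hat f_2$ is a genuine section. Some argument of this kind, using that $f_\eta$ meets $\infty_{\cP}$ only at markings, is indispensable; ``the valuative criterion applies'' does not supply it. Your deduction of (3) from the existence of $f_2$, and of (2) by separatedness, is fine once the extension is established.
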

\begin{proof}
  We first construct the spin bundle $\cL_2$. Let $\cC_2 \to C_2$ be
  the coarse moduli morphism.
  Then $C_2$ over $\spec R$ extends $C_{\eta}$ as a family of
  pre-stable curves with the set of markings $\Lambda$.
  By Lemma~\ref{lem:map-extending}, we obtain an $r$-spin curve
  $(\cC_3, \cL_3)$ over $\spec R$ extending $(C_{\eta}, L_{\eta})$.

  Let $\tilde{\cC}_3 \to \cC_3$ be the $r$th root construction along
  sections in $\Lambda_0$, and view $\tilde{\cC}_3$ as a family of
  pre-stable curves with markings $\Lambda\cup\Lambda_0$.
  Consider the line bundle
  $ \tilde{\cL}_3 :=
  \cL_{3}|_{\tilde{\cC}_3}\otimes\cO_{\tilde{\cC}_3}(\sum_{x \in
    \Lambda_0}\tilde x)$ over $\tilde{\cC}_3$.
  We check that $\tilde{\cL}_3$ is an $r$-spin bundle over
  $\tilde{\cC}_3$.
  Since
  $\tilde{\cC}_{3,\eta} = \tilde{\cC}_{1,\eta} =
  \tilde{\cC}_{2,\eta}$, the $r$-spin structure
  $(\tilde{\cC}_3, \tilde{\cL}_3)$ over $\spec R$ extends
  $(\tilde{\cC}_{2,\eta}, \tilde{\cL}_{2,\eta} =
  \tilde{\cL}_{1,\eta})$.
  As both $\tilde{\cC}_2$ and $\tilde{\cC}_3$ have the same coarse
  curve $C_2$, by the uniqueness of Lemma~\ref{lem:map-extending}, we
  conclude that
  $(\tilde{\cC}_{3},\tilde{\cL}_3) \cong
  (\tilde{\cC}_{2},\tilde{\cL}_2)$, hence $\cC_3 = \cC_2$ and
  $\cL_2 := \cL_3$.
  This proves (1).

We now construct the section $f_2$. Possibly after a finite extension, $f_{\eta}$ extends to a twisted stable map $\hat{f}_2\colon \hat{\cC}_2 \to \cP_2$. Consider the commutative diagram
\[
\xymatrix{
\tilde{\cP}_2 \ar@{-->}[rr] \ar[d] && \cP_2 \ar[d] && \\
\tilde{\cC}_2 \ar@/^1pc/[u]^{\tilde{f}_2} \ar[rr] && \cC_2  && \hat{\cC}_2 \ar[ll]_p \ar[llu]_{\hat{f}_2}
}
\]
where $\tilde{\cC}_2 \to \cC_2$ is the rigidification along
$\Lambda_0$ as shown in the previous paragraph, hence
$V := \tilde{\cC}_2\setminus \Lambda_0 \cong \cC_2\setminus
\Lambda_0$, and the dashed arrow is a rational map which is a
well-defined isomorphism over $V$.
Let $W = p^{-1}(V)$.
Then, since $(\tilde f_2 \circ p)|_W$ and $\hat f_2|_W$ agree over the
generic fiber, they also agree over all of $W$ by
Lemma~\ref{lem:unique-map}.
Hence, since $\hat{f}_2$ is a stable map limit,
Lemma~\ref{lem:map-extending} implies that $W \cong V$, and that
$\hat{\cC}_2 \to \cC_2$ is a contraction of rational components over
$\Lambda_0$ in the closed fiber.

Let $Z_x \subset \hat{\cC}_2$ be the preimage of a point
$x \in \cC_{2,\spec R/m_R}$ in $\Lambda_0$.
Suppose $Z_x$ is not a point.
Then $\hat{f}_2|_{Z_x}$ surjects onto a fiber of $\cP_2 \to \cC_2$,
hence intersects the infinity section non-trivially. We argue that this is not possible as follows. 

First $Z_x$ contains no markings, as otherwise it is in contradiction to the fact that $\Lambda$
is disjoint from $\Lambda_0$. By Proposition~\ref{prop:log-map-valuative}, we may lift $\hat{f}_2$ to 
a log map over the log scheme $S$, denoted again by $\hat{f}_2$, where the target $\cP_{2}$ is equipped with the log structure given by its infinity divisor $\infty_{\cP_2}$.

Let $Z' \subset Z_x$ be an irreducible component not contracted by
$\hat{f}_2$. Then $\hat{f}_2|_{Z'}$ must intersect properly with the infinity
section along at least one point, say $z \in Z'$. Observe that in absence of marking, $z$ is necessarily a node of $\hat{C}_2$. Otherwise, assume $z$ is a smooth unmarked point.   
Consider the morphism of characteristic monoids 
\[
\bar{\hat{f}}^{\flat}_2|_z \colon (\hat{f}^*\oM_{\cP_2})|_z \cong \NN \longrightarrow \oM_{\hat{C}_2}|_{z} \cong Q := \oM_{S}|_{\spec R/m_R}
\]
induced by the log map $\hat{f}_2$. The assumption $\hat{f}_2(z) \in \infty_{\cP_2}$ implies that $\bar{\hat{f}}^{\flat}_2|_z(1) \neq 0$. Note that the sheaf $\oM_{\hat{C}_2}|_{(Z')^{\circ}}$ is globally constant over the locus of smooth unmarked points $(Z')^{\circ} \subset Z'$. Hence  $\bar{\hat{f}}^{\flat}_2|_z(1) \in Q$ is the degeneracy of the component $Z'$. This is in contradiction to $Z'$ being non-degenerate, see \S \ref{sss:map-at-generic-pt}. Therefore,  $\hat{f}_2|_{Z'}$ necessarily intersects with the infinity section along a node of $Z_x$ joining $Z'$ and a component $Z'_1 \subset Z_x$.
Since this node is an incoming node of $Z'_1$,
 $\hat{f}_2|_{Z'_{1}}$ is a contraction to the infinity section and
$\deg \hat{f}^*_2(\infty_{\cP_2})|_{Z'_{1}} = 0$.
Applying \eqref{equ:degree-contact-orders} to the component $Z'_{1}$
and noting that $Z'_1$ has no markings, we observe that $Z'_{1}$ must
contain an outgoing node joining $Z'_1$ and $Z'_2$ with $Z'_2$
contracted to infinity. 
Indeed, the outgoing node of $Z'_1$ is an incoming node of $Z'_2$.

Applying the same argument inductively to $Z'_i$, we obtain an
infinite chain of rational components
$(Z'_1 \cup Z'_2 \cup \cdots )\subset Z_x$, which is not possible.
This proves (2).

The third statement follows since $f_{2,\eta} = f_{\eta}$ sends sections in $\Lambda_0$ to the zero section of $\cP_{2} \to \cC_2$.
\end{proof}

\subsubsection{Contract unstable components}
\label{sss:contract-unstable}

Let $\cC_2 \to C_2$ be the coarse moduli morphism where $C_2$ is a
family of pre-stable curves over $\spec R$ with the set of markings
$\Lambda$.
An irreducible component $\cZ \subset \cC_2$ is \emph{unstable} if
$\omega^{\log}_{\cC_2}\otimes f^{*}(\cO(0_{\cP_2}))^{k}$ fails to be
positive on $\cZ$ for $k\gg 0$.
Let $Z \subset C_2$ be the image of $\cZ$.
Then $Z$ is \emph{unstable} if $\cZ$ is so.
Note that all unstable components are over the closed point
$\spec R/m_R$, and are rational components with at most two markings.

\begin{lemma}\label{lem:stabilize-bridge}
  Let $C_2 \to C_3$ be a contraction of an unstable component $Z$ with
  two special points. Possibly after a further finite base change, we
  obtain an $r$-spin curve with a log field $(\cC_3, \cL_3,f_3)$
  extending $(\cC_{\eta}, \cL_{\eta}, f_{\eta})$ such that
\begin{enumerate}
 \item $\cC_3 \to C_3$ is the coarse moduli morphism.
 \item $\cC_2 \to \cC_3$ contracts $\cZ \subset \cC_2$ to a point.
 \item $\cP_{2} = \cP_{3}\times_{\cC_3}\cC_2$ and $f_{2}$ is the pullback of $f_3$.
\end{enumerate}
Here we do not require that $f_2$ is of the form in Lemma \ref{lem:remove-extra-marking}.
\end{lemma}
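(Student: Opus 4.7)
The approach is to apply Lemma \ref{lem:map-extending} directly to the contracted coarse curve $C_3$, producing a candidate $r$-spin triple $(\cC_3, \cL_3, f_3)$ extending $(\cC_\eta, \cL_\eta, f_\eta)$, and then to identify this candidate with the image of $(\cC_2, \cL_2, f_2)$ under the contraction of $\cZ$.

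I would first analyze the behavior of $f_2$ on $\cZ$. Since $\cZ$ is an unstable irreducible component with two special points, it must be a twisted rational bridge, and $\omega^{\log}_{\cC_2}|_\cZ$ is trivial; the spin relation then forces $\deg \cL_2|_\cZ = 0$. As $\cZ$ contains no markings, assumption (2) of Proposition \ref{prop:valuative} implies that $f_2|_\cZ$ avoids the infinity section, so it is given by a global section $s \in H^0(\cZ, \cL_2|_\cZ)$. A degree-zero line bundle on a twisted rational curve either is trivial, in which case $s$ is a constant, or admits only the zero section; in both cases the image $f_2(\cZ) \subset \cP_2|_\cZ$ collapses to a single point, which will have to match $f_3(z)$, where $z$ denotes the image of $\cZ$ in $C_3$.

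Next I would apply Lemma \ref{lem:map-extending} to the pre-stable family $C_3 \to \spec R$, whose generic fiber is $C_\eta$. This yields (after a possibly finite base change) a unique $r$-spin curve $(\cC_3, \cL_3)$ over $\spec R$ with coarse moduli $C_3$ extending $(\cC_\eta, \cL_\eta)$, together with a twisted stable map $\hat{f}_3\colon \hat{\cC}_3 \to \cP_3 := \PP(\cL_3\oplus\cO)$ extending $f_\eta$. The same argument as in the proof of Lemma \ref{lem:remove-extra-marking} --- any contracted component must dominate a $\PP^1$-fiber of $\cP_3 \to \cC_3$, hence meet $\infty_{\cP_3}$ at a non-marked point, which is excluded --- shows that $\hat{\cC}_3 \to \cC_3$ is an isomorphism, so $f_3 := \hat{f}_3$ is a genuine section. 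Then I would construct a contraction of twisted curves $\cC_2 \to \cC_3$ lifting $C_2 \to C_3$: the pullback of $\cL_3$ along this map provides an $r$-spin structure on $\cC_2$ with coarse moduli $C_2$ extending $\cL_\eta$, which by the uniqueness of Lemma \ref{lem:map-extending} must coincide with $\cL_2$. This gives property (3); property (2) then follows from the first paragraph together with the fact that $f_3$ takes a single value in $\cP_3|_z$.

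The main obstacle is constructing the twisted contraction $\cC_2 \to \cC_3$ itself. One must verify that the orbifold orders and monodromy characters of $\cL_2$ at the two nodes bordering $\cZ$ (which are inverse to the characters on the complementary sides, by the balanced condition) combine, via the triviality of $(\cL_2|_\cZ)^r$, into a compatible orbifold order and monodromy character for $\cL_3$ at the new node $z \in \cC_3$. This is a finite combinatorial check analogous to the construction of the twisted source curve in \cite{AbJa03}, guaranteed by the representability of both $\cL_2$ and $\cL_3$.
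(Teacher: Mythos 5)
Your overall skeleton (invoke Lemma \ref{lem:map-extending} over $C_3$, identify spin bundles by its uniqueness, then compare sections) matches the paper's, but the proof has a genuine gap sitting exactly at what you call ``the main obstacle'': the twisted contraction $\cC_2 \to \cC_3$ compatible with the spin structures \emph{is} the content of the paper's argument, and you defer it as ``a finite combinatorial check \dots guaranteed by representability'' without carrying it out. Matching orbifold orders and monodromy characters at the new node is only a necessary numerical condition; it does not by itself produce a morphism of stacks lifting $C_2 \to C_3$, nor the identification $\cL_2 \cong \cL_3|_{\cC_2}$ extending the generic one. The paper constructs the map canonically: by the description \eqref{diag:spin-map-correspondence} one has $\cC_3 \to (C_3,\omega^{\log}_{C_3/\spec R})^{1/r}$, and since $\omega^{\log}_{C_2/\spec R}$ is the pull-back of $\omega^{\log}_{C_3/\spec R}$, the root stack $(C_2,\omega^{\log}_{C_2/\spec R})^{1/r}$ is the base change of $(C_3,\omega^{\log}_{C_3/\spec R})^{1/r}$ along $C_2 \to C_3$; setting $\hat\cC_2 := \cC_3\times_{(C_3,\omega^{\log}_{C_3/\spec R})^{1/r}}(C_2,\omega^{\log}_{C_2/\spec R})^{1/r}$, one extends the generic isomorphism $\cC_{2,\eta}\to\hat\cC_{2,\eta}$ to a twisted stable map, pulls back the universal root, and identifies the result with $(\cC_2,\cL_2)$ by the uniqueness in Lemma \ref{lem:map-extending}; the contraction is then the composite $\cC_2\to\hat\cC_2\to\cC_3$ and $\cL_2=\cL_3|_{\cC_2}$ comes for free. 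Without some such construction, your proofs of (2) and (3) are incomplete.

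Two of your supporting claims are also justified incorrectly. Assumption (2) of Proposition \ref{prop:valuative} constrains only the generic fiber $f_\eta$; it says nothing about the closed fiber, whose components may meet, or lie inside, $\infty_{\cP_2}$ away from markings (this is exactly the phenomenon the log structure along infinity is meant to control, and the lemma explicitly allows $f_2$ not to be of the special form of Lemma \ref{lem:remove-extra-marking}). The correct reason $f_2(\cZ)$ collapses is the \emph{instability} of $\cZ$: since $\deg\omega^{\log}_{\cC_2}|_{\cZ}=0$, instability forces $\deg f_2^*\cO(0_{\cP_2})|_{\cZ}\le 0$, and because $\cO_{\cP_2}(0_{\cP_2}-\infty_{\cP_2})=\cL_2|_{\cP_2}$ has degree $0$ on $\cZ$, both intersection numbers of $f_2(\cZ)$ with $0_{\cP_2}$ and $\infty_{\cP_2}$ vanish, so $f_2|_{\cZ}$ is constant in the fiber direction (possibly equal to the zero or the infinity point of the fiber). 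For the same reason your no-bubbling argument for $\hat\cC_3\to\cC_3$ (``a contracted component would meet $\infty_{\cP_3}$ at a non-marked point, which is excluded'') is not justified as stated: excluding it requires ruling out vertical components inside $\infty_{\cP_3}$ and a flatness argument identifying the closed-fiber intersection with the limit of the marked intersection points. The paper never needs this step: once the contraction and $\cL_2=\cL_3|_{\cC_2}$ are in place, $f_3$ is obtained by descending $f_2$ (constant on $\cZ$) along $\cC_2\to\cC_3$, and $f_2$ agrees with the pull-back of $f_3$ by separatedness, since the two sections coincide over $\eta$.
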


\begin{proof}
By Lemma \ref{lem:map-extending}, we obtain an $r$-spin curve $(\cC_3, \cL_{3})$ over $\spec R$ extending $(\cC_{\eta}, \cL_{\eta})$ with the coarse moduli morphism $\cC_3 \to C_3$. Consider the cartesian diagram of solid arrows
\[
\xymatrix{
\cC_2 \ar@{-->}[r] \ar@{-->}[rd] & \hat{\cC}_2 \ar[r] \ar[d] & (C_2, \omega^{\log}_{C_2/\spec R})^{1/r} \ar[d] \ar[r] & C_2 \ar[d] \\
 & \cC_3 \ar[r] & (C_3, \omega^{\log}_{C_3/\spec R})^{1/r} \ar[r] & C_3
}
\]
The square on the right is cartesian as
$\omega^{\log}_{C_3/\spec R} = \omega^{\log}_{C_2/\spec R}|_{C_3}$.

Let $\cC''_2 \to \hat{\cC}_2$ be the twisted stable map extending the
isomorphism $\cC_{2,\eta} \to \hat{\cC}_{2,\eta}$.
By pulling back the universal $r$-th root via
$\cC''_2 \to \hat{\cC}_2 \to (C_2, \omega^{\log}_{C_2/\spec
  R})^{1/r}$, we obtain an $r$-spin bundle $\cL''_2$ over $\cC''_2$
extending $\cL_{\eta}$. By uniqueness of Lemma
\ref{lem:map-extending}, we obtain
$(\cC''_2, \cL''_2) = (\cC_2,\cL_2)$ hence the dashed horizontal arrow
as above. The skew dashed arrow is then the composition
$\cC_2 \to \hat{\cC}_2 \to \cC_3$. Thus (2) follows.

It follows from the above construction that $\cL_2 = \cL_{3}|_{\cC_2}$. Hence $\cP_{2}$ is the pullback of $\cP_{3}$ via $\cC_2 \to \cC_3$, and $f_2$ is the pullback of $f_2$.  This proves (3).
\end{proof}

We next remove unstable rational tails. We first prove

\begin{lemma}\label{lem:unstable-tail-vanishing}
Choosing the extension (\ref{equ:any-extension}) appropriately, we may assume that $\cC_2$ in Lemma \ref{lem:remove-extra-marking} has unstable rational tails contained in the zero section of $\cP_2$ via $f_2$.
\end{lemma}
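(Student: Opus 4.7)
The plan is to trace through the construction in Lemma~\ref{lem:remove-extra-marking} and exploit the incidence $f_2(\Lambda_0) \subset 0_{\cP_2}$ already guaranteed by Lemma~\ref{lem:remove-extra-marking}(3), together with a degree argument, to force unstable rational tails into $0_{\cP_2}$. The choice that I would make is to take $C'_1$ to be a pre-stable extension of $C'_\eta$ which is already stable with respect to the enlarged marking set $\Lambda \cup \Lambda_0$ and in which the $\Lambda_0$-sections specialize to distinct smooth non-marked points of the closed fiber; such an extension exists by standard stable reduction.

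First I would identify where unstable rational tails of $\cC_2$ arise. By Lemma~\ref{lem:ext-with-extra-marking}, the morphism $\tilde{\cC}_2 \to \tilde{\cC}_1$ contracts only rational bridges, so every irreducible component of $\tilde{\cC}_2$ survives to $\cC_2$ under the rigidification and the removal of the $\Lambda_0$-markings. A new unstable rational tail $Z \subset \cC_2$ (one node, no marking in $\cC_2$) therefore comes from a component of $\tilde{\cC}_2$ whose only special points besides a single node are markings in $\Lambda_0$. A component of $\tilde{\cC}_2$ with only a single node and no markings at all cannot occur: if it came from $\tilde{\cC}_1$, it would violate stability of $C'_1$ with respect to $\Lambda \cup \Lambda_0$, and if it were added by the stable map extension of $\tilde{f}_{1,\eta}$, then by Lemma~\ref{lem:ext-with-extra-marking} it would have to be a contracted rational bridge with two special points. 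In particular every unstable rational tail $Z$ of $\cC_2$ carries at least one $\Lambda_0$-section in its closure.

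Next I would apply a degree argument on such a $Z$. By Lemma~\ref{lem:remove-extra-marking}(3), $f_2$ maps the $\Lambda_0$-sections into $0_{\cP_2}$, so $f_2|_Z$ passes through $0_{\cP_2}$ at the image of at least one such section. Viewing $f_2|_Z$ as a section of $\PP(\cL_2|_Z \oplus \cO) \to Z$: if $f_2|_Z$ is not entirely the zero section $0_{\cP_2}|_Z$, then $f_2(Z)$ meets $0_{\cP_2}$ properly, and the intersection number is strictly positive, giving $\deg f_2^*\cO(0_{\cP_2})|_Z > 0$. But then $\omega^{\log}_{\cC_2/\spec R} \otimes f_2^*\cO(0_{\cP_2})^k|_Z$ has positive degree for $k \gg 0$, contradicting the instability of $Z$. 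Hence $f_2(Z) \subset 0_{\cP_2}$.

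The hard part will be the refinement of the degree computation in the stacky orbifold setting: intersections of $f_2(Z)$ with $0_{\cP_2}$ at stacky points produce fractional multiplicities, and $\cL_2|_Z$ itself has fractional degree coming from $\omega^{\log}_{\cC_2/\spec R}|_Z$. One must check that even a fractional intersection number (at least $1/r$) suffices to force $\omega^{\log}_{\cC_2/\spec R} \otimes f_2^*\cO(0_{\cP_2})^k|_Z$ to have positive degree for large $k$; once this is verified, the argument above goes through uniformly across the possible orbifold structures, completing the proof.
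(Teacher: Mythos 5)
Your argument is essentially the paper's proof run in the opposite order, so the verdict is: correct in substance, same mechanism, different bookkeeping. The paper starts from an arbitrary extension \eqref{equ:any-extension}; given an unstable rational tail $\cZ\subset\cC_2$ not contained in $0_{\cP_2}$, it first notes (by exactly the degree argument you use, via Lemma~\ref{lem:remove-extra-marking}(3): a $\Lambda_0$-section on $\cZ$ would force $\deg f_2^*\cO(0_{\cP_2})|_{\cZ}>0$, contradicting instability) that $\cZ$ carries no $\Lambda_0$-section, then traces $\cZ$ through $\tilde{\cC}_2\to\tilde{\cC}_1$ --- which by Lemma~\ref{lem:ext-with-extra-marking} contracts only two-special-point components --- to an unmarked rational tail $Z\subset C_1$, and blows $Z$ down to improve the extension. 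You instead normalize the extension a priori so that such unmarked tails of $C'_1$ cannot occur, and then the same two ingredients (the contraction structure of $\tilde{\cC}_2\to\tilde{\cC}_1$, plus the degree argument at a $\Lambda_0$-point landing in the zero section) force every unstable tail into $0_{\cP_2}$. Your ordering avoids any iteration/termination discussion; the paper's a posteriori blow-down avoids having to exhibit a distinguished extension.

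One imprecision to fix: the extension you prescribe --- $C'_1$ stable with respect to $\Lambda\cup\Lambda_0$ --- need not exist, because after the normalization step at the beginning of Section~\ref{ss:valuative} the component $\cC_\eta$ may be rational with $2g-2+|\Lambda|+|\Lambda_0|\le 0$ (for instance a rational component with two node-markings whose section meets $0_{\cP_\eta}$ only at those markings, so that $\Lambda_0=\emptyset$); ``standard stable reduction'' does not apply there. What your argument actually uses is only the weaker property that the closed fiber of $C'_1$ contain no rational component with a single node and no marking in $\Lambda\cup\Lambda_0$, and such an extension always exists: take any extension and contract such tails, which is precisely the paper's blow-down move. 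With that adjustment your proof goes through. Also, the ``hard part'' you flag is a non-issue: positivity of $\omega^{\log}_{\cC_2/\spec R}\otimes f_2^*\cO(0_{\cP_2})^{k}$ on $Z$ for $k\gg 0$ only needs $\deg f_2^*\cO(0_{\cP_2})|_Z>0$, and any proper intersection, stacky or not, contributes at least $1/r$; moreover the $\Lambda_0$-points of $\cC_2$ are non-stacky after the rigidification in Lemma~\ref{lem:remove-extra-marking}.
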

\begin{proof}
Let $\cZ \subset \cC_2$ be an unstable rational tail not contained in the zero section of $\cP_2$. Then $\cZ$ contains no section from $\Lambda_0$. Since $\cZ$ is from a component $\tilde{\cZ} \subset \tilde{\cC}_2$, and $\tilde{f}_2$ is a twisted stable map, $\cZ$ maps to a rational tail $Z \subset C_1$. Blowing down $Z$, we obtain another extension (\ref{equ:any-extension}) together with the same set of sections $\Lambda\cup\Lambda_0$.
\end{proof}

We then contract the unstable rational tails inductively as follows.

\begin{lemma}\label{lem:stabilize-tail}
  Let $(\cC_2, \cL_2,f_2)$ be an $r$-spin curve with a log field
  extending $(\cC_{\eta},\cL_{\eta}, f_{\eta})$.
  Suppose all unstable rational tails of $\cC_2$ are contained in the
  zero section of $\cP_{2} \to \cC_2$ via $f_2$.
  Let $\cC_2 \to C_2$ be the coarse moduli morphism, and $C_2 \to C_3$
  be the contraction of an unstable rational tail $Z$.
  Then there is a triple $(\cC_3, \cL_3, f_3)$ extending
  $(\cC_{\eta},\cL_{\eta}, f_{\eta})$ with coarse moduli morphism
  $\cC_3 \to C_3$ such that all unstable rational tails of $\cC_3$ are
  contained in the zero section of $\cP_{3} = \PP(\cL_3 \oplus \cO)$
  via $f_3$.
\end{lemma}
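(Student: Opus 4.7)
The plan is to follow the strategy of Lemma~\ref{lem:stabilize-bridge} closely, adapted to a tail (rather than bridge) contraction.  First, by Lemma~\ref{lem:map-extending}, possibly after a finite base change of $R$, we obtain a unique $r$-spin curve $(\cC_3, \cL_3)$ over $\spec R$ extending $(\cC_\eta, \cL_\eta)$ with coarse moduli morphism $\cC_3 \to C_3$.  It then remains to construct a section $f_3$ of $\cP_3 := \PP(\cL_3 \oplus \cO_{\cC_3}) \to \cC_3$ extending $f_\eta$ and to verify the zero-section property for unstable rational tails of $\cC_3$.

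To construct $f_3$, we will extend $f_\eta$ to a twisted stable map $\hat{f}_3 \colon \hat{\cC}_3 \to \cP_3$ via the second half of Lemma~\ref{lem:map-extending}, and argue that the projection $\hat{\cC}_3 \to \cC_3$ is an isomorphism.  This mimics the proof of Lemma~\ref{lem:remove-extra-marking}: any component of $\hat{\cC}_3$ contracted by this projection would be mapped by $\hat{f}_3$ onto a full fiber of $\cP_3 \to \cC_3$, hence would intersect $\infty_{\cP_3}$ non-trivially.  This contradicts assumption~(2) of Proposition~\ref{prop:valuative}, that $f_\eta$ meets the infinity section only along markings in $\Lambda$, since these markings specialize to non-contracted points of $\hat{\cC}_3$.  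We then set $f_3 := \hat{f}_3$.

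For the zero-section condition, let $c \colon \cC_2 \to \cC_3$ denote the induced contraction and let $W \subset \cC_3$ be an unstable rational tail.  If $W$ is disjoint from $c(\cZ)$, then $W$ is isomorphic to a component of $\cC_2$ on which $\cL_3$ and $\cL_2$ restrict compatibly, so by uniqueness in Lemma~\ref{lem:map-extending} we have $f_3|_W = f_2|_W$; this component is itself an unstable rational tail of $\cC_2$, and the hypothesis on $\cC_2$ gives the claim.  If instead $c(\cZ) \in W$, then $W$ is the image of a component $W_2 \subset \cC_2$ joined to $\cZ$ by a node $q$.  Since $f_2(\cZ) \subset 0_{\cP_2}$, we have $f_2(q) \in 0_{\cP_2}$, which under the contraction corresponds to $f_3(c(q)) \in 0_{\cP_3}$.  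If $f_3(W) \not\subset 0_{\cP_3}$, then $f_3|_W$ intersects $0_{\cP_3}$ properly at $c(q)$, giving $\deg(f_3^* \cO(0_{\cP_3})|_W) > 0$.  But then $\omega^{\log}_{\cC_3}|_W \otimes f_3^* \cO(0_{\cP_3})^k|_W$ would be positive for $k \gg 0$, contradicting the unstability of $W$.  Hence $f_3(W) \subset 0_{\cP_3}$ as required.

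The main obstacle lies in this last step.  Unlike the bridge contraction of Lemma~\ref{lem:stabilize-bridge}, a tail contraction changes $\omega^{\log}$ by a factor supported on $\cZ$, so $\cL_2$ and $\cL_3|_{\cC_2}$ differ by a fractional twist at the node $q$.  One must check carefully that this twist does not spoil the identification of zero and infinity sections between $\cP_2$ and the pullback of $\cP_3$, so that the positional assertion $f_2(q) \in 0_{\cP_2}$ indeed transfers to $f_3(c(q)) \in 0_{\cP_3}$, and that $\cL_3$ and $\cL_2$ agree in a compatible way on components of $\cC_2$ away from $\cZ$, so that the first case of the dichotomy genuinely reduces to the hypothesis on $\cC_2$.
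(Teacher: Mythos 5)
Your construction of $f_3$ is essentially the paper's own argument: extend $f_\eta$ to a twisted stable map to $\cP_3$ via Lemma~\ref{lem:map-extending} and show that the induced map to $\cC_3$ contracts nothing, since a contracted component would have to meet $\infty_{\cP_3}$. However, your justification of the contradiction, ``these markings specialize to non-contracted points of $\hat\cC_3$,'' is exactly the point where the hypothesis $f_2(\cZ)\subset 0_{\cP_2}$ must enter, and you never invoke it: an unstable rational tail may carry a marking, and such a marking does specialize into the fibre over $x$, hence a priori into the contracted tree. What rescues the argument is that at a marking lying on $\cZ$ the generic value of $f_\eta$ cannot lie in the infinity section, because $f_2^{-1}(\infty_{\cP_2})$ is closed while the closed specialization of that marking lands in the zero section; consequently the horizontal part of $f^*(\infty_{\cP_3})$ meets the closed fibre only away from the contracted locus, and only then does the contradiction follow.

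The genuine gap is the step you flag yourself, and it is not a routine verification. When the contracted tail $Z$ carries no marking, one has $\omega^{\log}_{\cC_2}|_{W_2}\cong\omega^{\log}_{\cC_3}|_{W}(q)$, so $\cL_2|_{W_2}$ and $\cL_3|_{W}$ differ by an $r$-th root of $\cO_W(q)$, and under the identification of $f_2|_{W_2\setminus\{q\}}$ with $f_3|_{W\setminus\{x\}}$ the vanishing order at the contracted point drops by $1/r$. Since $\deg\cL_2|_{Z}=-1/r$ forces the node $q$ to have index $r$ with the monodromy on the $W_2$-branch allowing the minimal order $1/r$, the case where $f_2$ vanishes at $q$ to this minimal order gives a finite \emph{nonzero} value $f_3(x)$; thus the implication ``$f_2(q)\in 0_{\cP_2}$ transfers to $f_3(c(q))\in 0_{\cP_3}$,'' on which your dichotomy rests, is not merely unchecked but can fail as stated, and with it the conclusion $\deg f_3^*\cO(0_{\cP_3})|_W>0$. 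Closing this case needs a different argument (for instance a degree count for $f^*\cO(0_{\cP})$ before and after the contraction, combined with an analysis, using assumption (2) of Proposition~\ref{prop:valuative}, of where $f_2|_{W_2}$ can acquire poles), not just a careful bookkeeping of the identification of zero and infinity sections. Note also that the paper does not take your route here: its proof only establishes that the stable-map extension to $\cP_3$ is already a section and treats the preservation of the zero-section condition as implicit, so the part of your proposal that goes beyond the paper is precisely the part that is not yet a proof.
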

\begin{proof}
  By Lemma \ref{lem:map-extending}, we obtain the $r$-spin curve
  $(\cC_3, \cL_3)$ extending $(\cC_{\eta}, \cL_{\eta})$ with the
  coarse moduli morphism $\cC_3 \to C_3$, and a twisted stable map
  $f_4\colon \cC_4 \to \cP_3$ over $\spec R$ extending $f_{\eta}$.
  Let $x$ be the image of $Z \to C_3$.
  As the pairs $(\cC_2, \cL_2)$ and $(\cC_3, \cL_3)$ are isomorphic
  away from the preimage of $Z$ and $x$, $f_4$ and $f_2$ are
  isomorphic away from the preimage of $Z$ and $x$.
  Thus the composition $\cC_4 \to \cP_3 \to \cC_3$ is a contraction of
  rational components $\cZ_x$ over $x \in \cC_3$. Then the same argument as for Lemma \ref{lem:remove-extra-marking} (2) implies that $\cZ_x$ has to be a point. 
\end{proof}

We start with an $r$-spin curve with a log field as in Lemma
\ref{lem:remove-extra-marking} and \ref{lem:unstable-tail-vanishing},
and inductively apply Lemma \ref{lem:stabilize-bridge} and
\ref{lem:stabilize-tail} by contracting unstable components.
After finitely many steps, we obtain $(\cC, \cL, f)$ as in Proposition
\ref{prop:valuative}.

\subsubsection{Separatedness}

Consider stable extensions $(\cC_i, \cL_i, f_i)$ of
$(\cC_{\eta}, \cL_{\eta}, f_{\eta})$ for $i=1,2$.
Let $\cC_i \to C_i$ be the coarse moduli for $i=1,2$.
By Lemma \ref{lem:map-extending} or Lemma~\ref{lem:unique-map}, it
suffices to show that there is an isomorphism $C_1 \cong C_2$
extending the one over $\eta$.

Let $C_3$ be a family of prestable curves over $\spec R$ extending
$C_{\eta}$ with dominant morphisms $C_3 \to C_i$ for $i=1,2$.
We may assume $C_3$ has no rational components with at most two
special points contracted in both $C_1$ and $C_2$ by further
contracting these components.

Let $\cC_3' \to \cC_1\times \cC_2\times C_3$ be the family of twisted stable maps over $\spec R$ extending the obvious one $\cC_\eta \to \cC_1\times \cC_2\times C_3$. Observe that the composition $\cC_3' \to \cC_1\times \cC_2\times C_3 \to C_3$ is the coarse moduli morphism. Indeed, if there is a component of $\cC_3'$ contracted in $C_3$, then it will be contracted in both $\cC_1$ and $\cC_2$ as well.

Let $\cC_3 \to (\cC_3',\omega^{\log}_{\cC_3'/\spec R})^{1/r}$ be the twisted stable map extending the spin structure over $\eta$. Then we obtain a (not necessarily representable) spin bundle $\cL_3$ over $\cC_3$. We next compare $C_3$ and $C_i$ for $i=1,2$.

For $i = 1, 2$, set $U_i$ be the complement in $C_3$ of all trees of
rational components contracted by $C_3 \to C_i$.
More explicitly, set $U_i^{(0)} = C_3$.
Let $U_i^{(k+1)}$ be obtained by removing from $U_i^{(k)}$ the
rational components with precisely one special point in $U_i^{(k)}$
and that are contracted in $C_i$.
We observe that this process must stop after finitely many steps.
Denote by $U_i \subset C_3$ the resulting open subset.

\begin{lemma}\label{lem:cover}
\begin{enumerate}
\item $U_1 \cup U_2 = C_3$.
\item $C_3 \setminus U_1 \subset U_2$ and $C_3 \setminus U_2 \subset U_1$.
\end{enumerate}
\end{lemma}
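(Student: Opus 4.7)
The plan is to reduce both statements to a single observation: the inductive construction removes only components that are contracted by one of the two morphisms $C_3 \to C_i$, and the preliminary reduction rules out any component being removed on both sides. More precisely, statement (1) is an immediate consequence of statement (2), and the two inclusions of (2) are symmetric, so it suffices to show $C_3 \setminus U_1 \subset U_2$.

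I would argue as follows. Let $Z \subset C_3$ be an irreducible component lying in $C_3 \setminus U_1$. Then $Z$ is removed at some stage of the construction, so $Z$ is contracted by $C_3 \to C_1$. Since $C_3$ and $C_1$ are both prestable curves over $\spec R$ and $C_3 \to C_1$ is dominant, any contracted component of the closed fiber must be a rational component of $C_3$ with at most two special points in $C_3$ (this is the standard description of birational morphisms between prestable curves, which proceeds by contracting smooth rational $(-1)$- and $(-2)$-configurations). By the reduction made immediately before the lemma, no rational component of $C_3$ with at most two special points is contracted in both $C_1$ and $C_2$; therefore $Z$ is \emph{not} contracted in $C_2$.

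Now I observe that the inductive removal rule for $U_2$ only removes components that are contracted in $C_2$. Hence $Z$ is never a candidate for removal in the construction of $U_2^{(k+1)}$ from $U_2^{(k)}$, so $Z$ persists through every stage and lies in $U_2$. This proves $C_3 \setminus U_1 \subset U_2$, and by symmetry $C_3 \setminus U_2 \subset U_1$, establishing (2); then (1) follows since every point of $C_3$ belongs to $U_1$ or to $C_3 \setminus U_1 \subset U_2$.

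The argument is essentially bookkeeping; the only point requiring a moment's care is the assertion that a contracted component of a morphism between prestable curves over $\spec R$ has at most two special points in the source, which is exactly the input needed to apply the preliminary reduction. No delicate estimate or new construction is required beyond these standard facts.
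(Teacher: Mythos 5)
The reduction of (1) to (2) and the use of symmetry are fine; the problem lies in the core step. You assert that every component of the closed fiber contracted by $C_3 \to C_1$ is rational with at most two special points \emph{in $C_3$}, and then apply the preliminary reduction to conclude it is not contracted in $C_2$. That assertion is false: the contracted locus of such a morphism of prestable curves over $\spec R$ is a union of trees of rational curves, and an individual component of such a tree can carry three or more special points of $C_3$ (blow up a point of the closed fiber and then two further points on the exceptional curve: the middle component has three nodes and is contracted). The factorization into single blowdowns only bounds the number of special points a component has \emph{at the stage when it is contracted}, not in $C_3$ itself. The paper's construction even flags this: the components removed in forming $U_i$ have precisely one special point in $U_i^{(k)}$, but ``their closure may have more than one special point in $C_3$'' --- and indeed they can have three or more, since earlier removals can have deleted several of their nodes. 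Since the reduction made before the lemma only eliminates components with at most two special points contracted in both $C_1$ and $C_2$, it says nothing about such a component, so your conclusion that a $U_1$-removed component cannot be contracted in $C_2$ does not follow. This is exactly why the paper argues with a whole tree of rational curves contracted in both $C_1$ and $C_2$ (from which one can extract an extremal component to which the assumption applies), rather than component by component.

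There is a second gap at the end: even granting that no component of $C_3\setminus U_1$ is ever removed in the $U_2$-process, the step ``hence $Z$ persists through every stage and lies in $U_2$'' conflates the component never being removed with every point of it remaining in $U_2$. Each stage of the $U_2$-construction deletes the closure (in $U_2^{(k)}$) of the removed components, and that closure contains the nodes they share with neighboring, non-removed components. So a point $z\in C_3\setminus U_1$ which is a node between a $U_1$-removed component $Z$ and another component $W$ can still be deleted from $U_2$ if $W$ is removed in the $U_2$-process, even though $Z$ itself never is. Your argument does not address this configuration, and excluding it is where the real content of the lemma sits: one must show that such a situation forces a tree of rational curves contracted in both $C_1$ and $C_2$ (or otherwise contradicts the structure of the closed fiber), which is the argument the paper's proof runs. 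As written, the proposal establishes only that the components of $C_3\setminus U_1$ with at most two special points are not themselves removed in the $U_2$-process, which is strictly weaker than $C_3\setminus U_1\subset U_2$.
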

\begin{proof}
Suppose $z \in C_3 \setminus (U_1\cup U_2) \neq \emptyset$. Then there is a tree of rational curves in $C_3$ attached to $z$ and contracted in both $C_1$ and $C_2$. This contradicts the assumption on $C_3$. Statement (2) follows from (1).
\end{proof}

Consider the coarse moduli morphism $\cC_3 \to C_3$.
Define $\cU_i := \cC_3\times_{C_3}U_i$ for $i=1,2$.
Since $U_{i} \to C_i$ contracts only rational components with two
special points in $U_i$, we have
$\omega^{\log}_{C_i/\spec R}|_{U_i} = \omega^{\log}_{U_i/\spec R}$
which further pulls back to
$\omega^{\log}_{\cC_i/\spec R}|_{\cU_i} = \omega^{\log}_{\cU_i/\spec
  R}$.
Thus the pullback $\cL_{i}|_{\cU_i}$ is an $r$-th root of
$\omega^{\log}_{\cU_i/\spec R}$.
Recall the $r$-spin bundle $\cL_{3}|_{\cU_i}$.
Note that $\cU_{i,\eta} = \cC_{\eta}$ and
$\cL_{3}|_{\cU_{i,\eta}} \cong \cL_{i,\eta}$.
Using Lemma~\ref{lem:unique-map}, we see that this isomorphism extends
uniquely to $\cL_{3}|_{\cU_{i}} \cong \cL_{i}|_{\cU_i}$ for $i=1,2$.
This allows us to glue the pullbacks $f_{i}|_{\cU_i}$ to a field
$f_3: \cC_3 \to \cP_3$.

In the following, indices ``$s$'' denote base-change to the central
fiber.
\begin{lemma}\label{lem:degree-comparison}
  $\deg(f_{3,s}^*\cO(0_{\cP_3})|_{\overline{\cU_{i,s}}}) \geq
  \deg(f_{i,s}^*\cO(0_{\cP_i}))$, for $i=1,2$.
\end{lemma}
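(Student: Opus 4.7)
The plan is to reduce the comparison to a degree computation along the contraction morphism $g \colon \overline{\cU_{i,s}} \to \cC_{i,s}$ induced by $\cC_3 \to \cC_i$, and to establish equality (hence the claimed inequality) via proper pushforward of fundamental classes.

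First I would verify that $g$ is proper, surjective, and birational. By the iterative definition of $U_i$, every irreducible component of $\cC_{3,s}$ not lying in $\overline{\cU_{i,s}}$ is a rational tail contracted by $\cC_3 \to \cC_i$, and the image point of such a tail already lies in $g(\overline{\cU_{i,s}})$ via the attachment node. Hence each irreducible component of $\cC_{i,s}$ is the birational image of a unique component of $\overline{\cU_{i,s}}$, while the remaining components of $\overline{\cU_{i,s}}$ (necessarily rational, with at least two special points) are contracted by $g$.

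Next I would identify $f_3|_{\overline{\cU_{i,s}}}$ with the pullback $g^* f_i$. The paragraph preceding the lemma records the canonical isomorphism $\cL_3|_{\cU_i} \cong \cL_i|_{\cU_i}$ of pullbacks, together with the equality $f_3|_{\cU_i} = g^* f_i|_{\cU_i}$ by the gluing of sections. For each irreducible component $Z \subset \overline{\cU_{i,s}}$, the intersection $Z \cap \cU_{i,s}$ is dense in $Z$ (its complement being finitely many attachment points), and the two morphisms $f_3|_Z$ and $g^* f_i|_Z$ from the proper curve $Z$ to a separated target agree on a dense open, so they agree on all of $Z$. This yields an isomorphism of line bundles $f_3^* \cO(0_{\cP_3})|_Z \cong (g|_Z)^* f_i^* \cO(0_{\cP_i})|_{g(Z)}$ for every such $Z$.

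Summing degrees over components of $\overline{\cU_{i,s}}$ and using $g_*[\overline{\cU_{i,s}}] = [\cC_{i,s}]$ (valid because $g$ is proper and birational), one obtains
\begin{equation*}
\deg f_3^* \cO(0_{\cP_3})|_{\overline{\cU_{i,s}}} = \deg f_i^* \cO(0_{\cP_i}),
\end{equation*}
which implies the claimed inequality. The main subtlety I anticipate concerns the components of $\overline{\cU_{i,s}}$ contracted by $g$: on such a $Z$, $\cL_3|_Z$ is the pullback of $\cL_i$ from a single point and is trivial up to stacky automorphisms at the nodes, while $f_3|_Z$ is the corresponding constant section, so one must check that $f_3^* \cO(0_{\cP_3})|_Z$ has degree zero to match the vanishing contribution on the $\cC_{i,s}$ side over the exceptional locus of $g$. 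Verifying this degree-zero vanishing on possibly stacky rational bridges, while globalizing the identification $f_3 = g^* f_i$ past the attachment points, is the main technical obstacle.
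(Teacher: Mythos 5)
There is a genuine gap, and it sits exactly at the step you flag as routine: the identification of $f_3|_{\overline{\cU_{i,s}}}$ with the pullback of $f_{i}$ along $g\colon \overline{\cU_{i,s}} \to \cC_{i,s}$. The spin bundles $\cL_3$ and $\cL_i$ agree only over the open locus $\cU_i$; on the closure they do \emph{not} pull back to each other. Indeed $\overline{\cU_{i,s}}$ meets the removed trees $\cT$ at attachment nodes of $\cC_3$ which are not special points of $\cC_i$, so $\omega^{\log}_{\cC_3/\spec R}|_{\overline{\cU_{i,s}}} \cong \omega^{\log}_{\cC_i/\spec R}|_{\overline{\cU_{i,s}}}(D')$ with $D'$ effective and supported on $\overline{\cU_{i,s}}\setminus\cU_{i,s}$, and taking $r$-th roots gives $\cL_3|_{\overline{\cU_{i,s}}} \cong \cL_i|_{\overline{\cU_{i,s}}}(D)$ with $D$ effective but in general nonzero. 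Consequently $\cP_3|_{\overline{\cU_{i,s}}}$ and $g^*\cP_i$ are different $\PP^1$-bundles, so the assertion that ``$f_3|_Z$ and $g^*f_i|_Z$ agree on a dense open, hence everywhere'' does not even typecheck before choosing an identification, and no such identification exists at the points of $D$. Concretely, if near an attachment point $p$ with $d=\mathrm{ord}_p D>0$ the section $f_i$ is finite and nonzero, then re-reading it as a section of $\cL_i(D)\cong\cL_3$ it acquires a zero of order $d$ at $p$, so $f_3^*\cO(0_{\cP_3})|_Z$ has strictly larger degree than $f_i^*\cO(0_{\cP_i})|_{g(Z)}$. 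Your claimed equality is therefore false in general; only the inequality of the lemma holds, and your argument as written does not establish even that, since it rests on the false isomorphism of pullback bundles (your ``main technical obstacle'' about contracted bridges is a side issue by comparison).

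The paper's proof works precisely with this discrepancy: it records $\cL_3|_{\overline{\cU_{i,s}}}\cong\cL_i|_{\overline{\cU_{i,s}}}(D)$ with $D\geq 0$, rules out the degenerate case where both restrictions are the infinity section when $D\neq 0$, and then compares $f_{3,s}$ and $f_{i,s}$ component-wise as rational sections of two line bundles differing by the effective twist $D$, agreeing on $\cU_{i,s}$; this yields $\deg f_{3,s}^*\cO(0_{\cP_3})|_{\overline{\cU_{i,s}}} - \deg f_{i,s}^*\cO(0_{\cP_i})|_{\overline{\cU_{i,s}}}\geq 0$. (The part of your argument that is fine, and which the paper uses implicitly, is that $\overline{\cU_{i,s}}\to\cC_{i,s}$ is surjective and contracts only rational bridges, so pulling back $f_i^*\cO(0_{\cP_i})$ does not change its total degree.) To repair your write-up you would have to replace the bundle identification by the effective-twist comparison at the points of $\overline{\cU_{i,s}}\setminus\cU_{i,s}$, i.e.\ essentially reproduce the paper's local argument.
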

\begin{proof}
  Note that on the components $\cZ$ contracted by $U_i \to C_i$, we
  have $\deg(f_{3,s}^*\cO(0_{\cP_3}))|_{\cZ} = 0$.
  Hence, it suffices to prove the inequality component-wise, and we
  may assume that $\overline{\cU_{i,s}}$ is irreducible.
  
  Note that
  $\omega^{\log}_{\cC_3/\spec R}|_{\overline{\cU_{i,s}}} =
  \omega^{\log}_{\cC_i/\spec R}|_{\overline{\cU_{i,s}}}(D')$ for some
  effective divisor $D'$ supported on the special points
  $\overline{\cU_{i,s}} \setminus \cU_{i,s}$ of $\cC_{3, s}$.
  Further note that $\cL_{3}$ and $\cL_{i}$ are the $r$-th roots of
  $\omega^{\log}_{\cC_3/\spec R}$ and $\omega^{\log}_{\cC_i/\spec R}$
  respectively and $\cL_{3}|_{\cU_{i}} = \cL_{i}|_{\cU_i}$.
  Thus there exists an effective divisor $D$ supported on
  $\overline{\cU_{i,s}} \setminus \cU_{i,s}$ such that
  $\cL_3|_{\overline{\cU_{i,s}}} \cong
  \cL_i|_{\overline{\cU_{i,s}}}(D)$.

  We may assume without loss of generality that $D \neq 0$.
  If one, or equivalently, both $f_{3,s}|_{\overline{\cU_{i,s}}}$ and
  $f_{i,s}|_{\overline{\cU_{i,s}}}$ map into the infinity section, we
  are also done because then
  $f_{3,s}^*\cO(0_{\cP_3})|_{\overline{\cU_{i,s}}}$ and
  $f_{i,s}^*\cO(0_{\cP_i})|_{\overline{\cU_{i,s}}}$ are trivial.
  We may thus assume that $f_{3,s}|_{\overline{\cU_{i,s}}}$ and
  $f_{i,s}|_{\overline{\cU_{i,s}}}$ are not the infinity section,
  and may thus be viewed as rational sections of
  $\cL_3|_{\overline{\cU_{i,s}}}$ and $\cL_i|_{\overline{\cU_{i,s}}}$,
  respectively.
  Since they agree on $\cU_{i,s}$ and
  $\cL_3|_{\overline{\cU_{i,s}}} \cong
  \cL_i|_{\overline{\cU_{i,s}}}(D)$ for an effective $D$, we have
  \begin{equation*}
    \deg f_{3,s}^*\cO(0_{\cP_3})|_{\overline{\cU_{i,s}}} - \deg f_{i,s}^*\cO(0_{\cP_i})|_{\overline{\cU_{i,s}}} \geq 0.
  \end{equation*}
\end{proof}

Suppose $C_1 \neq C_2$. Then we have $U_i \neq C_i$ for some $i$, say
$i=1$.
By construction each connected component of $C_3 \setminus U_1$ is a
tree of proper rational curves in $U_2$ with no marked point, and by
Lemma~\ref{lem:cover} (2),
$\cT := (\cC_3 \setminus \cU_1) \subset \cU_2$.

By construction, the composition $\cT \to \cC_3 \to \cC_2$ is a closed
immersion and $f_{3}|_{\cT} = f_{2}|_{\cT}$.
Since $\deg \omega^{\log}_{\cC_3/\spec R}|_{\cT} < 0$ (unless $\cT = \emptyset$), the stability of $f_2$ implies
\begin{equation*}
\deg f_3^*\cO(0_{\cP_3})|_{\cT} =  \deg f_2^*\cO(0_{\cP_2})|_{\cT} > 0.
\end{equation*}
Using Lemma  \ref{lem:degree-comparison}, we calculate
\[
\begin{aligned}
\deg f_{3,s}^*\cO(0_{\cP_3}) &= \deg f_{3,s}^*\cO(0_{\cP_3})|_{\overline{\cU_{1,s}}} + \deg f_{3,s}^*\cO(0_{\cP_3})|_\cT \\
&\geq  \deg f_{1,s}^*\cO(0_{\cP_1}) + \deg f_{3,s}^*\cO(0_{\cP_3})|_\cT.
 \end{aligned}
\]
Combining this with the fact that both $f_1$ and $f_3$ extend
$f_\eta$, so that
$\deg f_{3,s}^*\cO(0_{\cP_3}) = \deg f_{1,s}^*\cO(0_{\cP_1})$, we see
that $\cT = \cC_3 \setminus \cU_1 = \emptyset$.

Observe that $C_3 = U_1 \to C_1$ contracts proper rational components
with precisely two special points.
Let $Z \subset C_3$ be such a component, and let
$\cZ = Z \times_{C_3}\cC_3$.
Since $f_3|_{\cC_3 = \cU_1}$ is the pullback of $f_1$, we have
\begin{equation}\label{equ:bridge-trivial-degree}
\deg f_3^*\cO(0_{\cP_3})|_{\cZ} = 0.
\end{equation}

On the other hand, since $Z$ has two special points in $C_3$ and is
contracted in $C_1$, it is not contracted in $C_2$.
Denote by $\cZ' \subset \cC_2$ the component dominating
$Z \subset C_2$.
Then $\cZ'$ has precisely two special points.
Furthermore $f_2|_{\cZ'}$ and $f_3|_{\cZ}$ coincide away from the two
special points.
Using \eqref{equ:bridge-trivial-degree}, we observe that
$\deg f_2^*\cO(0_{\cP_2})|_{\cZ'} = 0$, which contradicts
the stability of $f_2$.
Thus $C_3 \to C_1$ is an isomorphism.

This completes the proof of Proposition \ref{prop:valuative}. \qed

\subsubsection{Failure of properness without log structure along $\infty_{\cP}$}\label{sss:properness-failure}

As our target has the non-trivial log structure $\cM_{\infty_{\cP}}$
along the infinity section (see Section~\ref{sss:spin-field}), a
non-degenerate component can only intersect $\infty_{\cP}$ along nodes
or markings.
Hence we have condition (2) in Proposition~\ref{prop:valuative}.
It turns out that this condition is necessary for proving the weak
valuative criterion for the moduli of meromorphic sections of the spin
bundle.
We exhibit this necessity using the following example.

Consider the case that $r = 1$.
Let $C = \PP^1$ with three marked points $z = 1, 2, \infty$ where
$z = u/v$ for a fixed homogeneous coordinates $[u:v]$ of $\PP^1$.
Consider a family of meromorphic differentials $f_t = t\frac{dz}z$
over $C$ where $t$ is the parameter over a punctured disc
$\Delta \setminus 0$.
Observe that $f_t$ intersects the infinity section transversally at a
single non-marked point $z=0$.
We claim that the limit as $t \to 0$ does not exist as a section of
$\PP(\omega^{\log}\oplus\cO)$ with finite automorphisms.

Suppose possibly after a finite base change, the family $f_t$ extends
to a family $f$ of sections of
$\cP := \PP(\omega^{\log}_{C_{\Delta}/\Delta}\oplus\cO) \to
C_{\Delta}$ over $\Delta$.

Consider $f_0$ as a section of $\cP|_{C_0}$ over $C_0$.
By semi-stable reduction, there is a contraction morphism
$C_\Delta \to C \times \Delta$.
We may then write $C_0 = Z_1 \cup Z_2$ where $Z_2$ is the pre-image of
$[0:1] \in C$ in $C_0 \subset C_\Delta$, and where $Z_1$ is the closure of
its complement in $C_0$.
Note that we may extend $f_t$ by zero in the central fiber away from
$Z_2$.
Hence, using Lemma~\ref{lem:unique-map}, we see that
$f(Z_1) \subset 0_\cP$.

Let us write $\cL = f_0^* \cO(\infty_\cP)$.
We note that $\deg(\cL) = 1$ and $\deg(\cL|_{Z_1}) = 0$, so that
$\deg(\cL|_{Z_2}) = 1$.
Let $Z$ be the union of components of $Z_2$ mapped to $\infty_\cP$.
Then, we have $\deg(\cL|_Z) = -\deg(\omega^{\log}|_Z) = 2n - m$, where
$n$ is the number of connected components of $Z$, and where $m$ is the
number of nodes on $Z$ that connect $Z$ to other components.
Note that $\deg(\cL|_{\overline{Z_2 \setminus Z}}) \ge m$, so that $Z$
must be empty.
Similarly, $f_0$ cannot map any node to $\infty_\cP$.
It follows that we may view $f_0$ as a meromorphic section of
$\omega^{\log}$ with a unique pole at a non-special point $q$.
Further, letting $p = Z_1 \cap Z_2$, which is mapped to zero by $f_0$,
we may view $f_0$ as a section $\alpha \in H^0(\omega_{Z_2}(p+(q-p)))$
whose image under the residue map
$\alpha \mapsto \alpha|_q \in \omega_{Z_2}(q)|_q$ is non-zero.
However, in the residue exact sequence
\begin{equation*}
  0 \to H^0(\omega_{Z_2}) \to H^0(\omega_{Z_2}(q)) \to \omega_{Z_2}(q)|_q \to H^1(\omega_{Z_2}) \to H^1(\omega_{Z_2}(q)) = 0,
\end{equation*}
the connecting homomorphism must be a an isomorphism, and $\alpha|_q$
must therefore be zero.
We have arrived at a contradiction.

 %%%%%%%%%%%Relative cosection

\section{Cosections and the reduced virtual cycle}
\label{sec:virtual}

%-----------------------------------------------------------------------------------------------------------

%%%%%%%%%%%%%%%%%%%%%%%%%%%%%%%%%%%%%
\subsection{The logarithmic perfect obstruction theory}
The perfect obstruction theory of stable log maps has been formulated in \cite{GS13, AW} in different but equivalent ways using the log cotangent complexes of \cite{LogCot}. Here we will follow the method of \cite{AW}.

\subsubsection{The canonical perfect relative obstruction theory}\label{ss:log-obs}
Let $\SF:=\SF_{\ddata}^{1/r}$ be the stack of stable $r$-spin curves with a log field with the discrete data $\beta$ as in (\ref{equ:spin-data}).
Let $\beta'$ be the reduced discrete data and $\fM(\cA,\beta')$ be the universal stack, see Section \ref{ss:log-moduli}.
Consider the fiber product in the fine and saturated category
\begin{equation}\label{equ:log-base-stack}
\fM:=\fM(\cA,\beta')\times_{\fM^{\tw}_{g, n}}\fM^{1/r}_{g,\vgamma}
\end{equation}
where the two arrows to $\fM^{\tw}_{g, n}$ are the tautological
morphisms. By Propositions \ref{prop:uni-stack-smooth} and
\ref{prop:curve-stack}, $\fM$ is log smooth and equi-dimensional.

By \eqref{equ:forgetgeometry}, we have the tautological morphism
\[
\SF \to \fM
\]
induced by the associated log maps (\ref{equ:reduce-target}) and the spin structures. This leads to the commutative diagram
\[
\xymatrix{
\cC_{\SF} \ar@/^1pc/[rrd]^{=} \ar[rd]^{f_{\SF}} \ar@/_1pc/[ddr]_{h}&&& \\
& \cP_{\SF} \ar[r] \ar[d] & \cC_{\SF} \ar[r]^{\pi_{\SF}} \ar[d] & \SF \ar[d] \\
& \cP_{\fM} \ar[r] & \cC_{\fM} \ar[r]_{\pi_{\fM}} & \fM
}
\]
where $f_{\SF}\colon \cC_{\SF} \to \cP_{\SF}$ is the universal log
field, $\pi_{\SF}\colon \cC_{\SF} \to \SF$ and
$\pi_{\fM}\colon \cC_{\fM} \to \fM$ are the universal log curves.
Note that the two squares are both cartesian with strict vertical
arrows.

\begin{notation}
We reserve the letter $\LL$ for the log cotangent complexes of
\cite{LogCot}, and the letter $\TT$ for its dual.
For what follows, without further decoration all functors such as $f^*$ and $\pi_*$ are
automatically in the derived sense to simplify the notation.
\end{notation}

Observe that the left and right Cartesian squares imply
\[
f^*_{\SF}\LL_{\cP_{\SF}/\cC_{\SF}} \cong h^*\LL_{\cP_{\fM}/\cC_{\fM}}, \ \ \ \mbox{and} \ \ \ \pi^*_{\SF}\LL_{\SF/\fM} \cong  \LL_{\cC_{\SF}/\cC_{\fM}}
\]
respectively.
Note that there is a map between cotangent complexes induced by $h$:
\[
h^*\LL_{\cP_{\fM}/\cC_{\fM}} \to \LL_{\cC_{\SF}/\cC_{\fM}},
\]
By the commutativity of arrows to $\cC_{\fM}$, we thus obtain the
morphism
\[
f^*_{\SF}\LL_{\cP_{\SF}/\cC_{\SF}} \to \pi^*_{\SF}\LL_{\SF/\fM}.
\]
Since $\cP_{\SF} \to \cC_{\SF}$ is log smooth and integral, we have
$\LL_{\cP_{\SF}/\cC_{\SF}} \cong \Omega_{\cP_{\SF}/\cC_{\SF}}$ the log
cotangent bundle.
Tensoring with the dualizing complex
$\omega_{\cC_\SF/\SF}^\bullet \cong \omega_{\cC_\SF/\SF}[1]$,
we obtain the morphism
\begin{equation*}
  f^*_{\SF}\LL_{\cP_{\SF}/\cC_{\SF}} \otimes \omega_{\cC_\SF/\SF}^\bullet \to \pi^!_{\SF}\LL_{\SF/\fM}.
\end{equation*}
Pushing forward along $\pi_{\SF}$, and using the adjunction morphism
$\pi_{\SF,*} \pi^!_{\SF} \LL_{\SF/\fM} \to \LL_{\SF/\fM}$, we obtain
\begin{equation}\label{equ:log-perfect-obs-cotangent}
\pi_{\SF,*} (f_{\SF}^*\Omega_{\cP_{\SF}/\cC_{\SF}} \otimes \omega_{\cC_\SF/\SF}^\bullet)\to \LL_{\SF/\fM}.
\end{equation}
Since the morphism $\SF \to \fM$ is strict, we have
\[
\LL_{\SF/\fM} \cong \LL_{\underline{\SF}/\underline{\fM}}
\]
where $\LL_{\underline{\SF}/\underline{\fM}}$ is the cotangent complex in the usual sense.

\begin{proposition}
The morphism (\ref{equ:log-perfect-obs-cotangent}) defines a perfect obstruction theory for $\SF \to \fM$ in the sense of Behrend-Fantechi \cite[Section 7]{BF97}.
\end{proposition}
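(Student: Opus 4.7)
The plan is to apply the Behrend--Fantechi criterion to the pair $(\EE, \phi)$ where $\EE := \pi_{\SF,*} f_{\SF}^{*}\Omega_{\cP_{\SF}/\cC_{\SF}}$ (derived) and $\phi\colon \EE \to \LL_{\SF/\fM}$ is the map constructed in \eqref{equ:log-perfect-obs-cotangent}, following the strategy for stable log maps developed in \cite{GS13, AC14, Ch14, AW}, adapted to the target $\cP_{\SF} \to \cC_{\SF}$, which plays here the role of the ``varying log smooth target''.

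First I would verify that $\EE$ has the correct perfect amplitude. Since $\cP_{\SF} \to \cC_{\SF}$ is log smooth, integral, representable, and of relative dimension one (with logarithmic boundary $\infty_{\cP_{\SF}}$), its log cotangent sheaf $\Omega_{\cP_{\SF}/\cC_{\SF}}$ is a line bundle, so $f_{\SF}^{*}\Omega_{\cP_{\SF}/\cC_{\SF}}$ is a line bundle on $\cC_{\SF}$. The universal log curve $\pi_{\SF}$ is proper, log smooth, flat of relative dimension one, so $R\pi_{\SF,*}$ applied to a line bundle on $\cC_{\SF}$ is a perfect complex concentrated in degrees $0$ and $1$; this yields $\EE$ of the amplitude required by the BF formalism, after the implicit Grothendieck--duality reindexing.

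Second, I would check the cohomological criterion via deformation theory. Since $\SF \to \fM$ is strict, $\LL_{\SF/\fM}$ agrees with its underlying cotangent complex. By the definition of $\fM$ in \eqref{equ:log-base-stack}, a strict morphism $T \to \fM$ with a lift to $\SF$ is precisely a log $r$-spin curve $(\cC_T \to T, \cL_T)$ together with a section $f_T\colon \cC_T \to \cP_T$ of the associated projective bundle. Hence the infinitesimal deformation theory of $\SF \to \fM$ at such a point is the classical deformation theory of sections of the log smooth morphism $\cP_T \to \cC_T$, with infinitesimal deformations and obstructions lying in $\pi_{T,*}(f_T^{*}T_{\cP_T/\cC_T})$ and $R^1\pi_{T,*}(f_T^{*}T_{\cP_T/\cC_T})$ respectively, where $T_{\cP_T/\cC_T}$ is the log tangent bundle. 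Using the construction of $\phi$ via the two cartesian squares together with Grothendieck duality on the fibers of $\pi_{\SF}$, one identifies the induced map on cohomology sheaves with the standard comparison between these classical deformation--theoretic spaces and the corresponding cohomology of $\LL_{\SF/\fM}$, from which the BF criterion ($h^{0}$ isomorphism, $h^{-1}$ surjection) follows.

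The main obstacle will be the careful bookkeeping of log structures through the cartesian squares and the identification of the adjunction morphism of \eqref{equ:log-perfect-obs-cotangent} with the intrinsic obstruction theory coming from classical deformation theory. In particular one must use the logarithmic base change $\pi_{\SF}^{*}\LL_{\SF/\fM} \simeq \LL_{\cC_{\SF}/\cC_{\fM}}$ and the distinguished triangle attached to the composition $\cC_{\SF} \xrightarrow{f_{\SF}} \cP_{\SF} \to \cC_{\SF}$ (whose composition is the identity, so that $f_{\SF}^{*}\LL_{\cP_{\SF}/\cC_{\SF}}$ maps into $\LL_{\cC_{\SF}/\cC_{\fM}} = \pi_{\SF}^{*}\LL_{\SF/\fM}$ as required). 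These technical points have been addressed in \cite{AW} in a closely parallel setting, and the arguments transport to the present setting essentially verbatim since our target $\cP/\cC$ behaves like the log smooth targets accommodated there.
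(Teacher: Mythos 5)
Your proposal takes essentially the same route as the paper: after observing the two-term perfectness (with the same implicit Grothendieck--duality normalization), you verify the obstruction-theory property by identifying the square-zero lifting problem for $\SF \to \fM$ with the deformation theory of log sections of $\cP \to \cC$, governed by $\pi_{\SF,*}f_{\SF}^*T_{\cP_{\SF}/\cC_{\SF}}$ and $R^1\pi_{\SF,*}f_{\SF}^*T_{\cP_{\SF}/\cC_{\SF}}$, which is exactly what the paper does by rewriting the lifting problem as one for log maps to the strict smooth target $\cP'_{\cC_S} \to \cA\times\underline{\cC}_S$ and invoking the deformation theory of log morphisms from \cite[Theorem 5.9]{LogCot}. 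The only slip is your parenthetical appeal to the triangle for $\cC_{\SF} \to \cP_{\SF} \to \cC_{\SF}$ (whose composite being the identity gives nothing); the map $f_{\SF}^*\LL_{\cP_{\SF}/\cC_{\SF}} \to \LL_{\cC_{\SF}/\cC_{\fM}}$ comes from the composite $\cC_{\SF} \to \cP_{\fM} \to \cC_{\fM}$ via the cartesian squares, but this concerns the construction already given in the paper, not the verification you were asked to supply.
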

\begin{proof}
  Note that $\pi_{\SF,*}(f_{\SF}^*\Omega_{\cP_{\SF}/\cC_{\SF}} \otimes \omega_{\cC_\SF/\SF}^\bullet)$ is a two-term complex perfect in $[-1, 0]$. It suffices to show that (\ref{equ:log-perfect-obs-cotangent}) is an obstruction theory.

Let $S_0 \to S$ be a strict closed embedding induced by a square-zero ideal. Given a commutative diagram of solid arrows
\[
\xymatrix{
S_0  \ar[r] \ar[d] & \SF \ar[d] \\
S \ar[r] \ar@{-->}[ur] & \fM
}
\]
we want to study the dashed arrow lifting the bottom arrow. Using the
associated log map \eqref{equ:reduce-target}, the above diagram of
solid arrows translates to the left square of the following commutative diagram of solid
arrows, and the dashed arrow translates to the dashed arrow below:
\[
\xymatrix{
\cC_{S_0} \ar[r] \ar[d] & \cP_{\cC_{S}} \ar[d] \ar@/^1pc/[rd] & \\
\cC_{S} \ar[r] \ar@{-->}[ur] & \cA\times\underline{\cC}_{S} \ar[r] & \Log\times\underline{\cC}_{S}
}
\]
Here $\Log$ is Olsson's stack parameterizing log structures \cite[Section 1]{Ol03}. Note that all the arrows in the left triangle are strict and smooth. Furthermore, the left bottom arrow is \'etale \cite[Corollary 5.24]{Ol03}. Thus, we have $T_{\cP_{\cC_S}/\cC_S} = T_{\cP_{\cC_S}/\Log\times\underline{\cC}_{S}} \cong T_{\cP_{\cC_{S}}/\cA\times\underline{\cC}_S}$. Now the statement follows from the same argument as in \cite[Section 6.1]{AW} using \cite{Wi11}.
\end{proof}

Observe that the above construction of perfect obstruction theory is compatible with arbitrary base changes.

\begin{lemma}\label{lem:obs-base-change}
For any morphism $S \to \fM$, consider $\SF_{S} := S\times_{\fM}\SF$ with the pullback $f_{\SF_S}\colon \cC_{\SF_S} \to \cP_{\SF_S}$. Then the perfect obstruction theory (\ref{equ:log-perfect-obs-cotangent}) of $\SF \to \fM$ pulls back to a perfect obstruction theory
\[
\pi_{\SF_S,*} (f_{\SF_S}^*\Omega_{\cP_{\SF_S}/\cC_{\SF_S}} \otimes \omega_{\cC_{\SF_S}/\SF_S}^\bullet) \to \LL_{\SF_S/S}.
\]
of the strict morphism $\SF_S \to S$.
\end{lemma}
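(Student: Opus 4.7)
The plan is to verify that each piece of the obstruction theory behaves compatibly under the base change $g\colon \SF_S \to \SF$ induced by $S \to \fM$, and then invoke the Behrend--Fantechi criterion. I will break this into three steps.

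First, I would check that the domain of the obstruction theory pulls back correctly. The universal families base change strictly: $\cC_{\SF_S} = \cC_\SF \times_\SF \SF_S$, $\cP_{\SF_S} = \cP_\SF \times_\SF \SF_S$, and $f_{\SF_S}$ is the pullback of $f_\SF$. Since $\cP_\SF \to \cC_\SF$ is log smooth and integral (a $\PP^1$-bundle compactification with the log structure from $\infty_\cP$), the log cotangent sheaf $\Omega_{\cP_\SF/\cC_\SF}$ is locally free and its pullback agrees with $\Omega_{\cP_{\SF_S}/\cC_{\SF_S}}$. Since $\pi_\SF$ is flat, cohomology and base change yields a canonical isomorphism
\[
g^* \pi_{\SF,*} f_{\SF}^*\Omega_{\cP_\SF/\cC_\SF} \;\cong\; \pi_{\SF_S,*} f_{\SF_S}^*\Omega_{\cP_{\SF_S}/\cC_{\SF_S}}.
\]

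Second, because $\SF \to \fM$ is strict, so is $\SF_S \to S$, hence both log cotangent complexes $\LL_{\SF/\fM}$ and $\LL_{\SF_S/S}$ coincide with the corresponding underlying cotangent complexes. There is a natural transformation $g^*\LL_{\SF/\fM} \to \LL_{\SF_S/S}$ arising from the cartesian square of underlying stacks. Composing the $g^*$-pullback of \eqref{equ:log-perfect-obs-cotangent} with this transformation produces the desired arrow $\pi_{\SF_S,*}f_{\SF_S}^*\Omega_{\cP_{\SF_S}/\cC_{\SF_S}} \to \LL_{\SF_S/S}$. Perfectness in $[-1,0]$ is automatic since pulling back a two-term complex of locally free sheaves gives a two-term complex of locally free sheaves.

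Third, I would verify the obstruction-theory property directly from the deformation-theoretic criterion. Given a strict square-zero extension $T_0 \hookrightarrow T_1$ over $S$, the data of a lift $T_1 \to \SF_S$ of a given $T_0 \to \SF_S$ compatible with $T_1 \to S$ is equivalent, via composition with $S \to \fM$, to the data of a lift $T_1 \to \SF$ of the composite $T_0 \to \SF_S \to \SF$ compatible with $T_1 \to S \to \fM$. Hence the obstruction and deformation groups for $\SF_S/S$ are identified with those of $\SF/\fM$ pulled back along $g$, and the obstruction theory property for $\SF/\fM$ proved in the previous proposition transfers directly.

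The only mildly delicate point is the compatibility of log cotangent complexes with strict base change used in the second step; this is standard since strictness means the log structure on $\SF_S$ is pulled back from $S$, matching the strict pullback on $\SF$ from $\fM$, so both the log and underlying cotangent complexes sit in a compatible commutative diagram. Everything else is routine bookkeeping, and no new geometric input is required beyond what has already been set up.
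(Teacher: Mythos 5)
Your argument is correct and amounts to the same reasoning the paper intends: the lemma is stated there as an immediate observation that the construction of \eqref{equ:log-perfect-obs-cotangent} is compatible with base change, and your three steps (derived base change for the flat universal curve, the natural map $g^*\LL_{\SF/\fM}\to\LL_{\SF_S/S}$ via strictness, and the identification of lifting problems through the fiber-product universal property so that the deformation-theoretic criterion transfers) are exactly the routine verification behind that observation. No gap; the only point worth stating explicitly is that the pushforward here is derived and the base-change isomorphism uses tor-independence, which holds since $\pi_{\SF}$ is flat.
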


%----------------------------------------------------------------------------------------
\subsubsection{The case of maps with uniform maximal degeneracy}

Replacing $\fM(\cA,\beta')$ by $\fU(\cA,\beta')$ in (\ref{equ:log-base-stack}), we obtain
\begin{equation}\label{equ:log-base-max-deg}
\fU:=\fU(\cA,\beta')\times_{\fM^{\tw}_{g, n}}\fM^{1/r}_{g,\vgamma}
\end{equation}
By Theorem \ref{prop:uni-stack-smooth}, the natural projection
$
\fU \to \fM
$
is log \'etale. Thus $\fU$ is log smooth and equi-dimensional.

Now consider $\USF := \USF_{\ddata}^{1/r}$ and the universal log field $f_{\USF}\colon \cC_{\USF} \to \cP_{\USF}$ over $\USF$. Since $\USF = \fU\times_{\fM}\SF$, applying Lemma \ref{lem:obs-base-change}, we obtain a relative perfect obstruction theory
\[
\pi_{\USF,*} (f_{\USF}^*\Omega_{\cP_{\USF}/\cC_{\USF}} \otimes \omega_{\cC_\USF/\USF}^\bullet) \to \LL_{\USF/\fU}.
\]

Taking the dual of the above morphism and using $T_{\cP_{\USF}/\cC_{\USF}} = \Omega^{\vee}_{\cP_{\USF}/\cC_{\USF}}$, we obtain
\begin{equation}\label{equ:log-perfect-obs}
\TT_{\USF/\fU} \to \pi_{\USF,*}f_{\USF}^*T_{\cP_{\USF}/\cC_{\USF}} =: \EE_{\USF/\fU}.
\end{equation}
The following result will be useful for later calculation.

\begin{lemma}\label{lem:pull-back-log-tangent}
  Let $(\cC \to S, \cL, f\colon \cC \to \cP)$ be an $r$-spin curve
  with a log field over a log scheme $S$.
  Then we have
\[
f^*T_{\cP/\cC} \cong f^*(\cO_{\cP}(0_{\cP}))  \cong \cL\otimes f^*(\cO_{\cP}(\infty_{\cP})).
\]
\end{lemma}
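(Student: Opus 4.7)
The plan is to establish both isomorphisms by reducing everything to explicit identifications on the $\PP^1$-bundle $\cP \to \cC$, using that its two sections $0_{\cP}$ and $\infty_{\cP}$ are disjoint with normal bundles $\cL$ and $\cL^{\vee}$ respectively. Throughout, let $\pi\colon \cP \to \cC$ denote the projection, so that $\pi \circ f = \mathrm{id}_{\cC}$.

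\emph{Step 1: convert the log tangent into the ordinary tangent.} The log structure on $\cP$ relative to $\cC$ is, by definition, the divisorial log structure associated to the smooth relative Cartier divisor $\infty_{\cP}$, since the factor $\cM_{\cC}|_{\ul{\cP}}$ is pulled back from $\cC$ and contributes nothing to $\Omega_{\cP/\cC}$. The standard residue sequence
\begin{equation*}
0 \to \Omega_{\ul{\cP}/\ul{\cC}} \to \Omega_{\ul{\cP}/\ul{\cC}}(\log \infty_{\cP}) \to \cO_{\infty_{\cP}} \to 0
\end{equation*}
together with the fact that $\ul{\cP}/\ul{\cC}$ has relative dimension one gives $\Omega_{\cP/\cC} \cong \Omega_{\ul{\cP}/\ul{\cC}} \otimes \cO_{\cP}(\infty_{\cP})$, and dualizing yields $T_{\cP/\cC} \cong T_{\ul{\cP}/\ul{\cC}} \otimes \cO_{\cP}(-\infty_{\cP})$.

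\emph{Step 2: identify $T_{\ul{\cP}/\ul{\cC}}$.} I claim $T_{\ul{\cP}/\ul{\cC}} \cong \cO_{\cP}(0_{\cP} + \infty_{\cP})$. Both sides have degree two on each geometric fiber of $\pi$, so their difference lies in $\pi^*\operatorname{Pic}(\cC)$. Restricting to the zero section and using the disjointness of $0_{\cP}$ and $\infty_{\cP}$ together with the normal bundle hypothesis yields $\cL$ on both sides, and since $0_{\cP} \to \cC$ is an isomorphism, this pins down the ambiguity and proves the claim. Combining with Step 1 gives $T_{\cP/\cC} \cong \cO_{\cP}(0_{\cP})$, which after pullback by $f$ yields the first asserted isomorphism.

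\emph{Step 3: relate $\cO_{\cP}(0_{\cP})$ and $\cO_{\cP}(\infty_{\cP})$.} The line bundle $\cO_{\cP}(0_{\cP}) \otimes \cO_{\cP}(-\infty_{\cP})$ has relative degree zero and is therefore pulled back from $\cC$. Restricting to $0_{\cP}$ and using disjointness of the two sections gives the pulled-back bundle $\cL$, hence $\cO_{\cP}(0_{\cP}) \cong \pi^*\cL \otimes \cO_{\cP}(\infty_{\cP})$. Applying $f^*$ and using $\pi \circ f = \mathrm{id}_{\cC}$ gives the second isomorphism.

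None of these steps presents a real obstacle, since each is a standard computation on a $\PP^1$-bundle; the main thing to be careful about is the convention determining which of the two sections carries which normal bundle, to make sure the identification in Step 2 restricts to $\cL$ rather than $\cL^{\vee}$ along $0_{\cP}$.
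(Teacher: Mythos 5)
Your argument is correct and follows essentially the same route as the paper: the log structure twists the ordinary relative tangent bundle $\cO_{\cP}(0_{\cP}+\infty_{\cP})$ down by $\infty_{\cP}$ to give $T_{\cP/\cC}\cong\cO_{\cP}(0_{\cP})$, and the identification $\cO_{\cP}(0_{\cP}-\infty_{\cP})\cong\cL|_{\cP}$ then yields the second isomorphism after applying $f^*$. The paper simply asserts these standard $\PP^1$-bundle facts, whereas you verify them via the residue sequence and a see-saw/restriction argument, which is a harmless elaboration.
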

\begin{proof}
  Note that the usual tangent bundle is
  $T_{\underline{\cP}/\underline{\cC}} = \cO_{\cP}(0_{\cP} +
  \infty_{\cP})$.
  The log tangent bundle
  $T_{\cP/\cC} \subset T_{\underline{\cP}/\underline{\cC}}$ is the
  subsheaf consisting of vector fields vanishing along
  $\infty_{\cP}$. Thus we have $T_{\cP/\cC} = \cO_{\cP}(0_{\cP})$
  which proves the first equality.

  The second equality follows from the observation
  that
  \begin{equation}
    \label{eq:hirzebruch}
    \cO_{\cP}(0_{\cP} - \infty_{\cP}) \cong \cL|_{\cP},
  \end{equation}
  where $\cL|_{\cP}$ is the pullback of $\cL$ along the morphism
  $\cP \to \cC$.
\end{proof}

%%%%%%%%%%%%%%%%%%%%%%%%%%%%%%%%%%%%%
\subsection{The relative cosection}\label{ss:extending-cosection}

Consider the universal log field
\[
f_{\USF}\colon \cC_{\USF} \to  \cP_{\USF}
\]
over $\USF := \USF_{\ddata}^{1/r}$. Denote by $\pi_{\USF}\colon \cC_{\USF} \to \USF$ the projection, and $\cL_{\USF}$ the universal $r$-spin bundle over $\cC_{\USF}$.

Throughout the rest of Section~\ref{sec:virtual}, we impose the
following condition on the discrete data, which is necessary for the
cosection construction.
\begin{assumption}\label{ass:narrow}
  All marked points are narrow and have zero contact order.
\end{assumption}

\begin{notation}
  For a locally free sheaf $V$ over a log stack $X$, write
  $\vb(V) = \spec(\mathrm{Sym}^\bullet V^\vee)$ to be the geometric
  vector bundle associated to $V$ with the strict morphism
  $\vb(V) \to X$.
  For any morphism $Y \to X$, denote by $V|_{Y}$ and $\vb(V)|_{Y}$ the
  pullbacks of $V$ and $\vb(V)$ respectively.
\end{notation}

%---------------------------------------------------------
\subsubsection{The twisted spin section}

Consider the canonical inclusion
\begin{equation}\label{equ:P-zero-section}
\iota\colon \cO_{\cP_{\USF}} \to \cO_{\cP_{\USF}}(0_{\cP}).
\end{equation}
Using the isomorphism
\[
\cO_{\cP_{\USF}}(0_{\cP}) \cong \cL_{\USF}|_{\cP_{\USF}} \otimes \cO_{\cP_{\USF}}(\infty_{\cP})
\]
from \eqref{eq:hirzebruch}, we obtain
\begin{equation}\label{equ:underlying-twist}
f^*_{\USF}\iota\colon \cO_{\cC_{\USF}} \to \cL_{\USF}\otimes f^*_{\USF}\cO_{\cP_{\infty}}(\infty_{\cP}).
\end{equation}

By Assumption \ref{ass:narrow}, we may pull back \eqref{equ:log-twist} via $\USF \to \fU$ and obtain
\begin{equation}\label{equ:log-twist-spin}
\tf_{\USF}\colon \pi^*_{\USF}\bL_{\max}\otimes f^*_{\USF}\cO(\infty_{\cP}) \to \cO_{\cC_\USF}.
\end{equation}
By abuse of notation, $\bL_{\max}$ denotes the pullback of the
corresponding line bundle over $\fU$.
Using Lemma~\ref{lem:pull-back-log-tangent} we obtain a morphism
\begin{equation}\label{equ:tangent-twist}
f_{\USF}^*T_{\cP_{\USF}/\cC_{\USF}} \cong \cL_{\USF}\otimes f^*_{\USF} \cO(\infty_{\cP}) \stackrel{\otimes\tf^{\vee}_{\USF}}{\longrightarrow} \cL_{\USF} \otimes \pi^*_{\USF} \bL_{\max}^{\vee}.
\end{equation}
Composing with (\ref{equ:underlying-twist}), we obtain the {\em twisted spin section}
\begin{equation}\label{equ:twisted-spin-section}
\bs_{\USF} := (\otimes\tf^{\vee}_{\USF})\circ (f^*_{\USF}\iota)\colon  \cO_{\cC_{\USF}} \to \cL_{\USF}\otimes \pi^*_{\USF}\bL_{\max}^{\vee},
\end{equation}
or equivalently a morphism $\bs_{\USF}\colon \cC_{\USF} \to \vb(\cL_{\USF}\otimes \pi^*_{\USF}\bL_{\max}^{\vee}).$

%-----------------------------------------------------------------------------------------------------
\subsubsection{The twisted superpotential and its differentiation}
Write for simplicity
\[
\omega_{\log,\USF} := \omega^{\log}_{\cC_{\USF}/\USF} \ \ \ \mbox{and} \ \ \   \omega_{\USF} := \omega_{\cC_{\USF}/\USF}.
\]

The $r$-spin structure $\cL_{\USF}^{r}\cong \omega_{\log,\USF}$ defines an isomorphism
\[
(\cL_{\USF}\otimes \pi^*_{\USF}\bL_{\max}^{\vee})^{r} \cong \omega_{\log,\USF}\otimes \pi^*_{\USF}\bL_{\max}^{-r}
\]
hence a non-linear morphism
\[
W\colon \vb(\cL_{\USF}\otimes \pi^*_{\USF}\bL^{\vee}_{\max}) \to \vb(\omega_{\log,\USF}\otimes \pi^*_{\USF}\bL_{\max}^{-r})
\]
called the \emph{twisted superpotential}. For convenience, we may equip both the source and the target of $W$ with the log structures pulled back from $\cC_{\USF}$. In particular, $W$ is a strict morphism. Differentiating $W$, we have
\[
\diff W\colon T_{\vb(\cL_{\USF}\otimes \pi^*_{\USF}\bL^{\vee}_{\max})/\cC_{\USF}} \to W^* T_{\vb(\omega_{\log,\USF}\otimes \pi^*_{\USF}\bL_{\max}^{-r})/\cC_{\USF}}.
\]
Pulling back $\diff W$ via the twisted spin section \eqref{equ:twisted-spin-section} gives
\begin{equation}\label{equ:before-twist}
\bs_{\USF}^*(\diff W) \colon \cL_{\USF}\otimes \pi^*_{\USF}\bL_{\max}^{\vee} \to \omega_{\log,\USF}\otimes \pi^*_{\USF}\bL_{\max}^{-r}.
\end{equation}
Pushing forward along $\pi_{\USF}$ and applying the projection formula, we have
\begin{equation}\label{equ:push-diff-potential}
\pi_{\USF,*}\bs_{\USF}^*(\diff W)\colon \pi_{\USF,*}(\cL_{\USF})\otimes \bL_{\max}^{\vee} \to \pi_{\USF,*}(\omega_{\log,\USF})\otimes \bL_{\max}^{-r}.
\end{equation}

Denote by $\Sigma := \sum_i \sigma_i$ the sum of marked points of
$\cC_{\USF}$. Since all the markings are narrow, recall from \cite[Lemma 3.2]{CLL15}
that the push-forward of the natural inclusion
$\cL_{\USF}(-\Sigma) \hookrightarrow \cL_{\USF}$ is an isomorphism
\begin{equation}\label{equ:narrow-iso}
\pi_{\USF,*}\cL_{\USF}(-\Sigma) \stackrel{\cong}{\longrightarrow} \pi_{\USF,*}\cL_{\USF}.
\end{equation}

Twisting down (\ref{equ:before-twist}) by the markings and pushing forward, we have
\begin{equation}\label{equ:after-twist}
\pi_{\USF,*}\bs_{\USF}^*(\diff W)(-\Sigma) \colon \pi_{\USF,*}\big(\cL_{\USF}(-\Sigma)\big)\otimes\bL_{\max}^{\vee} \to \pi_{\USF,*}\omega_{\USF}\otimes\bL_{\max}^{-r}.
\end{equation}
The two morphisms (\ref{equ:push-diff-potential}) and (\ref{equ:after-twist}) fit in a commutative diagram
\begin{equation}\label{diag:differentiate-potential}
\xymatrix{
\pi_{\USF,*}\big(\cL_{\USF}(-\Sigma)\big)\otimes\bL_{\max}^{\vee} \ar[d]_{\cong} \ar[rrr]^{\pi_{\USF,*}\bs_{\USF}^*(\diff W)(-\Sigma)} &&& \pi_{\USF,*}\omega_{\USF}\otimes\bL_{\max}^{-r} \ar[d] \\
\pi_{\USF,*}(\cL_{\USF})\otimes \bL_{\max}^{\vee} \ar[rrr]^{\pi_{\USF,*}\bs_{\USF}^*(\diff W)} &&& \pi_{\USF,*}(\omega_{\log,\USF})\otimes \bL_{\max}^{-r}
}
\end{equation}
where the vertical arrows are induced by twisting down by $\Sigma$, and the left vertical arrow is the isomorphism (\ref{equ:narrow-iso}).

\bigskip

We give a point-wise description of $R^1\pi_{\USF,*}\bs_{\USF}^*(\diff W)(-\Sigma)$. Consider a geometric point $w \to \USF$ with the pullback $(\cC_w/w, \cL_{w}, f_w)$. Using Serre duality and the spin structure $\cL_{w}^{r} \cong \omega_{\log, w}$, we have
\begin{align*}
  H^1\big(\cL_{w}(-\Sigma)\big)\otimes{\bL_{\max,w}^{\vee}} &\cong H^{0}\big( \cL^{\vee}_{w}(\Sigma)\otimes\omega_{w}\big)^\vee \otimes {\bL_{\max,w}^{\vee}}\\
                                                            &\cong H^{0}\big( \cL^{\vee}_{w}\otimes\omega_{\log,w}\big)^\vee \otimes {\bL_{\max,w}^{\vee}} \\
                                                            &\cong H^0(\cL_{w}^{r-1})^{\vee}\otimes\bL^{\vee}_{\max,w}.
\end{align*}
The fiber $R^1\pi_{\USF,*}\bs_{\USF}^*(\diff W)(-\Sigma)_w$ is then given by
\begin{equation}\label{equ:fiber-diff-W}
H^0(\cL_{w}^{r-1})^{\vee}\otimes {\bL}^{\vee}_{\max, w} \to {\bL}^{-r}_{\max, w}, \ \ \  {\dot{s}} \mapsto {r} \bs_{w}^{r-1} \cdot {\dot{s}}.
\end{equation}
where $\bs_w$ is the fiber of (\ref{equ:twisted-spin-section}) at $w$,
hence
${\bs}_{w}^{r-1} \in H^0(\cL_{w}^{r-1})\otimes {\bL}^{1 - r}_{\max, w}$.

%-----------------------------------------------------------------------------------------------------
\subsubsection{The relative cosection}

Pushing forward (\ref{equ:tangent-twist}), we obtain
\begin{equation}\label{equ:push-tangent-twist}
\EE_{\USF/\fU} \to \pi_{\USF,*}(\cL_{\USF}) \otimes \bL_{\max}^{\vee}.
\end{equation}
Composing with the left and top arrows of (\ref{diag:differentiate-potential}), we obtain
\begin{equation}\label{equ:derived-cosection}
\EE_{\USF/\fU} \to \pi_{\USF,*}\omega_{\USF}\otimes\bL_{\max}^{-r},
\end{equation}
whose $H^1$ defines the {\em relative cosection}
\begin{equation}\label{equ:relative-cosection}
\sigma_{\USF/\fU}\colon \obs_{\USF/\fU} := H^1(\EE_{\USF/\fU}) \to R^1\pi_{\USF,*}\omega_{\USF}\otimes\bL_{\max}^{-r} \cong \bL_{\max}^{-r}.
\end{equation}

By abuse of notation, denote by $\Delta_{\max} \subset \USF$ the pre-image of $\Delta_{\max}\subset \fU$. Let $\USF^{\circ} := \USF \setminus \Delta_{\max}$. Then $\USF^{\circ}$ is the stack parameterizing sections of the $r$-spin bundle. Note that $\USF^{\circ}$ is the stack $X$ as in \cite[Section 3]{CLL15}.

\begin{lemma}\label{lem:old-compatible}
In the $r$-spin case, the restriction of \eqref{equ:log-perfect-obs} to $\USF^{\circ}$ is the perfect obstruction theory in \cite[(3.2)]{CLL15}, and $\sigma_{\USF/\fU}|_{\fU^{\circ}}$ is the relative cosection in \cite[(3.5)]{CLL15}.
\end{lemma}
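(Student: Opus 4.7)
The plan is to exploit the fact that over $\USF^{\circ}$ both the maximal degeneracy divisor and the target log structure along $\infty_{\cP}$ become trivial in the sense relevant to the construction, and then to match the resulting objects term-by-term with the Chang--Li--Li data. First I would observe that over $\USF^{\circ}$ we have $\Delta_{\max} = \emptyset$, so by the exact sequence built from \eqref{equ:boundary-complex} the line bundle $\bL_{\max}|_{\USF^{\circ}}$ is canonically isomorphic to $\cO_{\USF^{\circ}}$, with the universal section $e_{\max}$ trivializing the torsor $\cT_{\max}$. Simultaneously, the universal log field $f_{\USF}|_{\USF^{\circ}}$ factors through the open complement of $\infty_{\cP}$, so $f_{\USF}^*\cO(\infty_{\cP})|_{\USF^{\circ}} \cong \cO_{\cC_{\USF^{\circ}}}$ and the log twist \eqref{equ:log-twist-spin} becomes the identity. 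In particular, the twisted spin section $\bs_{\USF}$ of \eqref{equ:twisted-spin-section} restricts to the ordinary universal section $s\colon \cO_{\cC} \to \cL$ of the $r$-spin bundle.

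Next I would identify the perfect obstruction theory. By Lemma \ref{lem:pull-back-log-tangent} we have a natural isomorphism $f_{\USF}^*T_{\cP_{\USF}/\cC_{\USF}} \cong \cL_{\USF}\otimes f_{\USF}^*\cO(\infty_{\cP})$. Over $\USF^{\circ}$ the second factor is canonically trivial, giving $f_{\USF}^*T_{\cP_{\USF}/\cC_{\USF}}|_{\USF^{\circ}} \cong \cL_{\USF}|_{\USF^{\circ}}$. Therefore the restriction of \eqref{equ:log-perfect-obs} reads
\begin{equation*}
\TT_{\USF^{\circ}/\fU^{\circ}} \to \pi_{\USF^{\circ},*}\cL_{\USF}|_{\USF^{\circ}},
\end{equation*}
which is exactly the perfect obstruction theory \cite[(3.2)]{CLL15} for the stack of pairs $(\cC,\cL,s)$ relative to the stack of spin curves (note that $\fU^{\circ}$ maps log \'etalely to $\fM^{1/r}_{g,\vgamma}$, since $\fU \to \fM$ is log \'etale by Theorem \ref{thm:max-moduli-intro} and away from $\Delta_{\max}$ the log structures agree).

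For the cosection I would trace through its construction. On $\USF^{\circ}$, the map \eqref{equ:push-tangent-twist} becomes simply $\EE_{\USF^\circ/\fU^\circ} \to \pi_*\cL$, and the composition \eqref{equ:derived-cosection} becomes
\begin{equation*}
\EE_{\USF^\circ/\fU^\circ} \to \pi_*\cL \to \pi_*\omega_{\USF^\circ},
\end{equation*}
where the second arrow is $\pi_* s^*(\diff W)(-\Sigma)$ composed with the narrow isomorphism \eqref{equ:narrow-iso}. Using the point-wise description \eqref{equ:fiber-diff-W}, which on $\USF^{\circ}$ becomes multiplication by $r s^{r-1}$ on each fiber, and applying $H^1$ together with the trace isomorphism $R^1\pi_*\omega \cong \cO$, the induced cosection is exactly
\begin{equation*}
\obs \to \cO, \qquad \dot{s}\mapsto \int_\cC r\,s^{r-1}\cdot\dot{s},
\end{equation*}
which agrees with \cite[(3.5)]{CLL15}.

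The main obstacle I anticipate is bookkeeping the canonical trivializations: one must verify that the isomorphism $\bL_{\max}|_{\USF^{\circ}} \cong \cO$ induced by $e_{\max}$ is compatible on the nose with the trivialization of $f^*\cO(\infty_{\cP})$ coming from the fact that $f$ avoids $\infty_{\cP}$, so that the composition \eqref{equ:log-twist-spin} restricts literally to the identity rather than to a unit scalar. This is the content of Lemma \ref{lem:torsor-twist} in the trivial-contact-order case, and unwinding the definitions of $\cT_{\max}$ and $\cT_{\infty}$ in \eqref{equ:boundary-torsor}--\eqref{equ:target-torsor} over $\USF^{\circ}$ shows directly that both torsors are canonically trivialized by the respective characteristic sections, giving the desired compatibility. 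The remaining identifications are then formal consequences of the definitions and the projection formula.
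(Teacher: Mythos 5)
Your proposal is correct and follows essentially the same route as the paper: the paper's proof consists precisely of the observations that $f^*_{\USF^{\circ}}\cO(\infty_{\cP})$ and $\bL_{\max}|_{\USF^{\circ}}$ are canonically trivial over $\USF^{\circ}$, after which the identification with \cite[(3.2), (3.5)]{CLL15} is read off from the construction of \eqref{equ:log-perfect-obs} and $\sigma_{\USF/\fU}$. Your additional care about the compatibility of the trivializations of $\cT_{\max}$ and $f^*\cT_{\infty}$ (so that \eqref{equ:log-twist-spin} restricts to the identity) is a correct elaboration of what the paper leaves implicit.
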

\begin{proof}
By assumption, we have that
\[
f^*_{\USF^{\circ}}\cO(\infty_{\cP}) = \cO_{\cC_{\USF^{\circ}}} \ \ \ \mbox{and} \ \ \ \bL_{\max}|_{\USF^{\circ}} = \cO_{\USF^{\circ}}.
\]
Then the statement follows from the construction of $\sigma_{\USF/\fU}$.
\end{proof}

%-----------------------------------------------------------------------------------------------------
\subsubsection{Surjectivity of $\sigma_{\USF/\fU}$ along the boundary}

\begin{lemma}\label{lem:narrow-marking-vanishing}
Suppose a narrow marking has the trivial contact order. Then its image via $f_{\USF}$ is contained in the zero section $0_{\cP} \subset \cP_{\USF}$.
\end{lemma}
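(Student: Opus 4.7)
My plan would proceed in two simple, essentially local steps at the narrow marking $\sigma_i$.

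First, I would show that trivial contact order at $\sigma_i$ is equivalent to $f_{\USF}(\sigma_i) \notin \infty_{\cP}$. This comes directly from unwinding the definitions in Section~\ref{sss:map-at-marking}: the characteristic $\oM_p$ of $\cP'$ at the image point is $\NN$ if the image lies on $\infty_{\cP}$ and is $\{0\}$ otherwise, and the locality requirement on $\bar{f}^{\flat}_{p}$ forces $c_i > 0$ in the former case. Hence $c_i = 0$ lets us work entirely in the open complement $\cP \setminus \infty_{\cP} = \Tot(\cL_{\USF}|_{\cC_{\USF}})$ near $\sigma_i$, and it suffices to check that the resulting local section of $\cL_{\USF}$ vanishes at $\sigma_i$.

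For the second step, I would pass to the \'etale-local chart \eqref{equ:marking-local} of the twisted source curve, where $\ul{\cC}_{\USF}$ looks like $[\spec A[\tilde{z}]/\mu_{r_i}]$ with $\mu_{r_i}$ acting on $\tilde{z}$ by a primitive $r_i$-th root of unity, and $\cL_{\USF}$ restricts to an equivariant line bundle $A[\tilde{z}]\cdot e$ on which $\mu_{r_i}$ acts on the generator $e$ via the monodromy character $\gamma_i$. In this chart, a section of $\Tot(\cL_{\USF})$ over the twisted chart is the same data as a function $h(\tilde{z}) \in A[\tilde{z}]$ satisfying the equivariance
\[
  h(\zeta\,\tilde{z}) \;=\; \gamma_i(\zeta)\, h(\tilde{z}), \qquad \zeta \in \mu_{r_i}.
\]
The narrow assumption ($\gamma_i \neq 1$) immediately kills the constant coefficient of $h$, so $h(0) = 0$. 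Therefore the underlying section vanishes along $\sigma_i$, which precisely means that $f_{\USF}(\sigma_i) \in 0_{\cP}$.

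I do not anticipate any serious obstacle here: the only genuine input is the interpretation of ``trivial contact order'' in terms of set-theoretic non-incidence with the log boundary, which is standard for rank one Deligne--Faltings targets as in Section~\ref{sss:rank-one}. Once that reduction is in place, the conclusion is forced by the compatibility between the narrow monodromy $\gamma_i$ and the representability of the equivariant section in the local model of the twisted curve.
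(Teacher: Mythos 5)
Your second step --- the equivariance computation in the local chart \eqref{equ:marking-local}, showing that the narrow monodromy kills the value of a section of $\cL_{\USF}$ at the stacky point --- is fine, and it is essentially the same monodromy-vanishing input the paper invokes via \cite[Lemma 3.2]{CLL15} and \cite[Proposition 3.0.3]{AbJa03}. The gap is in your first step. It is \emph{not} true that trivial contact order is equivalent to $f_{\USF}(\sigma_i)\notin\infty_{\cP}$. If the marking $p$ lies on a \emph{degenerate} component $Z$ (one mapped entirely into $\infty_{\cP}$), then the stalk $\oM_p$ of the pulled-back characteristic sheaf is $\NN$ even though the contact order vanishes: by Section~\ref{sss:map-at-marking} one has $\bar{f}^{\flat}_{p}(1)=e_Z+c_p(1)\cdot(0,1)$ in $Q\oplus\NN$, and locality only forces this element to be nonzero, which is already guaranteed by $e_Z\neq 0$; it does not force $c_p(1)>0$. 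So trivial contact order does not by itself keep the marking off the log boundary, and your reduction to the open locus $\cP\setminus\infty_{\cP}$ is unjustified exactly in the degenerate case, which is the case that matters on the compactified moduli space.

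This is where the paper's proof does its real work, and where narrowness is used a second time: assuming $f_w(\sigma_i)\in\infty_{\cP}$, the triviality of the contact order turns the map of $\cO^*$-torsors induced by $f^{\flat}_w$ near $\sigma_i$ into a morphism of line bundles $\cO_V\to f^*\cO(\infty_{\cP})|_V\cong\cL^{\vee}_w|_V$ which is non-vanishing at the marking; since $\sigma_i$ is narrow, every local section of $\cL^{\vee}_w$ must vanish there (again by the monodromy argument), a contradiction. Only after this exclusion does the easy final observation apply --- away from $\infty_{\cP}$ the field is locally a section of $\cL_{\USF}$, which vanishes at a narrow marking --- and that is the only part your proposal actually carries out. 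To repair your argument you would need to add precisely this torsor/non-vanishing-section step (or some other argument ruling out a degenerate component through the marking), so what is missing is the heart of the paper's proof rather than a routine detail.
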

\begin{proof}
  We first show that the images of narrow markings avoid the
  infinity section.
  Since this is an open condition, it suffices to check this over a
  geometric fiber $w \to \USF$ with the log twisted field
  $f_w\colon \cC_w \to \cP_w$.

  Suppose $f_w(\sigma_i) \in \infty_{\cP}$ for a narrow marking
  $\sigma_i \in \cC_w$.
  Then there is an irreducible Zariski neighborhood $V \subset \cC_w$
  of $p$ such that
  $\oM_{\cC_w} \cong \pi_w^*\oM_{w}\oplus \sigma_{i,*}\NN_{\sigma_i}$,
  see Section~\ref{sss:log-twisted-curve}.
  Since the contact order of $\sigma_i$ is trivial, $f^{\flat}_w|_{V}$
  induces a morphism of $\cO^*$-torsors over $V$ of the form
  $ f^{\flat}_w|_{V}\colon f^*_{w}\cT_{\infty_{\cP}}|_{V} \to
  \cT_{e_V} $ where $e_V \in \pi_w^*\oM_{w}$ is the degeneracy of
  $f_w$ along $V$,
  $\cT_{e_V} = e_V \times_{\oM_{\cC_w}} \cM_{\cC_w}$, and
  $\cT_{\infty_{\cP}} \subset \cM_{\cP_{w}}$ is the preimage of the
  torsor $\cT_{\infty}$ as in \eqref{equ:target-torsor} via
  $\cP_{w} \to \cA$.
  Taking the corresponding line bundles, we obtain a morphism
  $f^*\cO(-\infty_{\cP})|_{V} \to \cO_{V}$ whose dual
  $\cO_V \to f^*\cO(\infty_{\cP})|_{V}$ is non-vanishing at $p$ since
  the contact order at $p$ is trivial.
  Since $f^*\cO(\infty_{\cP})|_{V} \cong \cL^{\vee}_w|_{V}$, we
  obtain a local section of $\cL^{\vee}_w$ non-vanishing at a
  narrow marking.
  But by \cite[Lemma 3.2]{CLL15} and \cite[Proposition 3.0.3]{AbJa03}
  such a local section vanishes at $\sigma_i$.
  This is a contradiction.

Since $f_{\USF}(\sigma_i)$ avoids $\infty_{\cP}$, locally $f_{\USF}$ is a section of $\cL_{\USF}$  around $\sigma_{i}$, hence vanishes along $\sigma_{i}$. This completes the proof.
\end{proof}

We next prove the surjectivity of $\sigma_{\USF/\fU}$ along $\Delta_{\max}$.

\begin{proposition}\label{prop:cosection-boundary-surjective}
The vanishing locus  $(\sigma_{\USF/\fU} = 0) \subset \USF$ is given by the locus along which $f_{\USF}$ is the zero section.
\end{proposition}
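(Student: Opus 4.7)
My plan is a pointwise analysis at each geometric point $w \to \USF$. The cosection at $w$ is computed, via cohomology and base change applied to \eqref{equ:derived-cosection} together with the narrow isomorphism \eqref{equ:narrow-iso} and the relative Serre duality identification $R^1\pi_{\USF,*}\omega_{\USF}\cong \cO$, by the formula \eqref{equ:fiber-diff-W}: it is the map $\dot s \mapsto r\bs_w^{r-1}\cdot\dot s$ precomposed with the surjection $\obs_{\USF/\fU,w}\twoheadrightarrow H^0(\cL_w^{r-1})^\vee\otimes\bL_{\max,w}^{\vee}$ induced by \eqref{equ:push-tangent-twist}. Consequently $\sigma_{\USF/\fU,w}=0$ if and only if the global section $\bs_w^{r-1}\in H^0(\cC_w,\cL_w^{r-1}\otimes\bL_{\max,w}^{\otimes(1-r)})$ vanishes. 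Since $\cC_w$ is a reduced (nodal twisted) curve, this is equivalent to $\bs_w|_Z=0$ on every irreducible component $Z\subset \cC_w$.

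I will then analyze the factorization $\bs_w=(f_w^*\iota)\otimes \tf_w^{\vee}$ componentwise. By \eqref{equ:P-zero-section}, $f_w^*\iota$ vanishes on a point $p$ precisely when $f_w(p)\in 0_{\cP}$; in particular on any component $Z$ with $f_w(Z)\subset\infty_{\cP}$ the restriction $f_w^*\iota|_Z$ is nowhere vanishing because $0_{\cP}\cap\infty_{\cP}=\emptyset$. On the other hand, by Proposition~\ref{prop:log-twist}, the log-twist factor $\tf_w^{\vee}$ is identically zero on components whose degeneracy is strictly less than $e_{\max}$ and nowhere vanishing on components whose degeneracy equals $e_{\max}$. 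Hence $\bs_w|_Z$ vanishes automatically on any non-maximally degenerate component, while on a maximally degenerate $Z$ it vanishes iff $f_w(Z)\subset 0_{\cP}$.

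Putting this together in two cases: if $w\in\Delta_{\max}$, so that $e_{\max}\neq 0$, then by uniform maximal degeneracy some component $Z$ satisfies $e_Z=e_{\max}>0$, hence $Z$ is degenerate and $f_w(Z)\subset\infty_{\cP}$. Then both $f_w^*\iota|_Z$ and $\tf_w^{\vee}|_Z$ are nowhere vanishing, so $\bs_w|_Z$ is nowhere vanishing, $\bs_w^{r-1}\neq 0$, and $\sigma_{\USF/\fU,w}\neq 0$; simultaneously $f_w$ is not the zero section, since $Z$ maps into $\infty_{\cP}$. If instead $w\in\USF^{\circ}$, then $e_{\max}=0$, every component is both non-degenerate and maximally degenerate, $\tf_w^{\vee}$ is nowhere vanishing on all of $\cC_w$, and $\bs_w=0$ holds iff $f_w(Z)\subset 0_{\cP}$ for every $Z$, i.e.\ iff $f_w$ is the zero section of $\cP_w\to \cC_w$.

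The main technical point is the rigorous justification of the pointwise formula \eqref{equ:fiber-diff-W}, which amounts to cohomology and base change applied to $R^1\pi_{\USF,*}$ of the line bundles involved, combined with the narrow marking isomorphism in \eqref{diag:differentiate-potential}. The remaining case analysis is short and turns entirely on the ``surjective everywhere else'' clause of Proposition~\ref{prop:log-twist} together with the disjointness $0_{\cP}\cap\infty_{\cP}=\emptyset$; I anticipate no further substantive obstacle.
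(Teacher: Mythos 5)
You have a genuine gap, and it sits exactly at the step that matters. Your criterion ``$\sigma_{\USF/\fU,w}=0$ iff $\bs_w^{r-1}=0$'' rests on the asserted surjection $\obs_{\USF/\fU,w}\twoheadrightarrow H^0(\cL_w^{r-1})^\vee\otimes\bL_{\max,w}^{\vee}$, i.e.\ on surjectivity of $H^1$ of \eqref{equ:tangent-twist} at $w$. That map is $H^1$ of multiplication by $\tf_w$, which vanishes identically on the non-maximally degenerate subcurve $\cC_w\setminus Z$; its cokernel on $H^1$ is $H^1$ of a sheaf supported on $\cC_w\setminus Z$, which is nonzero as soon as that subcurve contains, say, a positive-genus component not falling into $\infty_{\cP}$ at maximal speed --- and such points certainly occur in $\Delta_{\max}$. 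So the surjectivity claim is false in general, and with it the ``only if'' direction of your criterion, which is precisely the direction you invoke to show $\sigma_{\USF/\fU,w}\neq 0$ along $\Delta_{\max}$, is unproven. (On $\USF^{\circ}$ the map is an isomorphism because $\tf$ is nowhere vanishing there, so your argument is fine in the interior, but that case is already \cite[Lemma 3.6]{CLL15}.)

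The conclusion of the criterion is nevertheless true, and the repair is essentially the paper's proof: dualize instead of asking for surjectivity. Under Serre duality the dual of $\sigma_{\USF/\fU,w}$ sends a nonzero $v_0\in\bL^{\otimes r}_{\max,w}$ to the global section $(r\,\bs_w^{r-1}\cdot v_0)\otimes\tf_w$ of $\cL_w^{\vee}\otimes f_w^*\cO(-\infty_{\cP_w})\otimes\omega_w$, where Lemma~\ref{lem:narrow-marking-vanishing} is used to identify $\cL_w^{r}$ with $\omega_w$ along the maximally degenerate subcurve (it carries no markings). Now use that $\bs_w$ is itself divisible by $\tf_w$ (it is $(\otimes\tf_{\USF}^{\vee})\circ f_{\USF}^*\iota$): any component on which $\bs_w^{r-1}$ is nonzero is automatically one on which $\tf_w$ is generically nonvanishing, so the product section is nonzero on the reduced curve and hence $\sigma_{\USF/\fU,w}\neq 0$ whenever $\bs_w\neq 0$ --- no surjectivity needed. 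With this substitution (and the harmless correction that $\tf_w$ on a maximally degenerate component may vanish at nodes meeting non-maximal components, so it is only generically nonvanishing there), your case analysis goes through and coincides in substance with the paper's argument along $\Delta_{\max}$.
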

\begin{proof}
By Lemma \ref{lem:old-compatible} and \cite[Lemma 3.6]{CLL15}, $\sigma_{\USF/\fU}|_{\USF^{\circ}}$ vanishes along the locus where $f_{\USF}$ is the zero section. It remains to show that $\sigma_{\USF/\fU}$ is surjective along $\Delta_{\max}$. Since $\bL_{\max}^{-r}$ is a line bundle, the image of $\sigma_{\USF/\fU}$ is a torsion-free sub-sheaf of $\bL_{\max}^{-r}$. Thus it suffices to show that $\sigma_{\USF/\fU}$ is surjective at each geometric point of $\Delta_{\max}$.

Let $w \in \Delta_{\max}$ be a geometric point with $(\cC_w/w, \cL_{w}, f_w)$. Taking $H^1$ of (\ref{equ:tangent-twist}) over $w$, we have
\begin{equation}\label{equ:H1-tangent-twist}
H^{1}\big(\cL_{w}\otimes f^*_{w} \cO(\infty_{\cP_w})\big) \to H^{1}(\cL_{w}) \otimes {\bL}_{\max, {w}}^{\vee}.
\end{equation}
By construction $\sigma_{\USF/\fU,w}$ is the composition
\begin{align*}
H^{1}\big(\cL_{w}\otimes f^*_{w} \cO(\infty_{\cP_w})\big) & \stackrel{}{\longrightarrow}  H^{1}(\cL_{w}) \otimes {\bL}_{\max, {w}}^{\vee} \\
 \big(\mbox{by \eqref{equ:narrow-iso}} \big)       \ \ \ \ \        & \stackrel{\cong}{\longleftarrow}  H^1\big(\cL_{w}(-\Sigma)\big)\otimes{\bL_{\max,w}^{\vee}} \\
   \big( \mbox{by \eqref{equ:after-twist}} \big)       \ \ \ \ \          & \stackrel{}{\longrightarrow} {\bL}^{-r}_{\max, w}
\end{align*}
where the first arrow is (\ref{equ:H1-tangent-twist}). Applying Serre duality and taking the dual, we have $\sigma^{\vee}_{\USF/\fU,w}$:
\begin{align*}
H^0\big(\cL^{\vee}_{w}\otimes f^*_{w} \cO(-\infty_{\cP_w})\otimes\omega_w\big) &\longleftarrow H^{0}(\cL^{\vee}_{w}\otimes\omega_{w})\otimes {\bL}_{\max, {w}}\\
&\stackrel{\cong}{\longrightarrow} H^0(\cL_{w}^{r-1})\otimes\bL_{\max,w} \\
&\stackrel{}{\longleftarrow} \bL^r_{\max, w}.
\end{align*}
where the first and last arrows are given by the duals of (\ref{equ:H1-tangent-twist}) and (\ref{equ:fiber-diff-W}) respectively. We describe $\sigma^{\vee}_{\USF/\fU,w}$ via the above composition as follows.

Suppose $v_0 \in \bL^r_{\max, w}$ is a non-zero vector. Applying the dual of (\ref{equ:fiber-diff-W}), we obtain a vector 
\[
v_1:=(r \bs_{w}^{r-1})^{\vee}(v_0) \in H^0(\cL_{w}^{r-1}\otimes\bL_{\max,w}) = H^0(\cL^{\vee}_{w}\otimes\omega^{\log}_{w}\otimes {\bL}_{\max, {w}}).
\] 
We observe that $v_1$ is non-trivial along the sub-curve $Z \subset \cC_w$ consisting of maximally degenerate components, and vanishes along $\cC_w \setminus Z$. Indeed, $\bs_{w}$ is the fiber of $\bs_{\USF}$ in \eqref{equ:twisted-spin-section} which is defined as the composition of a section $(f^*_{\USF}\iota)$ vanishing only along components of $\cC_w$ with images in $0_{\cP_w}$ by \eqref{equ:P-zero-section}, followed by tensoring the section $\tf_{\USF}^{\vee}$ in \eqref{equ:log-twist-spin} vanishing only along the closure of $\cC_w \setminus Z$ in $\cC_w$ by Proposition \ref{prop:log-twist}. The observation then follows since $Z$ has image entirely in $\infty_{\cP_w}$.

We further observe that $Z$ contains no markings by Lemma \ref{lem:narrow-marking-vanishing}. Thus the above paragraph implies that $v_1 \in H^0\big(\cL^{\vee}_{w}\otimes\omega_{w}\otimes {\bL}_{\max, {w}}\big)$ as $v_1$ vanishes along all markings.

Finally observe that the dual of  (\ref{equ:H1-tangent-twist}) is given by
\[
\cL^{\vee}_{w}\otimes\omega_{w}\otimes {\bL}_{\max, {w}} \stackrel{\otimes\tf_{\USF}}{\longrightarrow} \cL^{\vee}_{w}\otimes f^*_{w} \cO(-\infty_{\cP_w})
\]
hence $\sigma^{\vee}_{\USF/\fU,w}(v_0) = v_1\otimes\tf_{\USF}$. By Proposition \ref{prop:log-twist} again, $\tf_{\USF}$ hence $v_1\otimes\tf_{\USF}$ is non-trivial along $Z$. In particular, $\sigma^{\vee}_{\USF/\fU,w}(v_0) \neq 0$.

The above analysis implies that $\sigma^{\vee}_{\USF/\fU,w}$ is injective, hence $\sigma_{\USF/\fU,w}$ is surjective. This completes the proof.
\end{proof}

%%%%%%%%%%%%%%%%%%%%%%%%%%%%%%%%%%%%%
\subsection{Factorization of the relative obstruction}\label{ss:cosection-factorization}

%----------------------------------------------------------------------------------
\subsubsection{An auxiliary twist}\label{sss:auxiliary-twist}

Denote by $\cL_{\fU,-}:= \cL_{\fU}(-\Sigma)$ for simplicity, where
$\Sigma$ still denotes the sum of markings of $\cC_\fU$.
Similar to the construction of $\cP_{\fU}$ in Section \ref{sss:spin-field}, we formulate the stack $\cP_{\fU,-}$ with $\cL_{\fU}$ replaced by $\cL_{\fU,-}$.
The log structure on $\cP_{\fU,-}$ is defined as
\[
\cM_{\cP_{\fU,-}} := \cM_{\cC_{\fU}}|_{\cP}\oplus_{\cO^*}\cM_{\infty_{\cP_{-}}}
\]
where $\infty_{\cP_{-}} \subset \cP_{\fU,-}$ is the corresponding
infinity section.
The natural morphism $\vb(\cL_{\fU,-}) \to \vb(\cL_{\fU})$ induces a
birational map of log stacks
$\xymatrix{\cP_{\fU,-} \ar@{-->}[r] & \cP_{\fU}}$ which is an
isomorphism away from fibers over marked points.
Denote by $\cP_{\fU,reg} \subset \cP_{\fU,-}$ the open sub-stack where
the above rational map is well-defined.
Let $\bt\colon \cP_{\fU,reg} \to \cP_{\fU}$ be the corresponding
morphism. Denote by $\cP_{\USF,reg}$ the pullback of $\cP_{\fU,reg}$
with the corresponding morphism
$\bt\colon \cP_{\USF,reg} \to \cP_{\USF}$.

\begin{lemma}
There is a canonical factorization
\[
\xymatrix{
\cC_{\USF} \ar[rd]_{f_{\USF,-}} \ar[rr]^{f_{\USF}} && \cP_{\USF} \\
 &\cP_{\USF,reg} \ar[ru]_{\bt}&
}
\]
\end{lemma}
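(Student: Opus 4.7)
The plan is to apply Lemma~\ref{lem:narrow-marking-vanishing} as the key input. Under Assumption~\ref{ass:narrow}, every marking $\sigma_i$ is narrow with trivial contact order, so that lemma forces $\underline{f}_{\USF}(\sigma_i) \subset 0_{\cP_{\USF}}$; in particular $\underline{f}_{\USF}$ avoids $\infty_{\cP_{\USF}}$ in an \'etale neighborhood of $\Sigma := \sum_i \sigma_i$, and there it is determined by a section of $\cL_{\USF}$ vanishing along $\Sigma$.

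I first construct the underlying morphism $\underline{f}_{\USF,-}$. Away from $\Sigma$, the inclusion $\cL_{\USF,-} \hookrightarrow \cL_{\USF}$ is an isomorphism and $\underline{\cP}_{\USF,-} = \underline{\cP}_{\USF}$, so I set $\underline{f}_{\USF,-} := \underline{f}_{\USF}$ there. In an \'etale neighborhood of $\sigma_i$ with local coordinate $x$, trivialize $\cL_{\USF}$ by a section $e$, so that $xe$ trivializes $\cL_{\USF,-}$; by the Lemma, $\underline{f}_{\USF}$ lies in the affine chart $\vb(\cL_{\USF})$ and corresponds to a local section $s = \phi \cdot e$ with $\phi(\sigma_i) = 0$. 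Hence $s = \phi' \cdot (xe)$ for a regular $\phi'$, which gives a canonical local lift of $\underline{f}_{\USF}$ into the affine chart $\vb(\cL_{\USF,-}) \subset \underline{\cP}_{\USF,-}$. These local lifts are unique, hence glue to a global $\underline{f}_{\USF,-}\colon \underline{\cC}_{\USF} \to \underline{\cP}_{\USF,-}$.

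Next, a direct coordinate computation near $\sigma_i$ shows that $\bt$ takes the form $[w:t] \mapsto [xw:t]$, so its indeterminate locus is $\{x = t = 0\} \subset \infty_{\cP_{-}} \cap \pi^{-1}(\Sigma)$. Since $\underline{f}_{\USF,-}$ lands in $\{t \ne 0\}$ near $\Sigma$ and $\underline{\bt}$ is an isomorphism away from $\Sigma$, the image of $\underline{f}_{\USF,-}$ is contained in $\underline{\cP}_{\USF,reg}$.

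Finally, for the log structure, $\cM_{\cP_{\USF,-}} = \cM_{\cC_{\USF}}|_{\cP_{-}} \oplus_{\cO^*} \cM_{\infty_{\cP_{-}}}$ reduces to the pullback of $\cM_{\cC_{\USF}}$ off $\infty_{\cP_{-}}$. Near $\Sigma$ the constructed $\underline{f}_{\USF,-}$ avoids $\infty_{\cP_{-}}$ and $\underline{f}_{\USF}$ avoids $\infty_{\cP}$, so both $f_{\USF}^{\flat}$ and the candidate $f_{\USF,-}^{\flat}$ are given by the tautological identity on $\cM_{\cC_{\USF}}$ there; away from $\Sigma$, the isomorphism $\cP_{\USF,-} \cong \cP_{\USF}$ of log stacks identifies $f_{\USF,-}^{\flat}$ with $f_{\USF}^{\flat}$ through $\bt^{\flat}$. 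Gluing yields a log morphism $f_{\USF,-}$ satisfying $\bt \circ f_{\USF,-} = f_{\USF}$, and the uniqueness of the local lifts makes the factorization canonical. The principal technical point I expect is the bookkeeping for the log structures at the interface between ``near $\Sigma$'' and ``away from $\Sigma$''; but since all interactions near $\Sigma$ occur in the affine chart disjoint from the infinity divisors, this verification reduces to a routine local check.
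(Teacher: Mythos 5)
Your proof is correct and follows the same route as the paper: the paper's argument is exactly the observation that $\cP_{\fU,reg}$ is the complement of $\infty_{\cP_-}$ over the markings together with Lemma~\ref{lem:narrow-marking-vanishing}, which forces $f_{\USF}$ to land in $0_{\cP}$ along $\Sigma$ and hence to lift. Your local-coordinate verification (division by the marking equation, the explicit form of $\bt$, and the strictness of the log structures away from the infinity divisors) is just a spelled-out version of that one-line argument.
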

\begin{proof}
Note that $\cP_{\fU,reg} \subset \cP_{\fU,-}$ is obtained by removing the fiber of  $\infty_{\cP_{-}}$ over marked points. The statement follows from Lemma \ref{lem:narrow-marking-vanishing}.
\end{proof}

Denote by $\cP_{\fU,-} \to \cA$ the morphism of log stacks such that $\cM_{\infty_{\cP_{-}}}$ is the pullback of $\cM_{\cA}$. Consider the natural morphism
$
\USF \to \fU
$
induced by the composition of $f_{\USF,-}$ with $\cP_{\fU,-} \to \cA$. The above lemma implies that $\USF$ can be viewed as the log stack parameterizing log twisted sections $f_{T,-}\colon \cC_{T} \to \cP_{T,-}$ for any  $T \to \fU$. The same construction in Section \ref{ss:log-obs} provides a perfect obstruction theory of $\USF \to \fU$:
\begin{equation}\label{equ:-perfect-obs}
\TT_{\USF/\fU} \to \pi_{\USF,*}f^*_{\USF,-}T_{\cP_{\USF,reg}/\cC_{\USF}} \cong \pi_{\USF,*}f^*_{\USF,-}T_{\cP_{\USF,-}/\cC_{\USF}} =: \EE_{\USF/\fU, -}.
\end{equation}

On the other hand since $f^{*}_{\USF}\cO(\infty_{\cP}) \cong f^*_{\USF,-}\cO(\infty_{\cP,-})$, we calculate
\begin{align}
f^*_{\USF,-}T_{\cP_{\USF,-}/\cC_{\USF}} &\cong \cL_{\USF,-} \otimes f^*_{\USF,-}\cO(\infty_{\cP,-}) \nonumber \\
     &\cong \cL_{\USF}(-\Sigma)\otimes f^{*}_{\USF}\cO(\infty_{\cP}) \label{equ:twist-log-tangent} \\
     &\cong f^*_{\USF}T_{\cP_{\USF}/\cC_{\USF}}(-\Sigma).\nonumber
\end{align}
Using (\ref{equ:narrow-iso}) and Lemma \ref{lem:narrow-marking-vanishing}, we have
\[
\pi_{\USF,*}f^*_{\USF,-}T_{\cP_{\USF,reg}/\cC_{\USF}} \cong \pi_{\USF, *}f^*_{\USF}T_{\cP_{\USF}/\cC_{\USF}}.
\]
To summarize:
\begin{lemma}\label{lem:twist-obs}
The two perfect obstruction theories (\ref{equ:log-perfect-obs}) and (\ref{equ:-perfect-obs}) are identical.
\end{lemma}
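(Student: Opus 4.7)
The plan is to show that both perfect obstruction theories arise from the same universal construction applied to compatible data related by the birational map $\bt\colon \cP_{\USF,reg} \to \cP_{\USF}$, and to reduce the comparison to the isomorphism of obstruction complexes already established in \eqref{equ:twist-log-tangent}.

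First, I would recall that the construction of the obstruction theory in Section~\ref{ss:log-obs} is functorial in the target: given any factorization $\cC_{\USF} \to \cP' \to \cC_{\USF}$ of the structure morphism through a log smooth $\cC_{\USF}$-stack $\cP'$, together with a section, the same procedure yields a morphism $\TT_{\USF/\fU} \to \pi_{\USF,*}\big(\text{pullback of relative log tangent}\big)$. In our situation the factorization $f_{\USF} = \bt \circ f_{\USF,-}$ from the preceding lemma, together with the fact that $\cP_{\USF,-} \to \cC_{\USF}$ is log smooth and that $\bt$ is a strict morphism over $\cC_{\USF}$, provides a morphism of log tangent bundles
\begin{equation*}
\bt^*T_{\cP_{\USF,reg}/\cC_{\USF}} \;\hookleftarrow\; T_{\cP_{\USF}/\cC_{\USF}}|_{\cP_{\USF,reg}}
\end{equation*}
(or rather, a relation of log tangent sheaves) which intertwines the two constructions at the level of $f^*_{\USF,-}$ vs. $f^*_{\USF}$.

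Next I would make explicit the identification $\EE_{\USF/\fU,-} \cong \EE_{\USF/\fU}$ using the chain of isomorphisms \eqref{equ:twist-log-tangent}, namely
\begin{equation*}
f^*_{\USF,-}T_{\cP_{\USF,-}/\cC_{\USF}} \;\cong\; \cL_{\USF}(-\Sigma) \otimes f^*_{\USF}\cO(\infty_{\cP}) \;\cong\; f^*_{\USF}T_{\cP_{\USF}/\cC_{\USF}}(-\Sigma),
\end{equation*}
combined with the narrow-marking isomorphism \eqref{equ:narrow-iso} upon pushing forward by $\pi_{\USF,*}$. The key observation is that both obstruction maps factor, at the level of derived pushforwards, through the adjunction $\pi^*_{\USF}\LL_{\USF/\fU} \to \LL_{\cC_{\USF}/\cC_{\fU}}$ that is independent of the target, so the two morphisms to $\TT_{\USF/\fU}$ are determined entirely by the pullback of $\Omega_{\cP'/\cC}$ along the appropriate section. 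Functoriality of the log cotangent complex applied to the commutative triangle $\cC_{\USF} \xrightarrow{f_{\USF,-}} \cP_{\USF,reg} \xrightarrow{\bt} \cP_{\USF}$ then shows that the two morphisms $\TT_{\USF/\fU} \to \EE_{\USF/\fU,-}$ and $\TT_{\USF/\fU} \to \EE_{\USF/\fU}$ coincide under the isomorphism identified above.

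The main obstacle will be checking that the identification in \eqref{equ:twist-log-tangent} really does match the map induced by $\bt$ on log tangent bundles — equivalently, that twisting $\cL_{\USF}$ by $-\Sigma$ in the construction of $\cP_{\USF,-}$ exactly accounts for the twist by $-\Sigma$ appearing on log tangent sheaves at the markings. This is a local computation around each narrow marking, where by Lemma~\ref{lem:narrow-marking-vanishing} the section $f_{\USF}$ lies in the zero section and $\bt$ is a local blow-down/up identifying $\vb(\cL_{\USF,-})$ with $\vb(\cL_{\USF})$ twisted down along $\Sigma$; comparing relative log cotangents on both sides, using that the log structure along $\infty_{\cP}$ is pulled back identically, gives the desired match. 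Once this local identification is in place, the global compatibility with the obstruction maps follows from the functorial construction of \eqref{equ:log-perfect-obs-cotangent}.
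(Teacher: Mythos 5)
Your proposal is correct and takes essentially the same route as the paper: there the lemma is stated as a summary of the computation immediately preceding it, identifying the two targets via \eqref{equ:twist-log-tangent}, \eqref{equ:narrow-iso} and Lemma~\ref{lem:narrow-marking-vanishing}, with the agreement of the two morphisms out of $\TT_{\USF/\fU}$ left implicit in the functoriality of the construction of Section~\ref{ss:log-obs} through the factorization $f_{\USF} = \bt \circ f_{\USF,-}$. Your additional check that the identification of targets matches the map induced by $\diff\bt$ near the markings (apart from a reversed arrow in your displayed tangent-sheaf map, which should be the differential $T_{\cP_{\USF,reg}/\cC_{\USF}} \to \bt^{*}T_{\cP_{\USF}/\cC_{\USF}}$) is precisely the point the paper leaves implicit, so nothing essential is missing.
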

We now view $\USF$ with the universal family $f_{\USF,-}\colon \cC_{\USF} \to \cP_{\USF,-}$.

%----------------------------------------------------------------------------------
\subsubsection{Partial expansion and contraction}

The morphism $\fm\colon \fU(\cA,\beta') \to \cA_{\max}$ from Section \ref{ss:partial-expansion} induces a morphism $\fU \to \cA_{\max}$ which will again be denoted by $\fm$ by abuse of notation. Consider the cartesian diagram of fine log stacks
\begin{equation}\label{equ:bundle-partial-expansion}
\xymatrix{
\cP^{e}_{\fU,-} \ar[r] \ar[d]_{\fb} & \cA^{e} \ar[d]^{\fb} \\
\cP_{\fU,-} \ar[r] & \cA \times \cA_{\max}
}
\end{equation}
where the bottom is the product of $\fm$ and $\cP_{\fU,-} \to \cA$.

By construction, one checks that the bottom arrow satisfies the flatness conditions in \cite[Proposition (4.1)]{KKato}, hence is integral in the sense of \cite[Definition (4.3)]{KKato}. In particular, the underlying structure of the above cartesian diagram is a cartesian diagram of the underlying algebraic stacks. We remark that the above diagram is indeed cartesian in the fine and saturated category. Since the saturation plays no role in the following discussion, we omit the details here.

In the above diagram, since the right vertical arrow is log \'etale,
the left vertical arrow is again log \'etale.
By abuse of notation, we denote both vertical arrows by $\fb$.
Let $\infty_{\cP^{e}_{-}} \subset \cP^{e}_{\fU,-}$ be the pre-image of
$\infty_{\cA^e} \subset \cA^e$, and write
$\cP^{e}_{\fU,-} := \cP^{e,\circ}_{\fU,-}\setminus
\infty_{\cP^{e}_{-}}$.
Denote by $\cE_{\fb} \subset \cP^{e}_{\fU,-}$ the exceptional divisor
contracted by $\fb$.
In the following, we view the (relative) normal bundle
$\cN_{\infty_{\cP^e_-}}$ of $\infty_{\cP^e_-}$ as
a line bundle over $\cC_{\fU}$:

\begin{lemma}
$
\cN^{\vee}_{\infty_{\cP^e_{-}}} \cong \cL_{\fU,-}\otimes \pi^*_{\fU}\bL^{\vee}_{\max}.
$
\end{lemma}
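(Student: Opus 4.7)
The plan is to compute the conormal bundle of $\infty_{\cP^e_{-}}$ via the standard strict-transform formula for a log blow-up along a smooth codimension two center, and then identify the various line bundles with those appearing in the statement. Both $\infty_{\cP_-}$ and $\infty_{\cP^e_-}$ are sections of the corresponding projections over $\cC_\fU$, so every line bundle in sight lives naturally on $\cC_\fU$.

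First I would pin down the unblown-up conormal bundle. The $\PP^1$-bundle $\underline{\cP}_{\fU,-} = \PP(\cL_{\fU,-}\oplus \cO_{\cC_\fU})$ has infinity section $\PP(\cL_{\fU,-})$, which has conormal bundle $\cN^\vee_{\infty_{\cP_-}/\cP_{\fU,-}} \cong \cL_{\fU,-}$ on $\cC_\fU$ by the standard identification for projectivised rank two bundles. Next I would observe that, by the cartesian square \eqref{equ:bundle-partial-expansion} and the flatness/integrality of the bottom map, the log blow-up $\fb\colon \cP^e_{\fU,-}\to \cP_{\fU,-}$ is the (ordinary) blow-up along the smooth codimension two center $Z := \infty_{\cP_-}\cap (\fm\circ\pi_{\fU})^{-1}(\Delta_{\max})$, which is the transverse intersection of the two Cartier divisors $D_1 := \infty_{\cP_-}$ and $D_2 := (\fm\circ\pi_\fU)^{-1}(\Delta_{\max})$ inside $\cP_{\fU,-}$. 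In this situation $\infty_{\cP^e_-}$ is precisely the strict transform of $D_1$ and is mapped isomorphically to $\cC_\fU$ by $\fb$ composed with the projection.

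Now I would apply the standard strict-transform relation $\fb^* D_1 = \infty_{\cP^e_-} + \cE_\fb$, restrict to $\infty_{\cP^e_-}$, and dualise to obtain
\begin{equation*}
\cN^\vee_{\infty_{\cP^e_-}/\cP^e_{\fU,-}} \;\cong\; \cN^\vee_{\infty_{\cP_-}/\cP_{\fU,-}}\big|_{\cC_\fU} \;\otimes\; \cO_{\cP^e_{\fU,-}}(\cE_\fb)\big|_{\infty_{\cP^e_-}}.
\end{equation*}
Since $\cE_\fb$ meets $\infty_{\cP^e_-}$ transversely along the preimage of $Z$, which under the identification $\infty_{\cP^e_-}\cong \cC_\fU$ corresponds to the Cartier divisor $\pi_\fU^{-1}(\Delta_{\max})\subset \cC_\fU$, the restriction of $\cO(\cE_\fb)$ equals $\pi_\fU^*\cO_\fU(\Delta_{\max}) \cong \pi_\fU^*\bL_{\max}^\vee$ by Section~\ref{sss:boundary-torsor}. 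Combining with the identification of the first factor yields
\begin{equation*}
\cN^\vee_{\infty_{\cP^e_-}} \;\cong\; \cL_{\fU,-}\otimes \pi_\fU^*\bL_{\max}^\vee,
\end{equation*}
as claimed.

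The only subtle point is justifying the strict-transform identification and the identity $\cO(\cE_\fb)|_{\infty_{\cP^e_-}} = \pi_\fU^*\cO(\Delta_{\max})$; to check this cleanly I would do a local calculation on $\cA^e \to \cA\times\cA_{\max}$ (choosing coordinates $x,y$ on $\cA\times\cA_{\max}$ cutting out $\infty_\cA$ and $\Delta$, with blow-up chart $x = uy$ and strict transform $u = 0$) and then pull back via the cartesian diagram \eqref{equ:bundle-partial-expansion}. Everything else is routine bookkeeping of line bundles on the section $\cC_\fU \cong \infty_{\cP_-}$.
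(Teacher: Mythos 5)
Your proof is correct and follows essentially the same route as the paper: restrict the relation $\fb^*[\infty_{\cP_{-}}] = [\infty_{\cP^e_{-}}] + [\cE_{\fb}]$ to the infinity section identified with $\cC_{\fU}$, use $\cN_{\infty_{\cP_{-}}} \cong \cL^{\vee}_{\fU,-}$, and identify $\cO(\cE_{\fb})|_{\infty_{\cP^e_{-}}} \cong \pi^*_{\fU}\bL_{\max}^{\vee}$. The only difference is cosmetic: the paper simply asserts this last identification, whereas you verify it by the local computation on $\cA^e \to \cA\times\cA_{\max}$ pulled back through the cartesian diagram, which is a sound (and arguably cleaner) justification, though your aside about a ``smooth codimension two center'' in $\cP_{\fU,-}$ is unnecessary and not literally needed since the universal computation suffices.
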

\begin{proof}
  Observe that
  $\fb^*[\infty_{\cP_{-}}] = [\infty_{\cP^e_{-}}] + [\cE_{\fb}]$ where
  $[*]$ denotes the corresponding divisor class.
  Pulling back to $\cC_{\fU}$ via the identification
  $\cC_{\fU} \cong \infty_{\cP_{-}} \cong \infty_{\cP^e_{-}}$, we
  obtain
  \[
    \cO(\infty_{\cP_{-}})|_{\infty_{\cP_{-}}} \cong \big(\cO(\infty_{\cP^e_{-}})\otimes \cO(\cE_{\fb})\big)|_{\infty_{\cP^e_{-}}}.
  \]
  Using
  $\cO(\infty_{\cP_{-}})|_{\infty_{\cP_{-}}} \cong
  \cN_{\infty_{\cP_{-}}} \cong \cL^{\vee}_{\fU,-}$ and
  $\cO(\infty_{\cP^e_{-}})|_{\infty_{\cP^e_{-}}} \cong
  \cN_{\infty_{\cP^e_{-}}}$, we obtain
  \[
    \cL^{\vee}_{\fU,-} \cong \cN_{\infty_{\cP^e_{-}}}\otimes \cO(\cE_\fb)|_{\infty_{\cP^e_{-}}}.
  \]

  Finally, observe that
  $\cO(\cE_{\fb})|_{\infty_{\cP^e_{-}}} \cong
  \pi^*_{\fU}\bL_{\max}^{\vee}$, which leads to the desired
  isomorphism.
\end{proof}

\begin{lemma}\label{lem:contraction}
There is a commutative diagram of log stacks
\[
\xymatrix{
\cP_{\fU,-}^{e,\circ} \ar[rr]^-{\fc} \ar[rd] && \vb(\cL_{\fU,-}\otimes \pi^*_{\fU}\bL^{\vee}_{\max}) \ar[ld] \\
&\cC_{\fU}&
}
\]
where $\fc$ is a birational morphism contracting $\cE_{\fc}$, the
proper transform of $\cP_{\fU,-}\times_{\fU}\Delta_{\max}$, to
the zero section of
$\vb(\cL_{\fU,-}\otimes \pi^*_{\fU}\bL^{\vee}_{\max})$.
\end{lemma}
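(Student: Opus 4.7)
The plan is to construct $\fc$ explicitly by local toric computation in the two \'etale charts of the log blowup $\fb$, and then verify its properties by comparing the local description with the intrinsic identification of the target line bundle as the conormal bundle of $\infty_{\cP^e_-}$ coming from the preceding lemma.

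First I would dispose of the easy open locus $\cC_\fU\setminus \pi^*_\fU\Delta_{\max}$. There $\fb$ is an isomorphism (the blowup center lies entirely over $\pi^*_\fU\Delta_{\max}$) and the section $t$ trivializes $\bL_{\max}$, so both source and target restrict to $\vb(\cL_{\fU,-})$ and I declare $\fc$ to be the identity.

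Next I would work in \'etale local coordinates near $\infty_{\cP_-}\cap \varpi^*\Delta_{\max}$. Using the cartesian diagram \eqref{equ:bundle-partial-expansion} and the toric description of the blowup $\fb:\cA^e\to \cA\times \cA_{\max}$ at $\infty_{\cA}\times \Delta$, the space $\cP^e_{\fU,-}$ is covered by two charts. Let $\xi$ be a local defining equation of $\infty_{\cP_-}$ and continue to write $t$ for the pullback of the section of $\bL_{\max}$. The two charts have coordinates $(u,t)$ with $\xi=ut$ (the chart containing $\infty_{\cP^e_-}=\{u=0\}$), and $(\xi,v)$ with $t=\xi v$ (the chart containing $\cE_{\fc}=\{v=0\}$); in both charts the exceptional divisor $\cE_{\fb}$ is cut out by the monomial $\{t=0\}$ or $\{\xi=0\}$, respectively. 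Passing to $\cP^{e,\circ}_{\fU,-}$ restricts the first chart to $\{u\ne 0\}$ and leaves the second chart unchanged.

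The heart of the construction is the observation that the ratio $t/\xi$ defines a regular function on $\cP^{e,\circ}_{\fU,-}$: in the $(\xi,v)$-chart it equals $v$, and in the $(u,t)$-chart with $u$ invertible it equals $1/u$, and these agree on the overlap via $uv=1$. This ratio vanishes to order one along $\cE_{\fc}=\{v=0\}$ and is a unit everywhere else on $\cP^{e,\circ}_{\fU,-}$. Interpreting $t/\xi$ intrinsically as a global section of $(\pi^e)^*\bigl(\cL_{\fU,-}\otimes \pi^*_\fU\bL^\vee_{\max}\bigr)$ via the identification $\cL_{\fU,-}\otimes \pi^*_\fU\bL^\vee_{\max}\cong \cN^\vee_{\infty_{\cP^e_-}}$ from the preceding lemma (and the conormal-bundle description $\cN^\vee_{\infty_{\cP^e_-}}=I/I^2$ for the ideal $I$ of $\infty_{\cP^e_-}$), this section defines the desired morphism $\fc$ to $\vb(\cL_{\fU,-}\otimes \pi^*_\fU\bL^\vee_{\max})$ over $\cC_\fU$. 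The vanishing locus of the section is precisely $\cE_{\fc}$, so $\fc$ contracts $\cE_{\fc}$ to the zero section, and the invertibility of the section elsewhere gives birationality.

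The main obstacle will be the bookkeeping that globalizes the local function $t/\xi$ to an intrinsic section of $\cL_{\fU,-}\otimes \pi^*_\fU\bL^\vee_{\max}$: one must track how $\xi$, $t$, and the chart coordinates $u,v$ transform under changes of local defining equation of $\infty_{\cP_-}$ and of local trivializations of $\cL_{\fU,-}$ and $\bL_{\max}$, and match the resulting transformation law against the one dictated by the conormal identification. Once this compatibility is in place, the remaining claims (birationality and the contraction of $\cE_{\fc}$ to the zero section) follow directly from the coordinate computation above, and the two local definitions of $\fc$ glue with the identity map on $\cC_\fU\setminus\pi^*_\fU\Delta_{\max}$ by construction.
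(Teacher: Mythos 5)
Your construction is correct in substance, but it takes a genuinely different route from the paper. The paper never writes $\fc$ in coordinates: it observes that $3[\infty_{\cP^e_-}]$ is relatively nef for the family of nodal rational curves $\cP^e_{\fU,-}\to\cC_{\fU}$, takes the induced contraction $\bar{\fc}$ of the $(-1)$-curves $\cE_{\fc}$, identifies the contracted family with $\PP(\cN_{\infty_{\cP^e_-}}\oplus\cO)\cong\PP(\cO\oplus\cN^{\vee}_{\infty_{\cP^e_-}})$, and then deletes $\infty_{\cP^e_-}$ and its image, so that the target is $\vb(\cN^{\vee}_{\infty_{\cP^e_-}})\cong\vb(\cL_{\fU,-}\otimes\pi^*_{\fU}\bL^{\vee}_{\max})$ by the preceding lemma; the log structures are handled by the one-line remark that the projection $\vb(\cL_{\fU,-}\otimes\pi^*_{\fU}\bL^{\vee}_{\max})\to\cC_{\fU}$ is strict, so only the underlying morphism needs constructing. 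Your route builds the map directly, using that a morphism to the total space over $\cC_{\fU}$ is the same as a global section of the pulled-back line bundle; this buys an explicit formula (convenient, e.g., for Lemma \ref{lem:diff-c}) at the cost of the gluing bookkeeping you flag. That bookkeeping has a cleaner resolution than matching transformation laws against the conormal identification: your ratio $t/\xi$ is intrinsically $\tau\otimes s_{\Delta_{\max}}$, where $\tau$ is the tautological rational section of $\cL_{\fU,-}|_{\cP_{\fU,-}}\cong\cO(0_{\cP_-}-\infty_{\cP_-})$ with divisor $0_{\cP_-}-\infty_{\cP_-}$ and $s_{\Delta_{\max}}$ is the canonical section of $\bL^{\vee}_{\max}\cong\cO(\Delta_{\max})$ pulled back to $\cP_{\fU,-}$; since $\fb^*[\infty_{\cP_-}]=[\infty_{\cP^e_-}]+[\cE_{\fb}]$ and the total transform of $\pi^*_{\fU}\Delta_{\max}$ is $\cE_{\fc}+\cE_{\fb}$, its pullback has divisor $0_{\cP^e_-}+\cE_{\fc}-\infty_{\cP^e_-}$, so it is regular precisely on $\cP^{e,\circ}_{\fU,-}$ and both claims of the lemma follow; note this also covers the part of $\cE_{\fc}$ away from your charts near $\infty_{\cP_-}\cap\pi^*_{\fU}\Delta_{\max}$. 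Two small corrections: the section is not a unit away from $\cE_{\fc}$ — it also vanishes along the zero section $0_{\cP^e_-}$, as it must since $\fc$ preserves zero sections — and birationality should be read off from the locus over $\fU\setminus\Delta_{\max}$, where $\fc$ is multiplication by the nonvanishing $s_{\Delta_{\max}}$ and hence an isomorphism, rather than from nonvanishing of the section alone. Finally, add the paper's strictness remark so that the statement about \emph{log} stacks follows from the underlying morphism.
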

\begin{proof}
Note that once the underlying morphism of $\fc$ is defined, the morphism on the level of log structures is automatically obtained since the right skew arrow is strict.
We may assume for simplicity that all stacks in the rest of this proof have the trivial log structure.

Note that $[\infty_{\cP^e_{-}}]$ is a relative nef divisor of the
family of nodal rational curves $\cP^e_{\fU,-} \to \cC_{\fU}$.
Let $\bar{\fc}\colon \cP^e_{\fU,-} \to \cP^{c}_{\fU,-}$ be the induced
contraction, and $\cE_{\fc} \subset \cP^e_{\fU,-}$ be the exceptional
locus contracted by $\bar{\fc}$.
Then $\cE_{\fc}$ is the proper transform of
$\cP_{\fU,-}\times_{\fU}\Delta_{\max}$.
Observe that the resulting projection $\cP^{c}_{\fU,-} \to \cC_{\fU}$
is again a smooth $\PP^1$-fibration since the contracted locus
consists of a family of $(-1)$-curves over $\cC_{\fU}$.

Furthermore, note that $\bar{\fc}$ induces an embedding $\cN_{\infty_{\cP^e_{-}}} \to \cP^{c}_{\fU}$ over $\cC_{\fU}$ with complement $0_{\cP^{c}_{-}} = \cP^{c}_{\fU,-}\setminus \cN_{\infty_{\cP^e_{-}}}$ given by the image of the zero section $0_{\cP^e_{-}} \subset \cP^e_{\fU,-}$. We thus obtain
\[\cP^c_{\fU,-} \cong \PP(\cN_{\infty_{\cP^e_{-}}}\oplus \cO) \cong \PP(\cO \oplus \cN^{\vee}_{\infty_{\cP^e_{-}}}).\]
Thus $\fc$ is obtained from $\bar{\fc}$ by removing $\infty_{\cP^e_{-}}$ and its image in $\cP^c_{\fU,-}$.
\end{proof}

Consider the canonical morphism induced by the divisor $\cE_{\fc}$
\begin{equation}\label{equ:C-twist}
\iota_{\cE_{\fc}}\colon \cO_{\cP^e_{\fU,-}} \to \cO_{\cP^e_{\fU,-}}(\cE_{\fc}) \cong \bL_{\max}^{\vee}(-\cE_{\fb}),
\end{equation}
and the morphism of log tangent bundles
\[
\diff\fc\colon T_{\cP^{e,\circ}_{\fU,-}/\cC_{\fU}} \to \fc^*T_{\vb(\cL_{\fU,-}\otimes \pi^*_{\fU}\bL^{\vee}_{\max})/\cC_{\fU}}.
\]

\begin{lemma}\label{lem:diff-c}
$\diff\fc = \otimes\iota_{\cE_{\fc}}$.
\end{lemma}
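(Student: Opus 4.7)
The plan is to identify both the source and the target of $\diff\fc$ as explicit line bundles on $\cP^{e,\circ}_{\fU,-}$, observe that their ratio is canonically $\cO(\cE_{\fc})$, and finally match $\diff\fc$ with the tautological section $\iota_{\cE_{\fc}}$ under this identification.

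I would first identify the target. Since the projection $\vb(\cL_{\fU,-}\otimes\pi_{\fU}^{*}\bL_{\max}^{\vee})\to\cC_{\fU}$ is the total space of a line bundle and carries trivial relative log structure, its relative log tangent bundle is the pullback of $\cL_{\fU,-}\otimes\pi_{\fU}^{*}\bL_{\max}^{\vee}$ itself. Pulling back by $\fc$ and using that the composition $\cP^{e,\circ}_{\fU,-}\to\vb(\cdots)\to\cC_{\fU}$ coincides with $\pi_{-}$, we obtain $\fc^{*}T_{\vb(\cL_{\fU,-}\otimes\pi_{\fU}^{*}\bL_{\max}^{\vee})/\cC_{\fU}}\cong\pi_{-}^{*}\cL_{\fU,-}\otimes\pi_{-}^{*}\pi_{\fU}^{*}\bL_{\max}^{\vee}$. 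Since $\bL_{\max}^{\vee}\cong\cO_{\fU}(\Delta_{\max})$ and on $\cP^{e,\circ}_{\fU,-}$ the pullback $\fb^{*}\pi_{\fU}^{*}[\Delta_{\max}]$ decomposes as $[\cE_{\fb}]+[\cE_{\fc}]$, this simplifies to $\pi_{-}^{*}\cL_{\fU,-}\otimes\cO(\cE_{\fb}+\cE_{\fc})$.

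Next I would identify the source. The morphism $\fb$ is log \'etale by construction of $\cA^{e}$, so $T^{\log}_{\cP^{e}_{\fU,-}/\cC_{\fU}}\cong\fb^{*}T^{\log}_{\cP_{\fU,-}/\cC_{\fU}}$ as line bundles on $\cP^{e}_{\fU,-}$, and this identification remains valid after restriction to the open $\cP^{e,\circ}_{\fU,-}$ where the contribution of the removed divisor $\infty_{\cP^{e}_{-}}$ to the log structure is automatically trivial. By the same argument as in Lemma \ref{lem:pull-back-log-tangent}, one has $T^{\log}_{\cP_{\fU,-}/\cC_{\fU}}\cong\cL_{\fU,-}\otimes\cO(\infty_{\cP_{-}})$. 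Combining $\fb^{*}[\infty_{\cP_{-}}]=[\infty_{\cP^{e}_{-}}]+[\cE_{\fb}]$ with the triviality of $\cO(\infty_{\cP^{e}_{-}})|_{\cP^{e,\circ}_{\fU,-}}$, this gives $T^{\log}_{\cP^{e,\circ}_{\fU,-}/\cC_{\fU}}\cong\pi_{-}^{*}\cL_{\fU,-}\otimes\cO(\cE_{\fb})$. Factoring out this common summand, the quotient of the target by the source is canonically $\cO(\cE_{\fc})$, matching the codomain of $\iota_{\cE_{\fc}}$ in \eqref{equ:C-twist}.

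Finally I would match $\diff\fc$ with $\iota_{\cE_{\fc}}$ under these identifications. Since $\fc$ is an isomorphism on $\cP^{e,\circ}_{\fU,-}\setminus\cE_{\fc}$, the map $\diff\fc$ is invertible there, just as $\iota_{\cE_{\fc}}$ is; along $\cE_{\fc}$ both vanish, and the proof of Lemma \ref{lem:contraction} exhibits $\fc$ as a contraction of a family of $(-1)$-curves, forcing the vanishing of $\diff\fc$ to be of order exactly one. Hence $\diff\fc$ agrees with $\iota_{\cE_{\fc}}$ up to a unit in $\cO^{*}_{\cP^{e,\circ}_{\fU,-}}$. Pinning down that this unit equals $1$ is the main obstacle; I would do this by a local toric calculation in a chart near a point of $\cE_{\fb}\cap\cE_{\fc}$, using the presentation of $\cA^{e}$ from Section \ref{ss:partial-expansion} (blow-up charts $x=tu$ and $t=xv$) and carefully tracking the canonical identifications of $\bL_{\max}$, $\cL_{\fU,-}$, and the exceptional divisors through each of the preceding steps, so that the trivializations underlying "source", "target", and $\iota_{\cE_{\fc}}$ are all compatible.
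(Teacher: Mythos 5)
Your proposal is correct and takes essentially the same route as the paper: after observing that $\fc$ is an isomorphism away from $\cE_{\fc}$ (and is the blow-down of the proper transform of the fibers over $\Delta_{\max}$), the paper's entire proof consists of asserting that ``a local coordinate calculation'' gives $\fc^{*}=\otimes\iota_{\cE_{\fc}}^{\vee}$ on log cotangent bundles, which dualizes to the statement — exactly the computation you defer, while your bundle identifications and up-to-a-unit reduction are just a more explicit packaging of what that calculation must produce. The step you flag as ``the main obstacle'' does go through as you outline it: in the chart where $s=ws'$ (with $w$ cutting $\infty_{\cP_{-}}$ and $s$ cutting the pullback of $\Delta_{\max}$, so that $\cE_{\fb}=\{w=0\}$ and $\cE_{\fc}=\{s'=0\}$), the fiber coordinate of $\vb(\cL_{\fU,-}\otimes\pi^{*}_{\fU}\bL^{\vee}_{\max})$ pulls back under $\fc$ to $s'$, and the log generator $w\partial_{w}-s'\partial_{s'}$ of $T_{\cP^{e,\circ}_{\fU,-}/\cC_{\fU}}$ is sent to $-s'$ times the constant generator of the target, i.e.\ to multiplication by the local equation of $\cE_{\fc}$, which under the identification \eqref{equ:C-twist} is precisely $\otimes\iota_{\cE_{\fc}}$.
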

\begin{proof}
Consider the morphism of log cotangent bundles
\[
(\diff\fc)^\vee\colon \fc^* \Omega_{\vb(\cL_{\fU,-}\otimes \pi^*_{\fU}\bL^{\vee}_{\max})/\cC_{\fU}} \to \Omega_{\cP^{e,\circ}_{\fU,-}/\cC_{\fU}}.
\]
Note that $\fc$ is an isomorphism away from the divisor $\cE_{\fc}$.
Furthermore, the contraction $\fc$ is the blow-up of the zero section
of $\vb(\cL_{\fU,-}\otimes \pi^*_{\fU}\bL^{\vee}_{\max})$.
A local coordinate calculation shows that $(\diff\fc)^\vee$ is given by the composition
\[
\fc^* \Omega_{\vb(\cL_{\fU,-}\otimes \pi^*_{\fU}\bL^{\vee}_{\max})/\cC_{\fU}} \stackrel{\otimes\iota^{\vee}_{\cE_{\fc}}}{\longrightarrow} \fc^*\cL_{\fU,-}^{\vee}(-\cE_{\fb}) \stackrel{\cong}{\longrightarrow} \fb^*\Omega_{\cP_{\fU,-}/\cC_{\fU}}(-\cE_{\fb}) \hookrightarrow  \fb^*\Omega_{\cP_{\fU,-}/\cC_{\fU}}
\]
where the first arrow follows from \eqref{equ:C-twist}.
Note that
$\fb^*\Omega_{\cP_{\fU,-}/\cC_{\fU}} \cong
\Omega_{\cP^{e,\circ}_{\fU,-}/\cC_{\fU}}$ since $\fb$ is log \'etale
over $\cC_{\fU}$.
This means that $(\diff\fc)^\vee = \otimes \iota_{\cE_{\fc}}^{\vee}$.
Taking the dual, we obtain the desired equality.
\end{proof}

%----------------------------------------------------------------------------------
\subsubsection{Twisted spin section via partial expansion}

Consider the commutative diagram of solid arrows
\begin{equation}\label{diag:lift-log-fields}
\xymatrix{
\cC_{\USF} \ar@/^/[rrd] \ar@/_/[ddr]_{f_{\USF,-}} \ar@{.>}[rd]|-{f^{e}_{\USF,-}} && \\
&\cP^{e,\circ}_{\USF,-} \ar[r] \ar[d]^{\fb} & \cA^{e,\circ} \ar[d] \\
&\cP_{\USF,-} \ar[r] & \cA\times\cA_{\max}
}
\end{equation}
where the square is the pullback of  (\ref{equ:bundle-partial-expansion}) via $\USF \to \fU$.  We then obtain the dashed arrow $f^e_{\USF,-}$.
Consider the composition
\begin{equation}\label{equ:minus-section}
\bs_{\USF,-}\colon C_{\USF} \stackrel{f^e_{\USF,-}}{\longrightarrow} \cP^{e,\circ}_{\USF,-} \stackrel{\fc_\USF}{\longrightarrow} \vb(\cL_{\USF,-}\otimes \pi^*_{\USF}\bL^{\vee}_{\max})
\end{equation}
where $\fc_\USF$ is the pullback of the contraction $\fc$ as in Lemma \ref{lem:contraction}.

\begin{lemma}\label{lem:twisted-spin-marking-vanishing}
The section $\bs_{\USF}$ in (\ref{equ:twisted-spin-section}) is given by the composition
\[
C_{\USF} \stackrel{\bs_{\USF,-}}{\longrightarrow} \vb(\cL_{\USF,-}\otimes \pi^*_{\USF}\bL^{\vee}_{\max}) \stackrel{}{\longrightarrow} \vb(\cL_{\USF}\otimes \pi^*_{\USF}\bL^{\vee}_{\max}).
\]
\end{lemma}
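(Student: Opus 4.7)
The plan is to reduce the claim to a matching of two sections of $\cL_{\USF,-}\otimes \pi^*_{\USF}\bL^\vee_{\max}$, then verify the matching on a dense open and extend by the log-theoretic control over the boundary and marked loci. First I would use Lemma~\ref{lem:narrow-marking-vanishing} to note that $f_{\USF}$ maps every narrow marking into the zero section $0_{\cP}$, so the pull-back $f^*_\USF\iota$ vanishes along $\Sigma$, and consequently $\bs_\USF$ factors uniquely through the canonical inclusion $j_*\colon \cL_{\USF,-}\otimes \pi^*_\USF\bL^\vee_{\max}\hookrightarrow \cL_\USF\otimes \pi^*_\USF\bL^\vee_{\max}$. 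Call the resulting factorization $\tilde{\bs}_\USF\colon \cO_{\cC_\USF}\to \cL_{\USF,-}\otimes \pi^*_\USF\bL^\vee_{\max}$; the claim is then equivalent to $\tilde{\bs}_\USF=\bs_{\USF,-}$.

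Second, over the dense open $\cC_\USF\setminus(\Sigma\cup \pi^{-1}_\USF\Delta_{\max})$ both sides are easy to identify. Here $\cP_{\USF,-}$ and $\cP_\USF$ coincide and $\bL_{\max}$ is canonically trivial; the partial expansion $\fb$ is an isomorphism by Lemma~\ref{lem:lift-max-deg}(1) and the contraction $\fc$ is an isomorphism off $\cE_\fc$ by Lemma~\ref{lem:contraction}. Under these identifications the twist morphism $\tf^\vee_\USF$ is the identity, so both $\tilde{\bs}_\USF$ and $\bs_{\USF,-}$ coincide with $f^*_{\USF}\iota$ viewed as a map $\cO\to \cL_\USF|_\cP\otimes \cO(\infty_\cP)$. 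This pins down the agreement on a dense open.

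Third, the extension across $\Sigma$ and $\pi^{-1}_\USF\Delta_{\max}$ is controlled by the log geometry. Along $\Sigma$ the lifting $f^e_{\USF,-}$ forces $\bs_{\USF,-}$ to vanish (since $\cL_{\USF,-}=\cL_{\USF}(-\Sigma)$ and $\cP^{e,\circ}$ is glued from $\cP_{\USF,-}$ and the normal-cone expansion to create zeros at markings), matching the vanishing of $\tilde{\bs}_\USF$. Along $\pi^{-1}_\USF\Delta_{\max}$, the composition $\fc_\USF\circ f^e_{\USF,-}$ collapses $\cE_\fc$ to the zero section of $\vb(\cL_{\USF,-}\otimes \pi^*_\USF\bL^\vee_{\max})$, which by Lemma~\ref{lem:compare-universal-log-twist} is precisely the prescription realised by the twist $\tf^\vee_\USF$ in the construction of $\bs_\USF$. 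The heart of the argument is thus an identification of two morphisms of line bundles over $\cC_\USF$: the one obtained by composing the inclusion $\iota_{\cE_\fc}\colon \cO\to \bL^\vee_{\max}(-\cE_\fb)$ from \eqref{equ:C-twist} with the identification $\cN^\vee_{\infty_{\cP^e_-}}\cong \cL_{\fU,-}\otimes \pi^*_\fU\bL^\vee_{\max}$, and the one induced by $\tf^\vee_\USF$ out of the $\cO^*$-torsor $\cT_{\max}$.

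The main obstacle is exactly this last identification. It requires a careful bookkeeping of the normal-bundle computation $\cO(\cE_\fb)|_{\infty_{\cP^e_-}}\cong \pi^*_\fU\bL^\vee_{\max}$ and its compatibility with the characteristic-sheaf description of $\tf_\USF^\flat$ in Lemma~\ref{lem:torsor-twist}. Once this comparison is secured, the equality $\tilde{\bs}_\USF=\bs_{\USF,-}$ follows by uniqueness of the factorization, since both morphisms agree on a dense open of $\cC_\USF$ and their extensions across $\Sigma$ and $\pi^{-1}_\USF\Delta_{\max}$ are dictated by the same log-geometric data.
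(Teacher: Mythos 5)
Your opening reduction is sound: by Lemma~\ref{lem:narrow-marking-vanishing} the markings land in $0_{\cP}$, so $f^*_{\USF}\iota$ vanishes along $\Sigma$ and $\bs_{\USF}$ factors uniquely through $\vb(\cL_{\USF,-}\otimes\pi^*_{\USF}\bL^{\vee}_{\max})$; the lemma is then the assertion that this factorization equals $\bs_{\USF,-}$. The mechanism you propose for that equality, however, does not work. You verify it on $\cC_{\USF}\setminus(\Sigma\cup\pi^{-1}_{\USF}\Delta_{\max})$ and then ``extend''. First, that locus need not be dense: $\USF\to\fU$ is not flat, so the preimage in $\USF$ of the Cartier divisor $\Delta_{\max}\subset\fU$ can contain entire irreducible components of $\USF$ (families of log fields that are maximally degenerate under every deformation), and over such components your identifications apply on no nonempty open set. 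Second, $\USF$, hence $\cC_{\USF}$, is in general non-reduced, and two sections of the line bundle $\cL_{\USF,-}\otimes\pi^*_{\USF}\bL^{\vee}_{\max}$ agreeing on a dense open need not agree; ``extensions dictated by the same log-geometric data'' is not a substitute for exhibiting a single morphism of which both are restrictions. Third, the comparison you yourself flag as ``the main obstacle'' --- matching $\otimes\iota_{\cE_{\fc}}$ from \eqref{equ:C-twist}, via $\cN^{\vee}_{\infty_{\cP^e_{-}}}\cong\cL_{\fU,-}\otimes\pi^*_{\fU}\bL^{\vee}_{\max}$, with the torsor-theoretic twist $\otimes\tf^{\vee}_{\USF}$ --- is precisely the substantive content of the lemma, and you leave it unproved.

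The paper closes exactly this gap by a universal computation with no density argument: using the lift $f^e_{\USF,-}$ of \eqref{diag:lift-log-fields} and Lemma~\ref{lem:compare-universal-log-twist} (which already supplies the identification $(f^e_{\USF,-})^*(\otimes\iota_{\cE_{\fc}})=\otimes\tf^{\vee}_{\USF}$ you were missing), it writes $\bs_{\USF}=(f^e_{\USF,-})^*\big((\otimes\iota_{\cE_{\fc}})\circ(\fb^*\bt^*\iota)\big)$, observes that this composite factors through the morphism \eqref{equ:twisted-spin-Pe} defined on all of $\cP^e_{\USF,-}$, and identifies the restriction of \eqref{equ:twisted-spin-Pe} to $\cP^{e,\circ}_{\USF,-}$ with $\fc^*\iota_{-}$, the pull-back of the tautological section of $\vb(\cL_{\USF,-}\otimes\pi^*_{\USF}\bL^{\vee}_{\max})$. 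Since $\bs_{\USF,-}=\fc_{\USF}\circ f^e_{\USF,-}$ by \eqref{equ:minus-section}, pulling everything back along $f^e_{\USF,-}$ yields the asserted factorization everywhere at once, including over components of $\USF$ contained in $\Delta_{\max}$ and over nilpotents. To repair your proof, replace the dense-open step by this pull-back computation on $\cP^e_{\USF,-}$ and invoke Lemma~\ref{lem:compare-universal-log-twist} for the identification you postponed.
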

\begin{proof}
Since $\bt\colon \cP_{\USF,reg} \to \cP_{\USF}$ is well-defined along the zero section, pulling back (\ref{equ:P-zero-section}) we have
\[
\bt^*\iota\colon \cO_{\cP_{\USF,reg}} \to \cO_{\cP_{\USF,reg}}(0_{\cP_{-}})\otimes \cO_{\cC_{\USF}}(\Sigma)|_{\cP_{\USF,reg}}.
\]
Since $\cO_{\cP_{\USF,reg}}(0_{\cP_{-}}) \cong \cL_{\USF,-}|_{\cP_{\USF,reg}}\otimes \cO_{\cP_{\USF,reg}}(\infty_{\cP_{-}})$, further pulling back to $\cP_{\USF,reg}^e = \fb^{-1}(\cP_{\USF,reg})$, we have
\[
\fb^*\bt^*\iota\colon \cO_{\cP^e_{\USF,reg}} \to \cL_{\USF,-}|_{\cP^e_{\USF,reg}}\otimes \cO_{\cP^e_{\USF,reg}}(\infty_{\cP^e_{-}}+\cE_{\fb}) \otimes \cO_{\cC_{\USF}}(\Sigma)|_{\cP^e_{\USF,reg}}
\]
which is naturally the restriction of
\[
\fb^*\bt^*\iota\colon \cO_{\cP^e_{\USF,-}} \to \cL_{\USF,-}|_{\cP^e_{\USF,-}}\otimes \cO_{\cP^e_{\USF,-}}(\infty_{\cP^e_{-}}+\cE_{\fb}) \otimes \cO_{\cC_{\USF}}(\Sigma)|_{\cP^e_{\USF,-}}.
\]
Since $f_{\USF}$ factors through $f^e_{\USF,-}$, we have $(f^e_{\USF,-})^*(\fb^*\bt^*\iota) = f^*_{\USF}\iota$.

By Lemma \ref{lem:compare-universal-log-twist}, we have $(f^e_{\USF,-})^*(\otimes\iota_{\cE_{\fc}}) = (\otimes\tf^{\vee}_{\USF})$ in (\ref{equ:tangent-twist}). Putting things together, we have
\begin{align*}
\bs_{\USF} = (\otimes\tf^{\vee}_{\USF}) \circ f^*_{\USF}\iota &= (f^e_{\USF,-})^*(\otimes\iota_{\cE_{\fc}}) \circ (f^e_{\USF,-})^*(\fb^*\bt^*\iota) \\
&= (f^e_{\USF,-})^*\big((\otimes\iota_{\cE_{\fc}}) \circ (\fb^*\bt^*\iota) \big).
\end{align*}
Note that $(\otimes\iota_{\cE_{\fc}}) \circ (\fb^*\bt^*\iota)$ is the morphism
\[
 \cO_{\cP^e_{\USF,-}} \to \cL_{\USF,-}|_{\cP^e_{\USF,-}}\otimes \cO_{\cP^e_{\USF,-}}(\infty_{\cP^e_{-}})\otimes\bL_{\max}^{\vee}\otimes\cO_{\cC_{\USF}}(\Sigma)|_{\cP^e_{\USF,-}}.
\]
which factors through the natural morphism
\begin{equation}\label{equ:twisted-spin-Pe}
 \cO_{\cP^e_{\USF,-}} \to \cL_{\USF,-}|_{\cP^e_{\USF,-}}\otimes \cO_{\cP^e_{\USF,-}}(\infty_{\cP^e_{-}})\otimes\bL_{\max}^{\vee}.
\end{equation}

Write $V = \vb(\cL_{\USF,-}\otimes \pi^*_{\USF}\bL^{\vee}_{\max})$ for simplicity.
The section $\bs_{\USF,-}$ is the pullback of the canonical morphism via itself
\[
\iota_{-}\colon \cO_{V} \to \cO_{V}(0_{V}) \cong (\cL_{\USF,-}\otimes \pi^*_{\USF}\bL^{\vee}_{\max})|_{V}.
\]
This pulls back to
\[
\fc^* \iota_{-}\colon \cO_{\cP^{e,\circ}_{\USF,-}} \to \cL_{\USF,-}|_{\cP^{e,\circ}_{\USF,-}} \otimes\bL_{\max}^{\vee},
\]
which is the restriction of (\ref{equ:twisted-spin-Pe}). Since $\bs_{\USF}$ factors through $f^{e}_{\USF,-}$, the section (\ref{equ:twisted-spin-Pe}) pulls back to $\bs_{\USF,-}$ via $f^{e}_{\USF,-}$. This finishes the proof.
\end{proof}

\subsubsection{Relative cosection via partial expansion}
For simplicity, write
\[
\tilde{\cL}_{\USF,-} := \cL_{\USF,-}\otimes \pi^*_{\USF}\bL^{\vee}_{\max} \ \ \mbox{and} \ \ \tilde{\omega}_{\USF} := \omega_{\USF} \otimes \pi^*_{\USF}\bL^{-r}_{\max}
\]
Consider the composition
\begin{equation}\label{equ:twist-composition}
\xymatrix{
\cP^{e,\circ}_{\USF,-} \ar[r]^{\fc \ \ \ \ } & \vb(\tilde{\cL}_{\USF,-}) \ar[r]^{W\ \ \ \ } \ar@/_1.5pc/[rr]_{W_{-}} & \vb\big(\tilde{\omega}_{\USF}((1-r)\Sigma)\big) \ar[r] & \vb(\tilde{\omega}_{\USF}).
}
\end{equation}
Take the differentiation
\[
T_{\cP^{e,\circ}_{\USF,-}/\cC_{\USF}} \stackrel{\diff \fc}{\longrightarrow} \fc^*T_{\vb(\tilde{\cL}_{\USF,-})/\cC_{\USF}} \stackrel{\diff W_{-}}{\longrightarrow} (W_{-}\circ\fc)^*\vb(\tilde{\omega}_{\USF}).
\]
Using \eqref{equ:twist-log-tangent} and pulling back to $\cC_{\USF}$, we have
\[
\cL_{\USF,-}\otimes f^{*}_{\USF}\cO(\infty_{\cP})  \stackrel{(f^e_{\USF,-})^*\diff \fc}{\longrightarrow} \tilde{\cL}_{\USF,-} \stackrel{\bs_{\USF,-}^*\diff W_{-}}{\longrightarrow} \tilde{\omega}_{\USF}.
\]
Further pushing forward, we obtain
\begin{equation}\label{equ:relative-cosection-via-expansion}
\begin{aligned}
\EE_{\USF/\fU} := \pi_{\USF,*}\big(\cL_{\USF,-}\otimes f^{*}_{\USF}\cO(\infty_{\cP})\big) &\stackrel{\pi_{\USF,*}(f^e_{\USF,-})^*\diff \fc}{\longrightarrow} \pi_{\USF,*}\tilde{\cL}_{\USF,-} \\
&\stackrel{\pi_{\USF,*}\bs_{\USF,-}^*\diff W_{-}}{\longrightarrow} \pi_{\USF,*}\tilde{\omega}_{\USF}
\end{aligned}
\end{equation}

\begin{proposition}\label{prop:relative-cosection-comparison}
The composition \eqref{equ:relative-cosection-via-expansion} is \eqref{equ:derived-cosection}. In particular, the relative cosection \eqref{equ:relative-cosection} is the $H^1$ of \eqref{equ:relative-cosection-via-expansion}.

\end{proposition}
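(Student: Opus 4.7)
The plan is to decompose both compositions as
$\EE_{\USF/\fU} \xrightarrow{\alpha} \pi_{\USF,*}\tilde\cL_{\USF,-} \xrightarrow{\beta} \pi_{\USF,*}\tilde\omega_{\USF}$
and verify the equalities $\alpha_1 = \alpha_2$ and $\beta_1 = \beta_2$ factor-by-factor. For \eqref{equ:derived-cosection} this factorization is exactly the one given by the commutative diagram \eqref{diag:differentiate-potential}, with the identification of the intermediate object $\pi_{\USF,*}(\cL_\USF)\otimes \bL^\vee_{\max} \cong \pi_{\USF,*}\tilde\cL_{\USF,-}$ coming from the narrow isomorphism \eqref{equ:narrow-iso}; for \eqref{equ:relative-cosection-via-expansion} it is built into the definition. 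By Lemma \ref{lem:twist-obs} and the computation \eqref{equ:twist-log-tangent}, both start from $\EE_{\USF/\fU}\cong \pi_{\USF,*}\bigl(\cL_{\USF,-}\otimes f^*_\USF\cO(\infty_\cP)\bigr)$, so it suffices to check equality of the \emph{pre-pushforward} sheaf maps.

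For $\alpha$, I would identify $\alpha_2$ as the sheaf map $\cL_{\USF,-}\otimes f^*_\USF\cO(\infty_\cP) \to \tilde\cL_{\USF,-}$ given by $\otimes (f^e_{\USF,-})^*\iota_{\cE_{\fc}}$ via Lemma \ref{lem:diff-c}. The key identification is $(f^e_{\USF,-})^*\iota_{\cE_{\fc}} = \tf_\USF^\vee$, which proceeds as follows. From Lemma \ref{lem:lift-max-deg}(1), the map $f^e_{\USF,-}$ avoids $\infty_{\cP^e_-}$, so the pullback of $\fb^*[\infty_{\cP_-}] = [\infty_{\cP^e_-}] + [\cE_{\fb}]$ gives $f^*_\USF\cO(\infty_\cP) \cong (f^e_{\USF,-})^*\cO(\cE_{\fb})$. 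Combined with $\cE_{\fc} = \pi^*\Delta_{\max} - \cE_{\fb}$, this realizes $(f^e_{\USF,-})^*\iota_{\cE_{\fc}}$ as the dual of the canonical map \eqref{equ:log-twisted-expansion}, which by Lemma \ref{lem:compare-universal-log-twist} is $\tf_\USF$. On the other hand, $\alpha_1$ is the pushforward of \eqref{equ:tangent-twist}, which is by definition $\otimes \tf_\USF^\vee$, composed with the inverse of the narrow isomorphism; thus $\alpha_1 = \alpha_2$ on the nose.

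For $\beta$, I would use Lemma \ref{lem:twisted-spin-marking-vanishing} to write $\bs_\USF$ as $\bs_{\USF,-}$ followed by the inclusion $\vb(\tilde\cL_{\USF,-}) \hookrightarrow \vb(\cL_\USF\otimes \pi^*\bL_{\max}^\vee)$ induced by $\cL_\USF(-\Sigma)\hookrightarrow \cL_\USF$, and the factorization $W_- = \text{inc}\circ W\circ \fc$ of \eqref{equ:twist-composition}, where the final inclusion corresponds to the sheaf inclusion $\tilde\omega_{\USF}((1-r)\Sigma)\hookrightarrow \tilde\omega_{\USF}$. Using the pointwise description $\diff W(\dot s) = r s^{r-1}\dot s$ (and similarly for $W_-$), a direct computation shows that both $\bs_\USF^*(\diff W)(-\Sigma)$ and $\bs_{\USF,-}^*\diff W_-$ send $\dot s' \in \tilde\cL_{\USF,-}$ to the common element $r\,\bs_{\USF,-}^{r-1}\cdot \dot s' \in \tilde\cL_{\USF,-}^r = \tilde\omega_{\USF}((1-r)\Sigma)\hookrightarrow \tilde\omega_{\USF}$, with the two expressions matching because $\bs_\USF^{r-1}$ is the image of $\bs_{\USF,-}^{r-1}$ under $\cL^{r-1}((1-r)\Sigma)\hookrightarrow \cL^{r-1}$. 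The main technical obstacle is the $\alpha$-factor: the several twists coming from the partial expansion $\fb$, the contraction $\fc$, the narrow reduction by $\Sigma$, and the auxiliary twist by $\bL_{\max}$ must be tracked simultaneously, and the whole argument hinges on the clean identity $\cE_{\fc} = \pi^*\Delta_{\max} - \cE_{\fb}$, which via Lemma \ref{lem:compare-universal-log-twist} precisely encodes the universal log twist $\tf_\USF$. Once the two compositions agree, taking $H^1$ recovers the relative cosection \eqref{equ:relative-cosection}, yielding the second assertion.
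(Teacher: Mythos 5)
Your proposal is correct and follows essentially the same route as the paper: both factor the two maps through $\pi_{\USF,*}\tilde{\cL}_{\USF,-}$, identify the first factor via Lemma \ref{lem:diff-c} together with the identity $(f^e_{\USF,-})^*(\otimes\iota_{\cE_{\fc}})=(\otimes\tf^{\vee}_{\USF})$ (your divisor-class argument using Lemma \ref{lem:lift-max-deg}(1) and $\cE_{\fc}=\pi^*_{\fU}\Delta_{\max}-\cE_{\fb}$ is exactly the paper's proof of Lemma \ref{lem:compare-universal-log-twist}), handle the pushforwards with the narrow isomorphism \eqref{equ:narrow-iso}, and identify the second factor with \eqref{equ:after-twist} via Lemma \ref{lem:twisted-spin-marking-vanishing} and the fiberwise formula for $\diff W$. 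The only blemish is notational: in \eqref{equ:twist-composition} the map $W_{-}$ is the composite of the $r$-th power map with the inclusion $\vb\big(\tilde{\omega}_{\USF}((1-r)\Sigma)\big)\to\vb(\tilde{\omega}_{\USF})$ and does not itself include $\fc$, which does not affect your argument.
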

\begin{proof}
By Lemma \ref{lem:compare-universal-log-twist}, we have $(f^e_{\USF,-})^*(\otimes\iota_{\cE_{\fc}}) = (\otimes\tf^{\vee}_{\USF})$, where $\iota_{\cE_{\fc}}$ is as in \eqref{equ:C-twist}.  By Lemma \ref{lem:diff-c}, $(f^e_{\USF,-})^*\diff \fc$ is obtained by tensoring \eqref{equ:tangent-twist} by $\cO_{\cC_{\USF}}(-\Sigma)$. By \eqref{equ:narrow-iso}, the arrow $\pi_{\USF,*}(f^e_{\USF,-})^*\diff \fc$ is \eqref{equ:push-tangent-twist}. Further observe that the arrow $\pi_{\USF,*}\bs_{\USF,-}^*\diff W_{-}$ is \eqref{equ:after-twist}. This proves the statement.
\end{proof}

%----------------------------------------------------------------------------------
\subsubsection{The twisted Hodge bundle}
Denote by $\tilde{\omega}_{\fU} := \omega_{\fU} \otimes \pi^*_{\fU}\bL^{-r}_{\max}$. Consider the direct image cone
$
\bC(\pi_{\fU,*}\tilde{\omega}_{\fU})
$
as in \cite[Definition 2.1]{CL12}.
This is an algebraic stack over $\fU$ parameterizing sections of $\tilde{\omega}_{\fU}$, see \cite[Proposition 2.2]{CL12}.
We further equip it with the log structure pulled back from $\fU$.
For simplicity, we write $\fH := \bC(\pi_{\fU,*}\tilde{\omega}_{\fU})$, and denote by $\bs_{\fH}\colon C_{\fH} \to \vb(\tilde{\omega}_{\fH})$ the universal section over $\fH$.

By \cite[Proposition 2.5]{CL12}, the strict morphism $\fH \to \fU$ has a perfect obstruction theory
\begin{equation}\label{equ:Hodge-perfect-obs}
\TT_{\fH/\fU} \to \EE_{\fH/\fU} := \pi_{\fH,*}\tilde{\omega}_{\fH}.
\end{equation}
By the projection formula, we have
\begin{equation}\label{equ:fake-obs}
R^1\pi_{\fU,*} \tilde{\omega}_{\fU} = (R^1\pi_{\fU, *}\omega_{\fU}\otimes \bL^{-r}_{\max}) \cong \bL^{-r}_{\max}.
\end{equation}
Therefore
$R^0\pi_{\fU,*} \tilde{\omega}_{\fU} \cong R^0\pi_{\fU,
  *}\omega_{\fU}\otimes \bL^{-r}_{\max}$ is indeed a vector
bundle whose associated geometric vector bundle is $\fH$.
In particular, the morphism $\fH \to \fU$ is strict and smooth.
Thus $\TT_{\fH/\fU}$ is a vector bundle over $\fH$ concentrated in
degree zero, and the following morphism is trivial:
\[
0 = H^1(\TT_{\fH/\fU}) \stackrel{0}{\longrightarrow} R^1\pi_{\fH,*} \tilde{\omega}_{\fH} \cong  \bL^{-r}_{\max}.
\]

The section $\bs_{\USF}$ as in \eqref{equ:twisted-spin-section} defines a section
\[
\bs_{\USF}^r\colon \cC_{\USF} \to \vb(\cL_{\USF}^{r}\otimes \pi^*_{\USF}\bL^{-r}_{\max}) \cong \vb(\omega_{\log,\USF}\otimes \pi^*_{\USF}\bL^{-r}_{\max}).
\]
By Lemma \ref{lem:twisted-spin-marking-vanishing}, $\bs_{\USF}$ is a global section of $\cL_{\USF}(-\Sigma)\otimes \pi^*_{\USF}\bL_{\max}$. Thus $\bs_{\USF}^r$ factors through a section
\[
\cC_{\USF} \to \vb(\omega_{\USF}\otimes \pi^*_{\USF}\bL^{-r}_{\max}),
\]
which is again denoted by $\bs_{\USF}^r$. This induces a morphism
\[\USF \to \fH\]
such that  $\bs_{\USF}^r$ is the pullback of $\bs_{\fH}$.

\subsubsection{Obstruction factorization}

\begin{lemma}\label{lem:obs-commute}
There is a canonical commutative diagram
\begin{equation}\label{diag:rel-obs-commute}
\xymatrix{
\TT_{\USF/\fU} \ar[r] \ar[d] & \TT_{\fH/\fU}|_{\USF} \ar[d] \\
\EE_{\USF/\fU} \ar[r] & \EE_{\fH/\fU}|_{\USF}
}
\end{equation}
where the bottom arrow is \eqref{equ:relative-cosection-via-expansion}, and the left and right vertical arrows are the perfect obstruction theories (\ref{equ:log-perfect-obs}) and (\ref{equ:Hodge-perfect-obs}) respectively.
\end{lemma}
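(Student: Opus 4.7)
The plan is to derive the diagram by a functoriality argument applied to the partial-expansion construction of Section~\ref{ss:partial-expansion}, combined with the universal description of the obstruction theories in Section~\ref{ss:log-obs}. The starting point is to replace the perfect obstruction theory \eqref{equ:log-perfect-obs} on the left by the equivalent one arising from the lifted section $f^{e}_{\USF,-}\colon \cC_{\USF} \to \cP^{e,\circ}_{\USF,-}$ of diagram~\eqref{diag:lift-log-fields}. By Lemma~\ref{lem:twist-obs} together with the factorization $f_{\USF} = \bt \circ f_{\USF,-}$ and the log-étaleness of $\fb$, the relative log tangent bundles $f^*_{\USF}T_{\cP_{\USF}/\cC_{\USF}}$ and $(f^{e}_{\USF,-})^* T_{\cP^{e,\circ}_{\USF,-}/\cC_{\USF}}$ are canonically identified after twisting by $\cO(-\Sigma)$ and applying \eqref{equ:narrow-iso}.

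Next I would identify the morphism $\USF \to \fH$ at the level of sections. By Lemma~\ref{lem:twisted-spin-marking-vanishing}, $\bs_{\USF}^{\otimes r}$ equals the composition $W_{-} \circ \fc \circ f^{e}_{\USF,-}$ of \eqref{equ:twist-composition}, viewed (via the factorization through $\vb(\omega_{\USF}\otimes \pi^*_{\USF}\bL^{-\otimes r}_{\max})$) as the pullback of the universal Hodge section $\bs_{\fH}$. This exhibits $\USF \to \fH$ as classifying exactly this composition of sections over $\cC_{\USF}$, and hence the universal section $\bs_{\fH}$ pulls back, via $\USF \to \fH$, to the composition $W_{-}\circ\fc\circ f^{e}_{\USF,-}$.

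The heart of the argument is the functoriality of the obstruction theory construction of Section~\ref{ss:log-obs} with respect to strict smooth morphisms of targets. Given the composition of strict morphisms of targets $\cP^{e,\circ}_{\USF,-} \xrightarrow{\fc} \vb(\tilde{\cL}_{\USF,-}) \xrightarrow{W} \vb(\tilde{\omega}_{\USF}((1-r)\Sigma)) \to \vb(\tilde\omega_{\USF})$ over $\cC_{\USF}$, the corresponding differentials $\diff\fc$ and $\diff W_{-}$ induce morphisms of pullbacks of log tangent bundles; pushing forward along $\pi_{\USF,*}$ produces exactly the bottom arrow of \eqref{diag:rel-obs-commute}, as verified by Proposition~\ref{prop:relative-cosection-comparison}. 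On the other side, both vertical arrows arise by applying $\pi_{\USF,*}$ (or its pullback for $\fH$) to the adjunction of $\LL_{\USF/\fU}$ (respectively $\LL_{\fH/\fU}|_{\USF}$) with the relative log cotangent sheaf of the target. The morphism $\USF \to \fH$ then yields the top arrow $\TT_{\USF/\fU} \to \TT_{\fH/\fU}|_{\USF}$, and commutativity of the square reduces to the compatibility of adjunction with composition of sections.

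The main obstacle is the bookkeeping: one must verify that under the identifications of Lemma~\ref{lem:contraction}, Lemma~\ref{lem:diff-c}, and \eqref{equ:narrow-iso}, the resulting square really is the one in \eqref{diag:rel-obs-commute}, and in particular that the twist by $\pi^*_{\USF}\bL^{\vee}_{\max}$ coming from $\cE_{\fc}$ matches the one implicit in passing from $\fH$ (defined over $\fU$ using $\tilde{\omega}_{\fU}$) to its pullback to $\USF$. Once these identifications are in place, the commutativity is formal from the chain rule $\diff(W_{-}\circ \fc) = \diff W_{-} \circ \diff\fc$ and the naturality of adjunction under pushforward by $\pi_{\USF}$.
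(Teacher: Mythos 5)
Your proposal is correct and follows essentially the same route as the paper: both pass through the lifted section $f^{e}_{\USF,-}$, use that the map $\USF \to \fH$ classifies the composed section $W_{-}\circ\fc\circ f^{e}_{\USF,-}$, and then obtain the square from the induced commutative diagram of log tangent complexes over $\cC_{\fU}$, the log \'etaleness of $\fb$, pushforward along $\pi_{\USF}$ with adjunction, and Proposition~\ref{prop:relative-cosection-comparison} to recognize the bottom arrow as \eqref{equ:relative-cosection-via-expansion}. The only difference is cosmetic: the paper packages the chain-rule step as a single diagram of tangent complexes rather than appealing to ``functoriality for strict smooth targets'' (note $\fc$ is a contraction, not smooth, but this does not affect the argument).
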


\begin{proof}
  Consider the commutative diagram over $\cC_{\fU}$:
\[
\xymatrix{
 \cC_\USF \ar[rr] \ar[d]_{f_{\USF,-}^e} && \cC_\fH \ar[d]^{s_\fH} \\
 \cP^{e,\circ}_{\fU,-}   \ar[rr]^{(W_{-}) \circ \fc} && \vb(\tilde{\omega}_{\fU})
      }
\]
By abuse of notation, $s_\fH$ is the composition $ \cC_\fH \to \vb(\tilde{\omega}_{\fH}) \to \vb(\tilde{\omega}_{\fU})$. We obtain a commutative diagram of log tangent complexes
  \begin{equation*}
    \xymatrix{
      \pi_{\USF}^* \TT_{\USF/\fU} \cong \TT_{\cC_\USF/\cC_\fU} \ar[rr] \ar[d] &&       \TT_{\cC_\fH/\cC_\fU}|_{\cC_{\USF}} \ar[d]\\
      (f_{\USF,-}^e)^* \TT_{\cP^{e,\circ}_{\fU,-}/\cC_\fU} \ar[rr]^{(f_{\USF,-}^e)^*\diff (W_{-}) \circ \fc}  && (s_\fH)^* \TT_{\vb(\tilde{\omega}_{\fU})/\cC_\fU}|_{\cC_\USF}
      }
  \end{equation*}
  Since $\fb$ in \eqref{diag:lift-log-fields} is log \'etale, we have
  $(f_{\USF,-}^e)^* \TT_{\cP^{e,\circ}_{\fU,-}/\cC_\fU} \cong
  (f_{\USF,-})^* \TT_{\cP_{\fU,-}/\cC_\fU}$.
  Applying $\pi_{\USF,*}$ and using adjunction we obtain
  \begin{equation*}
    \xymatrix{
      \TT_{\USF/\fU} \ar[rrr] \ar[d] &&&       \TT_{\fH/\fU}|_\USF \ar[d] \\
       \pi_{\USF,*}(f_{\USF,-})^* \TT_{\cP_{\fU,-}/\cC_\fU} \ar[rrr]^{\pi_{\USF,*} (f_{\USF,-}^e)^*\diff (W_{-}) \circ \fc} &&&  \pi_{\USF,*}(s_\fH)^* \TT_{\vb(\tilde{\omega}_{\fU})/\cC_\fU}|_{\cC_\USF}
      }
  \end{equation*}
which is \eqref{diag:rel-obs-commute}.
\end{proof}

\begin{proposition}\label{prop:rel-obs-factorization}
The injection $H^1(\TT_{\USF/\fU}) \to \obs_{\USF/\fU}$ factors through the kernel of the relative cosection $\sigma_{\USF/\fU}$ in (\ref{equ:relative-cosection}).
\end{proposition}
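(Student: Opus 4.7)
The plan is to extract the factorization directly from the commutative diagram in Lemma~\ref{lem:obs-commute} by taking $H^1$ and exploiting the smoothness of the twisted Hodge bundle.

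First, I would apply $H^1$ to the commutative square of Lemma~\ref{lem:obs-commute} to obtain
\begin{equation*}
\xymatrix{
H^1(\TT_{\USF/\fU}) \ar[r] \ar[d] & H^1(\TT_{\fH/\fU}|_{\USF}) \ar[d] \\
\obs_{\USF/\fU} \ar[r] & H^1(\EE_{\fH/\fU}|_{\USF}) \cong \bL_{\max}^{-\otimes r}
}
\end{equation*}
where the identification on the bottom right uses the projection formula computation \eqref{equ:fake-obs}. By Proposition~\ref{prop:relative-cosection-comparison}, the bottom horizontal arrow is exactly the relative cosection $\sigma_{\USF/\fU}$. The left vertical arrow is injective, as it is the map on $H^1$ induced by a perfect obstruction theory.

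Next, I would observe that $\fH \to \fU$ is a strict smooth morphism. Indeed, as remarked right after \eqref{equ:fake-obs}, the sheaf $R^0\pi_{\fU,*}\tilde\omega_{\fU}$ is a vector bundle whose associated geometric vector bundle is $\fH$. Consequently, $\TT_{\fH/\fU}$ is a locally free sheaf concentrated in degree zero, so
\begin{equation*}
H^1(\TT_{\fH/\fU}|_{\USF}) = 0.
\end{equation*}

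Finally, I would conclude by a one-line diagram chase: the composition
\begin{equation*}
H^1(\TT_{\USF/\fU}) \longrightarrow \obs_{\USF/\fU} \xrightarrow{\ \sigma_{\USF/\fU}\ } \bL_{\max}^{-\otimes r}
\end{equation*}
agrees, by commutativity, with the composition through $H^1(\TT_{\fH/\fU}|_{\USF}) = 0$, hence vanishes. This means the image of the injection $H^1(\TT_{\USF/\fU}) \hookrightarrow \obs_{\USF/\fU}$ lies in $\ker(\sigma_{\USF/\fU})$, which is precisely the claim. There is no real obstacle here: all the genuine work was already carried out in Proposition~\ref{prop:relative-cosection-comparison} (identifying the bottom arrow as the cosection) and in Lemma~\ref{lem:obs-commute} (constructing the commutative square compatibly with the perfect obstruction theories); once these are in hand the factorization is immediate from the smoothness of $\fH \to \fU$.
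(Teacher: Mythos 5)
Your proof is correct and is essentially the paper's argument: both take $H^1$ of the commutative square from Lemma~\ref{lem:obs-commute}, identify the bottom arrow with $\sigma_{\USF/\fU}$ via Proposition~\ref{prop:relative-cosection-comparison}, and use the smoothness of $\fH \to \fU$ to get $H^1(\TT_{\fH/\fU}) = 0$, so the composition vanishes and the factorization through $\ker(\sigma_{\USF/\fU})$ follows.
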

\begin{proof}
By Lemma \ref{lem:obs-commute}, taking $H^1$ of (\ref{diag:rel-obs-commute}), we obtain a commutative diagram
\[
\xymatrix{
H^1(\TT_{\USF/\fU}) \ar[r] \ar[d] & H^1(\TT_{\fH/\fU}) = 0 \ar[d] \\
\obs_{\USF/\fU} \ar[r]^{\sigma_{\USF/\fU}} & \bL_{\max}^{-r}
}
\]
where $H^1(\TT_{\fH/\fU}) = 0$ follows from the smoothness of $\fH \to \fU$.
\end{proof}

%%%%%%%%%%%%%%%%%%%%%%%%%%%%%%%%%%%%%
\subsection{The reduced relative perfect obstruction theory}\label{ss:relative-reduced-POT}
The dual of \eqref{equ:boundary-complex} induces a complex with amplitude $[0,1]$ over $\fU$:
\[
\FF := \cO_{\fH} \stackrel{\epsilon}{\longrightarrow} \bL_{\max}^{-r}.
\]
Since $\fH \to \fU$ is log smooth, $\epsilon$ is injective. Consider the cokernel $\cok \epsilon$. Then $\FF = \cok\epsilon[-1]$ in the derived category.  The composition
\[
\EE_{\fH/\fU} \to H^1(\EE_{\fH/\fU})[-1] \cong \bL^{-r}_{\max}[-1]  \twoheadrightarrow \cok\epsilon[-1]
\]
defines a morphism of complexes
$
\EE_{\fH/\fU} \to \FF|_{\fH},
$
and hence a triangle
\begin{equation}\label{tri:reduce-Y}
\EE_{\fH/\fU}^{\red} \longrightarrow \EE_{\fH/\fU} \longrightarrow \FF|_{\fH} \stackrel{[1]}{\longrightarrow}
\end{equation}
where the notation $|_{*}$ stands for derived pullback to $*$.

\begin{lemma}\label{lem:red-O}
$H^1(\EE^{\red}_{\fH/\fU}) = \cO_\fH$.
\end{lemma}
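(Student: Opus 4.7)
The plan is to read off $H^1(\EE^{\red}_{\fH/\fU})$ directly from the long exact sequence in cohomology associated to the distinguished triangle \eqref{tri:reduce-Y}, so essentially all the work reduces to identifying the terms and the connecting map. The whole argument should fit on a page.

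First I would record the cohomology of the two outer terms of the triangle. Since $\fH \to \fU$ is strict and smooth, $\EE_{\fH/\fU} = \pi_{\fH,*}\tilde\omega_{\fH}$ has $H^{0}$ equal to the vector bundle $R^{0}\pi_{\fH,*}\tilde\omega_{\fH}$ (pulled back from $\fU$) and $H^{1} \cong \bL_{\max}^{-\otimes r}$ by the projection formula computation \eqref{equ:fake-obs}; higher cohomology vanishes. For $\FF|_{\fH}$, note that $\epsilon$ is injective on $\fU$ (as used in the very definition of $\FF$) and remains injective after pullback along the flat morphism $\fH \to \fU$, so $\FF|_{\fH}$ is concentrated in degree $1$ with $H^{1}(\FF|_{\fH}) = (\cok\epsilon)|_{\fH}$.

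The relevant fragment of the long exact sequence then reads
\[
0 \to H^{0}(\EE^{\red}_{\fH/\fU}) \to H^{0}(\EE_{\fH/\fU}) \to 0 \to H^{1}(\EE^{\red}_{\fH/\fU}) \to \bL_{\max}^{-\otimes r} \xrightarrow{\ \alpha\ } (\cok\epsilon)|_{\fH} \to H^{2}(\EE^{\red}_{\fH/\fU}) \to 0.
\]
The key (and essentially only) step is to identify the connecting map $\alpha$ with the canonical quotient map. This is immediate from the very construction of $\EE_{\fH/\fU} \to \FF|_{\fH}$ given just before \eqref{tri:reduce-Y}: it is the composition
\[
\EE_{\fH/\fU} \to H^{1}(\EE_{\fH/\fU})[-1] \cong \bL_{\max}^{-\otimes r}[-1] \twoheadrightarrow (\cok\epsilon)[-1],
\]
so on $H^{1}$ it recovers the tautological surjection $\bL_{\max}^{-\otimes r} \twoheadrightarrow \cok\epsilon$.

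Consequently $\ker\alpha = \mathrm{image}(\epsilon) \cong \cO_{\fH}$ by the injectivity of $\epsilon$, and the sequence above gives $H^{1}(\EE^{\red}_{\fH/\fU}) \cong \cO_{\fH}$, which is the claim. As an added bonus, the surjectivity of $\alpha$ forces $H^{2}(\EE^{\red}_{\fH/\fU}) = 0$, confirming that $\EE^{\red}_{\fH/\fU}$ remains of amplitude $[0,1]$ as required for it to underlie a perfect obstruction theory. There is no real obstacle in the argument; the only thing that must be checked carefully is the unwinding of $\alpha$, which is forced by construction.
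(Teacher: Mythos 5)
Your argument is correct and is essentially the paper's own proof: take the long exact sequence of the triangle \eqref{tri:reduce-Y}, use $H^0(\FF)=0$, identify the map on $H^1$ with the tautological surjection $\bL_{\max}^{-\otimes r}\twoheadrightarrow \cok\epsilon$ (which is forced by the construction of $\EE_{\fH/\fU}\to\FF|_{\fH}$), and conclude that its kernel is $\mathrm{image}(\epsilon)\cong\cO$ by injectivity of $\epsilon$. The only cosmetic point is that the map you call $\alpha$ is induced by the second arrow of the triangle rather than being a connecting homomorphism, but this does not affect the argument.
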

\begin{proof}
Taking the long exact sequence of (\ref{tri:reduce-Y}) and using $H^0(\FF) = 0$, we have an exact sequence
\[
0 \to H^1(\EE_{\fH/\fU}^{\red}) \to H^1(\EE_{\fH/\fU}) \to H^1(\FF|_{\fH})\to H^2(\EE_{\fH/\fU}^{\red}).
\]
Since $H^1(\EE_{\fH/\fU}) \to H^1(\FF|_{\fH})$ is precisely the morphism $\bL^{-r}_{\max}  \twoheadrightarrow \cok\epsilon$, it follows that $H^1(\EE^{\red}_{\fH/\fU}) = \cO_\fH$.
\end{proof}

The composition
\begin{equation}\label{equ:composing-to-moduli}
\EE_{\USF/\fU} \to \EE_{\fH/\fU}|_{\USF} \to \FF|_{\USF},
\end{equation}
yields a triangle
\begin{equation}\label{tri:reduce-X}
\EE_{\USF/\fU}^{\red} \longrightarrow \EE_{\USF/\fU} \longrightarrow \FF|_{\USF} \stackrel{[1]}{\longrightarrow}
\end{equation}

\begin{lemma}\label{lem:red-obs-factor}
  The obstruction theories $\TT_{\fH/\fU} \to \EE_{\fH/\fU}$ and
  $\TT_{\USF/\fU} \to \EE_{\USF/\fU}$ factor through
  $\TT_{\fH/\fU} \to \EE_{\fH/\fU}^{\red}$ and
  $\TT_{\USF/\fU} \to \EE_{\USF/\fU}^{\red}$ respectively.
  Furthermore, they fit in a commutative diagram
\begin{equation}\label{diag:red-obs-commute}
\xymatrix{
\TT_{\USF/\fU} \ar[r] \ar[d] & \TT_{\fH/\fU}|_{\USF} \ar[d] \\
\EE^{\red}_{\USF/\fU} \ar[r] & \EE^{\red}_{\fH/\fU}|_{\USF}
}
\end{equation}

\end{lemma}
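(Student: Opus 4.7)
The plan is to prove both statements by producing the lifts from the defining triangles \eqref{tri:reduce-Y} and \eqref{tri:reduce-X}, which requires showing that each obstruction-theory morphism composes to zero with the canonical arrow into $\FF$. For the $\fH/\fU$ case, since $\fH \to \fU$ is strict and smooth (the paragraph after \eqref{equ:fake-obs}), the tangent complex $\TT_{\fH/\fU}$ is a vector bundle concentrated in degree zero, while $\FF \simeq \cok\epsilon[-1]$ has cohomology only in degree one. A standard truncation-triangle computation then shows that $\Hom_{D}(\TT_{\fH/\fU}, \FF|_{\fH})$ injects into $\Hom(H^{1}(\TT_{\fH/\fU}), \cok\epsilon) = 0$, so the composition $\TT_{\fH/\fU} \to \EE_{\fH/\fU} \to \FF|_{\fH}$ vanishes in the derived category. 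Applying $\Hom(\TT_{\fH/\fU}, -)$ to \eqref{tri:reduce-Y} produces a lift $\TT_{\fH/\fU} \to \EE_{\fH/\fU}^{\red}$, and canonicity follows from the analogous vanishing $\Hom(\TT_{\fH/\fU}, \FF|_{\fH}[-1]) = 0$.

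For the $\USF/\fU$ case, $\TT_{\USF/\fU}$ has amplitude $[0,1]$, but the same truncation argument shows that the vanishing of a map to $\FF|_{\USF}$ in the derived category is equivalent to the vanishing of its induced map on $H^{1}$. Tracing through \eqref{equ:composing-to-moduli} and recalling that the $H^{1}$ of \eqref{equ:derived-cosection} is precisely the relative cosection $\sigma_{\USF/\fU}$, the $H^{1}$-composition factors as $H^{1}(\TT_{\USF/\fU}) \to \obs_{\USF/\fU} \xrightarrow{\sigma_{\USF/\fU}} \bL_{\max}^{-\otimes r}|_{\USF} \twoheadrightarrow \cok\epsilon|_{\USF}$. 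Proposition \ref{prop:rel-obs-factorization} ensures that the first map factors through $\ker\sigma_{\USF/\fU}$, so the whole composition is zero, yielding a lift $\TT_{\USF/\fU} \to \EE_{\USF/\fU}^{\red}$ from \eqref{tri:reduce-X}; a parallel Ext vanishing makes this lift canonical.

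To establish commutativity of \eqref{diag:red-obs-commute}, the factorization \eqref{equ:composing-to-moduli} exhibits $\EE_{\USF/\fU} \to \FF|_{\USF}$ as the composition through $\EE_{\fH/\fU}|_{\USF}$, which is exactly the data of a morphism of distinguished triangles from \eqref{tri:reduce-X} to the restriction of \eqref{tri:reduce-Y}. The axioms of a triangulated category then produce the bottom arrow $\EE^{\red}_{\USF/\fU} \to \EE^{\red}_{\fH/\fU}|_{\USF}$ of \eqref{diag:red-obs-commute}, and commutativity of the square follows by combining the lifts produced above with the commutative square of Lemma \ref{lem:obs-commute}: both compositions $\TT_{\USF/\fU} \to \EE^{\red}_{\fH/\fU}|_{\USF}$ agree after further composition into $\EE_{\fH/\fU}|_{\USF}$, and uniqueness of lifts forces them to agree already in $\EE^{\red}_{\fH/\fU}|_{\USF}$. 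The main obstacle is the careful derived-category bookkeeping: because $\TT_{\USF/\fU}$ is genuinely two-term, both the existence of the factorization and the uniqueness needed for commutativity rest on vanishings of specific Ext groups, and the crucial geometric input making these vanishings meaningful is the cosection factorization of Proposition \ref{prop:rel-obs-factorization}.
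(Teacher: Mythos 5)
Your construction of the two lifts is essentially sound and stays close to the paper's mechanism. On the $\fH$-side you argue exactly as the paper does: $\TT_{\fH/\fU}$ is a sheaf in degree zero because $\fH \to \fU$ is representable and smooth, while $\FF|_{\fH}\simeq \cok\epsilon[-1]$ lives in degree one, so the composition into $\FF|_{\fH}$ vanishes and \eqref{tri:reduce-Y} produces the lift (and here your uniqueness claim is genuinely fine). On the $\USF$-side you take a mildly different route: the paper (via Lemma \ref{lem:obs-commute}) identifies $\TT_{\USF/\fU}\to\EE_{\USF/\fU}\to\FF|_{\USF}$ with $\TT_{\USF/\fU}\to\TT_{\fH/\fU}|_{\USF}\to\EE_{\fH/\fU}|_{\USF}\to\FF|_{\USF}$, which is zero by the $\fH$-vanishing, whereas you detect vanishing on $H^1$ and feed in Proposition \ref{prop:rel-obs-factorization}. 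That works: since $\USF\to\fU$ is strict with representable underlying morphism, $\TT_{\USF/\fU}\in D^{\geq 0}$, and a map from such a complex to $\cok\epsilon[-1]$ vanishes iff it vanishes on $H^1$; by Proposition \ref{prop:relative-cosection-comparison} the $H^1$ of the composition is $\sigma_{\USF/\fU}$ followed by $\bL_{\max}^{-\otimes r}\twoheadrightarrow\cok\epsilon$, and Proposition \ref{prop:rel-obs-factorization} kills it. Note that Proposition \ref{prop:rel-obs-factorization} is itself deduced from Lemma \ref{lem:obs-commute}, so both arguments rest on the same input; the paper's is shorter, yours makes the role of the cosection explicit. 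In particular you do not need the claim that $\TT_{\USF/\fU}$ has amplitude $[0,1]$ for this step.

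The weak point is your justification of the commutativity of \eqref{diag:red-obs-commute}. Two maps $\TT_{\USF/\fU}\to\EE^{\red}_{\fH/\fU}|_{\USF}$ that agree after composing into $\EE_{\fH/\fU}|_{\USF}$ differ by a map factoring through $\FF|_{\USF}[-1]\simeq\cok\epsilon|_{\USF}[-2]$, so your uniqueness argument needs $\Hom_{D}(\TT_{\USF/\fU},\FF|_{\USF}[-1])=0$, a group controlled by $H^2(\TT_{\USF/\fU})$. You assert this via "$\TT_{\USF/\fU}$ is genuinely two-term," but that is not established anywhere: $\TT_{\USF/\fU}$ is the derived dual of the full (log) cotangent complex $\LL_{\USF/\fU}$, not of the two-term complex $\EE_{\USF/\fU}$, and it may have nonzero $H^2$ since $\USF$ is not known to be l.c.i.\ over $\fU$. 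So the uniqueness you invoke does not follow as stated (the analogous vanishing on the $\fH$-side is fine precisely because $\TT_{\fH/\fU}$ is a bundle in degree zero). To close this you should either prove the relevant vanishing, or do what the paper implicitly does in \eqref{diag:reduce-X-Y}: construct the bottom arrow $\EE^{\red}_{\USF/\fU}\to\EE^{\red}_{\fH/\fU}|_{\USF}$ and the two lifts compatibly from the morphism of triangles, arranging the commutativity by choice rather than deducing it from uniqueness of arbitrary lifts.
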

\begin{proof}
By Lemma \ref{lem:obs-commute}, we have a commutative diagram of solid arrows:
\begin{equation}\label{diag:reduce-X-Y}
\xymatrix{
\TT_{\USF/\fU} \ar@/^1pc/[rrd] \ar@{-->}[rd]  \ar[dd] &&&& \\
 & \EE^{\red}_{\USF/\fU} \ar[dd] \ar[r] & \EE_{\USF/\fU} \ar[dd] \ar[r] & \FF|_{\USF} \ar[r]^{[1]} \ar@{=}[dd] & \\
\TT_{\fH/\fU}|_{\USF} \ar@/^1pc/[rrd]|{\ \ \ \hole \ \   } \ar@{-->}[rd] &&&& \\
 & \EE^{\red}_{\fH/\fU}|_{\USF} \ar[r] & \EE_{\fH/\fU}|_{\USF} \ar[r] & \FF|_{\USF} \ar[r]^{[1]} &
}
\end{equation}
where the two horizontal lines are triangles (\ref{tri:reduce-X}) and
(\ref{tri:reduce-Y}), and the two curved arrows are the corresponding
obstruction theories.

Since $\fH \to \fU$ is representable and smooth, the complex $\TT_{\fH/\fU}$ is represented by the relative tangent bundle $T_{\fH/\fU}$. Thus the composition $\TT_{\fH/\fU} \to \EE_{\fH/\fU} \to \FF|_{\fH}$ is the zero morphism. This yields the lower dashed arrow $\TT_{\fH/\fU} \to \EE_{\fH/\fU}$.

Now by the commutativity, the composition $\TT_{\USF/\fU} \to \EE_{\USF/\fU} \to \FF|_{\USF}$ is the same as $\TT_{\USF/\fU} \to \TT_{\fH/\fU}|_{\USF} \to \FF|_{\USF}$, hence is trivial. Thus, we obtain the top dashed arrow $\TT_{\USF/\fU} \to \EE_{\USF/\fU}$.
\end{proof}

\begin{lemma}\label{lem:red-perfect}
The two complexes $\EE^{\red}_{\USF/\fU}$ and $\EE^{\red}_{\fH/\fU}$ are perfect with tor-amplitude in $[0,1]$.
\end{lemma}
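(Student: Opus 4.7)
The plan is to check the tor-amplitude bound pointwise and reduce it to a surjectivity statement at $H^1$. Perfectness of both $\EE^{\red}_{\USF/\fU}$ and $\EE^{\red}_{\fH/\fU}$ will be immediate from the defining triangles \eqref{tri:reduce-Y} and \eqref{tri:reduce-X}: the complexes $\EE_{\USF/\fU}$ and $\EE_{\fH/\fU}$ are two-term perfect obstruction complexes of tor-amplitude $[0,1]$, and $\FF = [\cO_{\fU} \xrightarrow{\epsilon} \bL_{\max}^{-\otimes r}]$ is a two-term complex of line bundles, hence also perfect of tor-amplitude $[0,1]$. Cones of maps between perfect complexes are perfect, so perfectness of the reduced complexes is automatic.

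For the amplitude bound I will use the standard pointwise criterion: a perfect complex $E$ has tor-amplitude in $[0,1]$ if and only if $H^i(E \otimes^L k(x)) = 0$ for every geometric point $x$ and every $i \notin [0,1]$. Tensoring the triangles \eqref{tri:reduce-Y} and \eqref{tri:reduce-X} with $k(x)$ and reading off the long exact sequence, the vanishing at $i=-1$ follows for free from the fact that both $\EE$ and $\FF|_{\cdot}$ have tor-amplitude $[0,1]$, while the vanishing at $i=2$ is equivalent to surjectivity of the induced map
\[
H^1(\EE \otimes^L k(x)) \longrightarrow H^1(\FF|_{\cdot} \otimes^L k(x)).
\]

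For $\fH$ this surjectivity is immediate from the construction: the map $\EE_{\fH/\fU} \to \FF|_{\fH}$ factors as $\EE_{\fH/\fU} \twoheadrightarrow H^1(\EE_{\fH/\fU})[-1] \cong \bL_{\max}^{-\otimes r}[-1] \twoheadrightarrow \cok\epsilon[-1] = \FF|_{\fH}$, so on $H^1$ it is the canonical surjection $\bL_{\max}^{-\otimes r} \twoheadrightarrow \cok\epsilon$, preserved by $\otimes k(x)$. For $\USF$, the composition \eqref{equ:composing-to-moduli} factors on $H^1$ as
\[
\obs_{\USF/\fU} \xrightarrow{\sigma_{\USF/\fU}} \bL_{\max}^{-\otimes r}|_{\USF} \twoheadrightarrow \cok\epsilon|_{\USF},
\]
and the first arrow is surjective by Proposition \ref{prop:cosection-boundary-surjective}, the second by the analysis above, so the composition is surjective on sheaves and, by right exactness of $\otimes k(x)$, pointwise as well.

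The hard part is really the $\USF$ case, where the required $H^1$-surjectivity encodes the boundary surjectivity of the relative cosection $\sigma_{\USF/\fU}$ along $\Delta_{\max}$. This is not a formal matter: it rests on the nonvanishing of the universal log twist $\tf_{\USF}$ along the maximally degenerate sub-curve (Proposition \ref{prop:log-twist}) together with Assumption \ref{ass:narrow}. Without the uniform-maximal-degeneracy log structure the cosection would acquire extra degeneracy loci on the boundary and the amplitude bound would fail there. Once Proposition \ref{prop:cosection-boundary-surjective} is invoked, the remaining steps are the routine long-exact-sequence chase outlined above.
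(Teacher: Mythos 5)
Your proof is correct and takes essentially the same route as the paper's: perfectness is formal from the defining triangles, and the tor-amplitude bound in both cases reduces to surjectivity of the induced map on $H^1$, which for $\fH$ is the tautological surjection $\bL_{\max}^{-\otimes r}\twoheadrightarrow\cok\epsilon$ and for $\USF$ rests on Proposition \ref{prop:cosection-boundary-surjective} together with the identification of the $H^1$ map with the relative cosection (the paper phrases this as $H^2(\EE^{\red})=0$ via the long exact sequence of cohomology sheaves rather than via derived fibers, an immaterial difference). One small imprecision: $\sigma_{\USF/\fU}$ is not surjective on all of $\USF$ (it vanishes where $f_{\USF}$ is the zero section), but since $\cok\epsilon$ is supported on $\Delta_{\max}$, where the cosection is surjective, your composed map is indeed surjective, as you in effect acknowledge when you single out the boundary surjectivity as the key input.
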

\begin{proof}
Since $\EE_{\fH/\fU}$, $\EE_{\USF/\fU}$ and $\FF$ are perfect in $[0,1]$, the complexes $\EE^{\red}_{\fH/\fU}$ and $\EE^{\red}_{\USF/\fU}$ are at least perfect in $[0,2]$. It suffices to show that $H^2(\EE^{\red}_{\fH/\fU}) = 0$ and $H^2(\EE^{\red}_{\USF/\fU}) = 0$.

Taking the long exact sequence of (\ref{tri:reduce-Y}), we have an exact sequence
\[
H^{1}(\EE_{\fH/\fU}) \to H^1(\FF|_{\fH}) \to H^2(\EE^{\red}_{\fH/\fU}) \to 0
\]
Since the left arrow is $\bL^{-r}_{\max}  \twoheadrightarrow \cok\epsilon$, we have $H^2(\EE^{\red}_{\fH/\fU}) = 0$.

Similarly using (\ref{tri:reduce-X}), we have an exact sequence
\[
H^{1}(\EE_{\USF/\fU}) \to H^1(\FF|_{\USF}) \to H^2(\EE^{\red}_{\USF/\fU}) \to 0.
\]
By \eqref{equ:composing-to-moduli}, the left arrow is the composition
\[
H^{1}(\EE_{\USF/\fU}) \to H^{1}(\EE_{\fH/\fU}|_{\USF}) \twoheadrightarrow  H^1(\FF|_{\USF}).
\]
By construction, $\FF|_{\USF\setminus\Delta_{\max}} = 0$ is the
zero complex.
It suffices to show that the above composition is surjective along a
neighborhood of $\Delta_{\max}$.
This follows from Proposition~\ref{prop:cosection-boundary-surjective}
given that Proposition~\ref{prop:relative-cosection-comparison}
identifies the morphism
$H^{1}(\EE_{\USF/\fU}) \to H^{1}(\EE_{\fH/\fU}|_{\USF})$ with the
relative cosection $\sigma_{\USF/\fU}$.
\end{proof}

\begin{lemma}\label{lem:red-obs}
The two arrows $\TT_{\fH/\fU} \to \EE_{\fH/\fU}^{\red}$ and $\TT_{\USF/\fU} \to \EE_{\USF/\fU}^{\red}$ define perfect obstruction theories of $\fH \to \fU$ and $\USF \to \fU$ respectively.
\end{lemma}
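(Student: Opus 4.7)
The plan is to verify the two axioms of a (tangent-form) perfect obstruction theory, namely that $h^0(\TT) \to h^0(\EE^{\red})$ is an isomorphism and $h^1(\TT) \to h^1(\EE^{\red})$ is injective, using the defining triangles \eqref{tri:reduce-Y} and \eqref{tri:reduce-X} together with the facts already established. The tor-amplitude hypothesis is Lemma~\ref{lem:red-perfect}, so no further work is needed there.

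First I will compute the cohomology of $\FF|_{\fH}$ and $\FF|_{\USF}$. Since $\FF$ is represented by the two-term complex of line bundles $[\cO \stackrel{\epsilon}{\to} \bL_{\max}^{-\otimes r}]$ in degrees $[0,1]$, with $\epsilon$ corresponding to the inclusion $\cO \hookrightarrow \cO(r\Delta_{\max})$, the pullback differential remains injective upon restriction to either $\fH$ or $\USF$, so $h^{-1}$ and $h^0$ of these derived restrictions vanish. Then the long exact sequences of the triangles \eqref{tri:reduce-Y} and \eqref{tri:reduce-X} produce canonical isomorphisms $h^0(\EE^{\red}_{\fH/\fU}) \cong h^0(\EE_{\fH/\fU})$ and $h^0(\EE^{\red}_{\USF/\fU}) \cong h^0(\EE_{\USF/\fU})$, and canonical injections $h^1(\EE^{\red}_{\fH/\fU}) \hookrightarrow h^1(\EE_{\fH/\fU})$ and $h^1(\EE^{\red}_{\USF/\fU}) \hookrightarrow h^1(\EE_{\USF/\fU})$.

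For the $h^0$ assertion, I will combine the isomorphisms above with the fact that $\TT_{\fH/\fU} \to \EE_{\fH/\fU}$ and $\TT_{\USF/\fU} \to \EE_{\USF/\fU}$ are already perfect obstruction theories (by Section~\ref{ss:log-obs} and the discussion after \eqref{equ:Hodge-perfect-obs}), together with the factorization of Lemma~\ref{lem:red-obs-factor}. This gives commutative triangles $h^0(\TT) \to h^0(\EE^{\red}) \to h^0(\EE)$ in which the outer composition and the right arrow are isomorphisms, so the left arrow is an isomorphism in both cases.

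For the $h^1$ assertion, the case of $\fH/\fU$ is immediate since $\fH \to \fU$ is strict and smooth, so $\TT_{\fH/\fU}$ is concentrated in degree zero and $h^1(\TT_{\fH/\fU}) = 0$. For $\USF/\fU$, the original obstruction theory provides an injection $h^1(\TT_{\USF/\fU}) \hookrightarrow h^1(\EE_{\USF/\fU})$ which, by the commutative diagram \eqref{diag:red-obs-commute}, factors through $h^1(\EE^{\red}_{\USF/\fU})$; combined with the injection $h^1(\EE^{\red}_{\USF/\fU}) \hookrightarrow h^1(\EE_{\USF/\fU})$ from the first step, this yields the desired injectivity of $h^1(\TT_{\USF/\fU}) \to h^1(\EE^{\red}_{\USF/\fU})$. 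The only step requiring any care is the vanishing of $h^0(\FF|_{\USF})$, which relies on $\Delta_{\max}$ pulling back to an effective Cartier divisor on $\USF$; all remaining steps are diagram chases on long exact sequences, so I do not anticipate any serious obstacle here—the substantive content has already been absorbed into Lemmas~\ref{lem:red-perfect} and~\ref{lem:red-obs-factor}.
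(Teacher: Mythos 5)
Your overall strategy is the same diagram chase the paper uses (the triangles \eqref{tri:reduce-Y}, \eqref{tri:reduce-X}, the factorization of Lemma \ref{lem:red-obs-factor}, and tor-amplitude from Lemma \ref{lem:red-perfect}), but there is one genuine unjustified step: the claim that the differential of $\FF$ ``remains injective upon restriction to \dots $\USF$'', i.e.\ that $h^0(\FF|_{\USF})=0$, equivalently that $\Delta_{\max}$ pulls back to an effective Cartier divisor on $\USF$. Nothing in the paper establishes this, and it is exactly the kind of statement one should not expect for free: $\USF\to\fU$ is not flat, and $\USF$, being a moduli space of stable log maps, may perfectly well have irreducible (or embedded) components contained entirely in $\Delta_{\max}$, in which case the pulled-back section $\epsilon|_{\USF}$ of $\bL_{\max}^{-\otimes r}|_{\USF}$ is a zero divisor and $h^0(\FF|_{\USF})\neq 0$. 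The paper's own bookkeeping confirms that the authors do not claim this vanishing: in Corollary \ref{cor:red-cos-surj} the term $H^0(\FF|_{\USF})$ is retained as a possibly nonzero kernel feeding into $H^1(\EE^{\red}_{\USF/\fU})$, and the equality $H^0(\EE^{\red}_{\USF/\fU})=H^0(\EE_{\USF/\fU})$ is deduced there as a \emph{consequence} of the present lemma, not from vanishing of $h^0(\FF|_{\USF})$. (For $\fH$ your claim is harmless, since $\fH\to\fU$ is smooth, hence flat, and flat pullback preserves the injectivity of $\epsilon$.)

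Fortunately the unproven claim is also unnecessary, and this is precisely how the paper's proof proceeds. Since $\FF$ is represented by a two-term complex of line bundles in degrees $[0,1]$, one only has $h^{-1}(\FF|_{\USF})=0$, which already gives injectivity of $h^0(\EE^{\red}_{\USF/\fU})\to h^0(\EE_{\USF/\fU})$; combined with the factorization and the fact that $h^0(\TT_{\USF/\fU})\to h^0(\EE_{\USF/\fU})$ is an isomorphism, the composite-through-an-injection argument forces $h^0(\TT_{\USF/\fU})\to h^0(\EE^{\red}_{\USF/\fU})$ to be an isomorphism (and, a posteriori, $h^0(\EE^{\red}_{\USF/\fU})\cong h^0(\EE_{\USF/\fU})$). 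Likewise your $h^1$ step does not need the injectivity of $h^1(\EE^{\red}_{\USF/\fU})\to h^1(\EE_{\USF/\fU})$ (which again would require $h^0(\FF|_{\USF})=0$): the factorization alone shows that the injective map $h^1(\TT_{\USF/\fU})\to h^1(\EE_{\USF/\fU})$ factors through $h^1(\EE^{\red}_{\USF/\fU})$, so its first factor is injective. With these two adjustments your argument coincides with the paper's; as written, however, the appeal to $h^0(\FF|_{\USF})=0$ is a real gap.
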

\begin{proof}
We verify the case of $\TT_{\USF/\fU} \to \EE_{\USF/\fU}^{\red}$.
The other case is similar.
By the triangle \eqref{tri:reduce-X} and the factorization of Lemma \ref{lem:red-obs-factor}, we have a surjection $H^0(\TT_{\USF/\fU}) \twoheadrightarrow H^0(\EE^{\red}_{\USF/\fU})$ and an injection $H^1(\TT_{\USF/\fU}) \hookrightarrow H^1(\EE^{\red}_{\USF/\fU})$.
Since $\FF$ is perfect in $[0,1]$, (\ref{tri:reduce-X}) implies that $H^0(\TT_{\USF/\fU}) \to H^0(\EE^{\red}_{\USF/\fU})$ is also injective, and hence an isomorphism.
\end{proof}

The proof of the above lemma leads to the following

\begin{corollary}\label{cor:red-cos-surj}
\begin{enumerate}
 \item $H^0(\EE_{\USF/\fU}^{\red}) = H^0(\EE_{\USF/\fU})$.
 \item Diagram (\ref{diag:reduce-X-Y}) induces a morphism between long exact sequences
 \[
 \xymatrix{
 0 \ar[r] & H^0(\FF|_{\USF}) \ar[r] \ar[d]^{\cong} & H^1(\EE^{\red}_{\USF/\fU}) \ar[r] \ar[d]^{\sigma^{\red}_{\USF/\fU}} & H^1(\EE_{\USF/\fU}) \ar[r] \ar[d]^{\sigma_{\USF/\fU}} & H^1(\FF|_{\USF}) \ar[r]  \ar[d]^{\cong}& 0 \\
 0 \ar[r] & H^0(\FF|_{\USF}) \ar[r]  & H^1(\EE^{\red}_{\fH/\fU}|_{\USF}) \ar[r]  & H^1(\EE_{\fH/\fU}|_{\USF}) \ar[r]  & H^1(\FF|_{\USF}) \ar[r]  & 0
 }
 \]
where the morphism $\sigma^{\red}_{\USF/\fU}$ is surjective along $\Delta_{\max}$.
\end{enumerate}
\end{corollary}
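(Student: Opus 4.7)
For Part~(1), I will invoke the factorization $\TT_{\USF/\fU} \to \EE^{\red}_{\USF/\fU} \to \EE_{\USF/\fU}$ from Lemma~\ref{lem:red-obs-factor}. By Lemma~\ref{lem:red-obs}, both $\TT_{\USF/\fU} \to \EE_{\USF/\fU}$ and $\TT_{\USF/\fU} \to \EE^{\red}_{\USF/\fU}$ are perfect obstruction theories, so applying $H^0$ to the factorization produces a chain of isomorphisms $H^0(\EE^{\red}_{\USF/\fU}) \cong H^0(\TT_{\USF/\fU}) \cong H^0(\EE_{\USF/\fU})$, which is exactly the statement of Part~(1).

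For Part~(2), the plan is to apply the long exact cohomology sequences of the triangles~\eqref{tri:reduce-X} and~\eqref{tri:reduce-Y}. Since $\FF$ has amplitude $[0,1]$ we have $H^{-1}(\FF|_{\USF}) = 0$, and by Lemma~\ref{lem:red-perfect} both $H^2(\EE^{\red}_{\USF/\fU})$ and $H^2(\EE^{\red}_{\fH/\fU})$ vanish, which truncates each long exact sequence. Part~(1) (and its $\fH$-side analogue, which follows directly since $\fH \to \fU$ is smooth so $\TT_{\fH/\fU} \to \EE_{\fH/\fU}$ induces an isomorphism on $H^0$) then forces the boundary maps $H^0(\EE_{\USF/\fU}) \to H^0(\FF|_{\USF})$ and its $\fH$-analogue to vanish, producing the leading zero of each sequence. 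The commutativity of diagram~\eqref{diag:reduce-X-Y} induces the vertical maps between the two four-term exact sequences so obtained; the outer verticals are identities because the rightmost column of~\eqref{diag:reduce-X-Y} is $\FF|_{\USF} \xrightarrow{=} \FF|_{\USF}$, while the middle-right vertical is $\sigma_{\USF/\fU}$ by Proposition~\ref{prop:relative-cosection-comparison}, and the middle-left vertical is by definition the induced reduced cosection $\sigma^{\red}_{\USF/\fU}$.

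The only substantive remaining step is the surjectivity of $\sigma^{\red}_{\USF/\fU}$ along $\Delta_{\max}$, which I expect to be the main (though mild) obstacle. I will deduce it by evaluating the two four-term exact sequences at the stalk of a geometric point $w \in \Delta_{\max}$ and invoking the Four Lemma: the two outer verticals are identities, and the right middle vertical $\sigma_{\USF/\fU}$ is surjective at $w$ by Proposition~\ref{prop:cosection-boundary-surjective}, which forces the middle-left vertical $\sigma^{\red}_{\USF/\fU}$ to be surjective at $w$ as well.
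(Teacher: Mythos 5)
Your argument is correct and follows the paper's own route: part (1) is exactly the $H^0$-comparison already implicit in the proof of Lemma~\ref{lem:red-obs} via the factorization of Lemma~\ref{lem:red-obs-factor}, the two exact rows come from the triangles \eqref{tri:reduce-X} and \eqref{tri:reduce-Y} through diagram \eqref{diag:reduce-X-Y}, and the surjectivity of $\sigma^{\red}_{\USF/\fU}$ along $\Delta_{\max}$ is deduced from that of $\sigma_{\USF/\fU}$ (Proposition~\ref{prop:cosection-boundary-surjective}). The paper leaves the final step implicit; your stalkwise Four Lemma chase is precisely the intended justification.
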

\begin{proof}
It remains to verify the surjectivity of $\sigma^{\red}_{\USF/\fU}$ along $\Delta_{\max}$. This follows from the surjectivity of $\sigma_{\USF/\fU}$ along $\Delta_{\max}$ by Proposition \ref{prop:cosection-boundary-surjective}.
\end{proof}

We summarizes our construction below.

\begin{proposition}\label{prop:reduced-obs}
The morphism $\USF \to \fU$ admits a {\em reduced perfect obstruction theory}
\begin{equation}\label{equ:reduced-relative-obs}
\TT_{\USF/\fU} \to \EE^{\red}_{\USF/\fU},
\end{equation}
and a {\em reduced relative cosection}
\begin{equation}\label{equ:reduced-relative-cosection}
\sigma^{\red}_{\USF/\fU}\colon \obs^{\red}_{\USF/\fU} := H^1(\EE^{\red}_{\USF/\fU}) \to \cO_{\USF}
\end{equation}
with the following properties
\begin{enumerate}
 \item $\EE^{\red}_{\USF/\fU}|_{\USF\setminus\Delta_{\max}} = \EE_{\USF/\fU}|_{\USF\setminus\Delta_{\max}}$.

 \item $\sigma^{\red}_{\USF/\fU}|_{\USF\setminus\Delta_{\max}} = \sigma_{\USF/\fU}|_{\USF\setminus\Delta_{\max}}$

 \item $\sigma^{\red}_{\USF/\fU}$ is surjective along $\Delta_{\max}$.
\end{enumerate}
In particular, $\sigma^{\red}_{\USF/\fU}$ and $\sigma_{\USF/\fU}$ have the same degeneracy loci.
\end{proposition}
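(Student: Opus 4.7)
The plan is to assemble the ingredients already prepared in Lemmas~\ref{lem:obs-commute}--\ref{lem:red-obs} and Corollary~\ref{cor:red-cos-surj}. First I would take \eqref{equ:reduced-relative-obs} to be the dashed arrow constructed in Lemma~\ref{lem:red-obs-factor}, namely the unique factorization of the canonical relative perfect obstruction theory $\TT_{\USF/\fU} \to \EE_{\USF/\fU}$ through the second vertex of the distinguished triangle \eqref{tri:reduce-X}. Lemma~\ref{lem:red-perfect} tells us $\EE^{\red}_{\USF/\fU}$ is perfect in $[0,1]$, and Lemma~\ref{lem:red-obs} shows this factorization is a perfect obstruction theory. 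The reduced cosection \eqref{equ:reduced-relative-cosection} is then read off from the middle vertical arrow of the commutative diagram in Corollary~\ref{cor:red-cos-surj}(2), i.e.\ it is the restriction to $\USF$ of the map $H^{1}(\EE^{\red}_{\fH/\fU}) \to \cO$ identified in Lemma~\ref{lem:red-O}, composed along the morphism $\EE^{\red}_{\USF/\fU} \to \EE^{\red}_{\fH/\fU}|_{\USF}$ induced by Lemma~\ref{lem:red-obs-factor}.

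Next I would verify the three properties one by one. Property (1) follows because the map $\epsilon\colon \cO_{\fH} \to \bL_{\max}^{-\otimes r}$ defining $\FF$ restricts to the identity away from $\Delta_{\max}\subset\fU$ (by the very definition of $\Delta_{\max}$ as the vanishing locus of the dual of \eqref{equ:boundary-complex}), so that $\FF|_{\USF\setminus\Delta_{\max}} \simeq 0$; the triangle \eqref{tri:reduce-X} then yields a canonical quasi-isomorphism between $\EE^{\red}_{\USF/\fU}$ and $\EE_{\USF/\fU}$ over that open locus. Property (2) is immediate from the commutative diagram of Corollary~\ref{cor:red-cos-surj}(2), since the outer vertical isomorphism $H^0(\FF|_\USF)\cong H^0(\FF|_\USF)$ forces $\sigma^{\red}_{\USF/\fU}$ to restrict to $\sigma_{\USF/\fU}$ once $\FF|_\USF$ vanishes. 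Property (3) is exactly the surjectivity assertion of Corollary~\ref{cor:red-cos-surj}(2), which in turn rests on the surjectivity of $\sigma_{\USF/\fU}$ along $\Delta_{\max}$ established in Proposition~\ref{prop:cosection-boundary-surjective}.

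The last sentence, that the two cosections have identical degeneracy loci, follows at once: over $\USF\setminus\Delta_{\max}$ the two cosections coincide by (2), while over $\Delta_{\max}$ both are surjective (hence both non-vanishing) by (3) and by Proposition~\ref{prop:cosection-boundary-surjective}.

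There is essentially no new analytic content to prove here: the only subtlety I expect is bookkeeping, namely checking that the choices made in Lemma~\ref{lem:red-obs-factor} (dashed factorizations in a diagram of triangles) are natural enough that $\sigma^{\red}_{\USF/\fU}$ is genuinely well-defined and canonical, not merely defined up to the non-unique octahedral choice. This is handled by noting that $\TT_{\fH/\fU}$ is concentrated in degree zero (since $\fH\to\fU$ is smooth), so that the relevant $\Hom$ groups controlling the factorization vanish and the dashed arrows are unique. With that in place every required statement reduces to diagram chases in the derived category that have already been performed in Lemmas~\ref{lem:obs-commute}--\ref{lem:red-obs} and Corollary~\ref{cor:red-cos-surj}.
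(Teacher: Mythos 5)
Your proposal is correct and follows essentially the same route as the paper: the reduced theory and cosection are assembled from Lemmas~\ref{lem:red-obs-factor}--\ref{lem:red-obs} and Corollary~\ref{cor:red-cos-surj}, property (1) comes from the vanishing of $\FF|_{\USF\setminus\Delta_{\max}}$, (2) from the resulting identification together with diagram \eqref{diag:reduce-X-Y}, and (3) from Proposition~\ref{prop:cosection-boundary-surjective} via Corollary~\ref{cor:red-cos-surj}. Your extra remark on the canonicity of the dashed factorization (using that $\TT_{\fH/\fU}$ is a sheaf in degree zero) is a harmless addition the paper leaves implicit.
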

\begin{proof}
The perfect obstruction theory has been verified in Lemma \ref{lem:red-perfect} and \ref{lem:red-obs}. The formation of $\sigma^{\red}_{\USF/\fU}$ and its surjectivity along $\Delta_{\max}$ follows from Corollary \ref{cor:red-cos-surj}.

Finally (1) follows from the observation $\FF|_{\USF \setminus \Delta_{\max}} = 0$. Statement (2) follows from (1) and \eqref{diag:reduce-X-Y}.
\end{proof}

Comparing \eqref{equ:reduced-relative-cosection} and
\eqref{equ:relative-cosection}, we observe that the reduced and
canonical cosections only differ along the boundary $\Delta_{\max}$,
and are related by Corollary \ref{cor:red-cos-surj} (2).

\begin{notation}\label{not:red-virtual-cycle}
Since $\fU$ is equi-dimensional, denote by $[\USF]^{\red}$ the virtual fundamental class of $\USF$ defined by the relative perfect obstruction theory \eqref{equ:reduced-relative-obs}, see \cite[Section~7]{BF97}.
\end{notation}

%%%%%%%%%%%%%%%%%%%%%%%%%%%%%%%%%%%%%
\subsection{The reduced absolute perfect obstruction theory}\label{ss:absolute-reduced-POT}

Our last goal is to compare the cosection localized virtual cycle and
the reduced virtual cycle as in Theorem \ref{thm:main}.
Since the cosection localized virtual cycle is defined using the
absolute theory \cite{CLL15}, we need to descend the relative reduced
theory in Proposition \ref{prop:reduced-obs} to an absolute one.
However, the log smooth base stack $\fU$ can have toroidal
singularities.
Hence the standard method constructing an absolute theory from a
relative one as explained in \cite[Proposition A.1]{BL00} does not
directly apply.
To fix this, we first construct a (non-canonical) resolution of $\fU$
in \S \ref{sss:resolution} leaving $\fU\setminus \Delta_{\max}$
untouched.
Pulling back along the resolution, we can then descend the relative
reduced theory to an absolute one in \S \ref{sss:absolute-reduced}.
Finally in \S \ref{sss:final-proof}, we compare the cosection
localized virtual cycle with the absolute reduced virtual cycle, and
further argue that the result is independent of the choice of
resolutions. 

 %-------------------------------------------------------
\subsubsection{Resolution of the base}\label{sss:resolution}

\begin{lemma}\label{lem:base-resolution}
Let $\fV \subset \fU$ be a finite type open substack, and write $\Delta_{\max, \fV} = \Delta_{\max} \cap \fV$. Then there exists a birational, log \'etale,  projective morphism of log stacks
$
\tilde{\phi} \colon \tilde{\fV} \to \fV
$
such that
\begin{enumerate}
 \item $\tilde{\phi}|_{\fV\setminus \Delta_{\max,\fV}}$ is an isomorphism onto $\fV\setminus \Delta_{\max,\fV}$.
 \item The log structure of $\tilde{\fV}$ is locally free. In particular, the underlying stack of $\tilde{\fV}$ is smooth.
\end{enumerate}
\end{lemma}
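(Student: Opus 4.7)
My plan is to produce $\tilde{\fV} \to \fV$ as a (finite) sequence of log blow-ups obtained by canonically subdividing the characteristic monoids of $\fV$. The whole construction rests on the observation that $\fV$ is already log smooth (Theorem~\ref{thm:max-uni-moduli} and Proposition~\ref{prop:uni-stack-smooth}), so its underlying stack is étale-locally a toric quotient and standard toric resolution applies.

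The first step is to identify exactly where $\fV$ fails to be smooth. At a geometric point $w \to \fV$ lying over a log map with combinatorial type $(G, V_{\max})$, the stalk of $\oM_{\fV}$ is the minimal monoid $\oM(G, V_{\max})$ from Section~\ref{sss:um-monoid}. If $w \notin \Delta_{\max}$, then $e_{\max} = 0$, so every vertex of $G$ is non-degenerate and every $e_v$ maps to $0$ in $\oM_S$. Tracing through the construction of $\oM(G, V_{\max})$, all relations $e_v = 0$ and $e_{v'} = e_v + c_l \cdot \rho_l$ become $\rho_l = 0$ whenever $c_l > 0$, while the $\rho_l$ with $c_l = 0$ are free generators by Corollary~\ref{cor:separate-non-distinct-log-U}. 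Thus $\oM(G, V_{\max})$ is a free monoid $\NN^d$ at such $w$, so the characteristic sheaf of $\fV$ is locally free, equivalently $\ul{\fV}$ is smooth with normal crossings boundary, on all of $\fV \setminus \Delta_{\max, \fV}$.

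The second step is to apply Kato's canonical toric/log resolution (see K.~Kato, \emph{Toric singularities}, Amer.\ J.\ Math.\ 116 (1994); or Nizio{\l}'s version for log regular schemes, or the generalization to Deligne--Mumford stacks used, e.g., in \cite{AW} and \cite{ACMW17}): for each fine saturated sharp monoid $Q = \oM_{\fV, w}$ there is a canonical regular subdivision $\Sigma_Q$ of the dual cone $Q_\RR^\vee$, depending only on $Q$ and functorial with respect to localization. Because subdivisions are compatible with the inclusions of faces, these local subdivisions glue, in the étale topology on $\fV$, to a coherent log blow-up (indeed a sequence of log blow-ups along coherent log ideals) $\tilde{\phi}\colon \tilde{\fV} \to \fV$. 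This $\tilde{\phi}$ is automatically log étale, projective, and birational; after possibly iterating finitely many times (which terminates since $\fV$ is of finite type) the log structure on $\tilde{\fV}$ is locally free, giving (2). Functoriality of the canonical subdivision guarantees that the subdivision is trivial at any point where $Q$ is already free, so by the first step $\tilde{\phi}$ is an isomorphism over $\fV \setminus \Delta_{\max, \fV}$, giving (1).

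The main obstacle is the technical one of actually running Kato's canonical subdivision on a Deligne--Mumford stack rather than on a scheme, and checking the descent of the subdivision through the étale charts furnished by $\cA_{\oM(G, V_{\max})} \to \fV$ used in the proof of Theorem~\ref{thm:max-uni-moduli}. Since $\fV$ is of finite type, only finitely many combinatorial types $(G, V_{\max})$ appear and only finitely many stalks $\oM_{\fV, w}$ occur up to isomorphism, so the subdivision procedure is effectively a simultaneous toric resolution on a finite atlas. The required gluing is standard (the subdivisions are canonical, hence equivariant), but writing it out cleanly is the step that demands the most care.
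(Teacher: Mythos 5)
Your overall strategy (resolve the log-smooth stack $\fV$ by a subdivision of its characteristic monoids, so that the result is projective, log \'etale, birational, with locally free log structure) is in the same spirit as the paper, and your first step — that at a geometric point of $\fV\setminus\Delta_{\max,\fV}$ the stalk of $\oM_{\fV}$ is free, generated by the $\rho_l$ of nodes with trivial contact order — is correct and mirrors what the paper extracts from Corollary~\ref{cor:separate-non-distinct-log-U}. The gap is in how you get property (1). You assert that ``functoriality of the canonical subdivision guarantees that the subdivision is trivial at any point where $Q$ is already free.'' Functoriality alone does not give this: the barycentric subdivision is canonical and compatible with faces and automorphisms, yet it is far from trivial on regular cones. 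So you need a resolution theorem with the additional property that it is the identity over the locus where the log structure is already locally free, and that this can be run coherently on an Artin fan or log algebraic stack; this combination is exactly the delicate point, and it is not what the reference the paper relies on (\cite[Theorem 4.4.2]{ACMW17}) provides — that result gives a projective log \'etale subdivision with locally free log structure, with no claim of triviality over the already-free locus.

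The paper avoids this issue with a structural trick you are missing: by Corollary~\ref{cor:separate-non-distinct-log-U} there is a canonical splitting $\cM_{\fV}=\cM'_{\fV}\oplus_{\cO^*}\cM''_{\fV}$, where $\cM''_{\fV}$ is the locally free part coming from nodes of trivial contact order and $\cM'_{\fV}$ is \emph{trivial} (not merely locally free) on $\fV\setminus\Delta_{\max,\fV}$. One then maps $\fV$ strictly and smoothly to the product of the two Artin fans $\cA'_{\fV}\times\cA''_{\fV}$, subdivides only $\cA'_{\fV}$ via \cite[Theorem 4.4.2]{ACMW17}, and pulls back. Property (2) is then immediate, and property (1) is automatic because any subdivision is an isomorphism over the open locus with trivial log structure — no ``identity on smooth cones'' property is needed. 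To repair your argument you should either adopt this splitting, or replace your appeal to functoriality by an explicit citation of a resolution that is projective, functorial, and an isomorphism over the locally free locus (such statements exist, but they are a strictly stronger input than what you invoke, and the gluing on the stack is then no longer ``standard'' but the content of that theorem).
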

\begin{proof}
  Recall from Corollary~\ref{cor:separate-non-distinct-log-U} that
  there is a canonical splitting
  $\cM_{\fV} = \cM'_{\fV}\oplus_{\cO^*}\cM''_{\fV}$ where
  $\oM''_{\fV,s} = \NN^d$ is the factor corresponding to nodes with
  the trivial contact order for each geometric point $s \to \fV$.
  Indeed, given a node over $s$, if it has the trivial contact order,
  then it can either be smoothed out or remain the trivial contact
  order in a neighborhood of $s$.
  Observe that $\cM'_{\fV}$ is trivial along
  $\fV\setminus \Delta_{\max,\fV}$ as the curves have no degenerate
  components away from $\Delta_{\max}$.

  Consider the Artin fans $\cA'_{\fV}$ and $\cA''_{\fV}$ associated
  to $\cM'_{\fV}$ and $\cM''_{\fV}$ respectively,
  see \cite[Proposition 3.1.1]{ACMW17}.
  By Theorem \ref{thm:max-uni-moduli}, we have a strict, smooth
  morphism of log stacks $\fV \to \cA'_{\fV}\times \cA''_{\fV}$.
  Let $\cY \to \cA'_{\fV}$ be the projective sub-division of
  \cite[Theorem 4.4.2]{ACMW17}.
  It is projective and log \'etale, and $\cM_{\cY}$ is
  locally free.
  This induces a projective, log \'etale morphism
  \[
    \tilde{\phi} \colon \tilde{\fV} := \fV\times_{\cA'_{\fV}\times\cA''_{\fV}}(\cY \times \cA''_{\fV}) \to \fV.
  \]
  It is an isomorphism over $\fV\setminus \Delta_{\max,\fV}$, over which $\cM'_{\fV}$ is trivial.
\end{proof}

Let $\fV \subset \fU$ be a finite type open substack containing the
image of $\USF$.
We fix a resolution $\tilde{\phi} \colon \tilde{\fV} \to \fV$ as in
Lemma~\ref{lem:base-resolution}.
Consider the fiber products
\[
 \tilde{\fH} := \fH\times_{\fU}\tilde{\fV} \ \ \ \mbox{and} \ \ \ \tilde{\USF} := \USF\times_{\fU}\tilde{\fV}.
\]
The perfect obstruction theories $\TT_{\fH/\fU} \to \EE_{\fH/\fU}^{\red}$ and $\TT_{\USF/\fU} \to \EE_{\USF/\fU}^{\red}$ in Lemma \ref{lem:red-obs} pull back to perfect obstruction theories
\[
 \TT_{\tilde{\fH}/\tilde{\fV}} \to \EE_{\tilde{\fH}/\tilde{\fV}}^{\red} \  \  \ \mbox{and} \ \ \ \TT_{\tilde{\USF}/\tilde{\fV}} \to \EE_{\tilde{\USF}/\tilde{\fV}}^{\red}.
\]
Since $\tilde{\fV}$ is equi-dimensional, let $[\tilde{\USF}]^{\red}$
be the virtual cycle of $\tilde{\USF}$ defined by the above perfect
obstruction theory as in \cite[Section~7]{BF97}.
By Lemma \ref{lem:base-resolution} and the virtual push-forward of
\cite{Co06, Ma12}, we obtain:

\begin{lemma}\label{lem:push-along-resolution}
$\tilde{\phi}_{*}[\tilde{\USF}]^{\red} = [\USF]^{\red}$
\end{lemma}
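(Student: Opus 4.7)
The plan is to apply the virtual pushforward formula of Costello--Manolache \cite{Co06, Ma12} to the base-change cartesian diagram
\[
\xymatrix{
\tilde\USF \ar[r]^{\tilde\phi_{\USF}} \ar[d] & \USF \ar[d] \\
\tilde\fV \ar[r]^{\tilde\phi} & \fV
}
\]
whose vertical arrows, by construction, carry compatible relative reduced perfect obstruction theories: $\EE^{\red}_{\tilde\USF/\tilde\fV}$ is defined as the derived pullback of $\EE^{\red}_{\USF/\fV}$ along $\tilde\phi$.

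First I would verify that $\tilde\phi_{\USF}$ is a proper, birational, degree-one morphism. Properness follows since $\tilde\phi$ is projective by Lemma~\ref{lem:base-resolution}, and the latter is inherited under the cartesian square. Birationality follows from part~(1) of the same lemma: as $\tilde\phi$ is an isomorphism over $\fV\setminus\Delta_{\max,\fV}$, the morphism $\tilde\phi_{\USF}$ is an isomorphism over $\USF\setminus\Delta_{\max}$. Log \'etaleness of $\tilde\phi$ combined with log smoothness of $\fV$ and $\tilde\fV$ forces the underlying stacks to be smooth and equidimensional of the same dimension, so $\tilde\phi_{*}[\tilde\fV]=[\fV]$ and hence $(\tilde\phi_{\USF})_{*}[\tilde\USF]=[\USF]$ at the level of fundamental cycles.

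Next I would set up the compatibility triangle of obstruction theories required to invoke virtual pushforward. The cartesian structure together with the log \'etaleness of $\tilde\phi$ yields a triangle
\[
\tilde\phi_{\USF}^{*}\EE^{\red}_{\USF/\fV}\longrightarrow \EE^{\red}_{\tilde\USF/\tilde\fV}\longrightarrow \EE_{\tilde\phi_{\USF}}\stackrel{[1]}{\longrightarrow}
\]
whose third term, being built from $\LL^{\log}_{\tilde\fV/\fV}=0$, is acyclic in the relevant sense. This is exactly the compatibility hypothesis of \cite[Theorem~4.1]{Ma12}, and combined with the degree-one conclusion of the previous paragraph it delivers
\[
(\tilde\phi_{\USF})_{*}[\tilde\USF]^{\red}=\deg(\tilde\phi_{\USF})\cdot[\USF]^{\red}=[\USF]^{\red}.
\]

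The main obstacle I anticipate is bookkeeping rather than conceptual: I must check carefully that the reduction procedure of Proposition~\ref{prop:reduced-obs} (in particular the construction of $\FF$ and the cokernel triangle~\eqref{tri:reduce-X}) commutes strictly with the log \'etale base change $\tilde\phi$. This reduces to the observations that $\bL_{\max}$ and $\FF$ are defined on $\fV$ and thus pull back strictly, and that the triangle~\eqref{tri:reduce-X} is compatible with derived pullback. Once these are verified, the compatibility triangle above is immediate and the virtual pushforward formula applies without further input.
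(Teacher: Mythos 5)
Your argument is correct and is essentially the paper's own: the lemma is obtained there precisely by combining Lemma \ref{lem:base-resolution} with the virtual push-forward of Costello--Manolache applied to the cartesian square $\tilde{\USF} = \USF\times_{\fV}\tilde{\fV}$ equipped with the pulled-back reduced relative obstruction theory over the equidimensional bases. Two intermediate claims in your write-up should be dropped or corrected, though neither is actually used: log smoothness of $\fV$ does not make its underlying stack smooth (only equidimensional, which is all that is needed), and $(\tilde{\phi}_{\USF})_*[\tilde{\USF}]=[\USF]$ on honest fundamental cycles can fail when components of $\USF$ lie inside $\Delta_{\max}$, since the exceptional fibres of $\tilde{\fV}\to\fV$ then make $\tilde{\USF}\to\USF$ non-birational; the degree-one input in the Costello/Manolache statement is the degree of the base map $\tilde{\fV}\to\fV$, i.e.\ $\tilde{\phi}_*[\tilde{\fV}]=[\fV]$, which you did establish.
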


% -------------------------------------------------------
\subsubsection{The absolute reduced theory and cosection}\label{sss:absolute-reduced}

We define $\EE^{\red}_{\tilde{\fH}}$ to be a cone fitting into the
following morphism of distinguished triangles:
\begin{equation}\label{diag:abs-obs-H}
\xymatrix{
\TT_{\tilde{\fH}/\tilde{\fV}} \ar[r] \ar[d] & \TT_{\tilde{\fH}} \ar[r] \ar[d] & \TT_{\tilde{\fV}}|_{\tilde{\fH}} \ar[r]^{[1]} \ar[d]^{\cong} & \\
\EE^{\red}_{\tilde{\fH}/\tilde{\fV}} \ar[r]  & \EE^{\red}_{\tilde{\fH}} \ar[r]  & \TT_{\tilde{\fV}}|_{\tilde{\fH}} \ar[r]^{[1]}  &
}
\end{equation}

\begin{lemma}\label{lem:lift-cosection}
The induced morphism $H^1(\EE^{\red}_{\tilde{\fH}/\tilde{\fV}}) \to H^1(\EE^{\red}_{\tilde{\fH}})$ is an isomorphism and $H^1(\EE^{\red}_{\tilde{\fH}}) \cong \cO_{\tilde{\fH}}$.
\end{lemma}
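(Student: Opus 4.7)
The plan is to exploit the smoothness properties of both $\tilde{\fV}$ and $\tilde{\fH} \to \tilde{\fV}$ to reduce everything to a diagram chase in the long exact cohomology sequences of the two triangles in \eqref{diag:abs-obs-H}. First I would observe that by Lemma \ref{lem:base-resolution}(2), the log structure on $\tilde{\fV}$ is locally free, so $\tilde{\fV}$ is log smooth. Therefore $\TT_{\tilde{\fV}}$ is concentrated in degree $0$, and in particular $H^1(\TT_{\tilde{\fV}}|_{\tilde{\fH}}) = 0$. Similarly, since $\fH \to \fU$ is strict and smooth (as observed just before \eqref{equ:fake-obs}, the morphism $\fH \to \fU$ is a geometric vector bundle), its base change $\tilde{\fH} \to \tilde{\fV}$ is also strict and smooth, so $\TT_{\tilde{\fH}/\tilde{\fV}}$ is concentrated in degree $0$ and $H^1(\TT_{\tilde{\fH}/\tilde{\fV}}) = 0$.

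Next, because the right vertical arrow in \eqref{diag:abs-obs-H} is an isomorphism and the left vertical arrow is a perfect obstruction theory (the pullback of the one supplied by Lemma \ref{lem:red-obs}), a standard application of the five lemma shows that the middle arrow $\TT_{\tilde{\fH}} \to \EE^{\red}_{\tilde{\fH}}$ is a perfect obstruction theory in the sense of Behrend--Fantechi. In particular, $H^0(\TT_{\tilde{\fH}}) \to H^0(\EE^{\red}_{\tilde{\fH}})$ is an isomorphism. Taking the long exact sequence of the top triangle and using $H^1(\TT_{\tilde{\fH}/\tilde{\fV}}) = 0$, the map $H^0(\TT_{\tilde{\fH}}) \to H^0(\TT_{\tilde{\fV}}|_{\tilde{\fH}})$ is surjective. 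Comparing with the long exact sequence of the bottom triangle
\[
H^0(\EE^{\red}_{\tilde{\fH}}) \to H^0(\TT_{\tilde{\fV}}|_{\tilde{\fH}}) \to H^1(\EE^{\red}_{\tilde{\fH}/\tilde{\fV}}) \to H^1(\EE^{\red}_{\tilde{\fH}}) \to H^1(\TT_{\tilde{\fV}}|_{\tilde{\fH}}) = 0
\]
and using the isomorphism $H^0(\TT_{\tilde{\fH}}) \cong H^0(\EE^{\red}_{\tilde{\fH}})$ together with the commutativity of \eqref{diag:abs-obs-H}, the first map in this sequence is surjective, forcing the connecting map $H^0(\TT_{\tilde{\fV}}|_{\tilde{\fH}}) \to H^1(\EE^{\red}_{\tilde{\fH}/\tilde{\fV}})$ to vanish. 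The sequence therefore collapses to give the desired isomorphism $H^1(\EE^{\red}_{\tilde{\fH}/\tilde{\fV}}) \xrightarrow{\sim} H^1(\EE^{\red}_{\tilde{\fH}})$.

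Finally, for the identification $H^1(\EE^{\red}_{\tilde{\fH}}) \cong \cO_{\tilde{\fH}}$, I would invoke Lemma \ref{lem:red-O}, which gives $H^1(\EE^{\red}_{\fH/\fU}) \cong \cO_{\fH}$, and pull it back along $\tilde{\fH} \to \fH$. Combined with the isomorphism of the previous paragraph, this yields the claim. I do not expect any real obstacle here: the argument is essentially bookkeeping with long exact sequences, and the crucial inputs (log smoothness of $\tilde{\fV}$ and smoothness of $\fH \to \fU$) are already in place. The only point worth double-checking is that the compatibility \eqref{diag:abs-obs-H} ensures the absolute obstruction theory inherits the correct behaviour on $H^0$ from the relative one, which is a standard consequence of the triangle comparison.
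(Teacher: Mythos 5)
Your argument is correct and follows essentially the same route as the paper: both proofs chase the long exact sequences of the two triangles in \eqref{diag:abs-obs-H}, using $H^1(\TT_{\tilde{\fV}}|_{\tilde{\fH}})=0$ (smoothness of $\tilde{\fV}$) and the surjectivity of $H^0(\TT_{\tilde{\fH}})\to H^0(\TT_{\tilde{\fV}}|_{\tilde{\fH}})$ coming from smoothness of $\tilde{\fH}\to\tilde{\fV}$, together with the $H^0$-isomorphism of the absolute reduced obstruction theory, to force the connecting map to vanish and obtain the isomorphism on $H^1$. The identification $H^1(\EE^{\red}_{\tilde{\fH}})\cong\cO_{\tilde{\fH}}$ via Lemma \ref{lem:red-O} is likewise exactly the paper's final step.
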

\begin{proof}
Since $\tilde{\fV}$ is smooth, we have $H^1(\TT_{\tilde{\fV}}) = 0$. Consider the induced morphism between long exact sequences
\[
\xymatrix{
H^{0}(\TT_{\tilde{\fH}}) \ar[r] \ar[d]^{\cong} & H^{0}(\TT_{\tilde{\fV}}|_{\tilde{\fH}}) \ar[r] \ar[d]^{\cong} & H^{1}(\TT_{\tilde{\fH}/\tilde{\fV}}) \ar[r] \ar[d] & H^{1}(\TT_{\tilde{\fH}}) \ar[r] \ar[d] & 0 \\
H^{0}(\EE^{\red}_{\tilde{\fH}}) \ar[r] & H^{0}(\TT_{\tilde{\fV}}|_{\tilde{\fH}}) \ar[r] & H^{1}(\EE^{\red}_{\tilde{\fH}/\tilde{\fV}}) \ar[r] & H^{1}(\EE^{\red}_{\tilde{\fH}}) \ar[r] & 0
}
\]
Since $\tilde{\fH} \to \tilde{\fV}$ is smooth, $H^{0}(\TT_{\tilde{\fH}}) \to H^{0}(\TT_{\tilde{\fV}}|_{\tilde{\fH}})$ and $H^{0}(\EE^{\red}_{\tilde{\fH}}) \to H^{0}(\TT_{\tilde{\fV}}|_{\tilde{\fH}})$ are both surjective. Thus $H^1(\EE^{\red}_{\tilde{\fH}/\tilde{\fV}}) \to H^1(\EE^{\red}_{\tilde{\fH}})$ is an isomorphism. Lemma \ref{lem:red-O} implies that $H^1(\EE^{\red}_{\tilde{\fH}}) \cong \cO_{\tilde{\fH}}$.
\end{proof}

Now consider the morphism of triangles:
\begin{equation}\label{diag:abs-obs-X}
\xymatrix{
\TT_{\tilde{\USF}/\tilde{\fV}} \ar[r] \ar[d] & \TT_{\tilde{\USF}} \ar[r] \ar[d]^{\varphi_{\tilde{\USF}}} & \TT_{\tilde{\fV}} \ar[r]^{[1]} \ar[d]^{\cong} & \\
\EE^{\red}_{\tilde{\USF}/\tilde{\fV}} \ar[r]  & \EE^{\red}_{\tilde{\USF}} \ar[r]  & \TT_{\tilde{\fV}} \ar[r]^{[1]}  &
}
\end{equation}
By \cite[Proposition A.1. (1)]{BL00}, we obtain a perfect obstruction theory $\TT_{\tilde{\USF}} \to \EE^{\red}_{\tilde{\USF}}$ of $\tilde{\USF}$ with the corresponding virtual cycle $[\tilde{\USF}]^{\red}$.

The bottom morphism in (\ref{diag:red-obs-commute}) induces a morphism of triangles
\[
\xymatrix{
\EE^{\red}_{\tilde{\USF}/\tilde{\fV}}|_{\tilde{\USF}} \ar[r] \ar[d]  & \EE^{\red}_{\tilde{\USF}}|_{\tilde{\USF}} \ar[r] \ar[d] & \TT_{\tilde{\fV}}|_{\tilde{\USF}} \ar[r]^{[1]}  \ar[d] & \\
\EE^{\red}_{\tilde{\fH}/\tilde{\fV}} \ar[r]  & \EE^{\red}_{\tilde{\fH}} \ar[r]  & \TT_{\tilde{\fV}}|_{\tilde{\USF}} \ar[r]^{[1]}  &
}
\]
Taking $H^1$ and applying Lemma \ref{lem:lift-cosection}, we have a commutative diagram
\[
\xymatrix{
H^1(\EE^{\red}_{\tilde{\USF}/\tilde{\fV}}) \ar@{->>}[r] \ar[d]_{\sigma^{\red}_{\tilde{\USF}/\tilde{\fV}}} & H^1(\EE^{\red}_{\tilde{\USF}}) \ar[d]^{\sigma^{\red}_{\tilde{\USF}}} \\
\cO \ar[r]^{=} &  \cO.
}
\]
Observe that $\sigma^{\red}_{\tilde{\USF}/\tilde{\fV}}$ is the pullback of $\sigma^{\red}_{\USF/\fU}$ in \eqref{equ:reduced-relative-cosection}. We call $\sigma^{\red}_{\tilde{\USF}}$ the {\em absolute reduced cosection}.

\subsubsection{Proof of Theorem \ref{thm:main}}\label{sss:final-proof}

Denote by
$\tilde{\Delta}_{\max} := \tilde{\USF}\times_{\USF}\Delta_{\max}$. By
Lemma \ref{lem:base-resolution} (1), we have the identity
$\USF^{\circ} := \tilde{\USF} \setminus \tilde{\Delta}_{\max} = \USF
\setminus \Delta_{\max}$.
Consider the open embedding
$\iota\colon \USF^{\circ} \hookrightarrow \tilde{\USF}$ with the
trivial perfect obstruction theory.
Thus the virtual pullback $\iota^{!}$ in the sense of \cite{Ma12} is
just the flat pullback (see \cite[Remark~3.10]{Ma12}).

Denote by $\sigma_{\USF^{\circ}} = \sigma_{\tilde{\USF}}^{\red}|_{\USF^{\circ}}$. By Lemma \ref{lem:base-resolution}, \ref{lem:old-compatible}, and Proposition \ref{prop:reduced-obs} (2), the morphism $\sigma_{\USF^{\circ}}$ is the absolute cosection in \cite[Proposition 3.4]{CLL15} in the $r$-spin case. We then obtain:

\begin{lemma}
$[\USF^{\circ}]_{\sigma_{\USF^{\circ}}}$ is the {\em Witten's top Chern class} as in \cite[Definition-Proposition 3.9]{CLL15}.
\end{lemma}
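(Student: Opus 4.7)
The plan is to verify that the three ingredients going into the Kiem--Li cosection-localized virtual cycle $[\USF^\circ]_{\sigma_{\USF^\circ}}$, namely the moduli stack, the two-term absolute perfect obstruction theory, and the absolute cosection, all coincide with the corresponding data used by Chang--Li--Li to define Witten's top Chern class in \cite[Definition-Proposition 3.9]{CLL15}. Once this matching is established, the equality of the two localized classes follows from the fact that the Kiem--Li construction depends only on these three pieces of input.

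First I would handle the identification of the stacks. On $\USF^\circ = \USF\setminus\Delta_{\max}$ every log $r$-spin field is a genuine section of the $r$-spin bundle, since there is no maximally degenerate component; moreover $\bL_{\max}|_{\USF^\circ}\cong\cO$ and $f^*\cO(\infty_{\cP})|_{\USF^\circ}\cong\cO$. Hence $\USF^\circ$ agrees with the stack $X$ of \cite[Section 3]{CLL15}. Also, by Lemma~\ref{lem:base-resolution}(1), the resolution $\tilde{\phi}\colon\tilde{\fV}\to\fV$ is an isomorphism over $\USF^\circ$, so passing to the absolute theory on $\tilde{\USF}$ introduces nothing new over $\USF^\circ$.

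Next, I would match the perfect obstruction theories. Lemma~\ref{lem:old-compatible} already identifies the relative theory \eqref{equ:log-perfect-obs} restricted to $\USF^\circ$ with the CLL relative perfect obstruction theory \cite[(3.2)]{CLL15}. Combining this with the triangle \eqref{diag:abs-obs-X} and the observation that $\tilde{\fV}\to\fU$ is an isomorphism over $\USF^\circ$ (so $\tilde{\fV}$ near $\USF^\circ$ is smooth of the expected dimension), the absolute reduced theory $\TT_{\tilde{\USF}}\to\EE^{\red}_{\tilde{\USF}}$ restricts on $\USF^\circ$ to the absolute perfect obstruction theory used by Chang--Li--Li. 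For the cosection, Proposition~\ref{prop:reduced-obs}(2) together with Lemma~\ref{lem:old-compatible} gives that $\sigma^{\red}_{\USF/\fU}|_{\USF^\circ}$ equals the CLL relative cosection \cite[(3.5)]{CLL15}; then the construction of the absolute cosection $\sigma^{\red}_{\tilde{\USF}}$ via the connecting morphism (Lemma~\ref{lem:lift-cosection}) matches verbatim the passage from relative to absolute cosection in \cite[Proposition 3.4]{CLL15}. Therefore $\sigma_{\USF^\circ}$ is precisely the CLL absolute cosection.

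With the stack, obstruction theory, and cosection all coinciding on $\USF^\circ$, the Kiem--Li cosection-localized virtual cycle produced from these data agrees with the one produced by Chang--Li--Li, which is by definition Witten's top Chern class. The main point that requires care is checking that the \emph{absolute}, rather than merely the \emph{relative}, data match; this ultimately reduces to the smoothness of $\tilde{\fV}\to\fU$ near $\USF^\circ$ and the naturality of the connecting map in the triangle \eqref{diag:abs-obs-X}, so no new computation is needed beyond combining Lemma~\ref{lem:base-resolution}, Lemma~\ref{lem:old-compatible}, Proposition~\ref{prop:reduced-obs}(2), and Lemma~\ref{lem:lift-cosection}.
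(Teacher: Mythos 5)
Your argument is correct and follows essentially the same route as the paper: the paper likewise combines Lemma~\ref{lem:base-resolution}, Lemma~\ref{lem:old-compatible}, and Proposition~\ref{prop:reduced-obs}(2) to identify $\sigma_{\USF^{\circ}}$ with the absolute cosection of \cite[Proposition 3.4]{CLL15} on $\USF^{\circ}=X$, after which the lemma is the definition of Witten's top Chern class. Your extra care in matching the absolute obstruction theory via \eqref{diag:abs-obs-X} and Lemma~\ref{lem:lift-cosection} is a reasonable elaboration of what the paper leaves implicit, not a different proof.
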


On the other hand, let $\tilde{\USF}(\sigma_{\tilde{\USF}}^{\red})$ (respectively $\USF^{\circ}(\sigma_{\USF^{\circ}})$) be the degeneracy loci of $\sigma_{\tilde{\USF}}^{\red}$ (respectively $\sigma_{\USF^{\circ}}$). Since $\sigma^{\red}_{\tilde{\USF}/\tilde{\fV}}$ is the pullback of $\sigma^{\red}_{\USF/\fU}$, Proposition \ref{prop:reduced-obs} (3) implies that $\sigma_{\tilde{\USF}}^{\red}$ is surjective along $\tilde{\Delta}_{\max}$, hence $\tilde{\USF}(\sigma_{\tilde{\USF}}^{\red}) = \USF^{\circ}(\sigma_{\tilde{\USF}}^{\red})$.

Let $\iota^{!}_{\sigma_{\tilde{\USF}}^{\red}}$ be the cosection localized virtual pullback as in \cite[Section~2.1]{CKL17}. Since $\iota^{!}_{\sigma_{\tilde{\USF}}^{\red}} = \iota^{!}$ and $\tilde{\USF}(\sigma_{\tilde{\USF}}^{\red}) = \USF^{\circ}(\sigma_{\USF^{\circ}})$, applying \cite[Theorem 2.6]{CKL17} we have the following equalities in $A_*(\USF^{\circ}(\sigma_{\USF^{\circ}}))$:
\[
[\tilde{\USF}]^{\red}_{\sigma_{\tilde{\USF}}^{\red}} = \iota^{!}_{\sigma_{\tilde{\USF}}^{\red}}[\tilde{\USF}]^{\red}_{\sigma_{\tilde{\USF}}^{\red}} = [\USF^{\circ}]_{\sigma_{\USF^{\circ}}}.
\]
Let $\tilde{i}\colon \USF^{\circ}(\sigma_{\USF^{\circ}}) \to \tilde{\USF}$ be the closed embedding.
By \cite[Theorem~1.1]{KiLi13}, we have:

\begin{lemma}
$\tilde{i}_{*}[\USF^{\circ}]_{\sigma_{\USF^{\circ}}} = [\tilde{\USF}]^{\red}$.
\end{lemma}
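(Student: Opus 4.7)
The plan is to deduce the identity directly from Kiem--Li's cosection localization theorem applied to the absolute reduced perfect obstruction theory $\TT_{\tilde{\USF}} \to \EE^{\red}_{\tilde{\USF}}$ together with the absolute reduced cosection $\sigma^{\red}_{\tilde{\USF}}\colon H^1(\EE^{\red}_{\tilde{\USF}}) \to \cO_{\tilde{\USF}}$ constructed in the preceding subsection, and then to combine the resulting equality with the chain of identities
\[
[\tilde{\USF}]^{\red}_{\sigma_{\tilde{\USF}}^{\red}} = \iota^{!}_{\sigma_{\tilde{\USF}}^{\red}}[\tilde{\USF}]^{\red}_{\sigma_{\tilde{\USF}}^{\red}} = [\USF^{\circ}]_{\sigma_{\USF^{\circ}}}
\]
in $A_*(\USF^{\circ}(\sigma_{\USF^{\circ}}))$ displayed immediately before the lemma.

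First I would verify the hypotheses needed for \cite{KiLi13}. The stack $\tilde{\USF} = \USF \times_{\fU} \tilde{\fV}$ is a proper Deligne--Mumford stack since $\USF$ is proper by Theorem \ref{thm:spin-fields-moduli} and $\tilde{\phi}\colon \tilde{\fV} \to \fV$ is projective by Lemma \ref{lem:base-resolution}. The reduced theory $\TT_{\tilde{\USF}} \to \EE^{\red}_{\tilde{\USF}}$ is a perfect obstruction theory of tor-amplitude in $[0,1]$, obtained from diagram \eqref{diag:abs-obs-X} via \cite[Proposition A.1.(1)]{BL00}. The absolute cosection $\sigma^{\red}_{\tilde{\USF}}$ is the one produced from the commutative square immediately above the lemma, and because $\sigma^{\red}_{\tilde{\USF}/\tilde{\fV}}$ pulls back from the surjective-along-$\Delta_{\max}$ relative cosection of Proposition \ref{prop:reduced-obs}(3), the absolute degeneracy locus satisfies $\tilde{\USF}(\sigma^{\red}_{\tilde{\USF}}) = \USF^{\circ}(\sigma^{\red}_{\tilde{\USF}}|_{\USF^{\circ}}) = \USF^{\circ}(\sigma_{\USF^{\circ}})$, as already used in the preceding paragraph, so in particular this locus is proper.

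With these inputs in hand, \cite[Theorem 5.1]{KiLi13} provides the comparison
\[
\tilde{i}_{*}\,[\tilde{\USF}]^{\red}_{\sigma_{\tilde{\USF}}^{\red}} = [\tilde{\USF}]^{\red}
\]
in $A_*(\tilde{\USF})$. Substituting the equality $[\tilde{\USF}]^{\red}_{\sigma_{\tilde{\USF}}^{\red}} = [\USF^{\circ}]_{\sigma_{\USF^{\circ}}}$ from the preceding display yields the lemma. Since the nontrivial work — the construction of the absolute reduced theory, the construction of the absolute cosection, the identification of degeneracy loci, and the comparison with the Witten top Chern class via \cite[Theorem 2.6]{CKL17} — has already been carried out, there is no genuine obstacle here; the step is essentially a bookkeeping invocation of Kiem--Li cosection localization, and the only point requiring care is checking that the hypotheses of \cite{KiLi13} (properness of $\tilde{\USF}$ and of the degeneracy locus) are satisfied in our setting.
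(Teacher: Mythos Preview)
Your proposal is correct and follows exactly the paper's approach: the paper's proof consists solely of the sentence ``By \cite{KiLi13}, we have'' preceding the lemma, and you have simply spelled out the hypothesis verification (properness, identification of the degeneracy locus, invocation of the comparison theorem) that this citation encodes.
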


Finally, let $i = \tilde{\phi}\circ \tilde{i}\colon \USF^{\circ}(\sigma_{\USF^{\circ}}) \to \USF$ be the closed embedding. Applying Lemma \ref{lem:push-along-resolution}, we have:
\begin{proposition}\label{prop:comparison}
$i_{*}[\USF^{\circ}]_{\sigma_{\USF^{\circ}}} = [\USF]^{\red}$.
\end{proposition}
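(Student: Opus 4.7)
The plan is to reduce the comparison to a smooth-base situation where the absolute cosection localization machinery applies, and then transport the resulting equality back to $\USF$ via virtual push-forward. Since $\fU$ carries a non-locally-free log structure along $\Delta_{\max}$, its underlying stack is singular and one cannot directly put an absolute reduced perfect obstruction theory on $\USF$. I would therefore first invoke Lemma \ref{lem:base-resolution} to produce a projective, log \'etale resolution $\tilde{\phi}\colon \tilde{\fV} \to \fV$ of a finite-type open neighborhood of the image of $\USF$, with $\tilde{\fV}$ smooth and $\tilde{\phi}$ an isomorphism over $\fV \setminus \Delta_{\max}$.

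Next I would pull back the reduced relative perfect obstruction theory of Proposition \ref{prop:reduced-obs} and the relative cosection $\sigma^{\red}_{\USF/\fU}$ to $\tilde{\USF} := \USF \times_{\fU} \tilde{\fV}$. Since $\tilde{\fV}$ is smooth, diagram \eqref{diag:abs-obs-X} together with \cite[Proposition A.1.(1)]{BL00} produces a genuine \emph{absolute} reduced obstruction theory $\TT_{\tilde{\USF}} \to \EE^{\red}_{\tilde{\USF}}$, and Lemma \ref{lem:lift-cosection} lifts the relative cosection to an absolute cosection $\sigma^{\red}_{\tilde{\USF}}\colon H^1(\EE^{\red}_{\tilde{\USF}}) \to \cO_{\tilde{\USF}}$. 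The surjectivity of $\sigma^{\red}_{\USF/\fU}$ along $\Delta_{\max}$ (Proposition \ref{prop:reduced-obs}(3)) then shows that the degeneracy locus of $\sigma^{\red}_{\tilde{\USF}}$ is contained in the open substack $\USF^{\circ} = \tilde{\USF} \setminus \tilde{\Delta}_{\max}$, which is identified with the original $\USF \setminus \Delta_{\max}$ because $\tilde{\phi}$ is an isomorphism there.

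Now I would invoke two compatibility results. First, by Lemma \ref{lem:old-compatible} together with Proposition \ref{prop:reduced-obs}(1)--(2), the restriction of $(\EE^{\red}_{\tilde{\USF}}, \sigma^{\red}_{\tilde{\USF}})$ to $\USF^{\circ}$ coincides with the perfect obstruction theory and cosection of Chang--Li--Li \cite{CLL15}, so $[\USF^\circ]_{\sigma_{\USF^\circ}}$ is Witten's top Chern class. Second, viewing the inclusion $\iota\colon \USF^{\circ} \hookrightarrow \tilde{\USF}$ as a virtual pull-back with trivial obstruction, the cosection-localized virtual pull-back of Chang--Kiem--Li \cite[Theorem 2.6]{CKL17} yields the identity $[\tilde{\USF}]^{\red}_{\sigma^{\red}_{\tilde{\USF}}} = [\USF^{\circ}]_{\sigma_{\USF^{\circ}}}$ in $A_*(\USF^{\circ}(\sigma_{\USF^{\circ}}))$. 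Applying Kiem--Li \cite{KiLi13} to push this class forward into $\tilde{\USF}$ gives $\tilde{i}_{*}[\USF^{\circ}]_{\sigma_{\USF^{\circ}}} = [\tilde{\USF}]^{\red}$.

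Finally, I would conclude by composing with $\tilde{\phi}$ and applying the virtual push-forward formula of Costello--Manolache, i.e.\ Lemma \ref{lem:push-along-resolution}, which gives $\tilde{\phi}_{*}[\tilde{\USF}]^{\red} = [\USF]^{\red}$; this yields $i_{*}[\USF^\circ]_{\sigma_{\USF^{\circ}}} = [\USF]^{\red}$ as required. The principal subtlety is guaranteeing that the construction of $\sigma^{\red}_{\tilde{\USF}}$ and the open-immersion virtual pull-back are genuinely compatible with the non-smooth ambient stack $\USF$, which is precisely what the resolution $\tilde{\phi}$ and the virtual push-forward make rigorous; the other bookkeeping (degeneracy locus comparison, birational invariance of the reduced class) is forced by the constructions already carried out in Section \ref{sec:virtual}.
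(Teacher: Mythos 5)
Your proposal is correct and follows essentially the same route as the paper: resolve the base via Lemma \ref{lem:base-resolution}, build the absolute reduced theory and cosection on $\tilde{\USF}$ using \eqref{diag:abs-obs-X}, \cite[Proposition A.1.(1)]{BL00} and Lemma \ref{lem:lift-cosection}, identify the localized class on $\USF^{\circ}$ with Witten's class via \cite{CLL15}, then apply \cite[Theorem 2.6]{CKL17}, \cite{KiLi13}, and the virtual push-forward of Lemma \ref{lem:push-along-resolution}. No substantive differences from the paper's argument.
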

This completes the proof of Theorem \ref{thm:main}.

\bibliographystyle{abbrv}
\bibliography{rspin}

\begin{thebibliography}{10}

\bibitem{ACFW13}
D.~Abramovich, C.~Cadman, B.~Fantechi, and J.~Wise.
\newblock Expanded degenerations and pairs.
\newblock {\em Comm. Algebra}, 41(6):2346--2386, 2013.

\bibitem{AC14}
D.~Abramovich and Q.~Chen.
\newblock Stable logarithmic maps to {D}eligne-{F}altings pairs {II}.
\newblock {\em Asian J. Math.}, 18(3):465--488, 2014.

\bibitem{ACGS17}
D.~Abramovich, Q.~Chen, M.~Gross, and B.~Siebert.
\newblock Decomposition of degenerate {G}romov-{W}itten invariants.
\newblock {\em Compos. Math.}, 156(10):2020--2075, 2020.

\bibitem{ACMW17}
D.~Abramovich, Q.~Chen, S.~Marcus, and J.~Wise.
\newblock Boundedness of the space of stable logarithmic maps.
\newblock {\em J. Eur. Math. Soc. (JEMS)}, 19(9):2783--2809, 2017.

\bibitem{AF16}
D.~Abramovich and B.~Fantechi.
\newblock Orbifold techniques in degeneration formulas.
\newblock {\em Ann. Sc. Norm. Super. Pisa Cl. Sci. (5)}, 16(2):519--579, 2016.

\bibitem{AbJa03}
D.~Abramovich and T.~J. Jarvis.
\newblock Moduli of twisted spin curves.
\newblock {\em Proc. Amer. Math. Soc.}, 131(3):685--699, 2003.

\bibitem{AV02}
D.~Abramovich and A.~Vistoli.
\newblock Compactifying the space of stable maps.
\newblock {\em J. Amer. Math. Soc.}, 15(1):27--75, 2002.

\bibitem{AW}
D.~Abramovich and J.~Wise.
\newblock Birational invariance in logarithmic {G}romov--{W}itten theory.
\newblock {\em Compos. Math.}, 154(3):595--620, 2018.

\bibitem{BF97}
K.~Behrend and B.~Fantechi.
\newblock The intrinsic normal cone.
\newblock {\em Invent. Math.}, 128(1):45--88, 1997.

\bibitem{BL00}
J.~Bryan and N.~C. Leung.
\newblock The enumerative geometry of {$K3$} surfaces and modular forms.
\newblock {\em J. Amer. Math. Soc.}, 13(2):371--410, 2000.

\bibitem{CKL17}
H.-L. Chang, Y.-H. Kiem, and J.~Li.
\newblock Torus localization and wall crossing for cosection localized virtual
  cycles.
\newblock {\em Adv. Math.}, 308:964--986, 2017.

\bibitem{CKL18P}
H.-L. Chang, Y.-H. Kiem, and J.~Li.
\newblock Algebraic virtual cycles for quantum singularity theories.
\newblock \href{https://arxiv.org/abs/1806.00216}{arXiv:1806.00216}, 2018.

\bibitem{CL12}
H.-L. Chang and J.~Li.
\newblock Gromov-{W}itten invariants of stable maps with fields.
\newblock {\em Int. Math. Res. Not. IMRN}, 2012(18):4163--4217, 2012.

\bibitem{CLL15}
H.-L. Chang, J.~Li, and W.-P. Li.
\newblock Witten's top {C}hern class via cosection localization.
\newblock {\em Invent. Math.}, 200(3):1015--1063, 2015.

\bibitem{CC16}
D.~Chen and Q.~Chen.
\newblock Spin and hyperelliptic structures of log twisted differentials.
\newblock {\em Selecta Math. (N.S.)}, 25(2):Paper No. 20, 42, 2019.

\bibitem{Ch14}
Q.~Chen.
\newblock Stable logarithmic maps to {D}eligne-{F}altings pairs {I}.
\newblock {\em Ann. of Math. (2)}, 180(2):455--521, 2014.

\bibitem{CJRSZ20P}
Q.~Chen, F.~Janda, R.~Pandharipande, Y.~Ruan, A.~Sauvaget, J.~Schmitt, and
  D.~Zvonkine.
\newblock Holomorphic differentials via {W}itten's $r$-spin class.
\newblock (work in progress).

\bibitem{CJR20P}
Q.~Chen, F.~Janda, and Y.~Ruan.
\newblock Punctured logarithmic {R}-maps.
\newblock \href{https://arxiv.org/abs/2009.07720}{arXiv:2009.07720}.

\bibitem{CJR21P}
Q.~Chen, F.~Janda, and Y.~Ruan.
\newblock The structural formulae of virtual cycles in logarithmic {G}auged
  {L}inear {S}igma {M}odels.
\newblock (in preparation).

\bibitem{CJR19P}
Q.~Chen, F.~Janda, and Y.~Ruan.
\newblock The logarithmic gauged linear sigma model.
\newblock {\em Invent. Math.}, 225(3):1077--1154, 2021.

\bibitem{Ch08}
A.~Chiodo.
\newblock Stable twisted curves and their {$r$}-spin structures.
\newblock {\em Ann. Inst. Fourier (Grenoble)}, 58(5):1635--1689, 2008.

\bibitem{Co06}
K.~Costello.
\newblock Higher genus {G}romov-{W}itten invariants as genus zero invariants of
  symmetric products.
\newblock {\em Ann. of Math. (2)}, 164(2):561--601, 2006.

\bibitem{FSZ10}
C.~Faber, S.~Shadrin, and D.~Zvonkine.
\newblock Tautological relations and the {$r$}-spin {W}itten conjecture.
\newblock {\em Ann. Sci. \'Ec. Norm. Sup\'er. (4)}, 43(4):621--658, 2010.

\bibitem{FJR13}
H.~Fan, T.~Jarvis, and Y.~Ruan.
\newblock The {W}itten equation, mirror symmetry, and quantum singularity
  theory.
\newblock {\em Ann. of Math. (2)}, 178(1):1--106, 2013.

\bibitem{FJR18}
H.~Fan, T.~J. Jarvis, and Y.~Ruan.
\newblock A mathematical theory of the gauged linear sigma model.
\newblock {\em Geom. Topol.}, 22(1):235--303, 2018.
\newblock \href{https://arxiv.org/abs/1506.02109}{arXiv:1506.02109}.

\bibitem{GrPa99}
T.~Graber and R.~Pandharipande.
\newblock Localization of virtual classes.
\newblock {\em Invent. Math.}, 135(2):487--518, 1999.

\bibitem{GS13}
M.~Gross and B.~Siebert.
\newblock Logarithmic {G}romov-{W}itten invariants.
\newblock {\em J. Amer. Math. Soc.}, 26(2):451--510, 2013.

\bibitem{Gu16}
J.~{Gu{\'e}r{\'e}}.
\newblock {A generalization of the double ramification cycle via log-geometry}.
\newblock {\em ArXiv e-prints}, 2016.
\newblock \href{https://arxiv.org/abs/1603.09213}{arXiv:1603.09213}.

\bibitem{GJR17P}
S.~Guo, F.~Janda, and Y.~Ruan.
\newblock A mirror theorem for genus two {G}romov--{W}itten invariants of
  quintic threefolds, 2017.
\newblock \href{https://arxiv.org/abs/1709.07392}{arXiv:1709.07392}.

\bibitem{GJR18P}
S.~Guo, F.~Janda, and Y.~Ruan.
\newblock Structure of higher genus {G}romov-{W}itten invariants of quintic
  3-folds.
\newblock \href{https://arxiv.org/abs/1812.11908}{arXiv:1812.11908}, 2018.

\bibitem{HR14}
J.~Hall and D.~Rydh.
\newblock Coherent {T}annaka duality and algebraicity of {H}om-stacks.
\newblock {\em Algebra Number Theory}, 13(7):1633--1675, 2019.

\bibitem{IP03}
E.-N. Ionel and T.~H. Parker.
\newblock Relative {G}romov-{W}itten invariants.
\newblock {\em Ann. of Math. (2)}, 157(1):45--96, 2003.

\bibitem{IP04}
E.-N. Ionel and T.~H. Parker.
\newblock The symplectic sum formula for {G}romov-{W}itten invariants.
\newblock {\em Ann. of Math. (2)}, 159(3):935--1025, 2004.

\bibitem{Ja98}
T.~J. Jarvis.
\newblock Torsion-free sheaves and moduli of generalized spin curves.
\newblock {\em Compositio Math.}, 110(3):291--333, 1998.

\bibitem{Ja00}
T.~J. Jarvis.
\newblock Geometry of the moduli of higher spin curves.
\newblock {\em Internat. J. Math.}, 11(5):637--663, 2000.

\bibitem{KKato}
K.~Kato.
\newblock Logarithmic structures of {F}ontaine-{I}llusie.
\newblock In {\em Algebraic analysis, geometry, and number theory ({B}altimore,
  {MD}, 1988)}, pages 191--224. Johns Hopkins Univ. Press, Baltimore, MD, 1989.

\bibitem{KR18}
A.~A. Khan and D.~Rydh.
\newblock Virtual cartier divisors and blow-ups, 2018.
\newblock \href{https://arxiv.org/abs/1802.05702}{arXiv:1802.05702}.

\bibitem{KiLi13}
Y.-H. Kiem and J.~Li.
\newblock Localizing virtual cycles by cosections.
\newblock {\em J. Amer. Math. Soc.}, 26(4):1025--1050, 2013.

\bibitem{KiLi18P}
Y.-H. Kiem and J.~Li.
\newblock Quantum singularity theory via cosection localization.
\newblock {\em J. Reine Angew. Math.}, 766:73--107, 2020.

\bibitem{Kim10}
B.~Kim.
\newblock Logarithmic stable maps.
\newblock In {\em New developments in algebraic geometry, integrable systems
  and mirror symmetry ({RIMS}, {K}yoto, 2008)}, volume~59 of {\em Adv. Stud.
  Pure Math.}, pages 167--200. Math. Soc. Japan, Tokyo, 2010.

\bibitem{LR01}
A.-M. Li and Y.~Ruan.
\newblock Symplectic surgery and {G}romov-{W}itten invariants of {C}alabi-{Y}au
  3-folds.
\newblock {\em Invent. Math.}, 145(1):151--218, 2001.

\bibitem{Li01}
J.~Li.
\newblock Stable morphisms to singular schemes and relative stable morphisms.
\newblock {\em J. Differential Geom.}, 57(3):509--578, 2001.

\bibitem{Li02}
J.~Li.
\newblock A degeneration formula of {GW}-invariants.
\newblock {\em J. Differential Geom.}, 60(2):199--293, 2002.

\bibitem{Ma12}
C.~Manolache.
\newblock Virtual pull-backs.
\newblock {\em J. Algebraic Geom.}, 21(2):201--245, 2012.

\bibitem{Ol03}
M.~C. Olsson.
\newblock Logarithmic geometry and algebraic stacks.
\newblock {\em Ann. Sci. \'Ecole Norm. Sup. (4)}, 36(5):747--791, 2003.

\bibitem{LogCot}
M.~C. Olsson.
\newblock The logarithmic cotangent complex.
\newblock {\em Math. Ann.}, 333(4):859--931, 2005.

\bibitem{Ol07}
M.~C. Olsson.
\newblock ({L}og) twisted curves.
\newblock {\em Compos. Math.}, 143(2):476--494, 2007.

\bibitem{PPZ16P}
R.~Pandharipande, A.~Pixton, and D.~Zvonkine.
\newblock Tautological relations via {$r$}-spin structures.
\newblock {\em J. Algebraic Geom.}, 28(3):439--496, 2019.
\newblock With an appendix by F. Janda and the authors.

\bibitem{Par11}
B.~{Parker}.
\newblock {Gromov Witten invariants of exploded manifolds}.
\newblock {\em ArXiv e-prints}, 2011.
\newblock \href{https://arxiv.org/abs/1102.0158}{arXiv:1102.0158}.

\bibitem{Par12}
B.~Parker.
\newblock Exploded manifolds.
\newblock {\em Adv. Math.}, 229(6):3256--3319, 2012.

\bibitem{Par15}
B.~Parker.
\newblock Holomorphic curves in exploded manifolds: compactness.
\newblock {\em Adv. Math.}, 283:377--457, 2015.

\bibitem{Po06}
A.~Polishchuk.
\newblock Moduli spaces of curves with effective {$r$}-spin structures.
\newblock In {\em Gromov-{W}itten theory of spin curves and orbifolds}, volume
  403 of {\em Contemp. Math.}, pages 1--20. Amer. Math. Soc., Providence, RI,
  2006.

\bibitem{PoVa01}
A.~Polishchuk and A.~Vaintrob.
\newblock Algebraic construction of {W}itten's top {C}hern class.
\newblock In {\em Advances in algebraic geometry motivated by physics
  ({L}owell, {MA}, 2000)}, volume 276 of {\em Contemp. Math.}, pages 229--249.
  Amer. Math. Soc., Providence, RI, 2001.

\bibitem{PoVa16}
A.~Polishchuk and A.~Vaintrob.
\newblock Matrix factorizations and cohomological field theories.
\newblock {\em J. Reine Angew. Math.}, 714:1--122, 2016.

\bibitem{Siebert}
B.~Siebert.
\newblock {G}romov-{W}itten invariants in relative and singular cases.
\newblock Lecture given in the workshop on algebraic aspects of mirror
  symmetry, Universit$\ddot{a}$t Kaiserslautern, Germany, Jun. 26 2001.

\bibitem{stacks-project}
T.~{Stacks Project Authors}.
\newblock \textit{Stacks Project}.
\newblock \url{https://stacks.math.columbia.edu}, 2020.

\bibitem{Wi11}
J.~Wise.
\newblock Obstruction theories and virtual fundamental classes, 2011.
\newblock \href{https://arxiv.org/abs/1111.4200}{arXiv:1111.4200}.

\bibitem{Wi16}
J.~Wise.
\newblock Moduli of morphisms of logarithmic schemes.
\newblock {\em Algebra Number Theory}, 10(4):695--735, 2016.

\bibitem{Wi93}
E.~Witten.
\newblock Algebraic geometry associated with matrix models of two-dimensional
  gravity.
\newblock In {\em Topological methods in modern mathematics ({S}tony {B}rook,
  {NY}, 1991)}, pages 235--269. Publish or Perish, Houston, TX, 1993.

\end{thebibliography}

\end{document}